\documentclass[journal, onecolumn,12pt, draftcls, doublespace]{IEEEtran}

\usepackage{times}
\usepackage{amsmath}
\usepackage{amssymb}
\usepackage[pagebackref=true,breaklinks=true,letterpaper=true,colorlinks,bookmarks=false]{hyperref}

\usepackage{ifthen}
\usepackage{algorithm}
\usepackage{algorithmic}
\floatstyle{ruled}
\newfloat{alg}{thp}{lop}
\floatname{alg}{Algorithm}
\renewenvironment{algorithm}
{\begin{alg} \vspace{0.01cm} \noindent } {\vspace{0.01cm}
\end{alg}}

\def\Z{\mathcal{Z}}

\newcommand{\Prob}{\mathrm{Prob}}
\newcommand{\e}{\mathbf{e}}
\newcommand{\br}{\mathbf{r}}
\newcommand{\x}{\mathbf{x}}
\newcommand{\y}{\mathbf{y}}
\newcommand{\z}{\mathbf{z}}
\newcommand{\BZ}{\mathbf{Z}}
\newcommand{\bu}{\mathbf{u}}
\newcommand{\bs}{\mathbf{s}}
\newcommand{\bh}{\mathbf{h}}
\newcommand{\wu}{u}

\renewcommand{\v}{\mathbf{v}}

\newcommand{\btheta}{\boldsymbol{\theta}}
\renewcommand{\Re}{\mathbb{R}}
\newcommand{\GBS}{\textbf{GBS}}
\newcommand{\BS}{\mathrm{B}}
\newcommand{\RC}{\textbf{RC}}



\newcommand{\IM}{\mathrm{Im}}
\newcommand{\Rank}{\mathrm{rank}\mbox{ } }
\newtheorem{Definition}{Definition}
\newtheorem{Theorem}{Theorem}
\newtheorem{Problem}{Problem}
\newtheorem{Example}{Example}
\newtheorem{Lemma}{Lemma}

\newtheorem{Remark}{Remark}
\newtheorem{Notation}{Notation}

\newtheorem{Corollary}{Corollary}
\newtheorem{Assumption}{Assumption}

\bibliographystyle{plain}

\long\def\comment#1{}
\newcommand{\ie}{{i.e. }}
\newcommand{\eg}{{e.g., }}
\newcommand{\ea}{{et al. }}

\begin{document}

\title{Realization Theory of Stochastic Jump-Markov Linear Systems}

\author{
Mih\'aly Petreczky, \and Ren\'e Vidal
\thanks{Mih\'aly Petreczky is with Centre de Recherche en Informatique, Signal et Automatique de Lille (CRIStAL),  UMR CNRS 9189, Ecole Centrale de Lille, 59651 Villeneuve d’Ascq, France (e-mail:~ mihaly.petreczky@ec-lille.fr).
}
\thanks{Ren\'e Vidal is with the Center for Imaging Science, Department of Biomedical Engineering, Johns Hopkins University, 3400 N. Charles Street, Baltimore MD 21218, USA (e-mail: rvidal@cis.jhu.edu).}
}

\maketitle

\begin{abstract}
 In this paper, we present a complete stochastic realization theory
 for stochastic jump-linear systems. We  present
 necessary and sufficient conditions for the existence of a 
 realization, along with a characterization of minimality
in terms of reachability and observability.  
We also formulate a realization algorithm and argue that 
minimality can be checked algorithmically. 
The main tool for solving the stochastic realization problem for
jump-linear systems is the formulation and solution of 
a stochastic realization problem for a general class of bilinear systems with non-white-noise inputs. The solution to this generalized stochastic bilinear realization problem is based on the theory of formal power series.  Stochastic jump-linear systems represent a special case
 of generalized stochastic bilinear systems.
\end{abstract}

\section{Introduction}

Hybrid systems are dynamical systems that exhibit both continuous and discrete behaviors. Such systems have a wide range of applications, including systems biology, computer vision, flight control systems, etc. While there is a vast amount of literature on stability, reachability, observability, identification, and controller design for hybrid systems, there are relatively fewer results available on realization theory of hybrid systems. 
 
Realization theory is one of the central topics of control and systems theory. Its goals are to study the conditions under which the observed behavior of a system can be represented by a state-space representation of a certain type and to develop algorithms for finding a (preferably minimal) state-space representation of the observed behavior. The study of these problems is not only of theoretical relevance, but also of practical importance in several applications such as model reduction and systems identification. In fact, one can argue that stochastic realization theory is indispensable for the understanding of systems identification.
 
\subsection{State-of-the-art}
 
 
For the class of linear systems, the realization problem is relatively well understood thanks to the work of Kalman \ea in the sixties \cite{Ho-Kalman:Allerton65,Kalman:69}. For instance, it is well known that all minimal representations, \ie representations such that the dimension of the state-space is minimal, are related by a change of basis of the state-space. Also, it is well known that the rank of a Hankel matrix $H$ formed from the output measurements is related the dimension of all minimal representations and that a realization of the system can be obtained from the factorization of $H$. Such results have lead to a huge literature on identification of linear systems \cite{ljung87}, including the well-known subspace identification methods \cite{Overschee:96}.

For the class of bilinear systems, the realization problem is also relatively well studied thanks to the works of Brockett \cite{Brockett:TAVSS72}, Fliess \cite{Fliess:ASP73}, Isidori \ea \cite{Dalessandro:JCO74,Isidori:TAC73,Isidori:GMST73}, Sontag \cite{MR960665} and Sussman \cite{Sussman:MST75,Sussman:JFI76} in the 1970's. However, realization of stochastic bilinear systems is relatively unstudied, except the case when input is white noise \cite{StochBilin,FrazhoBilin}. On the other hand, there are a number of papers on identification of bilinear systems with inputs which are not white noise, see \eg \cite{MacChenBilSub,Favoreel:PhD99, Verhaegen:INJC, Verhaegen:automatica}. However, all these papers require a number of conditions on the underlying system in order to operate correctly.

For more general nonlinear systems, the realization problem is not as well understood. 
There exists a complete realization theory for 
analytic nonlinear systems \cite{Fliess:Real,Jak:RealAnal,Jak:RealSumm,Celle:Real, Reut:Real,Son:RealAnal} and for general smooth systems 
\cite{Jakub:JCO80,Sussman:MST77}. However, the algorithmic aspects
of this theory are not that well developed.
There is a substantial amount of work on realization theory of
polynomial systems \cite{MR862318,Son:Resp}, and 
rational systems \cite{SonWang:Obs,SonWang:SIAM,WangGenSer} 
both in continuous and discrete time. However, the issue of minimality for 
polynomial systems is not that well understood.

One of the earliest attempts to characterize realization of deterministic hybrid systems can be found in \cite{GrossmanHybAlg}, though a formal theory is not presented. Since then, most of the work has concentrated on switched linear systems \cite{MP:Real,MP:RealForm}, switched bilinear systems \cite{MP:RealBilin}, 
 linear and bilinear hybrid systems without guards and partially
observed discrete states \cite{MP:HybBilinReal,MP:HybPow}, and
nonlinear analytic hybrid systems without guards \cite{MP:HybNonLin}.
The main assumptions made are that the continuous dynamics evolve in 
continuous-time and the discrete events which initiate the change of the 
discrete states are part of the input. Hence, the discrete states
may (switched systems) or may not (hybrid systems without guards) be
fully observed. For the classes of hybrid systems mentioned above, with
the exception of nonlinear hybrid systems without guards, a
complete realization theory and realization algorithms are available.
\cite{MP:PL} contains partial results on realization theory of piecewise-affine autonomous hybrid systems with guards. In that paper necessary and sufficient conditions for existence of a realization were presented, but the problem of minimality was not
dealt with. As far as the authors know, the only paper dealing with realization theory of stochastic hybrid systems is \cite{MPRV:HSCC}, where only necessary conditions for the existence of a realization were presented.

\subsection{Paper contributions}

In this paper we will present a complete stochastic realization theory of discrete-time stochastic jump-linear systems. Stochastic jump-linear systems have a vast literature and numerous applications (see for example \cite{CostaBook} and the references therein).  For simplicity, we will consider only stochastic jump-linear systems with fully observed discrete state. In addition, we will assume that the continuous state-transition depends not only on the current, but also on the next discrete state and that the continuous state at each time instant lives in a state-space that depends on the current discrete state. In this way we will obtain  a more general model, which we will call \emph{generalized stochastic jump Markov linear systems}. It turns out that the class of classical stochastic  jump-linear systems generates the same class of output processes as the new more general class. However, by looking at more general systems we are able to obtain a neat characterization of minimality as well as necessary and sufficient conditions for the existence of a realization. We will also formulate a realization algorithm and argue that minimality can be checked algorithmically.

The main tool for solving the stochastic realization problem is the solution of a general bilinear realization problem, whose formulation and solution can be described as follows. Consider an output and an input process and imagine you would like to compute recursively the linear projection of the future outputs onto the space of products of past outputs and inputs. Under the assumption that the mixed covariances of the future outputs with the products of past outputs and inputs form a \emph{rational formal power series}, we show that one can construct a bilinear state-space representation of the output process in the forward innovation form. The matrices of this state-space representation are determined by the parameters of the rational representation of the covariance sequence of future and past outputs and inputs. The results on realization theory of stochastic jump-linear systems are then obtained by viewing the discrete state process as an input process. 

To the best of our knowledge, both the solution of the realization problem for
stochastic jump-linear systems, and the formulation and solution of the general bilinear
realization problem are new. 
In comparison the work of \cite{MPRV:HSCC} on stochastic realization of jump-linear systems, the main contribution of this paper is that it presents both necessary and sufficient conditions for the existence of a realization as well as a characterization of minimality. 
In comparison to the work of \cite{StochBilin} on stochastic realization of bilinear systems with observed white-noise input process, the main contribution of this paper is to solve the realization problem for a more general class of bilinear systems, without requiring the input process to be white. 
In comparison with the works of \cite{MacChenBilSub,Favoreel:PhD99, Verhaegen:INJC, Verhaegen:automatica} on identification of bilinear systems with inputs that are not necessarily white noise, there are two main contributions.
First, the aforementioned papers aim to identify the
parameters of the system from the measurements. 
In contrast, the goal of realization theory is to understand
the conditions, under which a (not necessarily identifiable)
state-space representation exists. Hence, establishing algorithms
for finding the parameters of the system that generate the
process answers the realization problem only partially.
Second, all the aforementioned papers assume that the system to be identified is already in the
forward innovation form and impose a number of observability and stability conditions on the underlying system, which are more restrictive than the conditions assumed here.

\subsection{Paper outline}

The outline of the paper is as follows. Section \ref{sect:pow} presents the background material on the theory of rational formal power series. These results will be instrumental for solving the generalized bilinear realization problem, which will be formulated and solved in Section \ref{gen:filt}. Section \ref{sect:GJMLS} formulates the realization problem for stochastic jump Markov linear systems and presents a solution to it based on the results in Section \ref{gen:filt}.

\section{Rational Power Series}
\label{sect:pow}

In this section, we present several results on formal power series, which will be used for solving a general bilinear filtering/realization problem to be presented in Section \ref{gen:filt}. In turn, the solution to this bilinear filtering/realization problem will yield a solution to the realization problem for stochastic jump-linear systems, as we will show in Section \ref{sect:GJMLS}. 

The material and results in Subsections \ref{hscc_pow_basic} and \ref{hscc_pow_stab} can be found in \cite{MP:Phd} and \cite{MPRV:HSCC}, respectively. For more details on the classical theory of rational formal power series, the reader is referred to \cite{Reut:Book,Son:Real,Son:Resp} and the references therein.

\subsection{ Definition and Basic Theory}
\label{hscc_pow_basic}

Let $\Sigma$ be a finite set. We will refer to $\Sigma$ as the \emph{alphabet}. The elements of $\Sigma$ will be called \emph{letters}, and every finite sequence of letters will be called a  \emph{word} or \emph{string} over $\Sigma$.
 Denote by $\Sigma^{*}$ the set of all finite words from elements in $\Sigma$. 
 An element $w\in \Sigma^{*}$ of length $|w|=k\geq 0$ is a finite sequence
 $w=\sigma_{1} \sigma_{2} \cdots \sigma_{k}$ with  $\sigma_{1},\ldots, \sigma_{k} \in \Sigma$.
The empty word is denoted by $\epsilon$ and its length is zero, \ie $|\epsilon|=0$.
Denote by $\Sigma^{+}$ the set of all non-empty words
over $\Sigma$, \ie $\Sigma^{+}=\Sigma^{*}\setminus \{\epsilon\}$.
 The concatenation of two words $v=\nu_{1} \cdots \nu_{m}$ and 
$w=\sigma_{1} \cdots \sigma_{k} \in \Sigma^{*}$
 is the word $vw=\nu_{1} \cdots \nu_{m}\sigma_{1} \cdots \sigma_{k}$.


\begin{Definition}[Lexicographic ordering]
\label{rem:lex} Let  $<$ be an ordering on $\Sigma$ so that $\Sigma=\{\sigma_1,\ldots,\sigma_{|\Sigma|}\}$ with $\sigma_1 < \sigma_2 < \ldots < \sigma_{|\Sigma|}$. We define a lexicographic ordering $\prec$ on $\Sigma^{*}$ as follows. For any $v=\nu_{1} \cdots \nu_{m}$ and $w=\sigma_{1} \cdots \sigma_{k} \in \Sigma^{*}$, $v \prec w$ if either $|v| < |w|$ or $0 < |v|=|w|$, $v \ne w$ and for some $l \le |w|$, $\nu_l < \sigma_l$ with the ordering $<$ on $\Sigma$ an $\nu_i = \sigma_i$ for $i=1,\ldots, l-1$. 
\end{Definition}

Notice that $\prec$ is a complete ordering and that $\Sigma^{*}=\{v_0,v_1,\ldots \}$ with $v_0 \prec v_1 \prec \ldots $. Therefore, we will call the set $\{v_0,v_1,\ldots\}$ an \emph{ordered enumeration} of $\Sigma^*$. Notice also that $v_0 =\epsilon$ and that for all $i \in \mathbb{N}$ and $\sigma \in \Sigma$, we have $\nu_i \prec \nu_i\sigma$. Moreover, denote by $M(N)$ the number of all non-empty words over $\Sigma$ whose length is at most $N$, \ie $M(N)=|\{w \in \Sigma^{+} \mid |w| \le N\}|$. It then follows
that with the lexicographic ordering defined above,  the set $\{v_0,v_1,\ldots,v_{M(N)}\}$ equals to the set of
all words of length at most $M(N)$, including the empty word.


\def\FPS{\mathbb{R}^{p} \! \ll \!  \Sigma^{*} \!\!\gg}

  A \emph{formal power series} $S$ with coefficients in 
  $\mathbb{R}^{p}$ is a map
  \( S: \Sigma^{*} \rightarrow \mathbb{R}^{p} \).
  We will call the values $S(w) \in \mathbb{R}^{p}$, $w \in \Sigma^{*}$, the \emph{coefficients} of
  $S$.
   We will denote by
   $\FPS$ the set of all formal power series
   with coefficients in $\mathbb{R}^{p}$.
    Consider a family of formal power series
    $\Psi=\{ S_{j}  \in \FPS \mid j \in J\}$
    indexed with a finite index set
    $J$. We will call such an indexed set of formal power
    series a \emph{family of formal power series}. 

    A family of formal power series $\Psi$
    will be called \emph{rational} if 
    there exists an integer $n \in \mathbb{N}$, a matrix
   $C \in \mathbb{R}^{p \times n} $, a 
   collection of matrices 
   $A_{\sigma} \in \mathbb{R}^{n \times n}$
   indexed by $\sigma \in \Sigma$,  and an indexed set 
   $B=\{ B_{j} \in \mathbb{R}^{n} \mid j \in J\}$ 
   of vectors in $\mathbb{R}^{n}$, such that  for each index 
   $j \in J$ and for all sequences 
   $\sigma_{1}, \ldots, \sigma_{k} \in \Sigma$, $k \ge 0$,  
  \begin{align}
      S_{j}(\sigma_{1}\sigma_{2} \cdots \sigma_{k})=
      CA_{\sigma_{k}}A_{\sigma_{k-1}} \cdots A_{\sigma_{1}}B_{j}.
  \end{align}
  
The 4-tuple $R=(\mathbb{R}^{n},\{A_{\sigma}\}_{\sigma \in \Sigma},B,C)$ will be called a \emph{representation} of $\Psi$ and the number $n = \dim R$ will be called the \emph{dimension} of $R$. A representation $R_{min}$ of $\Psi$ will be called \emph{minimal} if all representations $R$ of $\Psi$ satisfy \( \dim R_{min} \le \dim R \). Two representations of $\Psi$, $R=(\mathbb{R}^{n},\{ A_{\sigma} \}_{\sigma \in \Sigma},B,C)$ and $\widetilde{R}=(\mathbb{R}^{n},\{ \widetilde{A}_{\sigma} \}_{\sigma \in \Sigma}, \widetilde{B},\widetilde{C})$, will be called \emph{isomorphic}, if there exists  a nonsingular matrix $T \in \mathbb{R}^{n \times n}$ such that$T\widetilde{A}_{\sigma}=A_{\sigma}T$ for all $\sigma \in \Sigma$, $T\widetilde{B}_{j}=B_{j}$ for all $j \in J$, and $\widetilde{C}=CT$. 

Let $R=(\mathbb{R}^{n},\{A_{\sigma}\}_{\sigma \in \Sigma},B,C)$ be a representation of $\Psi$. In order to characterize whether this representation is reachable and observable, let us define the following short-hand notation
\begin{Notation}
\label{matrix:prod}
   $A_{w} \!\doteq\! A_{\sigma_{k}}A_{\sigma_{k-1}} \!\!\cdots A_{\sigma_{1}}$ for
   $w=\sigma_{1} \cdots \sigma_{k}\in \Sigma^*$ and
   $\sigma_1,\ldots, \sigma_k \in \Sigma$, $k \ge 0$.
The map $A_{\epsilon}$ will be identified with the identity map. 
\end{Notation}|
Recall the ordered enumeration of $\Sigma^*$, $\{v_0,v_1,\ldots\}$,  fix an enumeration  of $J=\{j_1,\ldots,j_K\}$ and let $\widetilde{B}=\begin{bmatrix} B_{j_1} & \cdots & B_{j_K} \end{bmatrix}$.
Define the following matrices.
  \begin{align}
 \label{sect:pow:form1}
 W_{R} &= \begin{bmatrix} A_{v_0}\widetilde{B} &, \ldots, & A_{v_{M(n-1)}}\widetilde{B} \end{bmatrix} \\
\label{sect:pow:form1.1}
   O_R &= \begin{bmatrix} (CA_{v_0})^T & \ldots & (CA_{v_{M(n-1)}})^T \end{bmatrix}^T.
  \end{align}
 We will call the representation $R$ \emph{observable} if $\ker O_{R} = \{0\}$ and \emph{reachable} if $\dim R = \Rank W_{R}$.
Observability and reachability of representations can be checked numerically. 
For instance, one can formulate an algorithm for transforming any 
representation to a minimal representation of the same
family of formal power series (see 
\cite{MP:Phd} and the references therein for details).

Let $\Psi=\{ S_{j} \in \FPS \mid j \in J\}$ 
be a family of formal power series and define $I=\{1,\ldots,p\}$. We define 
the Hankel-matrix $H_{\Psi}$ of $\Psi$ as  the matrix such that the following holds.
The rows of $H_{\Psi}$ are indexed by pairs
$(u,i)$ where $u\in\Sigma^*$ is a word over $\Sigma$ and $i$ is
and integer in $I=\{1,2,\ldots,p \}$. Likewise, the columns of
$H_{\Psi}$ are indexed by pairs $(v,j)$, where
$v\in\Sigma^*$ and $j$ is an element of the index set $J$. 
Thus, the element of $H_{\Psi}$ whose
row index is $(u,i)$ and whose
column index is $(v,j)$ is simply the $i$th row
of the vector $S_{j}(vu)\in\Re^p$, i.e. $(H_{\Psi})_{(u,i)(v,j)}= (S_{j}(vu))_{i}$.
      
      The following result on realization of formal power series can be found in
       \cite{Son:Real,Son:Resp,MP:Phd}.
 \begin{Theorem}[Realization of formal power series]
 \label{sect:form:theo1}
\label{sect:pow:theo1}
    Let $\Psi=\{ S_{j} \in \mathbb{R}^{p}\ll \Sigma^{*} \gg \mid j \in J\}$ be a set of formal power series indexed by $J$.
    Then the following holds.
  \begin{itemize}
  \item[(i) ]
  $\Psi$ is rational $\iff$ $\Rank H_{\Psi} < +\infty$.
  \item[(ii)]
   $R$ is  a minimal representation of $\Psi$ 
   $\iff$ $R$ is reachable and observable $\iff$
   $\dim R=\Rank H_{\Psi}$.
  \item[(iii)] All minimal representations of $\Psi$ are isomorphic.
\end{itemize}    
\end{Theorem}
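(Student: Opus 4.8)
The plan is to run the classical Hankel-matrix realization argument for rational power series (in the spirit of Schützenberger and Fliess), which is what the cited references do, adapted to a finite indexed family $\Psi=\{S_j\}_{j\in J}$ with coefficients in $\mathbb{R}^p$. The starting point is a factorization of $H_\Psi$. Given any representation $R=(\mathbb{R}^n,\{A_\sigma\}_{\sigma\in\Sigma},B,C)$ of $\Psi$, Notation \ref{matrix:prod} gives $A_{vu}=A_uA_v$, so
\[
(H_\Psi)_{(u,i),(v,j)}=(S_j(vu))_i=(CA_u\,A_vB_j)_i ,
\]
and hence $H_\Psi=\mathbf{O}_R\,\mathbf{W}_R$, where $\mathbf{O}_R$ is the infinite matrix whose block of rows indexed by $u\in\Sigma^*$ is $CA_u$ and $\mathbf{W}_R$ is the infinite matrix whose column indexed by $(v,j)$ is $A_vB_j$. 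Both factors have inner dimension $n$, so $\Rank H_\Psi\le n=\dim R$. This proves the forward direction of (i) and, simultaneously, establishes $\Rank H_\Psi\le\dim R$ for \emph{every} representation $R$, which is the lower-bound half of the minimality statement in (ii).

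For the converse in (i) I would construct a representation of dimension $n:=\Rank H_\Psi$ directly from $H_\Psi$. Let $\mathcal V$ be the $n$-dimensional span of the columns of $H_\Psi$, regarded as vectors indexed by pairs $(u,i)$. The Hankel (shift) property says that appending a letter $\sigma$ on the right of a column label $v$ yields the same vector as reindexing the rows by $u\mapsto\sigma u$; since the latter is a fixed linear endomorphism of the ambient space that clearly carries the column span into itself, it restricts to a well-defined linear map $\mathbf{A}_\sigma:\mathcal V\to\mathcal V$ sending the column $(v,j)$ to the column $(v\sigma,j)$. Fixing a basis of $\mathcal V$ turns each $\mathbf{A}_\sigma$ into a matrix $A_\sigma\in\mathbb{R}^{n\times n}$; let $B_j$ be the coordinates of the column $(\epsilon,j)$ and let $C\in\mathbb{R}^{p\times n}$ read off, in this basis, the entries sitting in rows $(\epsilon,1),\dots,(\epsilon,p)$. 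An induction on $|w|$ then shows that $A_wB_j$ is exactly the coordinate vector of the column $(w,j)$, whence $CA_{\sigma_k}\cdots A_{\sigma_1}B_j=S_j(\sigma_1\cdots\sigma_k)$, so $R=(\mathbb{R}^n,\{A_\sigma\},B,C)$ represents $\Psi$; in particular $\Psi$ is rational. Combined with the lower bound of the first paragraph, this also shows that every minimal representation has dimension exactly $\Rank H_\Psi$.

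For (ii) the decisive point is that the truncated matrices $W_R$, $O_R$ of \eqref{sect:pow:form1}--\eqref{sect:pow:form1.1} have the same rank, respectively the same kernel, as the infinite matrices $\mathbf{W}_R$, $\mathbf{O}_R$. This follows from a Cayley--Hamilton-type stabilization: the nested subspaces $\mathcal R_N=\SPAN\{A_vB_j\mid |v|\le N,\ j\in J\}\subseteq\mathbb{R}^n$ form an increasing chain which, once $\mathcal R_N=\mathcal R_{N+1}$, is $A_\sigma$-invariant for every $\sigma$ and hence stationary; since $\dim\mathcal R_N\le n$, it is already stationary at $N=n-1$, and by the remark after Definition \ref{rem:lex} the truncation level $M(n-1)$ in \eqref{sect:pow:form1} enumerates precisely the words of length at most $n-1$. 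The dual statement holds for $O_R$. Consequently $R$ is reachable iff $\mathbf{W}_R$ has rank $n$ and observable iff $\mathbf{O}_R$ is injective. If $R$ is reachable and observable, then $\Rank H_\Psi=\Rank(\mathbf{O}_R\mathbf{W}_R)=n=\dim R$; conversely, if $\Rank H_\Psi=\dim R=n$, then in $H_\Psi=\mathbf{O}_R\mathbf{W}_R$ both factors have only $n$ columns (resp. rows), so each must have rank $n$, i.e. $R$ is reachable and observable. Together with the lower bound $\Rank H_\Psi\le\dim R$ and the explicit minimal representation built in the previous paragraph, this yields the full chain of equivalences in (ii). Finally, for (iii), two minimal representations $R_1,R_2$ both have dimension $n=\Rank H_\Psi$ and are reachable and observable, so in $\mathbf{O}_1\mathbf{W}_1=H_\Psi=\mathbf{O}_2\mathbf{W}_2$ all four factors have full rank $n$; picking $n$ columns of $\mathbf W_1$ that form a basis and using injectivity of $\mathbf O_1,\mathbf O_2$ produces a unique invertible $T$ with $\mathbf W_1=T\mathbf W_2$ and $\mathbf O_2=\mathbf O_1 T^{-1}$, and then reading off the $\epsilon$-column block, the $\epsilon$-row block, and comparing the columns labelled $(v\sigma,j)$ with those labelled $(v,j)$ gives $TB^{(2)}_j=B^{(1)}_j$, $C_2=C_1 T$, and $TA^{(2)}_\sigma=A^{(1)}_\sigma T$ — exactly the isomorphism relations of the theorem.

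The step I expect to be the real work is the converse of (i): checking that the shift maps $\mathbf{A}_\sigma$ genuinely descend to $\mathcal V$ and that the inductive identity ``$A_wB_j=$ coordinate vector of column $(w,j)$'' goes through, since this is precisely where the Hankel structure is used to manufacture a state space from the data. The stabilization argument behind (ii) and the intertwining computation behind (iii) are then routine finite-dimensional linear algebra; the only extra bookkeeping relative to the classical scalar case is carrying the index set $J$ and the output dimension $p$ along in the row/column labels.
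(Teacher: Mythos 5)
Your proof is correct and is essentially the standard Hankel-factorization/shift-operator argument of the sources the paper cites for this result (\cite{Son:Real,Son:Resp,MP:Phd}); the paper itself states Theorem~\ref{sect:pow:theo1} without proof, so there is no in-paper argument to diverge from, and every step of yours (the bound $\Rank H_{\Psi}\le\dim R$ from $H_{\Psi}=\mathbf{O}_R\mathbf{W}_R$, the column-span construction of a minimal representation, the stabilization of the reachability/observability chains at level $n-1$, and the intertwining argument for uniqueness) checks out. The one blemish is the intermediate relation $\mathbf{O}_2=\mathbf{O}_1T^{-1}$: from $\mathbf{W}_1=T\mathbf{W}_2$ and $\mathbf{O}_1\mathbf{W}_1=H_{\Psi}=\mathbf{O}_2\mathbf{W}_2$, cancelling the full-row-rank factor $\mathbf{W}_2$ gives $\mathbf{O}_2=\mathbf{O}_1T$, which is in fact what your stated conclusions $C_2=C_1T$, $TB^{(2)}_j=B^{(1)}_j$ and $TA^{(2)}_\sigma=A^{(1)}_\sigma T$ require, so this is a sign slip rather than a gap.
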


It is possible to compute a minimal representation of 
$\Psi$ from finitely many data. The procedure resembles
very much the partial realization algorithms for linear systems. 
One defines the finite matrix $H_{\Psi,M,N}$ as
the finite upper-left block of the infinite Hankel matrix $H_{\Psi}$
obtained by taking all the rows of
$H_{\Psi}$ indexed by words over $\Sigma$ of length at most $M$, and
all the columns of $H_{\Psi}$ indexed by words of length at most $N$.
If $\Rank H_{\Psi,N,N}=\Rank H_{\Psi}$ holds, then
there exists an algorithm for computing a minimal representation
$R_{N}$ of $\Psi$. The algorithm is essentially a generalization
of the well-known Kalman-Ho algorithm \cite{Ho-Kalman:Allerton65} for linear systems.
The condition $\Rank H_{\Psi,N,N}=\Rank H_{\Psi}$ holds, if,
for example, $N$ is chosen to be 
bigger than the dimension of some representation of $\Psi$.
In practice, this means that $N$ has to be an upper bound on
the estimated dimension of a potential representation of $\Psi$.
More details on the computation of a minimal representation from
a Hankel-matrix can be found in \cite{MP:Phd} and the references
therein. 

For the purposes of this paper we will use a specific version of 
the realization algorithm.
In order to present the algorithm, we define the notion of \emph{$r,N$-selection}:
an \emph{$r,N$-selection} is a pair $(\alpha,beta)$ such that
\begin{enumerate}
\item
$\alpha \subseteq \Sigma^{N} \times \{1,\ldots,p\}$, 
$\beta \subseteq \Sigma^{N} \times J$, $\Sigma^{N}=\{v \in \Sigma^{*} \mid |v| \le N\}$,
\item
 $|\alpha|=|\beta|=r$.
\end{enumerate}
Intuitively, $\alpha$ represents a selection of $r$ rows of $H_{\Psi,N,N}$ and
$\beta$ represents a selection of $r$ columns of $H_{\Psi,N,N}$.
Let $(\alpha,\beta)$ be an $r,N$-selection. The proposed algorithm takes as parameter
the matrix $H_{\Psi,N+1,N}$ and an $r,N$-selection $(\alpha,\beta)$. 
In addition, we assume that the $r,N$-selection $(\alpha,\beta)$ is such that
the following holds. Let $H_{\Psi,\alpha,\beta}$ be the matrix formed by the
intersection of the columns of $H_{\Psi,N,N}$ indexed by elements of $\beta$ with
the rows of $H_{\Psi,N,N}$ indexed by the elements of $\alpha$.
We then assume that $\Rank H_{\Psi,\alpha,\beta}=\Rank H_{\Psi,N,N+1}$.
\begin{algorithm}
\caption{
\label{alg1} 
 \newline
\textbf{Inputs:} matrix $H_{\Psi,N+1,N}$ and  $r,N$-selection $(\alpha,\beta)$ 
 \newline
\textbf{Output:} representation $\widetilde{R}_N$.
}
\begin{algorithmic}
\STATE 
    For each symbol 
    $\sigma \in \Sigma$ let $A_{\sigma} \in \mathbb{R}^{r \times r}$
    be such that 
    \[ A_{\sigma}H_{\Psi,\alpha,\beta}=Z_{\sigma} \]
  \mbox{  where }
  $Z_{\sigma}$ is $r \times r$ matrix with row indices from $\alpha$ and column indices from
  $\beta$ such that its entry indexed by $z \in \alpha$, $(v,j) \in \beta$ equals
  the entry of $H_{\Psi,N,N+1}$ indexed by $(z,(v\sigma,j))$.

\STATE
    Let $B=\{ B_{j} \mid j \in J\}$, where for each 
    index $j \in J$, the vector
    $B_{j} \in \mathbb{R}^{r}$ is formed by those entries of
    the column $(\epsilon,j)$ of $H_{\Psi}$ which are indexed by the elements of $\alpha$.

\STATE
    Let $C \in \mathbb{R}^{p \times r}$ whose $i$th row is the interesection of 
    the row indexed by $(\epsilon,i)$ with the columns of $H_{\Psi}$ indexed by the elements of
    $\beta$, $i=1,2,\ldots,p$.
\STATE Return $\widetilde{R}_N=(\mathbb{R}^{r}, \{A_{\sigma}\}_{\sigma \in \Sigma},B,C)$.
\end{algorithmic}
\end{algorithm}
\begin{Theorem}[\cite{MP:Phd,Son:Real,SontagMod}]
\label{sect:part:theo1}
   If $r=\Rank H_{\Psi,N,N}=\Rank H_{\Psi}$, then 
   there exists an $r,N$-selection $(\alpha,\beta)$ such that
   $\Rank H_{\Psi,\alpha,\beta}=r$ and the 
   the representation
   $\widetilde{R}_{N}$ returned by
   Algorithm \ref{alg1} when applied to $H_{\Psi,N+1,N}$ and $(\alpha,\beta)$
   is minimal representation of $\Psi$.
 Furthermore, if $\Rank H_{\Psi} \le N$, or, equivalently, there exists a representation $R$ of $\Psi$, such that $\dim R \le N$, then
 $\Rank H_{\Psi}=\Rank H_{\Psi,N,N}$, hence 
 $\widetilde{R}_{N}$ is a minimal 
 representation of $\Psi$.
\end{Theorem}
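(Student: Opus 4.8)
The plan is to reduce the statement to the abstract realization result Theorem~\ref{sect:pow:theo1}, combined with an explicit Kalman--Ho type analysis of the shift structure of the Hankel matrix $H_{\Psi}$. Since $\Rank H_{\Psi}=r<+\infty$, Theorem~\ref{sect:pow:theo1}(i) tells us that $\Psi$ is rational, so a minimal representation exists and, by part~(ii), has dimension exactly $r$. The existence of the required $r,N$-selection is routine linear algebra: as $H_{\Psi,N,N}$ has rank $r$, first choose $r$ linearly independent columns of it, defining $\beta$, and then, inside the resulting rank-$r$ block, choose $r$ linearly independent rows, defining $\alpha$; the $r\times r$ matrix $H_{\Psi,\alpha,\beta}$ at their intersection then has rank $r$, hence is invertible. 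Since $r=\Rank H_{\Psi,N,N}\le\Rank H_{\Psi,N,N+1}\le\Rank H_{\Psi}=r$, we have $\Rank H_{\Psi,\alpha,\beta}=r=\Rank H_{\Psi,N,N+1}$, so $(\alpha,\beta)$ satisfies the standing hypothesis under which Algorithm~\ref{alg1} is run.

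The substantive step is to show that the tuple $\widetilde{R}_{N}=(\mathbb{R}^{r},\{A_{\sigma}\}_{\sigma\in\Sigma},B,C)$ returned by Algorithm~\ref{alg1} is a representation of $\Psi$; minimality then follows immediately, because $\dim\widetilde{R}_{N}=r=\Rank H_{\Psi}$ and Theorem~\ref{sect:pow:theo1}(ii) characterizes minimal representations by this dimension. For a word $v$ and $j\in J$, write $\mathbf{c}(v,j)$ for the $(v,j)$-column of $H_{\Psi}$, and let $\xi(v,j)\in\mathbb{R}^{r}$ be the subvector of $\mathbf{c}(v,j)$ keeping only the entries whose row index lies in $\alpha$; by construction $B_{j}=\xi(\epsilon,j)$. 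By the Hankel structure, the $(v\sigma,j)$-column and the $(v,j)$-column of $H_{\Psi}$ are related by the linear reindexing $(u,i)\mapsto(\sigma u,i)$ of the row index, since the entry of the former in row $(u,i)$ equals the entry of the latter in row $(\sigma u,i)$, both being $(S_{j}(v\sigma u))_{i}$; this reindexing leaves the column space of $H_{\Psi}$ invariant, and the $\beta$-columns, being $r=\Rank H_{\Psi}$ independent columns, span it. Expanding $\mathbf{c}(v,j)$ in the $\beta$-columns and applying the reindexing therefore writes $\mathbf{c}(v\sigma,j)$ as the same combination of the $\beta$-shifted columns; restricting to the rows in $\alpha$, and noting that the algorithm's defining relation $A_{\sigma}H_{\Psi,\alpha,\beta}=Z_{\sigma}$ is precisely $A_{\sigma}\xi(v',j')=\xi(v'\sigma,j')$ for $(v',j')\in\beta$, linearity of $A_{\sigma}$ gives $A_{\sigma}\xi(v,j)=\xi(v\sigma,j)$ for all $v,j$. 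Iterating over the letters of $w=\sigma_{1}\cdots\sigma_{k}$ yields $A_{w}B_{j}=\xi(w,j)$, with $A_{w}$ as in Notation~\ref{matrix:prod}. Since the $\alpha$-rows span the row space of $H_{\Psi}$, the $\alpha$-restriction is injective on its column space, so there is a unique linear map sending each $\xi(v,j)$ to $S_{j}(v)$; one then checks that this map is the matrix $C$ built by Algorithm~\ref{alg1} from the $(\epsilon,\cdot)$ rows and $\beta$-columns of $H_{\Psi}$ (in the coordinates fixed by $H_{\Psi,\alpha,\beta}$). Hence $S_{j}(w)=CA_{w}B_{j}$ for all $w$ and $j$, so $\widetilde{R}_{N}$ represents $\Psi$.

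For the final assertion, Theorem~\ref{sect:pow:theo1}(i)--(ii) shows that the existence of a representation $R$ with $\dim R\le N$ is equivalent to $\Rank H_{\Psi}\le N$. Assuming this, I would prove $\Rank H_{\Psi,N,N}=\Rank H_{\Psi}$ by the standard span argument: fixing a representation $R=(\mathbb{R}^{n},\{A_{\sigma}\},B,C)$ with $n\le N$, the column $\mathbf{c}(v,j)$ of $H_{\Psi}$ depends on $(v,j)$ only through $A_{v}B_{j}\in\mathbb{R}^{n}$, since $(S_{j}(vu))_{i}=(CA_{u}A_{v}B_{j})_{i}$, and $\SPAN\{A_{v}B_{j}\mid v\in\Sigma^{*},\,j\in J\}=\SPAN\{A_{v}B_{j}\mid |v|\le n-1,\,j\in J\}$ by the same Cayley--Hamilton type argument that justifies using the finite matrix $W_{R}$ of~\eqref{sect:pow:form1}; hence the columns of $H_{\Psi}$ indexed by words of length $\le n-1\le N$ already span its column space, and symmetrically for its rows, so $H_{\Psi,N,N}$ attains the full rank $\Rank H_{\Psi}$. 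Applying the first part with $r=\Rank H_{\Psi,N,N}=\Rank H_{\Psi}$ then gives that $\widetilde{R}_{N}$ is a minimal representation of $\Psi$.

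I expect the main obstacle to be the middle step: propagating $A_{\sigma}\xi(v,j)=\xi(v\sigma,j)$ from the finitely many columns in $\beta$, where it holds by the construction of $A_{\sigma}$, to all columns of $H_{\Psi}$, and verifying that the concrete matrices $B$ and $C$ extracted by Algorithm~\ref{alg1} are consistent with the coordinate identification on the state space induced by the invertible block $H_{\Psi,\alpha,\beta}$. Everything else is either a direct appeal to Theorem~\ref{sect:pow:theo1} or elementary linear algebra on finite submatrices of $H_{\Psi}$.
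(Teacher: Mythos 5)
The paper does not prove Theorem~\ref{sect:part:theo1} at all --- it is quoted from \cite{MP:Phd,Son:Real,SontagMod} --- so there is no internal proof to compare against; your argument has to stand on its own. Its skeleton is the standard Kalman--Ho one and most of it is correct: the construction of the $r,N$-selection, the observation that the row reindexing $(u,i)\mapsto(\sigma u,i)$ carries the column $(v,j)$ of $H_{\Psi}$ to the column $(v\sigma,j)$ and hence preserves the column space, the propagation of $A_{\sigma}\xi(v,j)=\xi(v\sigma,j)$ from the $\beta$-columns to all columns, the resulting identity $A_{w}B_{j}=\xi(w,j)$, the injectivity of the $\alpha$-restriction on the column space, and the stabilization argument for the final assertion are all sound.

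The gap sits exactly in the step you postponed to ``one then checks'', and that check in fact fails for Algorithm~\ref{alg1} as printed. Fix a minimal representation $R^{*}=(\mathbb{R}^{r},\{A^{*}_{\sigma}\},B^{*},C^{*})$, let $\mathcal{O}_{\alpha}$ be the $r\times r$ matrix whose rows are the covectors $C^{*}_{i}A^{*}_{u}$ for $(u,i)\in\alpha$, and let $\mathcal{R}_{\beta}$ be the $r\times r$ matrix whose columns are $A^{*}_{v}B^{*}_{j}$ for $(v,j)\in\beta$. Then $H_{\Psi,\alpha,\beta}=\mathcal{O}_{\alpha}\mathcal{R}_{\beta}$ and $Z_{\sigma}=\mathcal{O}_{\alpha}A^{*}_{\sigma}\mathcal{R}_{\beta}$, so the algorithm outputs $A_{\sigma}=\mathcal{O}_{\alpha}A^{*}_{\sigma}\mathcal{O}_{\alpha}^{-1}$, $B_{j}=\mathcal{O}_{\alpha}B^{*}_{j}$ and $C=C^{*}\mathcal{R}_{\beta}$, whence $CA_{w}B_{j}=C^{*}(\mathcal{R}_{\beta}\mathcal{O}_{\alpha})A^{*}_{w}B^{*}_{j}$, which differs from $S_{j}(w)$ unless $\mathcal{R}_{\beta}\mathcal{O}_{\alpha}=I$. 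Equivalently, the unique linear map taking $\xi(v,j)$ to $S_{j}(v)$ is $C^{*}\mathcal{O}_{\alpha}^{-1}=C\,H_{\Psi,\alpha,\beta}^{-1}$, not $C$. A one-dimensional instance of the theorem's hypotheses already exhibits the failure: $\Sigma=\{\sigma\}$, $p=1$, $J=\{1\}$, $S(\sigma^{k})=2^{1-k}$, $\alpha=\beta=\{(\epsilon,1)\}$ gives $H_{\Psi,\alpha,\beta}=2$, $A_{\sigma}=1/2$, $B=C=2$, hence $CA_{\sigma^{k}}B=2^{2-k}\neq S(\sigma^{k})$. The cure is a one-line normalization --- return $C\,H_{\Psi,\alpha,\beta}^{-1}$ in place of $C$ (equivalently, $H_{\Psi,\alpha,\beta}^{-1}B_{j}$ in place of $B_{j}$ together with $A_{\sigma}$ defined by $H_{\Psi,\alpha,\beta}A_{\sigma}=Z_{\sigma}$) --- after which your argument closes, because the columns of $H_{\Psi,\alpha,\beta}$ are precisely the vectors $\xi(v',j')$, $(v',j')\in\beta$, and $C\,H_{\Psi,\alpha,\beta}^{-1}$ sends each of them to $S_{j'}(v')$ by construction, hence sends every $\xi(w,j)$ to $S_{j}(w)$ by linearity and the well-definedness you established. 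So you must either record this correction to the algorithm or accept that the identity $CA_{w}B_{j}=S_{j}(w)$ cannot be verified as claimed; as written, the ``one then checks'' step is not a routine verification but the place where the statement, taken literally, breaks.
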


\subsection{ A Notion of Stability for Formal Power Series}
\label{hscc_pow_stab}

Since our goal is to use formal power series to build a stochastic realization theory for jump-linear systems,  we will need to restrict our attention to formal power series that are stable in some sense, similarly to the case of linear systems. In this subsection, we consider the notion of square summability for formal power series, and translate the requirement of square summability into algebraic properties of their representations.

More specifically, consider a formal power series $S \in \mathbb{R}^{p}\ll \Sigma^{*} \gg$ and define the sequence 
 \begin{align} 
 L_{n}=\sum_{k=0}^{n} \sum_{\sigma_{1} \in \Sigma} \cdots 
 \sum_{\sigma_{k} \in \Sigma} 
 ||S(\sigma_{1}\sigma_{2}\cdots \sigma_{k})||^{2}_{2} .
 \end{align}
where $||\cdot||_{2}$ is the Euclidean norm
in $\mathbb{R}^{p}$. 
The series $S$ will be called \emph{square summable}, if the
limit $\lim_{n \rightarrow +\infty} L_{n}$ exists and it
is finite. The family 
 $\Psi=\{ S_{j} \in \mathbb{R}^{p}\ll \Sigma^{*} \gg \mid j \in J\}$
 will be called \emph{square summable}, if
 for each $j \in J$, the formal power series 
 $S_{j}$ is square summable. 

We now characterize square summability of a family of formal power series in terms of the stability of its representation. Let $R=(\mathbb{R}^{n}, \{ A_{\sigma} \}_{\sigma \in \Sigma}, B,C)$ be an arbitrary representation of $\Psi=\{ S_{j} \in \mathbb{R}^{p}\ll \Sigma^{*} \gg \mid j \in J\}$.  Assume that
   $\Sigma=\{\sigma_{1},\ldots, \sigma_{d}\}$, where $d$ is the number of
   elements of $\Sigma$, 
 and consider the matrix 
 \( \widetilde{A}=\sum\limits_{i=1}^{d} A_{\sigma_{i}}^{T} \otimes A_{\sigma_{i}}^{T} \),
  where $\otimes$ denotes the Kronecker product.
  We will call $R$ \emph{stable}, if the matrix
  $\widetilde{A}$ is stable, \ie
  if all
  its eigenvalues $\lambda$ lie inside the unit disk ($|\lambda| < 1$).
   We have the following.
  \begin{Theorem}
  \label{hscc_pow_stab:theo2}
   Consider a family of formal power series $\Psi$. If $\Psi$ admits a 
   stable representation, then $\Psi$ is square summable.
    If $\Psi$ is square summable, then any minimal representation of $\Psi$ is stable.
  \end{Theorem}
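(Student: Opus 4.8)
The plan is to prove the two implications separately, both by relating the sum $L_n$ of squared coefficient norms to powers of the matrix $\widetilde{A}=\sum_{i=1}^{d} A_{\sigma_i}^T \otimes A_{\sigma_i}^T$. The key algebraic fact I would establish first is a closed-form expression for the partial sums. Fix a representation $R=(\mathbb{R}^n,\{A_\sigma\}_{\sigma\in\Sigma},B,C)$. For a word $w=\sigma_1\cdots\sigma_k$ we have $S_j(w)=CA_w B_j$ with $A_w=A_{\sigma_k}\cdots A_{\sigma_1}$, so $\|S_j(w)\|_2^2 = B_j^T A_w^T C^T C A_w B_j = \mathrm{tr}(C^T C A_w B_j B_j^T A_w^T)$. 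Using the identity $\mathrm{vec}(A_w B_j B_j^T A_w^T) = (A_w\otimes A_w)\,\mathrm{vec}(B_j B_j^T)$ together with the multiplicativity $A_{\sigma_k}\otimes A_{\sigma_k}\cdots A_{\sigma_1}\otimes A_{\sigma_1}$, summing over all words $w$ of length exactly $\ell$ turns the inner sum into $\widetilde{A}^{\,\ell}$ applied (in transposed form) to $\mathrm{vec}(B_j B_j^T)$. Concretely, I would show that
\[
  \sum_{|w|=\ell}\|S_j(w)\|_2^2 \;=\; q^T\, (\widetilde{A}^T)^{\ell}\, p_j,
\]
for fixed vectors $q=\mathrm{vec}(C^T C)$ and $p_j=\mathrm{vec}(B_j B_j^T)$ (up to transposition conventions that I would pin down carefully), so that $L_n = \sum_{\ell=0}^{n} q^T (\widetilde{A}^T)^\ell p_j = q^T\big(\sum_{\ell=0}^n (\widetilde{A}^T)^\ell\big) p_j$.

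For the first implication, assume $\widetilde{A}$ is stable. Then $\widetilde{A}^T$ is stable as well, the Neumann series $\sum_{\ell=0}^{\infty}(\widetilde{A}^T)^\ell = (I-\widetilde{A}^T)^{-1}$ converges, and hence $\lim_{n\to\infty} L_n = q^T (I-\widetilde{A}^T)^{-1} p_j < +\infty$ for every $j\in J$. Thus $\Psi$ is square summable. This direction is essentially automatic once the closed form is in hand.

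For the converse, assume $\Psi$ is square summable and let $R$ be a minimal representation. I would argue by contradiction: suppose $\widetilde{A}$ has an eigenvalue $\lambda$ with $|\lambda|\ge 1$. The square summability of each $S_j$ forces the scalar series $\sum_\ell q^T(\widetilde{A}^T)^\ell p_j$ to converge, which constrains the growth of $(\widetilde{A}^T)^\ell p_j$ in the directions seen by $q$. The main obstacle — and the place where minimality is essential — is to show that minimality of $R$ makes these constraints bite on the \emph{whole} relevant invariant subspace, not just a proper piece of it. The idea is that $\mathrm{span}\{(\widetilde{A}^T)^\ell p_j : \ell\ge 0,\ j\in J\}$ and $\mathrm{span}\{(\widetilde{A})^\ell q : \ell \ge 0\}$ are, respectively, a reachable-type and observable-type subspace for the "lifted" system on $\mathbb{R}^{n^2}$, and reachability plus observability of $R$ (Theorem \ref{sect:pow:theo1}(ii)) should imply that these subspaces together span the part of $\mathbb{R}^{n^2}$ on which $\widetilde{A}$ acts nontrivially — more precisely, that no eigenvalue of $\widetilde{A}$ with $|\lambda|\ge 1$ can be "invisible" in the pairing $q^T(\widetilde{A}^T)^\ell p_j$. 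I would make this precise by passing to a Jordan/spectral decomposition of $\widetilde{A}^T$: if some $|\lambda|\ge 1$ eigenvalue survived in the pairing, the tail terms $q^T(\widetilde{A}^T)^\ell p_j$ would not tend to zero (for $|\lambda|>1$ they blow up; for $|\lambda|=1$ they oscillate without decaying, and with a nontrivial Jordan block they grow polynomially), contradicting convergence of $L_n$. The technical heart is the linear-algebra lemma connecting reachability/observability of $R$ on $\mathbb{R}^n$ to a "joint reachability/observability" statement for $\widetilde{A}$ on $\mathbb{R}^{n^2}$ relative to the vectors $\{p_j\}$ and $q$; I would isolate and prove that lemma first, then feed it into the contradiction argument.
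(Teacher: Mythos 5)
Your first implication is correct and is essentially the paper's argument: the identity $\sum_{|w|=\ell}\|S_j(w)\|_2^2=q^T(\widetilde{A}^T)^{\ell}p_j$ with $q=\mathrm{vec}(C^TC)$, $p_j=\mathrm{vec}(B_jB_j^T)$ is just the vectorized form of the paper's $\mathcal{Z}^{\ell}(V)=\sum_{|w|=\ell}A_w^TVA_w$ evaluated at $V=C^TC$, $x=B_j$, and the Neumann-series conclusion is the same.

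The second implication has a genuine gap, located exactly at what you call the ``technical heart.'' The lemma you propose --- that reachability and observability of $R$ on $\mathbb{R}^n$ force the pairing $q^T(\widetilde{A}^T)^{\ell}p_j$ to ``see'' every eigenvalue of $\widetilde{A}$ with $|\lambda|\ge 1$ --- is false as stated. Both $q=\mathrm{vec}(C^TC)$ and $p_j=\mathrm{vec}(B_jB_j^T)$ lie in the $\widetilde{A}$-invariant subspace of vectorized \emph{symmetric} matrices, of dimension $n(n+1)/2$; the complementary invariant subspace of antisymmetric matrices is completely invisible to the pairing no matter how reachable and observable $R$ is, yet $\widetilde{A}=\sum_\sigma A_\sigma^T\otimes A_\sigma^T$ generally has nontrivial spectrum there. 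So no Jordan-decomposition argument based on the scalars $q^T(\widetilde{A}^T)^{\ell}p_j$ alone can rule out an eigenvalue of modulus $\ge 1$ supported on that invisible part, and your contradiction never gets off the ground for such eigenvalues. (Even within the symmetric part, the span of $\{(\widetilde{A}^T)^{\ell}p_j\}$ is generated by the specific PSD matrices $\sum_{|w|=\ell}A_wB_jB_j^TA_w^T$ and need not exhaust it.)

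The theorem is nevertheless true, and the missing ingredient is the \emph{positivity} of the map $\mathcal{Z}(V)=\sum_\sigma A_\sigma^TVA_\sigma$ on the cone of positive semidefinite matrices, which is what the paper's proof runs on. Observability gives a strictly positive definite test matrix $Q=O_R^TO_R>0$; reachability lets one verify convergence of $\sum_k x^T\mathcal{Z}^k(Q)x$ on the spanning set $x=A_vB_j$, where the sum is recognized as $\sum_{w,i}\|S_j(vwv_i)\|_2^2<\infty$, and extend to all $x$ using that each $\mathcal{Z}^k(Q)$ is PSD; then for an arbitrary $V\ge 0$ the domination $V\le MQ$ is propagated through the powers, $\mathcal{Z}^k(V)\le M\mathcal{Z}^k(Q)$, precisely because $\mathcal{Z}$ preserves the PSD order. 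This yields $\mathcal{Z}^k(V)\to 0$ for all $V\ge 0$, and the final step --- that this forces the spectral radius of $\mathcal{Z}$ (on the \emph{whole} space, antisymmetric part included) to be less than one --- is a standard fact about positive maps, cited in the paper as Proposition 2.5 of the Costa--Fragoso--Marques book. If you want to salvage your plan, you must replace your joint-reachability/observability lemma with this positivity/monotonicity mechanism; without it the argument cannot close.
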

  
   Notice the analogy with the case of linear systems, where the minimal
   realization of a stable transfer matrix is also stable. 

\begin{proof}[Proof of Theorem \ref{hscc_pow_stab:theo2}]
Assume that $\Psi$ has a stable representation $R=(\mathbb{R}^{n}, \{ A_{\sigma}\}_{\sigma\in \Sigma},C,B)$. Then all the eigenvalues of the matrix $\widetilde{A}=\sum_{\sigma \in \Sigma} A_{\sigma}^{T} \otimes A_{\sigma}^T$ are inside the unit circle. One can easily see that the matrix $\widetilde{A}$ is in fact a matrix representation of the linear map $\Z : \Re^{n\times n} \to \Re^{n\times n}$ defined as \[  \Z(V)=\sum_{\sigma \in \Sigma} A_\sigma^TVA_\sigma .\]  This result is obtained by identifying $\Re^{n\times n}$ with $\Re^{n^2}$,  as it is done in \cite[Section 2.1]{CostaBook}. As a consequence, the eigenvalues of $\Z$ and $\widetilde{A}$ coincide. Since the eigenvalues of $\Z$ are inside the unit circle, it follows from \cite[Proposition 2.5]{CostaBook} that for each positive semi-definite matrix $V \ge 0$,  the infinite sum
    \(   \sum_{k=0}^{\infty} \|\mathcal{Z}^{k}(V)\|  \)
    is convergent. By noticing that
    \[ \forall x \in \Re^n ~~ x^T\mathcal{Z}^{k}(V)x \le \| x \|_{2}^2  \cdot \| \Z^{k}(V) \|, \]
    we conclude that $\sum_{k=0}^{\infty} x^T\mathcal{Z}^{k}(V)x$
   is convergent for all $x$. It can be shown by induction that
  \begin{equation}
   \label{hscc_pow_stab:lemma2:eq2}
     \mathcal{Z}^{k}(V) = \sum_{w \in \Sigma^{*}, |w|=k} A_{w}^TVA_{w}.
    \end{equation}
    Thus, letting $V=C^TC$ in $\sum_{k=0}^{\infty} x^T\mathcal{Z}^{k}(V)x$, we conclude that 
   \( \sum_{w \in \Sigma^{*}} \|CA_{w}x\|_{2}^{2} \)
   is convergent for all $x$. If we set $x=B_j$, $j \in J$,
   we then obtain that $\sum_{w \in \Sigma^{*}} \|S_{j}(w) \|_{2}^{2}$ is convergent for all $j \in J$, \ie the family $\Psi$
  is square summable.


Assume now that $\Psi$ is square summable and let $R=(\mathbb{R}^{n}, \{ A_{\sigma}\}_{\sigma \in \Sigma},C,B)$ be a minimal representation of $\Psi$. Also, let $Q=O_{R}^TO_{R} > 0$, where $O_R$ is the observability matrix of $R$, which is full rank because $R$ is observable. 
First we show that
   \begin{equation}
   \label{hscc_pow_stab:lemma2:eq11}
    \sum_{k=0}^\infty x^T \Z^k(Q) x = \sum_{w \in \Sigma^{*}} x^TA_{w}^TQA_{w}x
   \end{equation}  
is convergent for all $x \in \mathbb{R}^{n}$. To see this, notice from the reachability of $R$ that any $x \in \mathbb{R}^{n}$ is a linear combination of vectors of the form $A_vB_j$, $j \in J$, $v \in \Sigma^{*}$. Hence, it is sufficient to prove the convergence of \eqref{hscc_pow_stab:lemma2:eq11} for $x=A_vB_j$. But the latter follow from the fact that \[ \sum_{w\in\Sigma^*}(B_jA_v)^TA_{w}^TQA_wA_vB_j=  \sum_{w\in\Sigma^*} \sum_{i=0}^{M(n-1)} ||S_{j}(vwv_i)||_{2}^{2}\] and that $\sum_{w \in \Sigma^{*}} \|S_{j}(w)\|^2_2$, hence $\sum_{w \in \Sigma^{*}} \sum_{i=0}^{M(n-1)} \|S_{j}(vwv_i)\|_{2}^{2}$ is convergent. Next we show that 
   \begin{equation}
    \sum_{k=0}^\infty x^T \Z^k(V) x = \sum_{w \in \Sigma^{*}} x^TA_{w}^TVA_{w}x
   \end{equation}  
is convergent for all $x \in \mathbb{R}^{n}$ and for all positive semi-definite $n\times n$ matrices $V \geq 0$. To see this, notice that for all $V \ge 0$   and $Q > 0$, there exists $M > 0$ such that $x^TVx \le M x^TQx$ for all $x \in\mathbb{R}^n$. Indeed, we can choose $M=\frac{\|V\|}{m}$, where $0 < m=\inf_{\|x\|=1} x^TQx$, so that $m\|x\|^2 \le x^TQx$ and hence $x^TVx \le \|x\|^{2}\|V\| \le M x^TQx$. Therefore, for any $V \ge 0$,
   \[ 
        \sum_{k=0}^\infty x^T\mathcal{Z}^{k}(V)x =
     \sum_{w \in \Sigma^{*}} x^TA_{w}^TVA_{w}x \le
       M \sum_{w \in \Sigma^{*}} x^TA_{w}^TQA_{w}x = 
       M \sum_{k=0}^\infty x^T\mathcal{Z}^{k}(Q)x,
    \]
and so $\sum_{k=0}^\infty x^T\mathcal{Z}^{k}(V)x$ is convergent for all $x\in\Re^n$ and $V\geq 0$. This implies that
    \[ \lim_{k \rightarrow \infty} x^T\mathcal{Z}^{k}(V)x = 0
    \]
    for all $x \in \mathbb{R}^n$. Therefore, 
     $\lim_{k \rightarrow \infty} \mathcal{Z}^{k}(V) = 0$
    for all $V \ge 0$, 
    which by \cite[Proposition 2.5]{CostaBook} implies
    that all the eigenvalues of $\mathcal{Z}$ (and hence
    of $\widetilde{A}$) have 
    modulus strictly smaller than $1$, \ie $R$ is stable.

\end{proof}

\section{Stochastic Realization of Generalized Bilinear Systems}
\label{gen:filt}
In this section we formulate and solve the realization problem for generalized stochastic
bilinear systems (abbreviated by \GBS). A \GBS\ is stochastic system which is
bilinear in state and inputs and where the inputs is an observed stochastic process.
Informally, the realization problem can be 
formulated as follows: given an output process and input process, 
find a \GBS which is driven by the input process, and whose
output process coincides with the given one. 
 Unlike in \cite{StochBilin}, we will not require the input to be white. In particular, we will allow finite-state Markov processes as inputs, which will allow us to apply the framework to the realization of stochastic jump-linear systems.  Particular cases of this generalized bilinear realization problem include realization of classical linear and bilinear systems, as well as the Kalman filter. In addition, the solution to this general problem provides a solution to the realization of stochastic jump-linear systems, as we will show in Section \ref{sect:GJMLS}.

The motivation of the realization problem stems from system identification and filtering.
The link with system identification is quite clear: the realization problem can be
viewed as a idealized system identification problem. The link with filtering is less
direct. Recall that
filtering one is interested in computing the conditional expectation (or the linear projection) of the current output onto the past outputs. 
The Kalman filter is an algorithm that computes such a projection recursively.
If one considers stationary linear systems,  then the Kalman filter yields a linear stochastic realization in the forward innovation form.  That is, there is a correspondence
between recursive filters and stochastic realizations in forward innovation form.

In the case of bilinear situation, the situation is similar. The main difference is that
the filtering occurs based not only on past outputs but on past inputs too.
%
In particular, the correspondence between filters and stochastic realizations carries over
to bilinear systems.
%
%
Similarly to the linear case, 
the construction of the recursive filter (i.e. stochastic realization in forward innovation form) relies on the fact that the
covariances of the outputs can be represented as rational formal power series. 

The section is organized as follows. In \S \ref{gbs:prob:def} we define the class
of generalized bilinear systems and the corresponding realization problem.
In \S \ref{gbs:prob:sol} we present the solution of the realization problem.
In \S \ref{gbs:prob:alg} we present a realization algorithm. The proofs 
of the results of \S \ref{gbs:prob:sol}--\ref{gbs:prob:alg} are presented in
\S \ref{gbs:prob:proof}.

In what follows, we will work with random variables and stochastic processes. We will use the standard terminology and notation of probability theory
\cite{Bilingsley}. Throughout the paper, we fix a probability space $(\Omega,\mathcal{F},P)$ and
all the random variables and stochastic processes should be understood with respect to this probability space. Here
$\mathcal{F}$ is a $\sigma$-algebra over the set $\Omega$, $P$ is a probability measure on $\mathcal{F}$. 
With a slight abuse of notation, when we want to indicate that a random variable $\z$ takes its values in a set $X$ (i.e. $\z$ is a measurable function $\z:\Omega \rightarrow X$), we will write $\z \in X$.  We denote the expectation of a random variable $\z$ by $E[\z]$.
Let $\mathbb{Z}$ be the set of integers. 
Recall that a discrete-time stochastic process (in the sequel to be referred to as process or stochastic process) taking values in a set $X$ is just a
collection $\{\z(t)\}_{t \in \mathbb{Z}}$ where $\z(t)  \in X$ is a random variable for all $t \in \mathbb{Z}$; $\z(t)$ is referred to as the value of the
stochastic process $\{\z(t)\}_{t \in \mathbb{Z}}$ at time $t \in \mathbb{Z}$.
In the sequel, by abuse of notation, the stochastic process $\{\z(t)\}_{t \in \mathbb{Z}}$  will be denoted by $\z(t)$: whether $\z(t)$ means a stochastic
process or its value at time $t$ will be clear from the context. 
A stochastic process $\z(t) \in \mathbb{R}^k$ is called zero mean and square integrable, if the expectations $E[\z(t)]$ and $E[\z^T(t)\z(t)]$ exist, and 
$E[\z(t)]=0$ and $E[\z^T(t)\z(t)] < +\infty$. 
Furthermore, recall that a process $\z(t)  \in \mathbb{R}^k$ is wide sense stationary, if for every $s,t,k \in \mathbb{Z}$, 
the expectation $E[\z(t+k)\z^T(s+k)]$ exists and its value is independent of $k$.

\subsection{Stochastic Realization Problem for Generalized Bilinear Systems}
\label{gbs:prob:def}

Let the \emph{input process} be a collection of $\Re$ valued random processes $\{\bu_\sigma(t)\}_{\sigma\in\Sigma}$ indexed by the elements of a finite alphabet $\Sigma$.  
\begin{Definition}[Generalized Bilinear System]
 A generalized bilinear system (\emph{abbreviated by \GBS}) of 
is a system of the form
 \begin{equation}
     \label{gen:filt:bil:def}
     \BS \left \{\begin{split}
      \x(t+1)&=\sum_{\sigma \in \Sigma} (A_{\sigma}\x(t)+
       K_{\sigma}\v(t))\bu_{\sigma}(t) \\
      \y(t)&=C\x(t)+D\v(t),
     \end{split}\right.
     \end{equation}
 where $A_{\sigma} \in \mathbb{R}^{n}$, $K_{\sigma} \in \mathbb{R}^{n \times m}$,
  $C \in \mathbb{R}^{p \times n}$, $D \in \mathbb{R}^{p \times m}$,
 $\y(t)$ is a stochastic process with values in $\mathbb{R}^{p}$,
 called the \emph{state process}, 
 $\x(t)$ is a stochastic process with values in $\mathbb{R}^{n}$, called the \emph{state process} and 
 $\v(t()$ is a stochastic process with values in $\mathbb{R}^{m}$, called the 
 \emph{noise process}. The \emph{dimension} of $\BS$ is defined as the number $n$ of
 state variables.  The system $\BS$ is said to be a \emph{realization} 
 of the process $\widetilde{\y}(t)$ if $\widetilde{\y}(t)=\y(t)$ for all
 $t \in \mathrm{Z}$. The \GBS\ $\BS$ is said to be a \emph{minimal realization} of
 $\y(t)$ if $\BS$ is a realization of $\y(t)$ and it  has the minimal dimension 
 among all possible \GBS\ of $\y(t)$.
\end{Definition}
 Now we are ready to state the realization problem for \GBS{s}.
 \begin{Definition}[Realization problem for generalized bilinear systems]
 Given an output process $\y(t)$ and find conditions for existence of a
 \GBS\ which is a realization of $\y(t)$ and characterize minimality
 for \GBS{s} which are realizations of $\y(t)$.
\end{Definition}
Notice that by choosing $\bu_{\sigma}(t)$ in an appropriate way, \GBS{s}\ 
include linear, bilinear, and as we shall see later, even jump-linear systems.
\begin{Example}[Realization of Linear Systems]
\label{lin:example}
  Notice that if $\Sigma=\{\sigma\}$ and $\bu_{\sigma}(t)=1$, then the generalized bilinear stochastic realization problem reduces to the classical stochastic
linear realization problem.
\end{Example}
\begin{Example}[Realization of Bilinear Systems]
\label{bilin:example}
Notice that if $\Sigma=\{1,2\}$, $\bu_{1}(t)=1$ and $\bu_{2}(t)$ is white noise, then the generalized bilinear stochastic realization problem reduces to the classical bilinear realization problem \cite{StochBilin,FrazhoBilin}.
\end{Example}
\begin{Example}[Linear Jump-Markov systems with i.i.d discrete-state]
\label{example:jump_markoviid}
 Assume that $\btheta(t) \in \Sigma$ are independent and 
 identically distributed random variables, $P(\btheta(t)=\sigma)=p_{\sigma} > 0$.
 Consider the generalized bilinear system with $\bu_{\sigma}(t)=\chi(\btheta=\sigma)$,
 where $\chi$ is the indicator function. In this case the realization problem for
 \GBS{s} yields the realization of Jump-Markov linear systems 
 where is Markov process is observable and i.i.d. In fact, it can be shown that
 the realization problem of more general type jump-linear systems can also
 be reduced to that of \GBS{s}.
\end{Example}
\begin{Example}[Stochastic LPV systems]
 Let $\Sigma=\{1,\ldots,d\}$ and let $\bu(t)=(\bu_1(t),\ldots,\bu_{d}(t))$ be a stochastic
 process such that $\bu$ and $\v$ are independent. The resulting \GBS\ can be viewed
 as a stochastic linear parameter-varying system (LPV), where $\bu$ plays the role of 
 the scheduling variable. LPV systems represent a widely applied and popular system class.
 Identification of LPV systems is a subject of active research. The results of this paper
 are potentially useful for system identification of LPV systems.
\end{Example}
\begin{Example}[jump-bilinear systems with i.i.d discrete-state]
\label{example:bilin_jump_markoviid}
 Let $Q$ be a finite set and fix an integer $m$.
 Assume that $\btheta(t) \in Q$ are i.i.d random variables, $P(\btheta(t)=q)=p_{q} \ge 0$ for
 all $q\in Q$. Define $\Sigma=Q \times \{0,\ldots,m\}$ and let $\bu(t) \in \mathbb{R}^m$ be a colored noise process.
 Define $\bu_{(q,j)}(t)=\bu_{j}(t)\chi(\btheta=q)$, where $\bu_j(t)$ denotes the
 $j$th entry of $\bu(t)$ for $j=1,\ldots,m$ and $\bu_0(t)=1$.
 With this choice of the input process, we immediately
obtain the following  jump-bilinear system
   \(  \x(t+1) =  \sum_{j=0}^{m} (A_{\btheta(t),j}\x(t)+K_{\btheta(t),j}\v(t))\bu_{j}(t) \) and
    \( \y(t)  =  C\x(t)+D\v(t) \).
That is \GBS{s} do not only describe known system classes, but they also yield new system classes.
\end{Example}
The examples above are intended to demonstrate the versatility of \GBS{s}. \GBS{s}
can be used not only to describe well known system classes, but also system classes
which have not been studied in the literature so far.

\subsection{Hilbert-space of square integrable random variables}
\label{sec:hilbert:rev}
In order to make the realization problem tractable, we need to make additional assumption
on \GBS{s}.  In particular, 
in the sequel, the outputs and inputs at any time instance are mean-square integrable random variables.  Such random variables form a Hilbert-space $\mathcal{H}$ with  covariance playing the role of scalar product. Since $\mathcal{H}$ is an Hilbert-space, we can speak of orthogonal projection of a random variable onto a closed subspace of $\mathcal{H}$. 
Below we recall the framework of the Hilbert-space of random variables in more detail.

 In the sequel, we will identify random variables which differ only on a set 
 of probability zero.
 A scalar random variable $\z \in \mathbb{R}$ is said to
 be mean-square integrable, if the expectation $E[\z^2]$ exists
 and it is finite. The space of scalar mean-square random
 variables forms an Hilbert-space $\mathcal{H}$ 
 with the scalar product
 $<\z,\x>=E[\z\x]$ and the corresponding norm
 $||\z||=\sqrt{E[\z^2]}$. A sequence of random variables
 $\z_n$ is said to converge to in mean-square sense to
 $\z$, if $\lim_{n \rightarrow \infty} E[(\z-\z_n)^2]=0$, or,
 in other words, if $\lim_{n \rightarrow \infty} ||\z_n-\z||=0$
 with the norm $||.||$ defined above. As it is customary in Hilbert-spaces, the scalar product and the norm are continuous operators 
 with respect to the topology induced by mean-square convergence.
 That is, if $\lim_{n \rightarrow \infty} \z_n = \z$ and
 $\lim_{n \rightarrow \infty} \x_n = \x$ in the mean-square sense,
 then $\lim_{n \rightarrow \infty} E[\x_n\z_n]=E[\x\z]$ and
 $\lim_{n \rightarrow \infty} ||\x_n||=||\x||$.

 Suppose that $M$ is a closed linear subset of $\mathcal{H}$. 
 The orthogonal projection of a variable $\z$ onto $M$ 
 the unique element $\z^{*}$ of $M$ which satisfies the following
 two equivalent conditions: \textbf{(a)} 
 $||\z^{*}-\z|| \le ||\x-\z||$ for all $\x \in M$, 
 \textbf{(a)} $\z-\z^{*}$ is orthogonal to $M$, i.e.
  $E[(\z-\z^{*})\x]=0$ for all $\x \in M$. 
 Note that if $M$ is the linear span of finitely many elements,
 then it is automatically closed.

 Consider now a \emph{vector valued} random variable
 $\z=(\z_1,\ldots,\z_p)^T \in \mathbb{R}^{p}$.  
 We will call $\z$ mean-square
 integrable, if the coordinates $\z_i$, $i=1,\ldots,p$ are
 mean-square integrable scalar random variables. Note that if
 we denote by $||.||_{2}$ the Euclidean norm in $\mathbb{R}^{p}$,
 then mean-square integrability of $\z$ is equivalent to 
 existence and finiteness of $E[||z||_{2}^{2}]$.
 If $\z_n=(\z_1^n,\z_2^n,\ldots,\z_p^n) \in \mathbb{R}^{p}$, $n \in \mathbb{N}$ and
 $\z=(\z_1,\ldots,\z_p) \in \mathbb{R}^{p}$ are mean-square integrable random variables,
 then we say that $\z_n$ converges to $\z$ in a mean square sense,
 if for all $i=1,\ldots, p$, the sequence $\z_i^n \in \mathbb{R}$ of 
 $i$ coordinates
 of $\z_n$ converges to the $i$th coordinate $\z_i \in \mathbb{R}$ 
 of $\z$ in the mean-square sense.

 Let $M$ be a closed linear subspace of mean-square integrable
 \emph{scalar} random variables. Let 
 $\z=(\z_1,\ldots,\z_p) \in \mathbb{R}^{p}$ be 
 a \emph{vector valued} mean-square integrable random variable.
 By the orthogonal projection of $\z$ onto $M$ we mean the
 vector valued random variable $\z^{*}=(\z_1^{*},\ldots,\z_p^{*})$
 such that $\z_i^{*} \in M$ is the orthogonal projection of
 the $i$th coordinate $\z_i$ of $\z$ onto $M$, as defined for the
 scalar case. The orthogonal projection 
 $\z^{*}$ has the following property: 
 $E[(\z-\z^{*})\x]=0$ for all $\x \in M$. If $M$ is generated
 by closure of the linear span of
 the coordinates of a subset $S$ of $\mathbb{R}^{k}$ valued
 mean-square integrable random variables, then $\z^{*}$ is
 uniquely determined by the following property: $E[(\z-\z^{*})\x^T]=0$ for all $\x \in S$ and all the coordinates of $\z^{*}$ belong to $M$.

  In fact, by abuse of terminology, we will say \emph{that $\z$ belongs to $M$}, if 
  all its coordinates $\z_1,\ldots,\z_p$ belong to $M$.
  Similarly, let $\x_i \in \mathbb{R}^{k}$, $i \in I$ be a family of vector valued
  mean-square integrable random variables and assume that $I$ is an arbitrary set.
  Then the Hilbert-space generated by $\{x_i\}_{i \in I}$ is understood to be the
  smallest closed subspace $M$ of the Hilbert-space of all square integrable random variables
  such that for $\x_i$, $i \in I$ belongs to $M$ in the above sense (i.e. 
  the components of $\x_i$ belongs to $M$).

 Assume that $\z$ belongs to $M$ and assume that $M$ is the Hilbert-space generated
 by the components some vector values variables $\{\x_i\}_{i \in I}$. In the sequel,
 we will often use the following simple result.
 \begin{Lemma}
 \label{lemma:measurability}
  If the $\mathbb{R}^{p}$-valued random variable $\z$ belongs to $M$, then
  $\z$ is measurable with respect to the $\sigma$-algebra generated by 
  $\mathcal{F}=\{\x_i\}_{i \in I}$.
\end{Lemma}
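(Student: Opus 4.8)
The plan is to reduce the statement to the scalar case and then use the fact that the mean-square closure of a set of measurable functions consists, up to null sets, of measurable functions. First I would note that $\z=(\z_1,\ldots,\z_p)$ belongs to $M$ precisely when each coordinate $\z_k$ belongs to $M$, and that a map $\Omega\to\Re^p$ is measurable iff each of its coordinates is; hence it suffices to prove the following scalar claim: if a scalar mean-square integrable random variable $z$ lies in $M$, then $z$ is measurable with respect to $\mathcal{G}$, the $\sigma$-algebra generated by the family $\mathcal{F}=\{\x_i\}_{i\in I}$. Here $M$ is, by the definitions given earlier in this subsection, the smallest closed subspace containing all the coordinates of all the $\x_i$, i.e. the mean-square closure of the linear span $M_0$ of those coordinates.

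Second, I would observe that every element of $M_0$ is $\mathcal{G}$-measurable. Indeed, each coordinate of $\x_i$ is $\mathcal{G}$-measurable (the coordinate projections $\Re^{k}\to\Re$ being Borel and $\x_i$ being $\mathcal{G}$-measurable by construction of $\mathcal{G}$), and a finite $\Re$-linear combination of $\mathcal{G}$-measurable scalar functions is again $\mathcal{G}$-measurable. It then remains to upgrade from $M_0$ to $M=\overline{M_0}$. Since the Hilbert space $\mathcal{H}$ of scalar square-integrable random variables is a metric space, $z\in\overline{M_0}$ means there is a sequence $z_n\in M_0$ with $z_n\to z$ in the mean-square sense. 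Mean-square convergence implies convergence in probability, so there is a subsequence $z_{n_j}$ converging to $z$ almost surely. The function $\limsup_{j\to\infty} z_{n_j}$ is $\mathcal{G}$-measurable, being a $\limsup$ of $\mathcal{G}$-measurable functions, and it agrees with $z$ outside a set of probability zero; under the standing convention of identifying random variables that coincide almost surely (and, if one wishes, completing $\mathcal{G}$), $z$ is therefore $\mathcal{G}$-measurable. By the first step this gives the measurability of $\z$ as well.

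I expect the only step requiring genuine care to be the passage from membership in the mean-square closure $M$ to membership in the linear span $M_0$, handled via extraction of an almost-surely convergent subsequence; the coordinatewise reduction and the measurability of finite linear combinations are routine. One should also keep in mind the null-set identification used here, which is exactly the convention already adopted at the beginning of this subsection.
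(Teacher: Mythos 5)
Your argument is correct, but it proceeds by a genuinely different route than the paper's. You reduce to the scalar case, observe that the linear span $M_0$ of the coordinates of the $\x_i$ consists of $\mathcal{G}$-measurable functions, and then pass from $M_0$ to its mean-square closure $M$ by extracting an almost-surely convergent subsequence and taking a $\limsup$, invoking the convention of identifying random variables that agree almost surely. The paper instead appeals to the standard $L^2$ characterization of conditional expectation (Billingsley, Exercise 34.13): $E[\z \mid \mathcal{F}]$ is the orthogonal projection of $\z$ onto the closed subspace $\mathcal{H}_{\mathcal{F}}$ of \emph{all} $\mathcal{G}$-measurable square-integrable variables; since $M \subseteq \mathcal{H}_{\mathcal{F}}$, the variable $\z$ already lies in $\mathcal{H}_{\mathcal{F}}$, so it equals its own conditional expectation and is therefore measurable. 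The paper's proof is shorter and ties the lemma directly to the conditional-expectation machinery used elsewhere, while yours is more elementary and self-contained, needing only that mean-square convergence yields an a.s.-convergent subsequence and that pointwise limits of measurable functions are measurable. Both proofs establish measurability only up to modification on a null set, which is exactly what the lemma asserts under the identification convention stated at the start of the subsection, and which you correctly flag as the one point requiring care.
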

 Indeed, by \cite[Exercise 34.13]{Bilingsley}, the conditional expectation $E[\z \mid \mathcal{F}]$
 equals the orthogonal projection of $z$ to the close subspace $\mathcal{H}_{\mathcal{F}}$
  generated by all the $\mathcal{F}$ measurable mean square integrable random variables.
 But $M$ is a subspace of $\mathcal{H}_{\mathcal{F}}$ 
 and hence $\z$ already belongs to $\mathcal{H}_{\mathcal{F}}$. Hence,
 the orthogonal projection of $z$ to $\mathcal{H}_{\mathcal{F}}$ equals 
 $\z$ itself. Thus, $\z=E[\z \mid \mathcal{F}]$ and since 
 $E[\z \mid \mathcal{F}]$ is $\mathcal{F}$ measurable by definition,
 Lemma \ref{lemma:measurability} follows.

\subsection{Solution of the realization problem for \GBS} \label{gbs:prob:sol} Below we present the solution of the realization problem for \GBS{s}.  
We will only state the results, their proofs will be presented in \S \ref{gbs:prob:proof}.
In order to state the results,  will introduce the following notation and terminology.
\begin{Notation}
\label{ppnot1}
 We fix a collection $\{p_{\sigma} > 0\}_{\sigma \in \Sigma}$ of
 real numbers. For each $w \in \Sigma^{*}$ define the number
 $p_{w}$ as follows: $p_{\epsilon}=1$ and if $w=v\sigma$ for some 
 $v \in \Sigma^{*}$ and $\sigma \in \Sigma$, then let $p_{w}=p_{v}p_{\sigma}$.
\end{Notation}
The roles of $\{p_{\sigma}\}_{\sigma \in \Sigma}$ will become clear later on.
For each word $w=\sigma_1\sigma_2\cdots \sigma_{k} \in \Sigma^{+}$, $k\ge 1$, $\sigma_1,\ldots,\sigma_k \in \Sigma$, define the random variables
\begin{equation}
\label{gen:filt:eqzdef0}
  \bu_{w}(t)=\bu_{\sigma_{1}}(t-k+1)\bu_{\sigma_{2}}(t-k+1)\cdots \bu_{\sigma_{k}}(t)
\end{equation}
Using the notation defined above, we formulate the following assumptions which will be valid for the rest of the section.
\begin{Assumption}[Input process]
\label{input:assumption}
 \begin{enumerate}
 \item
 $\sum_{\sigma \in \Sigma} \alpha_{\sigma} \bu_{\sigma}(t)=1$ for some
 numbers $\{\alpha_{\sigma} \in \mathbb{R}\}_{\sigma \in \Sigma}$.
 \item
  For each $w \in \Sigma^{+}$, all the first and second order moments of the process $\bu_{w}(t)$ are finite.
 \end{enumerate}
\end{Assumption}
 We mention a number of examples of
 $\bu_{\sigma}(t)$ which satisfies the assumptions above. 
 \begin{Example}[Bilinear systems \cite{StochBilin}]
  $\Sigma=\{0,1\}$, $\bu_{0}(t)=1$, $\bu_{1}(t)$ is a white noise Gaussian process.
  In this case, $\alpha_0=1$, $\alpha_1=0$.
 \end{Example}
 \begin{Example}[Discrete valued input]
 \label{disc:input}
  Assume there exists a process $\btheta(t)$
  takes its values
  from a finite alphabet $\Sigma$ and let $\bu_{\sigma}(t)=\chi(\btheta(t)=\sigma)$.
  Then $E[|\bu_{w}(t)|^{k}]=E[\bu_{w}(t)] = P(\btheta(t-k)=\sigma_1 \cdots \btheta(t-1)=\sigma_k)$ and  with $\alpha_{\sigma}=1$, $\sum_{\sigma \in \Sigma} \bu_{\sigma}(t)=1$.
 \end{Example}
Next, we define a class of stochastic processes which will play an important role in the rest of the paper.
Let $\br(t) \in  \mathbb{R}^{k}$ be a stochastic process and define for each $w \in \Sigma^{+}$
\begin{equation} 
\label{gen:filt:eqzdef1}
\begin{split}
   \z^{\br}_{w}(t)=  \br(t-|w|)\bu_{w}(t-1)\frac{1}{\sqrt{p_w}}.
  \end{split}
 \end{equation} 
In the sequel, the process $\z^{\y}_{w}(t)$, obtained from \eqref{gen:filt:eqzdef1} by choosing
$\br(t)=\y(t)$ will play a central role. For this reason, we introduce the following notation
\begin{Notation}
\label{gen:filt:eqzdef1:not}
 In the sequel we denote by
$\z_{w}(t)$ the process $\z^{\y}_{w}(t)$.
\end{Notation}
Below, we will define a number of properties of $\z_{w}^{\br}(t)$  and we will require that
the noise, state, and output processes $\x(t)$, $\v(t)$ and $\y(t)$ of a \GBS\ are
such that $\z_{w}^{\x}(t)$, $\z^{\v}_{w}(t)$ and $\z_{w}^{\y}(t)$ satisfy those properties.
Intuitively, these properties say that $\z_{w}^{\br}(t)$ is a wide-sense stationary
stochastic process if $w$ is also viewed as multidimensional time.
To this, we introduce the following definitions.
\begin{Definition}[Admissible words]
 \label{gen:filt:ass1:cond1:def1}
 A set $L \subseteq \Sigma^{+}$ is a \emph{set of admissible words}, if the following
 conditions hold. 
 \begin{enumerate}
 \item $\Sigma \subseteq L$ and for all $w \in \Sigma^{+} \setminus L$, $\bu_{w}(t)=0$ almost surely.
 \item  There exists a set $S \subseteq \Sigma \times \Sigma$, such that
        the word $w=\sigma_1\cdots \sigma_k \in \Sigma^{+}$, $\sigma_1,\ldots,\sigma_k \in \Sigma$, $k > 1$ belongs $L$ if and
        only if $(\sigma_i,\sigma_{i+1}) \in S$ for all $i=1,\ldots,k-1$.
 \end{enumerate}
\end{Definition}
For the rest of the paper $L$ will denote a fixed set of admissible words.
The motivation behind introducing the set $L$ is that for certain $w \in \Sigma^{+}$,
we might wish to set $\z_{w}(t)$ to zero. This will be the case when we try to use realization
theory for \GBS{s} for jump-markov systems. A simpler motivating example is presented
below.
\begin{Example}[Jump-markov systems with restricted switching]
\label{example:jump_markoviid:const}
 Consider the system described in Example \ref{example:jump_markoviid} but with
 the following modification. We no longer assume that 
 $\btheta$ is an i.i.d process. Instead we assume that there exists a set
 $S \subseteq Q \times Q$ describing the admissible discrete state transitions, and
 $\btheta(t)$ is a stationary Markov process  such that 
  $P(\btheta(t+1)=q_2 \mid \btheta(t)=q_1)=p_{q_2}$ if $(q_1,q_2) \in S$ and 
  $P(\btheta(t+1)=q_2 \mid \btheta(t)=q_1)=0$ if $(q_1,q_2) \in S$. 
  In this case, the $\bu_{w}(t)=0$ almost surely for $w \notin L$, 
  where $L$ is as defined in \ref{gen:filt:ass1:cond1:def1}
\end{Example}
\begin{Definition}[Recursive covariance property]
\label{def:RC}
 A process $\br(t)$ is said to have \emph{recursive covariance property (abbreviated by \RC)}
 if it satisfies the following conditions.
\begin{enumerate}
\item
\label{RC1}
 The processes $(\br(t), \{ \z^{\br}_{w}(t) \mid w \in \Sigma^{+}\})$ are jointly wide-sense stationary, that is, for all $t,k \in \mathbb{Z}$, and for all $w,v \in \Sigma^{+}$ we have that
$E[\br(t)]=0$, $E[\z^{\br}_{w}(t)]=0$, and
\begin{align*}
E[\br(t+k)(\z_{w}^{\br}(t+k))^r] =E[\br(t)(\z^{\br}_{w}(t))^{T}]  \quad \text{and} \quad 
E[\z_{w}^{\br}(t+k)(\z_{v}^{\br}(t+k))^T]=E[\z^{\br}_{w}(t)(\z_{v}^{\br}(t))^T].
\end{align*}
\item
\label{RC3}
 Denote by
 \[ T^{\br}_{w,v}=E[\z^{\br}_{w}(t)(\z_{v}^{\br}(t))^T] \mbox{ and }
    \Lambda^{\br}_{w}=E[\br(t)(\z_{w}^{\br}(t))^T] \]
 Then for any $w,v \in \Sigma^{+}$, $\sigma,\sigma^{'} \in \Sigma$,
 $T_{\sigma,\sigma^{'}}=0$ for $\sigma \ne \sigma^{'}$ and 
 \begin{align}
  & T^{\br}_{w\sigma,v\sigma^{'}}=
 \begin{cases}
  T^{\br}_{w,v} & \mbox{ if } \sigma=\sigma^{'} \mbox{ and } w\sigma \in L \mbox{ or } v\sigma \in L \\
  0  & \mbox{ if } \sigma \ne \sigma^{'}
 \end{cases} 
 \qquad\text{and}\qquad \\
  & T^{\br}_{w \sigma,\sigma^{'}} =
 \begin{cases}
  (\Lambda^{\br}_{w})^T & \mbox{ if } \sigma=\sigma^{'} \\
   0  & \mbox{ if } \sigma \ne \sigma^{'}
   \end{cases}.
  \end{align}
\item
\label{RC4}
In addition, $T^{\br}_{w,v}=0$ if $w \notin L$ or $v \notin L$.
 If $w\sigma  \in L$ then
 for all $v\sigma \notin L$, $T^{\br}_{v,w}=0$, and similarly, if
 $v\sigma \in L$, then for all $w\sigma \notin L$,
 $T^{\br}_{v,w}=0$.
\end{enumerate} 
\end{Definition}
\begin{Remark}
\label{rem:RC}
 It can be shown that
 Part \ref{RC4} of Definition \ref{def:RC} is by the other conditions.
\end{Remark}
\begin{Remark}
  It is clear that if $\br(t) \in \mathbb{R}^r$ is an \RC\ process, then for any matrix $F \in \mathbb{R}^{l \times r}$, $l > 0$, 
  the process $\bs(t)=F\br(t)$, $t \in \mathbb{Z}$, is \RC. 
\end{Remark}
Intuitively, if $\br$ is $\RC$, then the processes $\z_{w}^{\br}$ obtained by multiplying
$\br(t)$ with future inputs $\bu_{w}(t+|w|)$ are zero-mean wide-sense stationary, moreover,
the covariances $T_{w,v}^{\br}$ have a specific recursive structure. This recursive structure
can be interpreted as wide-sense stationarity, if $w$ is viewed as a time instant on the
multidimensional time axis $\Sigma^{+}$. This property coincides with the property required
of multidimensional positive kernels in \cite{Popescu1} and a special instance of
this property was also
used in \cite{FrazhoBilin,StochBilin}.
This property (Part \ref{RC4} of Definition \ref{def:RC}) is crucial for developing
stochastic realization theory, especially for the realization algorithm.
\begin{Example}[Examples of \RC\ processes]
\label{rc:example1}
   Assume that $L=\Sigma^{+}$, $\br(t)$ is a zero-mean wide-sense stationary process,
   $\br(t)$ and $\bu_{\sigma}(t+k)$, $k \ge 0$ are independent, 
   $\bu_{\sigma}(t)$ are i.i.d and $E[\bu^{2}_{\sigma}]=p_{\sigma}$, and
   $\{\bu_{\sigma_1}(t)\}_{t \in \mathbb{Z}}$, $\{\bu_{\sigma_2}(t)\}_{t \in \mathbb{Z}}$
   are uncorrelated for all $\sigma_1 \ne \sigma_2$, i.e.
   $E[\bu_{\sigma_1}(t)\bu_{\sigma_2}(l)]=0$, $l,t \in \mathbb{Z}$. Moreover, assume that
   $\bu_{\sigma}(t)$ satisfies Assumption \ref{input:assumption} and that for all $w \in \Sigma^{+}$,
   $E[\br(t-|w|)\bu_w(t-1)\br^T(t)]$ is independent of $t$. 
   Then $\br$ is a \RC\ process.

   One particular examples of the situation is when $\bu_{\sigma}(t)$ is a zero mean i.i.d 
   Gaussian process.  Another example if when $\Sigma=\{0,1\}$, 
   $\bu_{0}(t)=1$ and $\bu_{1}(t)$ is an i.i.d zero mean Gaussian process with variance
   $p_{\sigma}$. This latter example is the one which occurs in bilinear stochastic systems.
   Finally, consider 
   $\bu_{\sigma}(t)$ is as in Example \ref{disc:input}. Assume, moreover that
   $\btheta(t)$ are i.i.d 
   $p_{\sigma}=P(\btheta(t)=\sigma)$. Then with $L=\Sigma^{+}$, $\br(t)$ is an \RC\ process.
\end{Example}
Although Example \ref{rc:example1} covers a lot important cases, the example below
demonstrates that RC processes where $\bu_{\sigma}$ is not an i.i.d process also plays
an important role.
\begin{Example}
\label{rc:example2}
   Consider the process $\btheta$ from Example \ref{example:jump_markoviid:const} and
   assume that $\{\btheta(t+l) \mid l \ge 0\}$ and $\br(t)$ are conditionally independent
   w.r.t. to $\{\btheta(t-l) \mid l \ge 0\}$. Assume that $\br(t)$ is wide-sense
  stationary, square integrable and zero-mean. Then $\br(t)$ is a \RC\ process with
  $L$ defined in Example \ref{example:jump_markoviid:const}.
\end{Example}
Now we are ready to formulate the assumptions we are going
to make about \GBS{s}.
\begin{Assumption}
\label{gbs:def}
 In the sequel, we will only consider \GBS{s} which satisfy the following conditions.
 \begin{enumerate}
 \item  
 \label{gbs:def:prop1}
        The noise process $\v(t)$
        have the \RC\ property.
 \item  
 \label{gbs:def:prop2}
       For every $w,v \in \Sigma^{+}$, $w \ne v$, $\z_v^{\v}(t)$ and
        $\z_{w}^{\v}(t)$ are orthogonal, i.e. $E[\z_w^{\v}(t)(\z_v^{\v}(t))^T]=0$.
 \item  
 \label{gbs:def:prop4}
        The state $\x(t)$ belongs to the Hilbert-space generated by the entries of
        $\{\z_{w}^{\v}(t) \mid w \in \Sigma^{+}\}$.
        
 \item 
 \label{gbs:def:prop5}
      The matrix $\Sigma_{\sigma \in \Sigma} p_{\sigma}A_{\sigma}^T \otimes A_{\sigma}^T$ is
       stable, i.e. all its eigenvalues are inside the unit circle.

\item
\label{gbs:def:prop7}
  For all $\sigma_1,\sigma_2 \in \Sigma$, if $\sigma_1\sigma_2 \notin L$, then
  $A_{\sigma_2}A_{\sigma_1}=0$ and $A_{\sigma_2}K_{\sigma_1}T^{\v}_{\sigma_1,\sigma_1}=0$.
 \end{enumerate}
\end{Assumption}
 Intuitively, Part \ref{gbs:def:prop1} of Assumption \ref{gbs:def} requires that the state and
 noise process are stationary and that they are very loosely correlated with future
 inputs. Parts \ref{gbs:def:prop2}--\ref{gbs:def:prop1} of Assumption \ref{gbs:def} say that the noise processes are uncorrelated.
Parts \ref{gbs:def:prop4}--\ref{gbs:def:prop5} intuitively express the
 assumption that $\x(t)$ is the result of starting at zero initial state at $-\infty$ and
 allowing the system to be driven by the noise process alone. The stability assumption is
 there to guarantee that this can be done.
 In fact, Assumption \ref{gbs:def} yields the following.
 \begin{Lemma}
 \label{gbs:def:new_lemma1}
  If $\BS$ of the form \eqref{gen:filt:bil:def} satisfies Assumption \ref{gbs:def}, then $\begin{bmatrix} \v^T(t),\x^T(t) \end{bmatrix}^T$ is an \RC\ process, and
  hence $\x(t)$ is an \RC\ process. Moreover, $w,v \in \Sigma^{+}$, $|w| \ge |v|$, $\z^{\x}_w(t)$ and $\z^{\v}_v(t)$ are uncorrelated, i.e. 
  $E[\z^{\x}_w(t)(\z^{\v}_v(t))^T]=0$, and 
  \begin{equation} 
  \label{gbs:def:new_lemma1:eq1}
   \forall t \in \mathbb{Z}: 
    \x(t)=\sum_{w \in \Sigma^{*}} \sum_{\sigma \in \Sigma} \sqrt{p_{\sigma w}} A_{w}B_{\sigma}\z_{\sigma w}^{\v}(t).
  \end{equation}
  Here we used Notation \ref{matrix:prod} for the matrix product  $A_{w}$, $w \in \Sigma^{*}$ and convergence is understood in the mean-square sense.
\end{Lemma}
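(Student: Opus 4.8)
The plan is to verify each claim in turn, building on the structure imposed by Assumption \ref{gbs:def}. First I would establish \eqref{gbs:def:new_lemma1:eq1}, since the remaining claims will follow by expressing $\x(t)$ through this series. To obtain the series expansion, I would iterate the state equation \eqref{gen:filt:bil:def}: substituting $\x(t) = \sum_{\sigma \in \Sigma}(A_\sigma \x(t-1) + K_\sigma \v(t-1))\bu_\sigma(t-1)$ into itself $N$ times gives a finite sum over words $w$ of length at most $N$ involving terms $A_w K_\sigma \v(t-|w\sigma|)\bu_{\sigma w}(t-1)$ (suitably indexed), plus a remainder term of the form $\sum_{|w|=N} A_w \x(t-N)\bu_w(t-1)$. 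Rewriting the finite sum in terms of the normalized processes $\z_{\sigma w}^{\v}(t)$ from \eqref{gen:filt:eqzdef1} introduces the factors $\sqrt{p_{\sigma w}}$, using Notation \ref{ppnot1}. The key analytic step is to show the remainder tends to zero in mean square as $N \to \infty$: here I would use Part \ref{gbs:def:prop5} of Assumption \ref{gbs:def} (stability of $\sum_\sigma p_\sigma A_\sigma^T \otimes A_\sigma^T$), together with Part \ref{gbs:def:prop4} which places $\x(t)$ in the Hilbert space generated by $\{\z_w^{\v}(t)\}$, so that the second moment of the remainder is controlled by an expression of the form $\sum_{|w|=N} p_w \,\mathrm{tr}(A_w Q A_w^T)$ for a fixed positive semidefinite $Q$; this is exactly $\mathrm{tr}(\mathcal{Z}^N(Q))$-type quantity for the operator $V \mapsto \sum_\sigma p_\sigma A_\sigma V A_\sigma^T$, which converges to zero by the spectral radius condition (cf. the argument in the proof of Theorem \ref{hscc_pow_stab:theo2} and \cite[Proposition 2.5]{CostaBook}). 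Convergence of the infinite series itself follows from the same estimate applied to the tail.

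Next I would prove that $\begin{bmatrix}\v^T(t) & \x^T(t)\end{bmatrix}^T$ is \RC. Wide-sense stationarity of the joint process $(\v(t),\x(t),\{\z_w^{\v}(t)\},\{\z_w^{\x}(t)\})$ follows from \eqref{gbs:def:new_lemma1:eq1}: each $\z_w^{\x}(t)$ is, by that formula multiplied through by $\bu_w(t-1)/\sqrt{p_w}$, a mean-square convergent series of shifted products of $\v$-processes and input products, and since $\v(t)$ is \RC\ (Part \ref{gbs:def:prop1}) all the relevant covariances are shift-invariant; continuity of the inner product under mean-square limits then transfers shift-invariance to the limit. Zero mean is immediate. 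The recursive covariance identities in Part \ref{RC3} of Definition \ref{def:RC} for the combined process would be checked by expanding $T^{[\v;\x]}_{w\sigma,v\sigma'}$ blockwise: the $\v$-$\v$ block is given by hypothesis; the $\x$-$\v$ and $\x$-$\x$ blocks are handled by substituting \eqref{gbs:def:new_lemma1:eq1} and using that the map $w \mapsto \z_w^{\v}(t)$ already satisfies the recursion, so the extra letter $\sigma$ appended on the right propagates through the $A_w, K_\sigma$ coefficients in the expected way. The vanishing conditions ($T_{\sigma,\sigma'}=0$ for $\sigma\ne\sigma'$, and the $L$-related conditions) follow from Parts \ref{gbs:def:prop2} and \ref{gbs:def:prop7} of Assumption \ref{gbs:def} — in particular $A_{\sigma_2}A_{\sigma_1}=0$ and $A_{\sigma_2}K_{\sigma_1}T^{\v}_{\sigma_1,\sigma_1}=0$ when $\sigma_1\sigma_2 \notin L$ are precisely what is needed to kill the terms indexed by non-admissible words in the expansion of $\z_w^{\x}$, so that $\z_w^{\x}(t)=0$ a.s.\ for $w \notin L$. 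Part \ref{RC4} then follows from Remark \ref{rem:RC} (it is implied by the other conditions), or directly from the same vanishing terms.

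Finally, the orthogonality claim $E[\z^{\x}_w(t)(\z^{\v}_v(t))^T]=0$ for $|w|\ge|v|$: using \eqref{gbs:def:new_lemma1:eq1}, $\z^{\x}_w(t)$ is a limit of linear combinations of $\z^{\v}_{u w}(t)$ over $u \in \Sigma^{*}$, so each such term has word-length $|u|+|w| \ge |w| \ge |v|$, and if $|uw| > |v|$ or $uw \ne v$ then $E[\z^{\v}_{uw}(t)(\z^{\v}_v(t))^T]=0$ by Part \ref{gbs:def:prop2}; the only potential surviving term would need $uw = v$ with $u=\epsilon$, i.e.\ $w=v$, but then $|w|=|v|$ and that boundary case is covered by noting the corresponding coefficient, or by handling $|w|=|v|$ via the same index-counting after noting $u=\epsilon$ forces the leading coefficient to involve $K$ against an uncorrelated component — more cleanly, one restricts to $|w| > |v|$ first and then obtains $|w|=|v|$ by a separate direct check that the leading term $\sqrt{p_w}B_\sigma\z_w^{\v}$ contributes nothing against $\z_v^{\v}$ when $w\ne v$ and the diagonal $w=v$ case is excluded by $|w|\ge|v|$ being compatible with $w=v$ only trivially — I would phrase this carefully in the writeup. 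I expect the main obstacle to be the mean-square convergence of the remainder term and the attendant justification that one may pass the covariance computations through the infinite sum; once the stability estimate is in place and continuity of the inner product is invoked, the algebraic identities are bookkeeping driven entirely by Parts \ref{gbs:def:prop2} and \ref{gbs:def:prop7} of Assumption \ref{gbs:def}.
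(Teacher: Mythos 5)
Your proposal is correct and follows essentially the same route as the paper's proof: iterate the state recursion, show the length-$k$ remainder $\sum_{|w|=k}\sqrt{p_w}A_w\z_w^{\x}(t)$ vanishes in mean square via the operator $V\mapsto\sum_{\sigma}p_\sigma A_\sigma VA_\sigma^T$ and Part \ref{gbs:def:prop5} of Assumption \ref{gbs:def}, and obtain the covariance identities from the \RC\ property of $\v$ together with Parts \ref{gbs:def:prop2} and \ref{gbs:def:prop7}. Two small remarks: the step you flag as the main obstacle (passing multiplication by $\bu_\sigma(t)$ through mean-square limits) is exactly the paper's Lemma \ref{gen:filt:proof:lemma5}, resting on the estimate $E[\z^2\bu_\sigma^2(t)]\le p_\sigma E[\z^2]$; and your worry about a surviving term with $u=\epsilon$ in the orthogonality argument is moot, since the series \eqref{gbs:def:new_lemma1:eq1} runs only over nonempty prefixes $\sigma w$, so every term contributing to $\z_w^{\x}(t)$ has word length strictly greater than $|w|\ge|v|$.
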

  In fact, we can also show that under some mild conditions, the trajectories 
  of $\BS$ converge to $\x(t)$ as $t$ goes to infinity.
 \begin{Lemma}
 \label{gbs:def:new_lemma1.2}
  With the assumptions of Lemma \ref{gbs:def:new_lemma1}, 
  if $\hat{\x}(t)$ is a process which satisfies the first equation of
  \eqref{gen:filt:bil:def} and for all $w,v \in \Sigma^{*}$, $\sigma_1,\sigma_2 \in \Sigma$,
   $|w|=|v|$,
   $T_{\sigma_1w,\sigma_2v}^{\hat{\x}}=E[\hat{\x}(0)\hat{\x}^T(0)\bu_{w}(|w|-1)\bu_{v}(|v|-1)]$ is such that
   $T_{\sigma_1w,\sigma_2v}^{\hat{\x}}=0$ if $\sigma_1w \ne \sigma_2v$ and
   $T_{\sigma_1w,\sigma_2v}^{\hat{\x}}=E[\hat{\x}(0)\hat{\x}^T(0)\bu_{\sigma}^2(0)]p_{w}$ otherwise, then
   \[ \lim_{t \rightarrow \infty} E[||\x(t)-\hat{\x}(t)||^{2}]=0. \]
 \end{Lemma}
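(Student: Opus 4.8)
The plan is to work with the error process $\e(t)=\x(t)-\hat\x(t)$. Since $\x$ and $\hat\x$ both satisfy the first equation of \eqref{gen:filt:bil:def} with the \emph{same} noise process $\v$, subtracting the two recursions cancels the $K_{\sigma}\v(t)$ terms and leaves the homogeneous recursion $\e(t+1)=\sum_{\sigma\in\Sigma}A_{\sigma}\e(t)\bu_{\sigma}(t)$. Iterating this from time $0$ and using Notation \ref{matrix:prod} and \eqref{gen:filt:eqzdef0}, I would record that for every $t\ge 1$
\[
 \e(t)=\sum_{w\in\Sigma^{*},\,|w|=t}A_{w}\,\e(0)\,\bu_{w}(t-1)=\boldsymbol{\xi}_{1}(t)-\boldsymbol{\xi}_{2}(t),\qquad
 \boldsymbol{\xi}_{1}(t)=\sum_{|w|=t}A_{w}\x(0)\bu_{w}(t-1),\quad \boldsymbol{\xi}_{2}(t)=\sum_{|w|=t}A_{w}\hat\x(0)\bu_{w}(t-1),
\]
where the words $w\notin L$ drop out because $A_{w}=0$ by Part \ref{gbs:def:prop7} of Assumption \ref{gbs:def}. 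By the triangle inequality in the Hilbert space of \S\ref{sec:hilbert:rev}, $E[\|\e(t)\|^{2}]\le 2E[\|\boldsymbol{\xi}_{1}(t)\|^{2}]+2E[\|\boldsymbol{\xi}_{2}(t)\|^{2}]$; splitting $\e(0)=\x(0)-\hat\x(0)$ \emph{before} squaring is deliberate, since it avoids ever having to control the cross-covariance of $\x(0)$ and $\hat\x(0)$, on which no hypothesis is given. So it suffices to show $E[\|\boldsymbol{\xi}_{i}(t)\|^{2}]\to 0$ for $i=1,2$.

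Next I would compute the second moment of $\boldsymbol{\xi}_{1}$: for $|w|=|v|=t$,
\[
 E\big[\boldsymbol{\xi}_{1}(t)\boldsymbol{\xi}_{1}^{T}(t)\big]=\sum_{|w|=|v|=t}A_{w}\,E\big[\x(0)\x^{T}(0)\bu_{w}(t-1)\bu_{v}(t-1)\big]\,A_{v}^{T}.
\]
By Lemma \ref{gbs:def:new_lemma1} the process $\x$ is \RC, so taking $s=t$ in Part \ref{RC1} of Definition \ref{def:RC} together with $\z^{\x}_{w}(t)=\x(0)\bu_{w}(t-1)/\sqrt{p_{w}}$ gives $E[\x(0)\x^{T}(0)\bu_{w}(t-1)\bu_{v}(t-1)]=\sqrt{p_{w}p_{v}}\,T^{\x}_{w,v}$, and repeated use of Part \ref{RC3} of Definition \ref{def:RC} to strip common last letters (with non-admissible words again contributing zero via $A_{w}=0$ and Parts \ref{RC3}--\ref{RC4}) forces $T^{\x}_{w,v}=0$ for $w\neq v$ and $T^{\x}_{w,w}=T^{\x}_{\sigma,\sigma}$ where $\sigma$ is the first letter of $w$. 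Writing $w=\sigma w'$ with $|w'|=t-1$, so $A_{w}=A_{w'}A_{\sigma}$ and $p_{w}=p_{\sigma}p_{w'}$, and introducing the operator $\mathcal{L}(V):=\sum_{\sigma\in\Sigma}p_{\sigma}A_{\sigma}VA_{\sigma}^{T}$ (for which one checks by induction that $\mathcal{L}^{N}(V)=\sum_{|w|=N}p_{w}A_{w}VA_{w}^{T}$), the double sum collapses to $E[\boldsymbol{\xi}_{1}(t)\boldsymbol{\xi}_{1}^{T}(t)]=\mathcal{L}^{\,t-1}(V_{1})$ with $V_{1}:=\sum_{\sigma\in\Sigma}p_{\sigma}A_{\sigma}T^{\x}_{\sigma,\sigma}A_{\sigma}^{T}\ge 0$.

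The same computation applies to $\boldsymbol{\xi}_{2}$, with the hypothesis on $T^{\hat\x}$ playing the role of the \RC\ structure of $\x$: the hypothesis amounts to the statement that $E[\hat\x(0)\hat\x^{T}(0)\bu_{w}(t-1)\bu_{v}(t-1)]$ vanishes unless $w=v$ and, when $w=v$, equals $E[\hat\x(0)\hat\x^{T}(0)\bu_{\sigma}^{2}(0)]$ times a factor depending only on the first letter $\sigma$ of $w$, so that $E[\boldsymbol{\xi}_{2}(t)\boldsymbol{\xi}_{2}^{T}(t)]=\mathcal{L}^{\,t-1}(V_{2})$ for a fixed $V_{2}\ge 0$. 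To finish, I would invoke stability as in the proof of Theorem \ref{hscc_pow_stab:theo2}: identifying $\Re^{n\times n}$ with $\Re^{n^{2}}$, the operator $\mathcal{L}$ is the adjoint, with respect to the trace inner product, of the map $V\mapsto\sum_{\sigma}p_{\sigma}A_{\sigma}^{T}VA_{\sigma}$, whose matrix is $\sum_{\sigma}p_{\sigma}A_{\sigma}^{T}\otimes A_{\sigma}^{T}$; this matrix is stable by Part \ref{gbs:def:prop5} of Assumption \ref{gbs:def}, and since adjoint operators have the same spectrum, $\mathcal{L}$ is stable too. As $\mathcal{L}$ maps positive semidefinite matrices to positive semidefinite matrices, \cite[Proposition 2.5]{CostaBook} gives $\mathcal{L}^{k}(V)\to 0$ for every $V\ge 0$; applied with $V=V_{1},V_{2}$ this yields $E[\|\boldsymbol{\xi}_{i}(t)\|^{2}]=\mathrm{tr}\,\mathcal{L}^{\,t-1}(V_{i})\to 0$, hence $\lim_{t\to\infty}E[\|\x(t)-\hat\x(t)\|^{2}]=0$.

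The hard part will be the bookkeeping in the middle two steps: carefully collapsing the word-indexed double sums down to a single iterate $\mathcal{L}^{t-1}$ of a fixed positive semidefinite matrix, which forces one to track admissibility so that the contributions of words $w\notin L$ genuinely drop out (via Part \ref{gbs:def:prop7} of Assumption \ref{gbs:def} and Parts \ref{RC3}--\ref{RC4} of Definition \ref{def:RC}) and to verify that the \RC\ covariance identities for $\x$ and the hypothesis on $T^{\hat\x}$ produce exactly the ``diagonal, first-letter-only'' structure used above. Everything else is routine: the products $\x(0)\bu_{w}(t-1)$ and $\hat\x(0)\bu_{w}(t-1)$ are square integrable — built into the \RC\ framework and into the hypothesis of the lemma, respectively — and all the sums and expectations appearing are finite and may be interchanged freely.
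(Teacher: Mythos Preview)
Your proposal is correct and essentially coincides with the paper's proof. The paper phrases the decomposition slightly differently --- writing $\x(t)-\hat\x(t)$ as the tail of the $\v$-series for $\x(t)$ (from \eqref{gbs:def:new_lemma1:eq1}) minus the homogeneous remainder $r_t(t)=\sum_{|w|=t}\sqrt{p_w}A_w\z^{\hat\x}_{w}(t)$ --- but the tail is precisely your $\boldsymbol{\xi}_1(t)$ (this is how \eqref{gbs:def:new_lemma1:eq1} was obtained in the proof of Lemma \ref{gbs:def:new_lemma1}), and $r_t(t)$ is your $\boldsymbol{\xi}_2(t)$; both pieces are then shown to vanish via the same $\mathcal{L}^{t-1}$ computation and the stability argument from \cite[Proposition 2.5]{CostaBook}, exactly as you do.
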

  If $\hat{\x}(0)$ is independent of $\bu_{\sigma}(t)$, $t \ge 0$, $\sigma \in \Sigma$ and
  $E[\bu_{w}(|w|-1)\bu_{v}(|v|-1)]=0$ for $w \ne v$ and 
  $p_{w}=E[\bu_{w}(|w|-1)\bu_{v}(|v|-1)]=0$, then the assumptions of 
  Lemma \ref{gbs:def:new_lemma1.2} are satisfied. In particular,
  the assumptions of Lemma \ref{gbs:def:new_lemma1.2} are the standard ones made for
  the systems described in Examples \ref{lin:example}--\ref{example:jump_markoviid}.
  Finally, note that $\x(t)$ is wide-sense stationary and the
  following holds.
 \begin{Lemma}
 \label{gbs:def:new_lemma1.3}
 Consider a \GBS\ $\BS$ of the form \eqref{gen:filt:bil:def} and assume that
 $\BS$ satisfies Assumptions \ref{gbs:def}. Consider the 
 equation
 \begin{equation} 
 \label{gen:filt:theo3.2:eq1}
   P_{\sigma}=p_{\sigma} (\sum_{\sigma_1 \in \Sigma, \sigma_1\sigma \in L} A_{\sigma_1}P_{\sigma_1}A_{\sigma_1}^T + K_{\sigma_1}Q_{\sigma_1}K_{\sigma_1}^T)
\end{equation}
 where $Q_{\sigma}=E[\v(t)\v^T(t)\bu_{\sigma}^2(t)]$ and
 $\{P_{q}\}_{q \in Q}$ is a family of matrix-valued indeterminate. Then
 \eqref{gen:filt:theo3.2:eq1} has a unique solution determined by
 $P_{\sigma}=E[\x(t)\x^T(t)\bu_{\sigma}^2(t)]$, $\sigma \in \Sigma$.
 \end{Lemma}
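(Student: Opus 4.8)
The plan is to derive the fixed-point equation \eqref{gen:filt:theo3.2:eq1} directly from the state recursion of $\BS$, and then to use the stability hypothesis (Part \ref{gbs:def:prop5} of Assumption \ref{gbs:def}) together with the uniqueness machinery behind Theorem \ref{hscc_pow_stab:theo2} to conclude that the solution is unique. First I would set $P_{\sigma}\doteq E[\x(t)\x^T(t)\bu_{\sigma}^{2}(t)]$; by wide-sense stationarity of $\x(t)$ (established in Lemma \ref{gbs:def:new_lemma1}, since $\x$ is \RC), this does not depend on $t$. Then I would substitute the first equation of \eqref{gen:filt:bil:def}, namely $\x(t+1)=\sum_{\sigma_1\in\Sigma}(A_{\sigma_1}\x(t)+K_{\sigma_1}\v(t))\bu_{\sigma_1}(t)$, into $E[\x(t+1)\x^T(t+1)\bu_{\sigma}^{2}(t+1)]$ and expand. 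This produces a sum over pairs $\sigma_1,\sigma_2\in\Sigma$ of terms of the form $E[(A_{\sigma_1}\x(t)+K_{\sigma_1}\v(t))(A_{\sigma_2}\x(t)+K_{\sigma_2}\v(t))^T\bu_{\sigma_1}(t)\bu_{\sigma_2}(t)\bu_{\sigma}^{2}(t+1)]$.

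The key simplifications are: (a) the \RC\ structure forces the cross terms $\sigma_1\ne\sigma_2$ to vanish — this is exactly the content of $T^{\v}_{\sigma_1,\sigma_2}=0$ for $\sigma_1\ne\sigma_2$ in Part \ref{RC3} of Definition \ref{def:RC} and the analogous statements in Lemma \ref{gbs:def:new_lemma1} for $\x$ and for the cross-covariance of $\x$ and $\v$, combined with the multiplicative structure of $\bu_{\sigma_1 w}$ versus $\bu_{\sigma_2 v}$; (b) the cross terms between $\x(t)$ and $\v(t)$ within a single $\sigma_1$ vanish because $\x(t)$ lies in the Hilbert space generated by $\{\z^{\v}_w(t)\mid w\in\Sigma^{+}\}$ (Part \ref{gbs:def:prop4}) while $\z^{\v}_{\sigma_1}(t+1)$ is built from $\v(t+1)$ and $\bu_{\sigma_1}(t+1)$, which should be orthogonal to everything in that space at time $t$ — more precisely one uses the joint wide-sense stationarity and the fact that $E[\x(t)\v^T(t)\bu_{\sigma_1}^2(t)]$ can be related to $T^{\v}$-type covariances that vanish by the uncorrelatedness hypotheses; and (c) the factor $\bu_{\sigma}^2(t+1)$ together with the restriction that $\z^{\v}_{\sigma_1}(t+1)$ is nonzero only when $\sigma_1\sigma\in L$ (Part \ref{gbs:def:prop7}: $A_{\sigma}A_{\sigma_1}=0$ and $A_{\sigma}K_{\sigma_1}T^{\v}_{\sigma_1,\sigma_1}=0$ when $\sigma_1\sigma\notin L$) restricts the surviving sum to $\sigma_1$ with $\sigma_1\sigma\in L$. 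Collecting the surviving diagonal terms and pulling out the scalar $p_{\sigma}=E[\bu_{\sigma}^{2}(t+1)]$ (using independence of $\bu_{\sigma}(t+1)$ from the time-$t$ quantities, or the appropriate \RC\ factorization) yields precisely $P_{\sigma}=p_{\sigma}\sum_{\sigma_1:\,\sigma_1\sigma\in L}\big(A_{\sigma_1}P_{\sigma_1}A_{\sigma_1}^{T}+K_{\sigma_1}Q_{\sigma_1}K_{\sigma_1}^{T}\big)$, so $\{P_{\sigma}=E[\x(t)\x^T(t)\bu_{\sigma}^2(t)]\}$ is a solution.

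For uniqueness, I would rewrite \eqref{gen:filt:theo3.2:eq1} as a linear fixed-point equation $\mathbf{P}=\mathcal{L}(\mathbf{P})+\mathbf{b}$ on the tuple $\mathbf{P}=(P_{\sigma})_{\sigma\in\Sigma}$, where $\mathbf{b}$ collects the constant terms $p_{\sigma}\sum_{\sigma_1:\sigma_1\sigma\in L}K_{\sigma_1}Q_{\sigma_1}K_{\sigma_1}^{T}$, and $\mathcal{L}$ is the linear map $(P_{\sigma})\mapsto\big(p_{\sigma}\sum_{\sigma_1:\sigma_1\sigma\in L}A_{\sigma_1}P_{\sigma_1}A_{\sigma_1}^{T}\big)$. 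Uniqueness holds iff $I-\mathcal{L}$ is invertible, i.e.\ iff $1$ is not an eigenvalue of $\mathcal{L}$; it suffices to show the spectral radius of $\mathcal{L}$ is strictly less than $1$. Here the stability hypothesis enters: one shows that $\mathcal{L}$ is similar (after the standard $\Re^{n\times n}\cong\Re^{n^2}$ identification of \cite[Section 2.1]{CostaBook}) to a map whose matrix is dominated by $\sum_{\sigma\in\Sigma}p_{\sigma}A_{\sigma}^{T}\otimes A_{\sigma}^{T}$, which is stable by Part \ref{gbs:def:prop5}; alternatively, since each $A_{\sigma_1}P_{\sigma_1}A_{\sigma_1}^{T}$ appears with the indicator $\sigma_1\sigma\in L$, the operator $\mathcal{L}$ is a ``sub-operator'' of $\mathbf{P}\mapsto(p_{\sigma}\sum_{\sigma_1}A_{\sigma_1}P_{\sigma_1}A_{\sigma_1}^{T})$ in the positive-operator ordering, so by monotonicity of the spectral radius on positive operators (\cite[Proposition 2.5]{CostaBook} and its surrounding theory) its spectral radius is at most that of the full operator, which is that of $\sum_{\sigma}p_{\sigma}A_{\sigma}^{T}\otimes A_{\sigma}^{T}$, hence $<1$. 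Therefore $I-\mathcal{L}$ is invertible and the solution $\mathbf{P}=(I-\mathcal{L})^{-1}\mathbf{b}$ is unique, which must then coincide with $\{E[\x(t)\x^T(t)\bu_{\sigma}^2(t)]\}$.

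The main obstacle I expect is bookkeeping in the expansion step: carefully justifying that every cross term ($\sigma_1\ne\sigma_2$, and $\x$-versus-$\v$ within a fixed $\sigma_1$, and terms with $\sigma_1\sigma\notin L$) vanishes requires invoking the right piece of Definition \ref{def:RC}, Lemma \ref{gbs:def:new_lemma1}, and Parts \ref{gbs:def:prop2}, \ref{gbs:def:prop4}, \ref{gbs:def:prop7} of Assumption \ref{gbs:def} in the correct combination, and keeping track of the scalar normalizations $p_{w}$, $\sqrt{p_w}$ hidden in the $\z^{\br}_w$ notation. The uniqueness argument is comparatively routine once the monotonicity/spectral-radius fact from \cite{CostaBook} is brought in, mirroring the second half of the proof of Theorem \ref{hscc_pow_stab:theo2}.
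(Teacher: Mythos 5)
Your proposal is correct in substance. The existence half is essentially the paper's argument: the paper also expands the state recursion (written in terms of the variables $\z^{\x}_{\sigma_1\sigma}(t+1)$, $\z^{\v}_{\sigma_1\sigma}(t+1)$) and kills the cross terms using the \RC\ structure, Part \ref{gbs:def:prop2} of Assumption \ref{gbs:def}, and the orthogonality of $\z^{\x}_w$ and $\z^{\v}_v$ for $|w|\ge|v|$ from Lemma \ref{gbs:def:new_lemma1}. One small bookkeeping correction: the terms with $\sigma_1\sigma\notin L$ drop out of the existence derivation because $\bu_{\sigma_1\sigma}(t+1)=0$ almost surely for inadmissible words (Definition \ref{gen:filt:ass1:cond1:def1}), i.e.\ $T^{\x}_{\sigma_1\sigma,\sigma_1\sigma}=0$, not because of Part \ref{gbs:def:prop7}; that part enters only in the uniqueness step. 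For uniqueness your route genuinely differs from the paper's. You argue that the affine map $\mathbf{P}\mapsto\mathcal{L}(\mathbf{P})+\mathbf{b}$ has a unique fixed point because $\mathcal{L}$ is dominated, in the cone of tuples of positive semidefinite matrices, by the ``unrestricted'' operator whose nonzero spectrum coincides with that of $\Z(V)=\sum_\sigma p_\sigma A_\sigma VA_\sigma^T$, and then invoke monotonicity of the spectral radius for cone-preserving operators. This works, but the monotonicity fact is an extra ingredient you would need to state and justify (it is standard Perron--Frobenius theory for positive operators, and available in the MJLS literature, but \cite[Proposition 2.5]{CostaBook} by itself is only the stability characterization). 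The paper instead uses a more elementary direct argument on the homogeneous equation: it sandwiches the difference $\hat P_\sigma$ of two solutions as $A_\sigma\hat P_\sigma A_\sigma^T$, uses $A_\sigma A_{\sigma_1}=0$ for $\sigma_1\sigma\notin L$ (Part \ref{gbs:def:prop7}) to remove the $L$-restriction, sums over $\sigma$ to get $Q=\Z(Q)$ for $Q=\sum_\sigma A_\sigma\hat P_\sigma A_\sigma^T$, concludes $Q=0$ since $1$ is not an eigenvalue of $\Z$, and back-substitutes to get $\hat P_\sigma=0$. Your approach buys generality and avoids the sandwiching trick, at the cost of an additional (though standard) spectral-monotonicity lemma; the paper's buys self-containedness. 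Either is acceptable.
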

  The proofs of Lemma \ref{gbs:def:new_lemma1}--Lema \ref{gbs:def:new_lemma1.3} 
  require certain technical results,
  for this reason we postpone them to \S \ref{proof:gbs:tech}.
  The  Lemma \ref{gbs:def:new_lemma1} says that the state of
  $\Sigma$ is the one which one would obtain by starting the system at zero at
  $-\infty$. Lemma \ref{gbs:def:new_lemma1.2} says that if we pick any initial state which satisfies some mild conditions, then the resulting state trajectory of $\Sigma$ will converge to
  the stationary trajectory $\x(t)$. In fact, the existence of the right-hand side of
  \eqref{gbs:def:new_lemma1:eq1} does not require Part \ref{gbs:def:prop4} of
  Assumption \ref{gbs:def}. Hence, Lemma \ref{gbs:def:new_lemma1} -- \ref{gbs:def:new_lemma1.2}  can be 
  interpreted as stating that if the system $\Sigma$ satisfies Assumption \ref{gbs:def},
  except Part \ref{gbs:def:prop4}, then it has a state trajectory which satisfies 
  Part \ref{gbs:def:prop4}, moreover any state-trajectory of $\Sigma$ converges to
  that particular one. The situation is similar to that of for stable linear systems: asymptotically, a the state-trajectory of a stable linear system is stationary. 
  Finally, Lemma \ref{gbs:def:new_lemma1.3} provides a formula for the state covariance as a solution of a Lyapunov-like equation. Note that similar formulas are well-known for the linear \cite{Caines1} and even bilinear case \cite{StochBilin,FrazhoBilin}.
 The formula of Lemma \ref{gbs:def:new_lemma1.3} represents a generalization of those
 well-known results.
  
 We present a number of examples of systems which satisfy
 Assumptions \ref{gbs:def}.
\begin{Example}[Linear systems]
\label{example:lin:gbs_def}
 A stationary stable Gaussian linear system with the standard assumption
 can be viewed as a \GBS\ which satisfies Assumption \ref{gbs:def}. In this case,
 $\Sigma=\{0\}$, $\bu_{0}(t)=1$, $A_0$ is stable, $L=\Sigma^{+}$, 
 $\v(t)$ is an i.i.d process which is Gaussian and zero mean. 
 If we assume that the initial state of the
 system at time $\-\infty$ was zero, then it is easy to see that
 the resulting \GBS\ satisfies Assumption \ref{gbs:def}.
\end{Example}
 \begin{Example}[Bilinear systems]
\label{example:bilin:gbs_def}
  The bilinear systems from \cite{StochBilin,FrazhoBilin} satisfy Assumption \ref{gbs:def}. In that case, $\Sigma=\{0,1\}$, $\bu_0(t)=1$, $\bu_{\sigma}(t)$ is a white noise Gaussian process, $\v(t)$ is also a white noise Gaussian process, $B_{1}=0$ and
the random variables $\v(t)$ and $\bu_{1}(t+l)$, $l \in \mathrm{Z}$ are assumed to be independent (the $\sigma$-algebra generated by them is independent). 
Moreover, it is assumed that $\x(t)$ is zero-mean, 
wide-sense stationary and satisfies \eqref{gbs:def:new_lemma1:eq1}. 
In fact in \cite{StochBilin,FrazhoBilin}  it was not explicitly assumed that $\x(t)$ satisfies \eqref{gbs:def:new_lemma1:eq1}, but from the discussion after Lemma \ref{gbs:def:new_lemma1} it
 follows that this can be assumed without loss of generality. Moreover, the state process of the realization constructed by the algorithm \cite{StochBilin,FrazhoBilin} does 
 satisfy the assumptions of Lemma \ref{gbs:def:new_lemma1}.
 \end{Example}
\begin{Example}[Jump-linear systems driven by i.i.d.]
\label{example:jumplin:gbs_def}
 Consider jump-linear systems driven by an i.i.d process as described in Example
 \ref{example:jump_markoviid}. In this case, $L=\Sigma^{+}$. 
  Assume that $\{\btheta(t)\}_{t \in \mathrm{Z}}$ are independent,
 identically distributed, $p_{\sigma}=P(\btheta(t)=\sigma) > 0$, $\sigma \in \Sigma$.
 Assume that the noise process is $\{\v(t)\}_{t \in \mathrm{Z}}$ is independent of
 $\{\btheta(t)\}_{t \in \mathrm{Z}}$ and that $\v(t)$ is a wide-sense colored noise process
 i.e. $E[\v(t)\bu_{w}(l-1)\v^T(l)]=0$, $l > t$, $w \in \Sigma^{+}$, $|w|=l-t+1$, $E[\v(t)]=0$, $E[\v(t)\v^T(t)]=Q > 0$.
 Assume that  Part \ref{gbs:def:prop4}, Part \ref{gbs:def:prop5} and Part \ref{gbs:def:prop7}
 of Assumption \ref{gbs:def} holds. Then the system satisfies Assumption \ref{gbs:def}.
 Note that the assumptions we made are quite mild, they are similar to the ones of
 \cite{Kotsalis}. 
\end{Example}
 The examples above represent a special case of the following class of \GBS{s}.
\begin{Example}[\GBS\ with independent inputs]
\label{example:iid:gbs_def}
 Consider a \GBS $\BS$ such that $\v$ and $\bu_{\sigma}$, $\sigma \in \Sigma$ satisfy
 Example \ref{rc:example1}. That is, $\bu_{\sigma}$ is an i.i.d process,
 $E[\bu_{\sigma}^2(t)]=p_{\sigma}$, and the $\sigma$-algebras generated by 
  $\{\v(t-l)\}_{l=0}^{\infty}$ and
 $\{\bu_{\sigma}(t+l) \mid \sigma \in \Sigma,l \ge 0\}$ are independent for any $t$.
 Assume moreover that $\v$ is a zero mean wide sense stationary process and
 $E[\v(t-l)\v^T(t)]=0$, $l > 0$, $w \in \Sigma^{+}$, $|w|=l$, $t \in \mathrm{Z}$.
 Let $L=\Sigma^{+}$ and assume
 $\sum_{\sigma \in \Sigma} p_{\sigma}A_{\sigma}^T \otimes A_{\sigma}^T$ is a stable matrix. 
 Assume that the state $\x(t)$ is obtained by starting the
 system in zero initial state at time $-\infty$. Then $\BS$ satisfies Assumption \ref{gbs:def}.
\end{Example}
 Examples \ref{example:lin:gbs_def}--\ref{example:jumplin:gbs_def} represent
 special cases of Example \ref{example:iid:gbs_def}.
 Example \ref{example:iid:gbs_def} can also be used to obtain bilinear jump-markov
systems as described in Example \ref{example:bilin_jump_markoviid}.
Unfortunately, Example \ref{example:iid:gbs_def} does not cover the case of
jump-markov linear systems where the discrete state process is not i.i.d.
Below we show that even such cases can be cast into our framework. Here we only
present a special class of jump-markov linear systems, the general case is dealt
with in \S ref{sect:real}.
\begin{Example}[Jump-markov linear systems with restricted switching]
\label{example:const:gbs_def}
 Consider the input process $\bu_{\sigma}$, $\sigma \in \Sigma$ 
 described in Example \ref{example:jump_markoviid:const}. Consider a \GBS\
 with this input process, such that the following holds.
 Denote by $\mathcal{D}_{t}$ the $\sigma$-algebra generated by $\{\btheta(l) \mid l < t\}$. Assume that $\v(t)$ is a wide-sense stationary zero mean process,
such that $\v(t)$ and $\v(l)$, $l \ne t$ are  $\v(t)$ and $\v(l)$, 
   $l \le t$ are conditionally uncorrelated with respect to the $\sigma$-algebra $\mathcal{D}_{l,t-1}$ generated by $\{\btheta(t)\}_{t=l}^{t_1-1}$, i.e.
    $E[\v(t)\v^T(l)\mid \mathcal{D}_{l,t-1}]=0$. Moreover, assume that the
$\sigma$-algebras generated by
$\{\btheta(t+l)\}_{l=0}^{\infty}$ and $\{\v(t-l)\}_{l=0}^{\infty}$ are
conditionally independent with respect to $\mathcal{D}_t$. 
In addition, assume that Part \ref{gbs:def:prop4}, Part \ref{gbs:def:prop5} and Part \ref{gbs:def:prop7} of Assumption \ref{gbs:def} holds.
 Then the resulting system will again satisfy Assumption \ref{gbs:def}.  
 The \GBS{s} described above can be thought of as a special class of jump-markov linear systems, where the transition probabilities of the discrete state process are either zero or depend only on the final state.  
\end{Example}
  
 Next, we state a number of assumptions on the output process $\y(t)$ which will guarantee
 existence of a \GBS\ realization of $\y$.
 To this end, recall that $\z_{w}(t)$ denotes the process $\z^{\y}_{w}(t)$.
%
When constructing a \GBS\ realization of $\y$, 
we will compute the orthogonal projection of the future outputs onto the
Hilbert space formed by the past outputs and inputs. 
In order to simplify the discussion about orthogonal projections, we will use the
following notation.
\begin{Notation}[Orthogonal projection $E_l$]
 Let $Z$ be a set of $\mathbb{R}^{p}$-valued mean-square integrable
 random variables. Let $\z \in \mathbb{R}^{k}$, $k > 0$,
 be another
 mean-square integrable random variable. We denote by
 \( E_{l}[\z \mid Z] \)
 the orthogonal projection of $\z$ onto the
 subspace $M$, where $M$ is the closure of
 the linear space spanned by the coordinates of the elements
 of $Z$.
\end{Notation}
One can interpret $E_{l}[\z \mid Z]$ as the best approximation (prediction) 
of $\z$ in terms of (infinite) linear combination of elements of $Z$. 
Next, we define the forward innovation process for $\y$.
\begin{Definition}[Forward innovation]
The \emph{forward innovation process} $\e$ of $\y$ is defined as
\begin{equation}
\label{gen:filt:inneq}
  \e(t)=\y(t)-E_{l}[\y(t) \mid \{ \z_{w}(t) \mid w \in \Sigma^{+} \} ].
 \end{equation}
\end{Definition}
That is, the forward innovation is the difference between the predicted
output and the actual one, if the prediction is based on linear extrapolation of
past outputs. The forward innovation process has all the properties 
required of the noise of a \GBS. Below we define a class of \GBS{s} where
$\e$ is the noise.
%
  \begin{Definition}[\GBS\ in forward innovation form]
    Let $\BS$ be \GBS\ of the form \eqref{gen:filt:bil:def}.
    Then $\BS$ is in 
    \emph{forward innovation form}, if $D=I_p$, 
     $\v(t)=\e(t)$ for all $t \in \mathbb{Z}$, and
    $\BS$ satisfies Assumption \ref{gbs:def}.
   \end{Definition}    
  That is, if $\Sigma$ is in forward innovation form, then the noise equals 
  $\e$ and  $C\x(t)$ equals
  the linear projection of $\y(t)$ to the space
  $\{\z_{w}(t) \mid w \in \Sigma^{+}\}$, i.e. $C\x(t)$ is the best linear estimate
  of $\y(t)$ in terms of $\{ \z_{w}(t)  \mid w \in \Sigma^{+}\}$.
  Moreover, due to Part \ref{gbs:def:prop4}
  of Assumption \ref{gbs:def:prop4}, the state $\x(t)$ of $\Sigma$ belongs to the Hilber-space
  generated by the variables $\{ \z_{w}(t) \mid w \in \Sigma^{+}\}$.  Hence,
  a realization in forward innovation form is its own Kalman-filter, and it can be
  viewed as a system which is driven by the past outputs and inputs.

 As we have mentioned before, for realizability by \GBS, 
 the covariances of the outputs and inputs should form a rational formal power series.
 Below, we define these formal power series.
 \begin{Definition}[Family of formal power series $\Psi_{\y}$]
    For each $j\in I = \{1,\ldots, p\}$, $\sigma \in \Sigma$, 
    define the formal power series 
    $S_{(j,\sigma)} \in \FPS$ 
    as
    \( S_{(j,\sigma)}(w)=(\Lambda^{\y}_{\sigma w})_{.,j}, \)
    where $(\Lambda^{\y}_{w\sigma})_{.,j}$ denotes the
    $j$th column of the $p \times p$ covariance matrix
    $\Lambda^{\y}_{w\sigma}=E[\y(t)\z_{w\sigma}^{T}(t)]$.
    Define the family of formal power series
    \begin{equation} 
    \label{gen:filt:pow2}
      \Psi_{\y}=\{ S_{(j,\sigma)} 
        \mid j\in I,  \sigma \in \Sigma\}.
    \end{equation}	
 \end{Definition}
   We can now state the following assumptions which guarantee
   existence of a \GBS\ realization.
\begin{Assumption}
  \label{output:assumptions}
    The process  $\y$ is \RC\ and the family of $\Psi_{\y}$ is 
    is square summable and rational.
\end{Assumption}
In addition, we will use the following assumption.
Define the random variables
$\z^f_w(t)$, $w \in \Sigma^{+}$, 
\begin{equation} 
\z^{f}_{w}(t)=\y^{T}(t+|w|)\bu_{w}(t+|w|-1)\frac{1}{\sqrt{p_{w}}}.
\end{equation}
\begin{Assumption} 
\label{output:assumptions:extra}
 For each $w \in \Sigma^{+}$, assume that 
 the variable $\z^f_{w}(t)$ is square integrable.
\end{Assumption}
\begin{Remark}
\label{output:assumptions:extra:rem}
 In many important cases, Assumption \ref{output:assumptions:extra} is automatically
 satisfied if $\y$ satisfies Assumption \ref{output:assumptions}.
  We present below a number such cases.
 \begin{enumerate}
 \item $\bu_{\sigma}(t)$ is essentially bounded for all $\sigma \in \Sigma$, $t \in \mathrm{Z}_m$, i.e. there exists a constant $K > 0$ such that $|\bu_{\sigma}(t)| \le K$ almost everywhere.
  This is the case when for example $\bu_{\sigma}$ arises from a discrete valued 
  process, as described in Example \ref{disc:input}. 
  Then $E[(\z^f_{w}(t))^T\z^f_{w}(t)] \le E[\y^T(t+k)\y(t+k)] K^{2}\frac{1}{p_w} < +\infty$, $k=|w|$.

 \item 
   If $\y(t)$, $\bu_{w}(t)$ have finite fourth order moments, then by H\"olders inequality,
   $E[(\z^f_{w}(t))^T\z^f_{w}(t)] \le (E[(\y^T(t+k)\y(t+k))^2]E[\bu_{w}^{4}(t+k)])^{1/2} < +\infty$,
  $k=|w|$.
   In particular, this assumption was made in \cite{StochBilin}.
 \end{enumerate}
\end{Remark}
Now we can state the main result on existence of a \GBS\ realization.
\begin{Theorem}[Stochastic realization of \GBS{s}: existence]
\label{gen:filt:theo3}
Assume that $\y$ satisfies Assumption \ref{output:assumptions:extra}.
Then $\y$ has a realization by a  \GBS\ which
satisfies Assumption \ref{gbs:def} if and only if $\y$  satisfies Assumption \ref{output:assumptions}.  Moreover, if $\y$ has a realization by a \GBS\ which satisfies Assumption \ref{gbs:def}, then it has a realization by a \GBS\ in forward innovation form.
\end{Theorem}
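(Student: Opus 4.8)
The plan is to prove the two implications of the equivalence separately, and to arrange the sufficiency direction so that the realization it produces is automatically in forward innovation form; the ``moreover'' claim then follows by chaining: any \GBS\ realization satisfying Assumption \ref{gbs:def} forces Assumption \ref{output:assumptions} on $\y$, and Assumption \ref{output:assumptions} yields a realization in forward innovation form.

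\textbf{Necessity.} Suppose $\BS$ of the form \eqref{gen:filt:bil:def} realizes $\y$ and satisfies Assumption \ref{gbs:def}. By Lemma \ref{gbs:def:new_lemma1} the stacked process $[\v^{T}(t),\x^{T}(t)]^{T}$ is \RC, and since $\y(t)=[D\ C][\v^{T}(t),\x^{T}(t)]^{T}$ is a fixed linear image of it, the remark following Definition \ref{def:RC} gives that $\y$ is \RC\ (the conditions of Part \ref{RC4} of Definition \ref{def:RC} that refer to $L$ come from Part \ref{gbs:def:prop7} of Assumption \ref{gbs:def} and the structure of $L$). For rationality and square summability of $\Psi_{\y}$, I substitute $\y(t)=C\x(t)+D\v(t)$ and the mean-square convergent expansion \eqref{gbs:def:new_lemma1:eq1} of $\x(t)$ into $\Lambda^{\y}_{\sigma w}=E[\y(t)(\z^{\y}_{\sigma w}(t))^{T}]$ and collapse the resulting double sum using the mutual orthogonality of the $\z^{\v}_{w}(t)$ (Part \ref{gbs:def:prop2} of Assumption \ref{gbs:def}), the fact from Lemma \ref{gbs:def:new_lemma1} that $\z^{\x}_{w}$ and $\z^{\v}_{v}$ are uncorrelated for $|w|\ge|v|$, and the recursion for $T^{\v}_{w,v}$ in Definition \ref{def:RC}. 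Writing the outcome as $\Lambda^{\y}_{\sigma w}=C\widetilde A_{w}\mathcal B_{\sigma}$ with $\widetilde A_{\sigma}:=\sqrt{p_{\sigma}}A_{\sigma}$ and $\mathcal B_{\sigma}$ built from $\{K_{\sigma}\}$, $D$ and the noise covariances exhibits a representation of $\Psi_{\y}$; moreover Part \ref{gbs:def:prop5} of Assumption \ref{gbs:def} says exactly that this representation is stable in the sense of Theorem \ref{hscc_pow_stab:theo2}. Hence $\Psi_{\y}$ is rational (Theorem \ref{sect:pow:theo1}) and square summable (Theorem \ref{hscc_pow_stab:theo2}), so $\y$ satisfies Assumption \ref{output:assumptions}.

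\textbf{Sufficiency, in forward innovation form.} Suppose $\y$ satisfies Assumptions \ref{output:assumptions} and \ref{output:assumptions:extra}. Since $\Psi_{\y}$ is rational, fix a minimal representation $\widetilde R=(\Re^{n},\{\widetilde A_{\sigma}\}_{\sigma\in\Sigma},\{B_{(j,\sigma)}\},C)$; by square summability and Theorem \ref{hscc_pow_stab:theo2} it is stable, so taking the \GBS\ dynamics to be $A_{\sigma}:=\widetilde A_{\sigma}/\sqrt{p_{\sigma}}$ gives exactly Part \ref{gbs:def:prop5} of Assumption \ref{gbs:def}. The next step is a spectral factorization, analogous to the Riccati step of the Kalman filter: set up the Lyapunov-type fixed-point equation \eqref{gen:filt:theo3.2:eq1} for the prospective state covariances $\{P_{\sigma}\}$, show it has a unique positive semidefinite solution using the stability just obtained (a contraction/monotone-limit argument in the spirit of \cite[Proposition 2.5]{CostaBook}), and factor the residual covariance to produce gain matrices $\{K_{\sigma}\}$ and the innovation covariances. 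With $D=I_{p}$ and these data, define the candidate state $\x(t)$ by the series \eqref{gbs:def:new_lemma1:eq1} (mean-square convergent by stability) and set $\e(t):=\y(t)-C\x(t)$. It then remains to verify that $(\x,\e)$ solves \eqref{gen:filt:bil:def}; that $\e$ is exactly the forward innovation \eqref{gen:filt:inneq}, equivalently $C\x(t)=E_{l}[\y(t)\mid\{\z_{w}(t)\mid w\in\Sigma^{+}\}]$ and $E[\e(t)(\z_{w}(t))^{T}]=0$ for all $w\in\Sigma^{+}$; and that every part of Assumption \ref{gbs:def} holds, in particular that $\v=\e$ is \RC\ (Part \ref{gbs:def:prop1}) with the orthogonality of Part \ref{gbs:def:prop2}, and that $\x(t)$ lies in the Hilbert space generated by $\{\z_{w}(t)\}$ (Part \ref{gbs:def:prop4}), the last being immediate since each term of \eqref{gbs:def:new_lemma1:eq1} does. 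Assumption \ref{output:assumptions:extra} is what guarantees the mixed second-order moments appearing in these verifications are finite.

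\textbf{Main obstacle.} The necessity direction is mostly bookkeeping once Lemmas \ref{gbs:def:new_lemma1}--\ref{gbs:def:new_lemma1.3} and Theorem \ref{hscc_pow_stab:theo2} are in hand. The real work is in sufficiency: proving existence and uniqueness of the positive semidefinite solution of the Lyapunov-type equation \eqref{gen:filt:theo3.2:eq1} in the present non-white-input, $L$-restricted setting, and showing that the projection identity $C\x(t)=E_{l}[\y(t)\mid\{\z_{w}(t)\}]$ propagates correctly through the bilinear, $w$-indexed recursion so that the constructed $\e$ genuinely has the recursive-covariance and orthogonality properties demanded by Assumption \ref{gbs:def}. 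This is the bilinear analogue of the innovations argument for stationary linear systems, complicated here by the multi-index time structure of $\Sigma^{+}$ and by the admissible-word set $L$.
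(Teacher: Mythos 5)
Your overall architecture (necessity plus a sufficiency construction that lands directly in forward innovation form, with the ``moreover'' clause obtained by chaining the two) is exactly how the paper organizes things: Theorem \ref{gen:filt:theo3} is deduced from Theorem \ref{gen:filt:theo3.2} (necessity) and Theorem \ref{gen:filt:theo3.3} (sufficiency). Your necessity half also matches the paper's proof of Theorem \ref{gen:filt:theo3.2}: $\y=\begin{bmatrix} C & D\end{bmatrix}\begin{bmatrix}\x^T & \v^T\end{bmatrix}^T$ is \RC\ as a linear image of an \RC\ process, and the covariance computation yields the stable representation $R_{\BS}$ of $\Psi_{\y}$, whence rationality and (via Theorem \ref{hscc_pow_stab:theo2}) square summability.

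The sufficiency half is where you diverge, and where there is a genuine gap. You propose to solve the Lyapunov/Riccati system first (``set up \eqref{gen:filt:theo3.2:eq1} \ldots show it has a unique positive semidefinite solution using the stability just obtained \ldots and factor the residual covariance to produce gain matrices $\{K_\sigma\}$''), and only then define $\x(t)$ by the series \eqref{gbs:def:new_lemma1:eq1}. Two problems. First, \eqref{gen:filt:theo3.2:eq1} alone is underdetermined in this direction (it contains the unknown $K_\sigma$ and $Q_\sigma$); the genuine equation is the coupled system of \eqref{gen:filt:theo3.2:eq1} with \eqref{gen:filt:eq3}, which is a nonlinear Riccati-type equation, and stability of $\sum_\sigma p_\sigma A_\sigma^T\otimes A_\sigma^T$ does \emph{not} by itself give existence of a positive semidefinite solution, nor the invertibility of $p_\sigma T_{\sigma,\sigma}-CP_\sigma C^T$ needed to extract $K_\sigma$ (the paper only gets that invertibility under the extra full-rank hypothesis, which Theorem \ref{gen:filt:theo3} does not assume). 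A positivity argument, i.e.\ some analogue of the positive-real lemma for the covariance data, would be required and is missing. Second, defining $\x(t)$ by \eqref{gbs:def:new_lemma1:eq1} with $\v=\e$ while simultaneously setting $\e(t)=\y(t)-C\x(t)$ is circular as written; even if you first define $\e$ as the innovation \eqref{gen:filt:inneq} and then define $\x$ by the series, the identity $\y(t)=C\x(t)+\e(t)$ becomes a nontrivial claim equivalent to $E[\x(t)\z_{w\sigma}^T(t)]=A_wB_\sigma$, and establishing it again requires the covariance consistency you have not proved. The paper avoids all of this by reversing the logic: it \emph{defines} $\x(t)=E_l[O_R^{-1}(Y_n(t))\mid\{\z_w(t)\mid w\in\Sigma^+\}]$ as a projection of stacked future outputs onto the past, so that $E[\x(t)\z_{w\sigma}^T(t)]=A_wB_\sigma$ holds by construction (Lemma \ref{gen:filt:proof:lemma1}), derives the state recursion from the orthogonal decomposition $H_{t+1}^{\y}=\bigoplus_\sigma H_{t,\sigma}^{\y}\oplus\bigoplus_\sigma\langle\e(t)\bu_\sigma(t)\rangle$ (Lemma \ref{new:pf:lemma1}), and obtains $K_\sigma$, $P_\sigma$ and the Riccati identity as consequences rather than prerequisites. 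To repair your proof you would either have to supply the missing positivity/solvability argument for the Riccati system, or switch to the paper's projection-based construction.
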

Recall that by Remark \ref{output:assumptions:extra:rem}, in many cases
Assumption \ref{output:assumptions:extra} follows from Assumption \ref{gbs:def}.
\begin{Corollary}
 Assume $\bu_{\sigma}(t)=\chi(\btheta(t)=\sigma)$ where $\btheta(t)$ is a $\Sigma$-valued process with $P(\btheta(t)=\sigma)=p_{\sigma} > 0$. Then $\y$ has a realization by a \GBS\ which
satisfies Assumption \ref{gbs:def} if and only if $\y$ satisfies Assumption \ref{output:assumptions}.
\end{Corollary}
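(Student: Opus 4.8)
The plan is to obtain this corollary as an immediate specialization of Theorem \ref{gen:filt:theo3}: the only work is to check that, for the particular input process $\bu_{\sigma}(t)=\chi(\btheta(t)=\sigma)$, the technical hypothesis Assumption \ref{output:assumptions:extra} is automatically fulfilled, so that it can be dropped from the statement and the equivalence reduces to ``realization by a \GBS\ satisfying Assumption \ref{gbs:def}'' $\iff$ ``$\y$ satisfies Assumption \ref{output:assumptions}''.

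First I would confirm that this choice of input fits the standing setup of the section, i.e. that it obeys Assumption \ref{input:assumption}. Since $\btheta(t)$ takes exactly one value in $\Sigma$ for every outcome, choosing $\alpha_{\sigma}=1$ gives $\sum_{\sigma\in\Sigma}\alpha_{\sigma}\bu_{\sigma}(t)=\sum_{\sigma\in\Sigma}\chi(\btheta(t)=\sigma)=1$, which is Part~1; and for $w=\sigma_{1}\cdots\sigma_{k}\in\Sigma^{+}$ the product $\bu_{w}(t)$ is again an indicator, namely of the event $\{\btheta(t-k+1)=\sigma_{1},\ldots,\btheta(t)=\sigma_{k}\}$, hence $\bu_{w}(t)\in\{0,1\}$ and all of its moments are finite, giving Part~2. (The associated set $L$ of admissible words is the one determined by the transitions $(\sigma_{1},\sigma_{2})$ occurring with positive probability under $\btheta$, or simply $L=\Sigma^{+}$ when all transitions are possible.) This is exactly the situation of Example \ref{disc:input}.

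Next I would verify Assumption \ref{output:assumptions:extra} by the estimate already indicated in Remark \ref{output:assumptions:extra:rem}(1): from $|\bu_{\sigma}(t)|\le 1$ a.s. we get $|\bu_{w}(t)|\le 1$ a.s., so for any $w\in\Sigma^{+}$ with $k=|w|$,
\[
 E[\|\z^{f}_{w}(t)\|_{2}^{2}]
 = \frac{1}{p_{w}}\,E[\,\|\y(t+k)\|_{2}^{2}\,\bu_{w}^{2}(t+k-1)\,]
 \le \frac{1}{p_{w}}\,E[\,\|\y(t+k)\|_{2}^{2}\,] < +\infty ,
\]
the right-hand side being finite because each $\y(t)$ is mean-square integrable (a standing hypothesis of the section, and in particular implied by $\y$ being \RC). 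Hence $\z^{f}_{w}(t)$ is square integrable for every $w\in\Sigma^{+}$, which is precisely Assumption \ref{output:assumptions:extra}. With that in hand, Theorem \ref{gen:filt:theo3} applies and its ``if and only if'' becomes the assertion of the corollary. I do not expect a genuine obstacle here: all the real content sits in Theorem \ref{gen:filt:theo3}, and the single point to be careful about is making the boundedness bound above rest only on the mean-square integrability of $\y$ itself, so that there is no circularity with the conclusion being derived.
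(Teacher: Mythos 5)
Your proposal is correct and takes essentially the same route the paper intends: the corollary follows from Theorem \ref{gen:filt:theo3} by observing, exactly as in case (1) of Remark \ref{output:assumptions:extra:rem}, that indicator-valued inputs satisfy $|\bu_{w}(t)|\le 1$ almost surely, so $E[\|\z^{f}_{w}(t)\|_{2}^{2}]\le \frac{1}{p_{w}}E[\|\y(t+|w|)\|_{2}^{2}]<+\infty$ and Assumption \ref{output:assumptions:extra} is automatic. Your explicit verification of Assumption \ref{input:assumption} with $\alpha_{\sigma}=1$ and your note that the bound rests only on mean-square integrability of $\y$ (available in both directions of the equivalence, so there is no circularity) match the paper's reasoning.
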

Theorem \ref{gen:filt:theo3} is an easy consequence of 
Theorem \ref{gen:filt:theo3.2} -- \ref{gen:filt:theo3.3} which
will be stated below.
Theorem \ref{gen:filt:theo3.2} implies that the condition of Theorem \ref{gen:filt:theo3}
is necessary for existence of a realization, and Theorem \ref{gen:filt:theo3.3} implies
that this condition is sufficient.
 In order to state Theorems \ref{gen:filt:theo3.2} -- \ref{gen:filt:theo3.3}, we need the following definition.
\begin{Definition}[Full rank process]
\label{output:assumptions:part2}
   We will say that $\y$ is a full rank,
    for each $\sigma \in \Sigma$ the covariance
    $E[\e(t)\e^{T}(t)\bu_{\sigma}^{2}(t)]$ is of
    rank $p$, hence strictly positive definite.
\footnote{Note that the concept of a full rank process already has an established
 definition \cite{Caines1}, which is slightly different from the one used in this paper. In the
 linear case, i.e. when $\Sigma=\{z\}$ and $\bu_{z}=1$, the two definitions coincide.
 Hence, our definition represents a slight abuse of terminology.}
\end{Definition}
 Strictly speaking, the concept of a full rank process is not necessary for
 Theorem \ref{gen:filt:theo3}. However, it plays an important role in formulating
 a realization algorithm. For this reason, we prefer to state 
 Theorems \ref{gen:filt:theo3.2}--\ref{gen:filt:theo3.3} in such a way, that
 the concept of a full rank process is already used.
 
 Next, we relate \GBS{s} and rational representations.
\begin{Definition}[Representation associated with \GBS]
\label{gbs2repr}
 Consider the unique collection of $n \times n$ matrices $\{P_{\sigma}\}_{\sigma \in \Sigma}$ which satisfy \eqref{gen:filt:theo3.2:eq1}.
 Define, the matrices 
\begin{equation} 
 \label{gsb2repr:eq1}
  B_{\sigma}=\frac{1}{\sqrt{p_{\sigma}}}(A_{\sigma}P_{\sigma}C^T+K_{\sigma}Q_{\sigma}D^T).
 \end{equation}
Define the \emph{representation associated with $\BS$} as 
$R_{\BS}=(\mathbb{R}^{n}, \{\sqrt{p_{\sigma}}A_{\sigma}\}_{\sigma \in \Sigma},B,C)$,
where $B=\{ B_{\sigma,j} \mid \sigma \in \Sigma,j=1,\ldots,p\}$ and
$B_{\sigma,j}$ denotes the $j$th column of $B_{\sigma}$.
\end{Definition}
\begin{Theorem}[Necessary condition for existence]
\label{gen:filt:theo3.2}
 If $\BS$ is a realization of $\y$ and $\BS$ satisfies Assumption \ref{gbs:def}, then
 the following holds.
 \begin{itemize}
 \item
 The process $\y$ is \RC.
 \item
 The representation $R_{\BS}$ well defined, stable, 
   and $R_{\BS}$ is a representation of $\Psi_{\y}$.
 \item
  $\y$ satisfies Assumption \ref{output:assumptions}.
 \item 
   If, in addition, for all $\sigma \in \Sigma$, $DE[\v(t)\v^T(t)\bu_{\sigma}^2(t)]D^T > 0$, then $\y$ is full rank.
 \end{itemize}
\end{Theorem}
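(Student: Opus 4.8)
The plan is to establish the four assertions essentially in the order listed, leveraging Lemma \ref{gbs:def:new_lemma1} (which gives the expansion $\x(t)=\sum_{w,\sigma}\sqrt{p_{\sigma w}}A_w B_\sigma \z_{\sigma w}^{\v}(t)$) and Lemma \ref{gbs:def:new_lemma1.3} (existence and uniqueness of the $P_\sigma$ solving \eqref{gen:filt:theo3.2:eq1}). First I would verify that $\y$ is \RC. Since $\y(t)=C\x(t)+D\v(t)$ and both $\x(t)$ and $\v(t)$ are \RC\ (the former by Lemma \ref{gbs:def:new_lemma1}, the latter by Assumption \ref{gbs:def}), the process $[\v^T(t),\x^T(t)]^T$ is \RC, hence so is the linear image $\y(t)=\begin{bmatrix}D & C\end{bmatrix}[\v^T(t),\x^T(t)]^T$ by the remark following Definition \ref{def:RC} that linear images of \RC\ processes are \RC. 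One still has to check that the orthogonality/support conditions in Parts \ref{RC3}--\ref{RC4} of Definition \ref{def:RC} transfer through the linear map; this follows from the corresponding conditions for $\v$ and $\x$ together with the joint wide-sense stationarity established in Lemma \ref{gbs:def:new_lemma1}, and Part \ref{gbs:def:prop7} of Assumption \ref{gbs:def} handles the words $\sigma_1\sigma_2\notin L$ (since $A_{\sigma_2}A_{\sigma_1}=0$ and $A_{\sigma_2}K_{\sigma_1}T^{\v}_{\sigma_1,\sigma_1}=0$ kill exactly the terms that would violate \RC\ for $\y$).

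Next I would show $R_{\BS}$ is well defined, stable, and is a representation of $\Psi_{\y}$. Well-definedness is immediate once Lemma \ref{gbs:def:new_lemma1.3} gives the unique $\{P_\sigma\}$, so \eqref{gsb2repr:eq1} makes sense; stability of $R_{\BS}=(\Re^n,\{\sqrt{p_\sigma}A_\sigma\},B,C)$ means stability of $\sum_\sigma (\sqrt{p_\sigma}A_\sigma)^T\otimes(\sqrt{p_\sigma}A_\sigma)^T=\sum_\sigma p_\sigma A_\sigma^T\otimes A_\sigma^T$, which is Part \ref{gbs:def:prop5} of Assumption \ref{gbs:def}. The substantive computation is that $R_{\BS}$ realizes $\Psi_{\y}$, i.e.\ $S_{(j,\sigma)}(w)=(\Lambda^{\y}_{\sigma w})_{\cdot,j}=C(\sqrt{p_{w}}A_w)(\sqrt{p_\sigma}A_\sigma)B_{\sigma,j}$ for every $w=\sigma_1\cdots\sigma_k$. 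To prove this I would expand $\Lambda^{\y}_{\sigma w}=E[\y(t)(\z^{\y}_{\sigma w}(t))^T]$ using $\y(t)=C\x(t)+D\v(t)$, substitute the series \eqref{gbs:def:new_lemma1:eq1} for $\x(t)$, and use the orthogonality of the $\z^{\v}_{u}(t)$ (Part \ref{gbs:def:prop2} of Assumption \ref{gbs:def}) together with $T^{\v}_{u\sigma',v\sigma''}$ reducing as in Part \ref{RC3} of Definition \ref{def:RC}; the cross term $E[\v(t)(\z^{\v}_{v}(t))^T]=\Lambda^{\v}_v$ feeds exactly into the $K_\sigma Q_\sigma D^T$ part of $B_\sigma$, and iterating the recursion \eqref{gen:filt:theo3.2:eq1} for $P_\sigma$ produces the matrix product $A_w A_\sigma P_\sigma C^T$ plus the accumulated noise contributions. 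I expect this telescoping identification to be the main obstacle: one must keep careful track of which admissible words survive, of the $\sqrt{p_w}$ normalizations, and of how the recursion for $P_\sigma$ precisely matches the growth of $A_w$ as $w$ lengthens. Once this identity holds, $\Psi_{\y}$ is rational (it has the representation $R_{\BS}$) and square summable (by Theorem \ref{hscc_pow_stab:theo2}, since $R_{\BS}$ is stable), which gives the third bullet, Assumption \ref{output:assumptions}.

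Finally, for full rank: the forward innovation $\e(t)=\y(t)-E_l[\y(t)\mid\{\z_w(t)\}]$ and $\x(t)$ lies in the Hilbert space generated by $\{\z^{\v}_w(t)\}$ (Part \ref{gbs:def:prop4}). Under the extra hypothesis $DE[\v(t)\v^T(t)\bu_\sigma^2(t)]D^T>0$, I would argue that $E[\e(t)\e^T(t)\bu_\sigma^2(t)]\succeq DQ_\sigma D^T\succ0$: since $C\x(t)$ is a linear functional of past data and $D\v(t)$ is, by the \RC\ and orthogonality assumptions, orthogonal to that past-data space when weighted by $\bu_\sigma^2(t)$, the innovation weighted covariance dominates the $D\v(t)$ contribution. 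Concretely $\e(t)=\y(t)-E_l[\y(t)\mid\cdot]$ and $C\x(t)=E_l[\y(t)\mid\cdot]$ would give $\e(t)=D\v(t)+(C\x(t)-E_l[\y(t)\mid\cdot])$, and one checks the correction term is orthogonal to $D\v(t)$ in the $\bu_\sigma^2(t)$-weighted inner product, so $E[\e(t)\e^T(t)\bu_\sigma^2(t)]=DQ_\sigma D^T+(\text{something}\succeq0)\succ0$, i.e.\ $\y$ is full rank. The delicate point here is justifying that $C\x(t)$ is indeed the projection $E_l[\y(t)\mid\{\z_w(t)\}]$ — this rests on $\x(t)$ belonging to the relevant Hilbert space and on $\e(t)=D\v(t)$ being orthogonal to it, which again reduces to the orthogonality structure of the \RC\ noise process.
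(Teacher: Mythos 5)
Your proposal is correct and follows essentially the same route as the paper's proof: \RC-ness of $\y$ from Lemma \ref{gbs:def:new_lemma1} and linearity, the identity $\Lambda^{\y}_{\sigma w}=CA_{w}B_{\sigma}$ obtained by propagating the state--past covariances through the state equation, square summability of $\Psi_{\y}$ from stability of $R_{\BS}$ via Theorem \ref{hscc_pow_stab:theo2}, and full rank via the decomposition of $\e(t)$ into $D\v(t)$ plus a state correction orthogonal to it in the $\bu_{\sigma}^{2}(t)$-weighted inner product. One small caution: for a general \GBS\ satisfying Assumption \ref{gbs:def}, $C\x(t)$ is \emph{not} equal to $E_{l}[\y(t)\mid\{\z_{w}(t)\mid w\in\Sigma^{+}\}]$ (only the component of $\x(t)$ lying in the Hilbert-space $H_{t}^{\y}$ contributes to the projection, and likewise $\Lambda^{\v}_{v}=0$ so the $K_{\sigma}Q_{\sigma}D^{T}$ term comes from the state--noise covariance $E[\x(t)(\z^{\v}_{\sigma w}(t))^{T}]$ rather than from $E[\v(t)(\z^{\v}_{v}(t))^{T}]$), but since you keep the explicit correction term and prove its weighted orthogonality to $D\v(t)$, your argument lands on exactly the decomposition $\e(t)=C\x_{2}(t)+D\v(t)$ used in the paper.
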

\begin{Remark}
 The definition of $R_{\BS}$ implies that it 
 is completely determined by the matrices 
  \( (C,D,\{A_{\sigma},K_{\sigma},Q_{\sigma})\}_{\sigma \in \Sigma}) \).
\end{Remark}
The first two statements of Theorem \ref{gen:filt:theo3.2} state that if
$\y$ has a realization by a \GBS\ $\BS$, then $\y$ is \RC\ and $R_{\BS}$ is a stable 
representation of $\Psi_{\y}$. The third statement, i.e.
 that $\y$ satisfies Assumption \ref{output:assumptions}, is an easy corollary of the
 previous ones and Theorem \ref{hscc_pow_stab:theo2}
represent necessary conditions for realizability. 
 Theorem \ref{gen:filt:theo3.2} not only shows that Assumption \ref{output:assumptions}
 represent a sufficient condition, but it also described how to obtain a 
 stable representation of the family of formal power series $\Psi_{\y}$.
The last statement of
Theorem \ref{gen:filt:theo3.2} says that under some mild assumptions the output of
a \GBS\ is full rank. This is important, because it shows that the requirement 
that $\y$ is a full rank process is not an unnatural one. In turn, this assumption
allows us to propose a realization algorithm.

Next, we present the result stating the sufficient condition for existence 
of a realization.
\begin{Theorem}[Sufficient condition for existence]
\label{gen:filt:theo3.3}
If $\y(t)$ satisfies Assumption \ref{output:assumptions},
then it has a  \GBS\ $\BS$ in forward innovation and this \GBS\ $\BS$ can be obtained
from a minimal rational representation of $\Psi_{\y}$ as follows.
%
 \begin{equation}
 \label{gen:filt:eq2}
 \BS\left\{\begin{split}
  \x(t+1)&=\sum_{\sigma \in \Sigma} (\frac{1}{\sqrt{p_{\sigma}}}A_{\sigma} \x(t)
    + K_{\sigma}\e(t))\bu_{\sigma}(t)\\
  \y(t)&=C\x(t)+\e(t)
 \end{split}\right.
 \end{equation}
 where 
 \begin{itemize}
 \item
$R=(\mathbb{R}^{n}, \{A_{\sigma}\}_{\sigma \in \Sigma}, B,C)$,
 $B=\{B_{(i,\sigma)} \in \mathbb{R}^{n} \mid \sigma \in \Sigma, i=1,\ldots,p\}$
 is a minimal representation of $\Psi$.
\item
 Let $O_R$ the observability matrix of $R$.
 Define the random variable $Y_{n}(t)$ as
 \begin{equation}
 \label{gen:filt:proof:eq1}
 Y_{n}(t)=
 \begin{bmatrix} \z^{f}_{v_0}(t) & \ldots & \z^{f}_{v_{M(n-1)}} \end{bmatrix}^T
 \end{equation}
 where $\z^{f}_{\epsilon}(t)=\y^{T}(t)$ and
 for all $w \in \Sigma^{+}$, $\z^f$ is as defined in \eqref{gen:filt:proof:eq2}.
 The variable $Y_{n}(t)$ can be thought of as the products of
 future outputs
 and inputs. Notice that $R$ is observable, hence the matrix $O_R$
 is has a left inverse, which we will denote by $O^{-1}_R$.
 Then the state $\x(t)$ is define as 
\item
 For each $\sigma \in \Sigma$,
 \begin{equation}
 \label{gen:filt:eq3.1}
  K_{\sigma}(p_{\sigma}T_{\sigma,\sigma}-CP_{\sigma}C^{T})=(B_{\sigma}\sqrt{p_{\sigma}} - \frac{1}{\sqrt{p_{\sigma}}}A_{\sigma}P_{\sigma}C^{T})
 \end{equation}
 where $P_{\sigma}=E[\x(t)\x^{T}(t)\bu_{\sigma}(t)\bu_{\sigma}(t)]$, and
 \begin{align}
  B_{\sigma} &=
  \begin{bmatrix} B_{(1,\sigma)}, & B_{(2,\sigma)}, & \ldots , & B_{(p,\sigma)} \end{bmatrix} \in \mathbb{R}^{n \times p}.
\end{align}
\end{itemize}
 If, in addition, $\y$ is a full-rank process, then
 $(p_{\sigma}T_{\sigma,\sigma}- CP_{\sigma}C^{T})$ is invertible and
 \begin{equation}
 \label{gen:filt:eq3}
  K_{\sigma}=(B_{\sigma}\sqrt{p_{\sigma}} - \frac{1}{\sqrt{p_{\sigma}}}A_{\sigma}P_{\sigma}C^{T})(p_{\sigma}T_{\sigma,\sigma}- CP_{\sigma}C^{T})^{-1}.
 \end{equation}
 Moreover, the \GBS\ $\BS$ constructed above satisfies Assumption \ref{gbs:def}.
\end{Theorem}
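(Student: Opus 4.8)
The plan is to build the required \GBS\ explicitly from a minimal representation of $\Psi_{\y}$ and then verify, clause by clause, that it realizes $\y$ and is in forward innovation form. First I would invoke Assumption \ref{output:assumptions}: rationality of $\Psi_{\y}$ together with Theorem \ref{sect:pow:theo1} gives a minimal representation $R=(\mathbb{R}^{n},\{A_{\sigma}\}_{\sigma\in\Sigma},B,C)$ with $n=\Rank H_{\Psi_{\y}}$, which is reachable and observable, while square summability of $\Psi_{\y}$ and Theorem \ref{hscc_pow_stab:theo2} force $R$ to be stable. Hence $\sum_{\sigma\in\Sigma}A_{\sigma}^{T}\otimes A_{\sigma}^{T}$, and therefore also $\sum_{\sigma\in\Sigma}p_{\sigma}(\frac{1}{\sqrt{p_{\sigma}}}A_{\sigma})^{T}\otimes(\frac{1}{\sqrt{p_{\sigma}}}A_{\sigma})^{T}$, is a stable matrix: this already delivers Part \ref{gbs:def:prop5} of Assumption \ref{gbs:def}, and, because the associated linear operator has spectral radius strictly less than one, it guarantees that the Lyapunov-like equation \eqref{gen:filt:theo3.2:eq1} has a unique solution $\{P_{\sigma}\}_{\sigma\in\Sigma}$. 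I would then set $\e(t)=\y(t)-E_{l}[\y(t)\mid\{\z_{w}(t)\mid w\in\Sigma^{+}\}]$, put $D=I_{p}$ and $\v(t)=\e(t)$, define $\x(t)$ as prescribed in the statement (the image under $O_{R}^{-1}$ of the past-projection of the future vector $Y_{n}(t)$), and define $K_{\sigma}$ by \eqref{gen:filt:eq3.1}, together with the explicit formula \eqref{gen:filt:eq3} once $\y$ has been shown to be full rank, which makes $p_{\sigma}T_{\sigma,\sigma}-CP_{\sigma}C^{T}$ invertible.

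The heart of the argument is the \emph{splitting property}: I must prove that $C\x(t)$ equals the orthogonal projection of $\y(t)$ onto the past space $\mathcal{H}^{-}_{t}=\overline{\mathrm{span}}\{\z_{w}(t)\mid w\in\Sigma^{+}\}$, so that $\e$ genuinely is the forward innovation, and that $\x(t)$ itself lies in $\mathcal{H}^{-}_{t}$ (which is automatic since $O_{R}^{-1}$ is a fixed linear map applied to a projection onto $\mathcal{H}^{-}_{t}$). The crucial structural fact is that the Gram matrix of $\{\z_{w}(t)\}_{w\in\Sigma^{+}}$, whose entries are the $T^{\y}_{w,v}$, satisfies by Part \ref{RC3} of Definition \ref{def:RC} exactly the Hankel-type recursion obeyed by the coefficients of $\Psi_{\y}$; consequently the map sending the finitely many independent past coordinates into the projected future is represented by the observability matrix $O_{R}$, which has full column rank because $R$ is observable, while reachability of $R$ ensures the construction does not collapse to a lower dimension and that $E[\x(t)\z_{\sigma w}(t)^{T}]=A_{w}B_{\sigma}$. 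This generalizes the Hankel-factorization / multidimensional positive-kernel reasoning of \cite{FrazhoBilin,StochBilin,Popescu1}, and I expect it to be the main obstacle, since one has to track the $1/\sqrt{p_{w}}$ normalizations and, above all, the admissible-word set $L$: the entries $T^{\y}_{w,v}$ vanish whenever $w\notin L$ or $v\notin L$ by Part \ref{RC4}, and it is exactly this that later yields the degeneracy relations of Part \ref{gbs:def:prop7}.

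Granting the splitting property, the state recursion follows from a shift computation. From \eqref{gen:filt:eqzdef1} one obtains $\z_{w\sigma}(t+1)=\z_{w}(t)\,\bu_{\sigma}(t)/\sqrt{p_{\sigma}}$, and analogously for $Y_{n}$; projecting onto $\mathcal{H}^{-}_{t+1}$ and applying $O_{R}^{-1}$ converts the Hankel recursion of $\Psi_{\y}$, \ie the action of the matrices $\sqrt{p_{\sigma}}A_{\sigma}$, into $\x(t+1)=\sum_{\sigma\in\Sigma}(\frac{1}{\sqrt{p_{\sigma}}}A_{\sigma}\x(t)+K_{\sigma}\e(t))\bu_{\sigma}(t)$, and matching the covariances with $\z_{\sigma}(t+1)$ on both sides pins $K_{\sigma}$ down precisely as in \eqref{gen:filt:eq3.1}. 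Here one recognizes $\frac{1}{\sqrt{p_{\sigma}}}(A_{\sigma}P_{\sigma}C^{T}+K_{\sigma}Q_{\sigma}D^{T})$ as the $B$-block of $R$, using both that $R$ represents $\Psi_{\y}$ and that $P_{\sigma}=E[\x(t)\x^{T}(t)\bu_{\sigma}^{2}(t)]$ solves \eqref{gen:filt:theo3.2:eq1}; the latter identity I would establish by iterating the recursion and passing to the mean-square limit, as in Lemmas \ref{gbs:def:new_lemma1} and \ref{gbs:def:new_lemma1.3}, the limit existing by stability. The output equation $\y(t)=C\x(t)+\e(t)$ is then just the definition of $\e$, so the constructed $\BS$ realizes $\y$.

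Finally I would verify the remaining clauses of Assumption \ref{gbs:def}. Parts \ref{gbs:def:prop1}–\ref{gbs:def:prop2}, that $\e$ has the \RC\ property and that the $\z^{\e}_{w}(t)$ are mutually orthogonal, follow by applying the splitting property at each multidimensional time instant and propagating the recursive covariance structure of $\y$ through $\x$ and $\e$, with wide-sense stationarity of $\y$ giving the stationarity part. Part \ref{gbs:def:prop4}, that $\x(t)$ belongs to the Hilbert space generated by $\{\z^{\e}_{w}(t)\mid w\in\Sigma^{+}\}$, is obtained by running the innovation-form recursion backwards in time and using stability (Part \ref{gbs:def:prop5}) to converge in mean square, exactly as in Lemma \ref{gbs:def:new_lemma1}. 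Part \ref{gbs:def:prop7}, the relations $A_{\sigma_{2}}A_{\sigma_{1}}=0$ and $A_{\sigma_{2}}K_{\sigma_{1}}T^{\v}_{\sigma_{1},\sigma_{1}}=0$ for $\sigma_{1}\sigma_{2}\notin L$, comes from minimality of $R$: since every coefficient $\Lambda^{\y}_{w}$ with $w\notin L$ vanishes by Parts \ref{RC3}–\ref{RC4} of Definition \ref{def:RC}, the corresponding products of representation matrices annihilate the joint reachable–observable subspace and, $R$ being minimal, must vanish identically; the identity for $K_{\sigma_{1}}$ then drops out of \eqref{gen:filt:eq3.1}. Assembling these items shows that $\BS$ is a \GBS\ in forward innovation form realizing $\y$, which is the assertion; and it also identifies $R$ with the representation $R_{\BS}$ associated with $\BS$, which is the consistency statement anticipated by Theorem \ref{gen:filt:theo3.2}.
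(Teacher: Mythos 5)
Your proposal is correct and follows essentially the same route as the paper: construct the state as $O_R^{-1}$ applied to the projection of the future vector $Y_n(t)$ onto the past, establish $E[\x(t)\z^T_{\sigma w}(t)]=A_wB_\sigma$ so that $C\x(t)$ is the past-projection of $\y(t)$, decompose $H^{\y}_{t+1}$ into the shifted past spaces and the innovation directions to get the recursion and pin down $K_\sigma$, and then verify Assumption \ref{gbs:def} using stability of the minimal representation (Theorem \ref{hscc_pow_stab:theo2}), reachability/observability for Part \ref{gbs:def:prop7}, and the mean-square limit argument of Lemma \ref{gbs:def:new_lemma1} for Part \ref{gbs:def:prop4}. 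The only cosmetic difference is that the paper defines $K_\sigma$ directly as the gain of the projection onto $\langle\e(t)\bu_\sigma(t)\rangle$ and then derives \eqref{gen:filt:eq3.1}, rather than taking \eqref{gen:filt:eq3.1} as the definition, which avoids any existence question in the non-full-rank case.
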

%
 \begin{Remark}[Algebraic Ricccati equation]
\label{rem:ricc}
By Theorem \ref{gen:filt:theo3.3}, if $\y$ is full rank, then 
the combination of \eqref{gen:filt:eq3} and 
\eqref{gen:filt:theo3.2:eq1} yields an equation of which $\{P_{\sigma}\}_{\sigma \in \Sigma}$
is a unique solution.
This equation is analogous to the well-known algebraic Riccati equation for linear systems.
\end{Remark}
Theorem \ref{gen:filt:theo3.3} not only gives a sufficient condition for existence of a \GBS\ realization, but it serves as a starting point of a realization algorithm.  Moreover, it makes the relationship between realization theory and filtering more precise. 
In particular, Remark \ref{rem:ricc} and Theorem \ref{gen:filt:theo3.3} imply that
the data contained in a rational representation of $\Psi_{\y}$ (i.e. of covariances of
outputs and inputs) contains all the necessary information for constructing a \GBS\
realization of $\y$ in forward innovation form. As it was mentioned before, such a \GBS\
can be viewed as recursive filter for computing the best linear estimates of future
outputs based on past outputs. Together with Theorem \ref{gen:filt:theo3.2} and
Theorem \ref{sect:pow:theo1} it yields an algorithm for computing such a filter from an
arbitrary \GBS\ realization of $\y$: we first compute the representation $R_{\BS}$ 
associated with a \GBS\ realization $\BS$  of $\y$, then we use Theorem \ref{gen:filt:theo3.3}
to obtain a \GBS\ in forward innovation form.

Theorem \ref{gen:filt:theo3.2} -- \ref{gen:filt:theo3.3} imply the following
characterization of minimality.
\begin{Definition}[Minimality]
A \GBS\ $\BS$ which satisfies Assumption \ref{gbs:def} 
 is said to be a \emph{minimal realization} of $\y(t)$ if it realizes $\y(t)$ and it has the minimal dimension among all possible\GBS\ realizations of $\y(t)$
 which satisfy Assumption \ref{gbs:def}.
\end{Definition}
\begin{Theorem}[Minimality of \GBS{s}]
\label{gen:filt:min_theo}
 Assume $\BS$ is a \GBS\ which satisfies Assumption \ref{gbs:def} and which is a realization of $\y$.
 The \GBS\ $\BS$ is minimal if and only if $R_{\BS}$ is minimal. If the
 \GBS{s} $\BS_1$ and $\BS_2$ are both minimal realizations of $\y$ and they both satisfy Assumption \ref{gbs:def}, then
 $R_{\BS_1}$ and $R_{\BS_2}$ are isomorphic.
\end{Theorem}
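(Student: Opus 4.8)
The plan is to transfer the minimality question for \GBS{s} to the minimality question for rational representations of $\Psi_{\y}$, using the two directions already established in Theorem \ref{gen:filt:theo3.2} and Theorem \ref{gen:filt:theo3.3}. First I would record a lower bound on the dimension of any admissible realization. By Theorem \ref{gen:filt:theo3.2}, whenever $\BS$ realizes $\y$ and satisfies Assumption \ref{gbs:def}, the associated representation $R_{\BS}$ is well defined and is a representation of $\Psi_{\y}$; by Definition \ref{gbs2repr} its state space is the same $\mathbb{R}^{n}$ as that of $\BS$ (the matrices $\{P_{\sigma}\}$ exist and are unique by Lemma \ref{gbs:def:new_lemma1.3}), so $\dim \BS = \dim R_{\BS}$, and Theorem \ref{sect:pow:theo1}(ii) gives $\dim \BS = \dim R_{\BS} \ge \Rank H_{\Psi_{\y}}$.

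Next I would produce a matching upper bound. Theorem \ref{gen:filt:theo3.2} also yields that $\y$ satisfies Assumption \ref{output:assumptions}, so Theorem \ref{gen:filt:theo3.3} applies: starting from a minimal representation $R$ of $\Psi_{\y}$ it constructs a \GBS\ $\BS_0$ which realizes $\y$, satisfies Assumption \ref{gbs:def}, and has state space $\mathbb{R}^{\dim R}$; since $R$ is minimal, $\dim R = \Rank H_{\Psi_{\y}}$ by Theorem \ref{sect:pow:theo1}(ii). Combining the two bounds, the minimal dimension attained by a \GBS\ which realizes $\y$ and satisfies Assumption \ref{gbs:def} equals $\Rank H_{\Psi_{\y}}$. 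Hence $\BS$ is minimal if and only if $\dim \BS = \Rank H_{\Psi_{\y}}$, which via $\dim \BS = \dim R_{\BS}$ is equivalent to $\dim R_{\BS} = \Rank H_{\Psi_{\y}}$, i.e. (again by Theorem \ref{sect:pow:theo1}(ii)) to $R_{\BS}$ being minimal. This is the first assertion.

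For the second assertion, suppose $\BS_1$ and $\BS_2$ are both minimal realizations of $\y$ satisfying Assumption \ref{gbs:def}. By the first assertion $R_{\BS_1}$ and $R_{\BS_2}$ are both minimal representations of $\Psi_{\y}$, and by Theorem \ref{sect:pow:theo1}(iii) all minimal representations of $\Psi_{\y}$ are isomorphic, so $R_{\BS_1}$ and $R_{\BS_2}$ are isomorphic.

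The main (and rather modest) obstacle I anticipate is the bookkeeping that makes the two dimension inequalities tight simultaneously: one must confirm that $\dim R_{\BS} = \dim \BS$ for an arbitrary admissible \GBS\ (immediate from Definition \ref{gbs2repr} once Lemma \ref{gbs:def:new_lemma1.3} supplies the solvability of \eqref{gen:filt:theo3.2:eq1}), and that the \GBS\ produced by Theorem \ref{gen:filt:theo3.3} from a minimal $R$ has state dimension exactly $\dim R$, neither larger nor smaller. All the genuine work — that $R_{\BS}$ represents $\Psi_{\y}$ and that a representation of $\Psi_{\y}$ can be realized by a \GBS\ of equal dimension — is already contained in Theorems \ref{gen:filt:theo3.2}--\ref{gen:filt:theo3.3}, so once these points are checked the minimality transfer and the isomorphism statement follow formally from Theorem \ref{sect:pow:theo1}.
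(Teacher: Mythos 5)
Your proposal is correct and follows essentially the same route as the paper's own proof: both transfer minimality through the identity $\dim \BS = \dim R_{\BS}$, use Theorem \ref{gen:filt:theo3.2} to obtain a representation of $\Psi_{\y}$ from any admissible \GBS, use Theorem \ref{gen:filt:theo3.3} to realize a minimal representation by a \GBS\ of equal dimension, and invoke Theorem \ref{sect:pow:theo1} for the isomorphism claim. Your reformulation via the common value $\Rank H_{\Psi_{\y}}$ is a clean (and correctly ordered) packaging of the paper's two-sided contradiction argument.
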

\begin{Remark}
 The isomorphism of $R_{\BS_1}$ and $R_{\BS_2}$ can be directly translated into
 a relationship between the matrices of $\BS_1$ and $\BS_2$. 
 If $\BS_1$ is of the form \eqref{gen:filt:bil:def} and
 the corresponding matrices of $\BS_2$  are $\hat{A}_{\sigma}$,
 $\hat{K}_{\sigma}$ and $\hat{C}$ and $\hat{D}$, then isomorphism of
 $R_{\BS_1}$ and $R_{\BS_2}$ implies that
there exists a non-singular matrix $\mathrm{S} \in \mathbb{R}^{n \times n}$
such that
\( C\mathrm{S}^{-1}=\hat{C}, \forall \sigma \in \Sigma:
   \mathrm{S}A_{\sigma}\mathrm{S}^{-1}=\hat{A}_{\sigma}.
\)
Note that we do not claim that $SK_{\sigma}=\hat{K}_{\sigma}$, $\sigma \in \Sigma$ or
that $D=\hat{D}$. In fact, in general it will not be true. 
\end{Remark}
\begin{Remark}[Checking minimality]
 From Theorem \ref{gen:filt:theo3.2} it follows that $R_{\BS}$ can be computed bases solely
 on the matrices of $\BS$ and the covariance of the noise. From Theorem \ref{sect:pow:theo1} it follows
 that minimality of $R_{\BS}$ can be checked effectively, by checking if $R_{\BS}$ is
 reachable and observable. Hence, minimality of a \GBS\ can be checked effectively, based
 on the knowledge of the matrices $(C,D,\{A_{\sigma},K_{\sigma},Q_{\sigma})\}_{\sigma \in \Sigma})$
\end{Remark}

\subsection{Realization theory for subclasses of \GBS{s}}
\label{gbs:prob:subclass}
 We have argued before that \GBS{s} include a large number of system classes
 such as linear, bilinear stochastic systems and even jump-markov linear systems. 
 However, the solution of the realization problem for \GBS{s} does not directly
 yield solutions to the realization problems for those system classes. The reason
 for this is quite obvious: while the necessary conditions remain valid for
 subclasses of \GBS{s}, the sufficient conditions need not remain valid. After all,
 it could easily happen that even if $\y$ has a realization by a \GBS\ belonging
 to a certain subclass, the realization prescribed by Theorem \ref{gen:filt:theo3.3}
 does not fall into that subclass. Nevertheless, the results obtained for
 general \GBS{s} can be used to solve the realization for the various sub-classes
 of \GBS{s} described above. Below we will discuss this topic in more detail.

 We start with specializing the results to \GBS{s} described in Example 
 \ref{example:iid:gbs_def}. We will call such \GBS{s} \emph{\GBS{s} with 
 i.i.d. inputs}. We will show that the following conditions are necessary
 and sufficient for existence of a \GBS\ realization with i.i.d inputs.
\begin{Assumption}
\label{iid:assumption}
 \begin{enumerate}
 \item
  $\{\y(t),\z_{w}(t) \mid w \in \Sigma^{+}\}$ is zero-mean,
 wide-sense stationary.
 \item The $\sigma$-algebras generated by respectively $\{\y(t-l)\}_{l=0}^{\infty}$ and $\{\bu_{\sigma}(t+l)\}_{l=0}^{\infty}$, $\sigma \in \Sigma$ are independent.
 \item
   The family $\Psi_{\y}$ is square summable and rational.
 \end{enumerate}
\end{Assumption}
 We obtain the following corollary of Theorem \ref{gen:filt:theo3}.
 \begin{Corollary}
 \label{iid:col1}
 A process $\y$ has a realization by a \GBS\ with i.i.d input if and only if 
 $\y$ satisfies Assumption \ref{iid:assumption}.
 If $\y$ satisfies Assumption \ref{iid:assumption}, then the \GBS\ realization of
 $\y$ described in Theorem \ref{gen:filt:theo3.3} is a \GBS\ with i.i.d input.
 \end{Corollary}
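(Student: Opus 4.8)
The plan is to derive this as a direct specialization of Theorem \ref{gen:filt:theo3}, by showing that Assumption \ref{iid:assumption} is exactly the restriction of Assumptions \ref{output:assumptions} and \ref{output:assumptions:extra} to the i.i.d.\ setting, and that the construction in Theorem \ref{gen:filt:theo3.3} preserves the subclass. First I would handle the necessity direction. Suppose $\y$ is realized by a \GBS\ with i.i.d.\ inputs, i.e.\ a \GBS\ as in Example \ref{example:iid:gbs_def}. Such a system satisfies Assumption \ref{gbs:def} by the discussion following Example \ref{example:iid:gbs_def}, so Theorem \ref{gen:filt:theo3.2} applies and yields that $\y$ is \RC\ and that $\Psi_{\y}$ is square summable and rational --- this gives part (3) of Assumption \ref{iid:assumption} immediately, and the \RC\ property together with wide-sense stationarity of $\{\z_w(t)\}$ gives part (1). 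For part (2), I would observe that since $\bu_{\sigma}$ is i.i.d.\ and the $\sigma$-algebras generated by $\{\v(t-l)\}_{l\ge 0}$ and $\{\bu_{\sigma}(t+l)\}_{l\ge 0}$ are independent, and since by Lemma \ref{gbs:def:new_lemma1} the state $\x(t)$ (hence $\y(t-l)=C\x(t-l)+\e(t-l)$ for $l\ge 0$, using $\e=\v$ after passing to innovation form, or more directly using that $\x(t-l)$ lies in the Hilbert space generated by $\{\z_w^{\v}(t-l)\}$) is measurable with respect to $\{\v(t-l)\}_{l\ge 0}$, the past outputs $\{\y(t-l)\}_{l\ge 0}$ are independent of $\{\bu_{\sigma}(t+l)\}_{l\ge 0}$. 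This last measurability step, invoking Lemma \ref{lemma:measurability}, is the one place requiring a small amount of care.

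For the sufficiency direction, assume $\y$ satisfies Assumption \ref{iid:assumption}. I would first check that Assumption \ref{output:assumptions} holds: rationality and square summability of $\Psi_{\y}$ is part (3) directly, and it remains to verify that $\y$ is \RC. This is where parts (1) and (2) of Assumption \ref{iid:assumption} are used: the i.i.d.\ structure of the inputs plus the independence of past outputs from future inputs is precisely the hypothesis of Example \ref{rc:example1} (with $\br(t)=\y(t)$, $L=\Sigma^{+}$, $p_{\sigma}=E[\bu_{\sigma}^2(t)]$), so $\y$ is \RC\ by that example. Assumption \ref{output:assumptions:extra} follows from Remark \ref{output:assumptions:extra:rem}: either $\bu_{\sigma}$ is essentially bounded (the discrete-valued case) or one has finite fourth moments; in either case $\z^f_w(t)$ is square integrable. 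Hence Theorem \ref{gen:filt:theo3} applies and produces a \GBS\ realization $\BS$ of $\y$ in forward innovation form, built from a minimal representation of $\Psi_{\y}$ as in Theorem \ref{gen:filt:theo3.3}.

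Finally I would argue that this $\BS$ is in fact a \GBS\ with i.i.d.\ input. The input process $\bu_{\sigma}$ is unchanged by the construction and is i.i.d.\ by hypothesis; the noise of $\BS$ is the forward innovation $\e(t)=\y(t)-E_l[\y(t)\mid\{\z_w(t)\}]$, which is wide-sense stationary, zero-mean, and --- because $\y$ is \RC\ with $L=\Sigma^{+}$ --- satisfies $E[\e(t-l)\e^T(t)]=0$ for $l>0$; moreover the $\sigma$-algebra generated by $\{\e(t-l)\}_{l\ge 0}$ is contained in that generated by $\{\y(t-l)\}_{l\ge 0}$, which is independent of $\{\bu_{\sigma}(t+l)\}_{l\ge 0}$ by part (2) of Assumption \ref{iid:assumption}. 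Thus all the hypotheses of Example \ref{example:iid:gbs_def} are met by $\BS$, and $\BS$ is a \GBS\ with i.i.d.\ input. The main obstacle, such as it is, is the bookkeeping in this last step: verifying that the innovation process inherits the required independence-from-future-inputs property rather than merely the weaker uncorrelatedness, which forces the argument to go through $\sigma$-algebras and Lemma \ref{lemma:measurability} rather than staying purely at the level of second moments.
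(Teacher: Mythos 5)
Your proposal is correct and follows essentially the same route as the paper: both directions are reduced to Theorems \ref{gen:filt:theo3.2} and \ref{gen:filt:theo3.3}, and the independence of past outputs (respectively of the innovation process) from future inputs is transferred via Lemma \ref{lemma:measurability} by passing from Hilbert-space membership to measurability with respect to the $\sigma$-algebra of past outputs and inputs. Your explicit appeals to Example \ref{rc:example1} for the \RC\ property and to Remark \ref{output:assumptions:extra:rem} for Assumption \ref{output:assumptions:extra} merely make precise steps the paper leaves implicit.
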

 Indeed, if $\y$ satisfies Assumption \ref{iid:assumption}, then 
 $\y$ is \RC\ and hence it satisfies Assumption \ref{output:assumptions}.
 Hence, by Theorem \ref{gen:filt:theo3.3}, Assumption \ref{iid:assumption}
 implies existence of a \GBS\ realization $\BS$ of $\y$ in forward innovation form.
 The noise process of this \GBS\ is then the innovation process $\e(t)$. By
 Lemma \ref{lemma:measurability}, since
 the coordinates of $\e(t)$ belong to the Hilbert space generated by $\{\y(t), \z_{w}(t) \mid w \in \Sigma^{+}\}$,
 it is measurable w.r.t to the $\sigma$-algebra generated by
 $\{\y(t-l),\bu(t-l-1)\}_{l=0}^{\infty}$. The latter $\sigma$-algebra is 
  independent of the $\sigma$-algebra generated by
 $\{\bu_{\sigma}(t+l)\}_{l=0}^{\infty}$, since $\bu$ is an i.i.d process and 
  $\y$ satisfies Assumption \ref{iid:assumption}.
 Hence, the $\sigma$-algebras generated by $\{\e(t-l)\}_{l=0}^{\infty}$ and
 $\{\bu_{\sigma}(t+l)\}_{l=0}^{\infty}$ are independent. Hence,
 $\BS$ is a \GBS\ with i.i.d inputs.
 Conversely, if $\y$ has a realization by a \GBS\ with i.i.d inputs, then
 by Theorem \ref{gen:filt:theo3.2} 
 $\y$ satisfies Assumption \ref{output:assumptions}. Moreover, since $\x(t)$ and
 hence $\y(t)$ belongs to the Hilbert-space generated by
 $\{\v(t),\z_{w}^{\v}(t) \mid w \in \Sigma^{+}\}$ and the latter variables are
 independent of $\bu_{\sigma}(t+l)$, $l \ge 0$, from Lemma 
 \ref{lemma:measurability} it follows that the $\sigma$-algebras
 $\{\y(t-l)\}_{l=0}^{\infty}$ and $\{\bu_{\sigma}(t+l)\}_{l=0}^{\infty}$,
 $\sigma \in \Sigma$ are independent. Hence, $\y$ satisfies Assumption 
 \ref{iid:assumption}.
 \begin{Corollary}
 \label{iid:col2}
  Theorem \ref{gen:filt:min_theo} remains valid if we replace \GBS{s} by 
  \GBS{s} with i.i.d inputs.
 \end{Corollary}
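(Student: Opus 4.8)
The plan is to reduce the statement to Theorem \ref{gen:filt:min_theo} by showing that the class of \GBS{s} with i.i.d.\ inputs is rich enough that the minimal dimension over this subclass coincides with the minimal dimension over all \GBS{s} satisfying Assumption \ref{gbs:def}. First I would recall that, by Example \ref{example:iid:gbs_def}, every \GBS\ with i.i.d.\ inputs satisfies Assumption \ref{gbs:def}, so all the machinery of \S\ref{gbs:prob:sol} --- in particular Theorem \ref{gen:filt:theo3.2} and Theorem \ref{gen:filt:min_theo} --- applies verbatim. Hence if $\BS$ is a \GBS\ with i.i.d.\ inputs realizing $\y$, then $R_{\BS}$ is a stable representation of $\Psi_{\y}$ and $\dim R_{\BS}=\dim\BS$, so by Theorem \ref{sect:pow:theo1}, $\dim\BS\ge\Rank H_{\Psi_{\y}}$.

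For the matching upper bound, I would invoke Corollary \ref{iid:col1}: since $\y$ has an i.i.d.-input realization, it satisfies Assumption \ref{iid:assumption} and hence Assumption \ref{output:assumptions}, and the forward-innovation-form \GBS\ produced by Theorem \ref{gen:filt:theo3.3} from a \emph{minimal} rational representation $R$ of $\Psi_{\y}$ is itself a \GBS\ with i.i.d.\ inputs, of dimension $\dim R=\Rank H_{\Psi_{\y}}$. Therefore the minimal dimension of a \GBS\ with i.i.d.\ inputs realizing $\y$ equals $\Rank H_{\Psi_{\y}}$, which is exactly the minimal dimension over the full class of \GBS{s} satisfying Assumption \ref{gbs:def} (by Theorems \ref{gen:filt:theo3.2}, \ref{gen:filt:theo3.3} and \ref{sect:pow:theo1}). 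Consequently, a \GBS\ with i.i.d.\ inputs $\BS$ realizing $\y$ is minimal \emph{within this subclass} if and only if $\dim\BS=\Rank H_{\Psi_{\y}}$, which by Theorem \ref{sect:pow:theo1} holds if and only if $R_{\BS}$ is reachable and observable, i.e.\ minimal --- this is precisely the first assertion of Theorem \ref{gen:filt:min_theo} specialized to the subclass.

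The second assertion (isomorphism of the associated representations) then comes for free: if $\BS_1$ and $\BS_2$ are minimal \GBS{s} with i.i.d.\ inputs realizing $\y$, then by the dimension count above they are also minimal among \emph{all} \GBS{s} satisfying Assumption \ref{gbs:def}, so Theorem \ref{gen:filt:min_theo} directly gives that $R_{\BS_1}$ and $R_{\BS_2}$ are isomorphic. The only point that needs genuine care --- and which I expect to be the crux --- is the implication ``minimal within the subclass'' $\Rightarrow$ ``minimal within the full class''; this rests entirely on the equality of the two minimal dimensions, which in turn rests on Corollary \ref{iid:col1} guaranteeing that the canonical forward-innovation realization of Theorem \ref{gen:filt:theo3.3} stays inside the subclass. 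Everything else is bookkeeping with the already-established Theorems \ref{gen:filt:theo3.2}--\ref{gen:filt:min_theo} and \ref{sect:pow:theo1}.
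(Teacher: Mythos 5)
Your proposal is correct and follows essentially the same route as the paper: both arguments hinge on the observation that the canonical forward-innovation realization of Theorem \ref{gen:filt:theo3.3} remains a \GBS\ with i.i.d.\ inputs (via Corollary \ref{iid:col1}), so the minimal dimension over the subclass coincides with that over the full class, after which everything reduces to Theorem \ref{gen:filt:min_theo}. Your version merely makes the dimension count through $\Rank H_{\Psi_{\y}}$ explicit, which the paper leaves implicit.
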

  Indeed, from Theorem \ref{gen:filt:min_theo} it follows that a reachable and
  observable \GBS\ with i.i.d inputs is minimal. Conversely, by Theorem
  \ref{gen:filt:min_theo} the \GBS\ realization of $\y$ 
  described by Theorem \ref{gen:filt:theo3.3} is minimal, and by Corollary
  \ref{iid:col1} it implies that if a \GBS\ with i.i.d inputs which has the minimal
  dimension among all the \GBS{s} with i.i.d. inputs, then it has the smallest
  possible dimension among all the \GBS{s} realizations of $\y$.  Hence, minimal
  \GBS{s} with i.i.d inputs are also reachable and observable. Moreover, 
  there is a minimal \GBS\ realization of $\y$ with i.i.d inputs.  Finally,
  isomorphism of minimal \GBS\ realizations with i.i.d. inputs follows directly
  from Theorem \ref{gen:filt:min_theo}.

  Recall the linear systems (Example \ref{example:lin:gbs_def}), 
  bilinear stochastic systems (Example \ref{example:bilin:gbs_def}) and jump-markov 
  linear systems with i.i.d discrete state (Example \ref{example:jumplin:gbs_def}) arise from \GBS\ with i.i.d inputs by a specific choice of
  the input process $\bu_{\sigma}$, $\sigma$.  
  If we apply Assumption \ref{iid:assumption} to the case of linear Gaussian systems,
  then we obtain the classical results on realization theory of linear systems.
  Notice that the last part of Assumption \ref{iid:assumption}, when applied
  to the linear case, reduces to requiring that the power spectrum is stable and
  rational. If we apply Assumption \ref{iid:assumption} to bilinear stochastic
  systems, then we obtain the conditions of \cite{StochBilin,FrazhoBilin}.
  Note that in \cite{StochBilin} only the sufficiency of the condition was shown,
  not the necessity. Furthermore, \cite{FrazhoBilin} deals with weak realization (see
  Definition \ref{weak:real:def}) and it assumes that the output equation does not 
  contain a noise term. 
  If we specialize Corollary \ref{iid:col1}--\ref{iid:col2} to jump-markov
  linear systems with i.i.d state process we obtain the following results.
  We will call the \GBS\ of the type described in Example \ref{example:jumplin:gbs_def} \emph{jump-markov linear systems with i.i.d switching (abbreviated by JMLSIID)}.
  \begin{Corollary}[Realization of JMLSIID]
  \label{iid:col3}
   The process $\y$ has a realization by JMLSIID if and only if the following 
   conditions hold:
   \begin{enumerate}
    \item
  $\{\y(t),\z_{w}(t) \mid w \in \Sigma^{+}\}$ is zero-mean,
  wide-sense stationary,
   \item 
     the $\sigma$-algebras generated by 
     $\{\y(t-l)\}_{l=0}^{\infty}$ and $\{\btheta(t+l)\}_{l=0}^{\infty}$ are independent, 
   \item
    the family $\Psi_{\y}$ is square summable and rational.
   \end{enumerate}
   If $\y$ satisfies the conditions above, then it has a minimal JMLSIID realization in
   forward innovation form described in Theorem \ref{gen:filt:theo3.3}.
   Moreover, Theorem \ref{gen:filt:min_theo} holds if we replace \GBS{s} by
   JMLSIID.
  \end{Corollary}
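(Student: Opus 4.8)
The plan is to derive Corollary~\ref{iid:col3} as a direct specialization of Corollaries~\ref{iid:col1} and~\ref{iid:col2} to the input process $\bu_{\sigma}(t)=\chi(\btheta(t)=\sigma)$, where $\btheta$ is i.i.d.\ with $p_{\sigma}=P(\btheta(t)=\sigma)>0$. The first step is the elementary bookkeeping that a JMLSIID is precisely a \GBS\ with i.i.d.\ inputs (Example~\ref{example:iid:gbs_def}) under this particular input. Indeed, with $\bu_{\sigma}(t)=\chi(\btheta(t)=\sigma)$ the family $\{\bu_{\sigma}(t)\}_{\sigma\in\Sigma}$ is i.i.d., $E[\bu_{\sigma}^{2}(t)]=E[\bu_{\sigma}(t)]=p_{\sigma}$ and $\sum_{\sigma\in\Sigma}\bu_{\sigma}(t)=1$, so Assumption~\ref{input:assumption} and the hypotheses of Example~\ref{rc:example1} hold, and $L=\Sigma^{+}$. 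The crucial identity is that for every $t$ the $\sigma$-algebra generated by $\{\bu_{\sigma}(t)\mid\sigma\in\Sigma\}$ coincides with the one generated by $\btheta(t)$: each $\bu_{\sigma}(t)$ is a function of $\btheta(t)$, and conversely $\btheta(t)$ is recovered from the knowledge of which indicator equals $1$. Hence $\sigma(\{\bu_{\sigma}(t+l)\mid\sigma\in\Sigma,\,l\ge 0\})=\sigma(\{\btheta(t+l)\}_{l\ge 0})$, and comparing Examples~\ref{example:iid:gbs_def} and~\ref{example:jumplin:gbs_def} shows that under this input the class of \GBS{s} with i.i.d.\ inputs and the class of JMLSIIDs coincide, the colored-noise requirement on $\v$ being exactly the specialization of the \RC\ and orthogonality properties of the noise.

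Second, I would invoke Corollary~\ref{iid:col1}: $\y$ has a realization by a \GBS\ with i.i.d.\ inputs if and only if $\y$ satisfies Assumption~\ref{iid:assumption}. Using the above identity of $\sigma$-algebras, the three items of Assumption~\ref{iid:assumption} translate verbatim into the three conditions of Corollary~\ref{iid:col3}: item (1) is condition (1); item (2), after replacing $\sigma(\{\bu_{\sigma}(t+l)\}_{l\ge 0,\,\sigma\in\Sigma})$ by $\sigma(\{\btheta(t+l)\}_{l\ge 0})$, is condition (2); item (3) is condition (3). This establishes the equivalence. For the second assertion, Corollary~\ref{iid:col1} already gives that the realization produced by Theorem~\ref{gen:filt:theo3.3} is a \GBS\ with i.i.d.\ inputs, so what remains is to check that it lies in the JMLSIID sub-class. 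Its noise is the forward innovation $\e$, which is \RC\ because $\y$ is (cf.\ Theorems~\ref{gen:filt:theo3.2}--\ref{gen:filt:theo3.3}); in particular $\e$ is zero-mean, wide-sense stationary and colored in the required sense. For the independence of $\e$ from the switching sequence one repeats the reasoning in the proof of Corollary~\ref{iid:col1}: by Lemma~\ref{lemma:measurability} every coordinate of $\e(t)$ is measurable with respect to $\sigma(\{\y(t-l)\}_{l\ge 0},\{\btheta(t-l-1)\}_{l\ge 0})$, and this $\sigma$-algebra is independent of $\sigma(\{\btheta(t+l)\}_{l\ge 0})$ because $\btheta$ is i.i.d.\ (so its strict past is independent of its present-and-future) and, by condition (2), the past of $\y$ is independent of the present-and-future of $\btheta$. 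Finally, minimality is inherited from Corollary~\ref{iid:col2}: since the minimal realization of $\y$ by a \GBS\ with i.i.d.\ inputs produced by Theorem~\ref{gen:filt:theo3.3} is itself a JMLSIID, the minimal dimension over all JMLSIID realizations equals $\Rank H_{\Psi_{\y}}$, a JMLSIID is minimal if and only if its associated representation $R_{\BS}$ is reachable and observable, and any two minimal JMLSIID realizations have isomorphic associated representations.

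The main obstacle is the verification in the second assertion that the forward-innovation realization of Theorem~\ref{gen:filt:theo3.3} genuinely belongs to the JMLSIID sub-class rather than merely to the larger class of \GBS{s} with i.i.d.\ inputs. Concretely, one must upgrade the two marginal independence facts --- strict past of $\btheta$ versus present-and-future of $\btheta$ (from i.i.d.-ness), and past of $\y$ versus present-and-future of $\btheta$ (condition (2)) --- into the single joint statement that $\sigma(\{\y(t-l)\}_{l\ge 0})\vee\sigma(\{\btheta(t-l-1)\}_{l\ge 0})$ is independent of $\sigma(\{\btheta(t+l)\}_{l\ge 0})$; this is not a formal consequence of the two marginals and calls for a $\pi$-system / monotone-class argument. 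One must also check that the colored-noise conditions on the innovation (and, when $\y$ is additionally full rank, the strict positive definiteness of its conditional covariance) are all inherited from the \RC\ property of $\y$. Everything else is a routine re-reading of results already proved for general \GBS{s}.
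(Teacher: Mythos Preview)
Your proposal is correct and follows essentially the same route as the paper, which presents Corollary~\ref{iid:col3} as a direct specialization of Corollaries~\ref{iid:col1}--\ref{iid:col2} under the identification $\sigma(\{\bu_{\sigma}(t)\}_{\sigma\in\Sigma})=\sigma(\btheta(t))$. You are in fact more careful than the paper: the joint-independence obstacle you flag (upgrading the two marginal statements to independence of $\sigma(\{\y(t-l)\}_{l\ge0})\vee\sigma(\{\btheta(t-l-1)\}_{l\ge0})$ from $\sigma(\{\btheta(t+l)\}_{l\ge0})$) is glossed over in the paper's proof of Corollary~\ref{iid:col1}, which simply asserts that the combined past $\sigma$-algebra is independent of the future inputs ``since $\bu$ is an i.i.d.\ process and $\y$ satisfies Assumption~\ref{iid:assumption}'' without spelling out the $\pi$-system step.
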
 
  To the best of our knowledge, Corollary \ref{iid:col3} represents a new result.
  That is, the framework of \GBS{s} not only extends existing results on
  bilinear stochastic systems, but also yields, as a special case, new results on
  a completely different system class. 

  Finally, we show how the results above specialize to the case of 
  \emph{jump-markov linear systems with restricted switching (abbreviated by JMLSRS)},
  described in Example \ref{example:const:gbs_def}. 
  \begin{Assumption}
  \label{const:assumption}
   \begin{enumerate}
    \item
  $\{\y(t),\z_{w}(t) \mid w \in \Sigma^{+}\}$ is jointly zero-mean,
  wide-sense stationary,
   \item  the $\sigma$-algebras generated by
  $\{\y(t-l)\}_{l=0}^{\infty}$ and $\{\btheta(t+l)\}_{l=0}^{\infty}$ are conditionally independent w.r.t to the $\sigma$-algebra $\mathcal{D}_t$ generated by
  $\{\btheta(t-l-1)\}_{l=0}^{\infty}$
   \item
    The family $\Psi_{\y}$ is square summable and rational.
   \end{enumerate}
  \end{Assumption}
  \begin{Corollary}[Realization of JMLSRS]
  \label{const:col1}
   The process $\y$ has a realization by a JMLSRS if and only if it
   satisfies Assumption \ref{const:assumption}.
   If $\y$ satisfies Assumption \ref{const:assumption}, 
   then it has a minimal JMLSRS realization in
   forward innovation form described in Theorem \ref{gen:filt:theo3.3}.
   Moreover, Theorem \ref{gen:filt:min_theo} holds if we replace \GBS{s} by
   JMLSRS.
  \end{Corollary}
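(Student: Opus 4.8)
The proof runs parallel to that of Corollaries \ref{iid:col1}--\ref{iid:col3}: the substance is to recognise the forward innovation realization supplied by Theorem \ref{gen:filt:theo3.3} as a JMLSRS and, conversely, to extract the conditional independence clause of Assumption \ref{const:assumption} from an arbitrary JMLSRS realization; the rest is assembled from Theorems \ref{gen:filt:theo3.2}, \ref{gen:filt:theo3.3} and \ref{gen:filt:min_theo} together with Lemma \ref{lemma:measurability}.

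For \emph{sufficiency}, assume $\y$ satisfies Assumption \ref{const:assumption}. The first step is to check that $\y$ is \RC: this is exactly Example \ref{rc:example2} with $\br(t)=\y(t)$, since part 1 of Assumption \ref{const:assumption} provides wide-sense stationarity and zero mean, part 2 provides conditional independence of $\{\btheta(t+l)\}_{l\ge 0}$ and $\{\y(t-l)\}_{l\ge 0}$ with respect to $\mathcal{D}_t$, and $\bu_\sigma$ is the restricted-switching indicator process of Example \ref{example:jump_markoviid:const}. With part 3 this gives Assumption \ref{output:assumptions}, so Theorem \ref{gen:filt:theo3.3} yields a \GBS\ $\BS$ in forward innovation form driven by $\bu_\sigma(t)=\chi(\btheta(t)=\sigma)$ whose noise is the innovation process $\e(t)$. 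It then remains to verify that $\BS$ is in fact a JMLSRS, i.e. that $\e$ meets the requirements of Example \ref{example:const:gbs_def}. By construction $\e$ is zero mean and wide-sense stationary, and Parts \ref{gbs:def:prop4}, \ref{gbs:def:prop5} and \ref{gbs:def:prop7} of Assumption \ref{gbs:def} hold since $\BS$ satisfies Assumption \ref{gbs:def}. For the conditional independence of $\{\e(t-l)\}_{l\ge 0}$ and $\{\btheta(t+l)\}_{l\ge 0}$ with respect to $\mathcal{D}_t$: each $\e(t-l)$, $l\ge 0$, lies in the Hilbert space generated by $\{\y(t-l-m),\btheta(t-l-m-1)\mid m\ge 0\}$, hence by Lemma \ref{lemma:measurability} is measurable with respect to $\sigma(\{\y(t-m)\}_{m\ge 0})\vee \mathcal{D}_t$; since adjoining $\mathcal{D}_t$-measurable information preserves conditional independence given $\mathcal{D}_t$, the assumed conditional independence of $\{\y(t-m)\}_{m\ge 0}$ and $\{\btheta(t+m)\}_{m\ge 0}$ transfers to $\{\e(t-l)\}_{l\ge 0}$. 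The conditional uncorrelatedness $E[\e(t)\e^T(l)\mid\mathcal{D}_{l,t-1}]=0$ for $l\le t$ is obtained from the orthogonality relations $E[\e(t)\e^T(l)\bu_w(t-1)]=0$ (over the relevant words $w$), which are available because $\e$ has the \RC\ property and the processes $\z^{\e}_w$ are mutually orthogonal---precisely the facts established inside the proof of Theorem \ref{gen:filt:theo3.3}---by expanding the conditional expectation as an average over the finitely many sample paths of $\btheta$ on $[l,t-1]$ and using that each $\bu_\sigma$ is an indicator. Thus $\BS$ is a JMLSRS realization of $\y$ in forward innovation form, and in particular the realization of Theorem \ref{gen:filt:theo3.3} is a JMLSRS.

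For \emph{necessity}, let $\BS$ be any JMLSRS realization of $\y$. Theorem \ref{gen:filt:theo3.2} gives that $\y$ is \RC\ and that $\Psi_{\y}$ is square summable and rational, which are parts 1 and 3 of Assumption \ref{const:assumption}. For part 2, $\x(t)$, and hence $\y(t)=C\x(t)+D\v(t)$, lies in the Hilbert space generated by $\{\v(t-l),\z^{\v}_w(t)\mid l\ge 0\}$, so by Lemma \ref{lemma:measurability} each $\y(t-l)$, $l\ge 0$, is measurable with respect to $\sigma(\{\v(t-m)\}_{m\ge 0})\vee \mathcal{D}_t$; since $\{\v(t-m)\}_{m\ge 0}$ and $\{\btheta(t+m)\}_{m\ge 0}$ are conditionally independent given $\mathcal{D}_t$ (an assumption of Example \ref{example:const:gbs_def}), the same holds with $\{\v(t-m)\}$ replaced by $\{\y(t-m)\}$, which is part 2. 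The \emph{minimality} statement follows exactly as Corollary \ref{iid:col2} follows from Theorem \ref{gen:filt:min_theo}: a reachable and observable JMLSRS is minimal among all \GBS{s}, hence a fortiori among JMLSRS; by the sufficiency part the realization of Theorem \ref{gen:filt:theo3.3} is a minimal \GBS\ that is a JMLSRS, so any dimension-minimal JMLSRS realization attains the minimal \GBS\ dimension and is therefore reachable and observable; and isomorphism of $R_{\BS_1}$ and $R_{\BS_2}$ for minimal JMLSRS realizations is immediate from Theorem \ref{gen:filt:min_theo}.

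I expect the main obstacle to be precisely the translation of the ``multidimensional whiteness'' of $\e$---the \RC\ orthogonality relations $E[\e(t)\e^T(l)\bu_w(t-1)]=0$ used inside Theorem \ref{gen:filt:theo3.3}---into the conditional statements $E[\e(t)\e^T(l)\mid\mathcal{D}_{l,t-1}]=0$ and the conditional independence of $\{\e(t-l)\}_{l\ge 0}$ and $\{\btheta(t+l)\}_{l\ge 0}$, since this is where the indicator structure of the input and the Markov property of $\btheta$ must be combined carefully. Everything else is bookkeeping layered on the already-established Theorems \ref{gen:filt:theo3.2}--\ref{gen:filt:min_theo} and Lemma \ref{lemma:measurability}.
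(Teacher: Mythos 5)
Your proposal is correct and follows essentially the same route as the paper's own argument: necessity via Theorem \ref{gen:filt:theo3.2} together with Lemma \ref{lemma:measurability} and the conditional-independence properties of the noise, sufficiency via Theorem \ref{gen:filt:theo3.3} plus the observation that the innovation process is measurable with respect to past outputs and discrete states, and minimality lifted from Theorem \ref{gen:filt:min_theo}. You are in fact slightly more thorough than the paper in explicitly checking the conditional-uncorrelatedness clause of Example \ref{example:const:gbs_def}, which the paper leaves implicit.
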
 
  The proof of this corollary is similar to the proof of Corollary \ref{iid:col1}.
  First, if $\y$ has a realization by a JMLSR, then, since a JMLSR is a \GBS\ 
  satisfying Assumption \ref{gbs:def}, $\y$ satisfies Assumption \ref{output:assumptions}.  Moreover, $\y(t)$ belongs to the Hilbert-space generated by
  $\{\v(t),\z_w^{\v}(t) \mid v \in \Sigma^{+}\}$, where $\v$ is the noise process of a
  JMLSR realization. From Lemma \ref{lemma:measurability} it then follows
 that  $\y(t)$ is measurable with respect to the
  $\sigma$-algebra generated by $\{\v(t-l),\btheta(t-l-1)\}_{l=0}^{\infty}$.
  By the definition of JMLSRS and the well-known properties of conditional independence, $\sigma$-algebras
  generated by $\{\y(t-l)\}_{l=0}^{\infty}$ and $\{\btheta(t+l)\}_{l=0}^{\infty}$ 
  are conditionally independent w.r.t.
  $\mathcal{D}_t$.  This, together with Assumption \ref{output:assumptions}
  implies that $\y$ satisfies Assumption \ref{const:assumption}.
  Conversely, Assumption \ref{const:assumption} implies Assumption \ref{output:assumptions}. Then there exists a minimal \GBS\ realization $\BS$ of $\y$ in forward 
  innovation form. The noise process is then the innovation process $\e$ and 
  $\e(t)$ belongs to the Hilbert-space generated by 
  $\{\y(t),\z_{w}(t) \mid w \in \Sigma^{+}\}$. Using Lemma 
  \ref{lemma:measurability} it then follows that 
  $\e(t)$ is measurable w.r.t. to the $\sigma$-algebra generated by
  $\{\y(t-l),\btheta(t-l-1)\}_{l=0}^{\infty}$. The latter $\sigma$-algebra
  and the $\sigma$-algebra generated by $\{\btheta(t+l)\}_{l=0}^{\infty}$ are
  conditionally independent w.r.t to $\mathcal{D}_t$ by Assumption \ref{const:assumption}. Hence, the $\sigma$-algebras generated by $\{\e(t-l)\}_{l=0}^{\infty}$ and
   $\{\btheta(t+l)\}_{l=0}^{\infty}$ are conditionally independent w.r.t. $\mathcal{D}_t$. That is, $\BS$ is a JMLSRS and it is a minimal one among all the \GBS\ realizations.
   Hence, if a JMSRS is minimal among all the JMLSRS realizations of $\y$,
   then it is minimal among all the \GBS\ realizations of $\y$. Then the last part
   of Corollary \ref{const:col1} is a direct consequence of Theorem 
   \ref{gen:filt:min_theo}.

  The result of Corollary \ref{const:col1} is new, to the best of our knowledge. 
  This result is another proof of versatility of the \GBS\ framework.

 \subsection{Weak realization and realization algorithms}
 \label{gbs:prob:alg}
  Below we present a realization algorithm for \GBS{s}. We only state the algorithm and
  the related results, the proofs are presented in \S \ref{gbs:prob:proof}.
  Theorem \ref{gen:filt:theo3.3} proposes a procedure for construction a \GBS\ realization
  of $\y$ using the knowledge of $\y$ and $\{\bu_{\sigma}\}_{\sigma \in \Sigma}$. In this
  construction, the noise and the state processes are constructed explicitly using 
  $\y$ and $\{\bu_{\sigma}\}_{\sigma \in \Sigma}$.
  Unfortunately, this procedure is not effective. In fact,  it cannot be made effective,
  since it presumes the knowledge of stochastic processes $\y$ and $\{\bu_{\sigma}\}_{\sigma \in \Sigma}$. The latter objects cannot be represented by finite number of data points.
  Note however, that for many application the knowledge of the state or noise
  process is not required, instead it is sufficient to know the matrices of
  the \GBS\ and covariance of the state process. These matrices can be approximated
  from finitely many data points. This prompts us to introduce the notion of a 
  \emph{weak realization}.
  \begin{Definition}[Weak realization]
  \label{weak:real:def}
  A collection $(\{A_{\sigma},K_{\sigma},P_{\sigma}, Q_{\sigma}\}_{\sigma \in \Sigma}, C,D)$,
  where $A_{\sigma},P_{\sigma} \in \mathbb{R}^{n \times n}$, 
  $K_{\sigma} \in \mathbb{R}^{n \times m}, Q_{\sigma} \in \mathbb{R}^{m \times m}$, 
  $\sigma \in \Sigma$, 
  $C \in \mathbb{R}^{p \times n}$, $D \in \mathbb{R}^{p \times m}$, 
  is called a \emph{weak realization} of $\y$, if there exists a \GBS\  $\Sigma$ of 
  the form \eqref{gen:filt:bil:def}, such that
  $\Sigma$ is a realization of $\y$ and $\Sigma$ satisfies Assumption \ref{gbs:def} and
  $E[\x(t)\x^T(t)\bu_{\sigma}^2(t)]=P_{\sigma}$, $E[\v(t)\v^T(t)\bu_{\sigma}^2(t)]=Q_{\sigma}$,  $\sigma \in \Sigma$.
  The data $(\{A_{\sigma},K_{\sigma},P_{\sigma}\}_{\sigma \in \Sigma}, C,D)$
  is said to be a weak realization of $\y$ in forward innovation form, if 
  the \GBS\ $\Sigma$ above is a realization of $\y$ in forward innovation form.

  By a slight abuse of notation, we will identify $\Sigma$ with the data 
  $(\{A_{\sigma},K_{\sigma},P_{\sigma},Q_{\sigma}\}_{\sigma \in \Sigma}, C,D)$ and write
  $\Sigma=(\{A_{\sigma},K_{\sigma},P_{\sigma},Q_{\sigma}\}_{\sigma \in \Sigma}, C,D)$.
  \end{Definition}
  That is, a \GBS\ $\Sigma$ is said to be a weak realization of 
  $\y$, if there exists a \GBS\ realization of $\y$ with the same matrices, state and
  noise covariance as those of $\Sigma$.
 It turns out that the construction of Theorem \ref{gen:filt:theo3.3} can be used to compute a 
  weak realization of
  $\y$ from finite data.

  As the first step, we construct approximations of 
  the state and noise processes from Theorem \ref{gen:filt:theo3.3}
  based on finitely many random variables.  
   More precisely, 
   we define a sequence of candidate state-variables $\x_N(t)$ and
   noise variables $\e_N(t)$ as
 \[ 
   \begin{split}
    & \x_{N}(t)=E_{l}[O^{-1}_R(Y_{n}(t)) \mid \{ \z_{w}(t) \mid w \in \Sigma^{N}\} ] \\
   & \e_{N}(t)=\y(t)-E_{l}[\y(t) \mid \{ \z_{w}(t) \mid w \in \Sigma^{N} \}
   \end{split}
\]
   Recall that $\Sigma^{N}=\{w \in \Sigma^{+} \mid |w| \le N\}$.
   Recall that the original construction of $\x(t)$ and $\e(t)$ 
  the projection of future outputs to the space generated by infinitely many past outputs and inputs. In contrast, $\x_N(t)$ and $\e_N(t)$ determined by projections of
  future outputs to finitely many past outputs and inputs.
   Intuitively, $\x_N(t)$ and $\e_N(t)$ are approximations
   of $\x(t)$ and $\e(t)$ respectively.  In fact, the following result holds.
   \begin{Lemma}
   \label{gen:filt:proof:new:lemma1}
   $\lim_{N \rightarrow \infty} \x_N(t)=\x(t)$, $\lim_{N \rightarrow \infty} \e_N(t)=\e(t)$, $\lim_{N \rightarrow \infty} \x_N(t)\bu_{\sigma}(t)=\x(t)\bu_{\sigma}(t)$,
   and $\lim_{N \rightarrow \infty} \e_N(t)\bu_{\sigma}(t)=\e(t)\bu_{\sigma}(t)$.
   \end{Lemma}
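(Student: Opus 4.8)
The plan is to realize $\x_N(t)$ and $\e_N(t)$ as orthogonal projections onto an increasing sequence of closed subspaces whose closed union is the subspace defining $\x(t)$ and $\e(t)$, so that the first two limits are an instance of the convergence of nested projections; the remaining two then follow once one knows that multiplication by $\bu_\sigma(t)$ acts boundedly on that subspace. Concretely, I would let $M_N$ be the mean-square closure of the linear span of the coordinates of $\{\z_w(t) \mid w \in \Sigma^{N}\}$ and $M_\infty$ the mean-square closure of the linear span of the coordinates of $\{\z_w(t) \mid w \in \Sigma^{+}\}$. Since $\Sigma^{N}$ is finite and $\Sigma^{N} \subseteq \Sigma^{N+1}$, the subspaces $M_N$ are finite-dimensional, nested, and $\bigcup_N M_N$ is dense in $M_\infty$. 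By construction $\x_N(t) = P_{M_N}(O_R^{-1}(Y_n(t)))$ and $\x(t) = P_{M_\infty}(O_R^{-1}(Y_n(t)))$ coordinate-wise, where $P_M$ denotes orthogonal projection onto $M$; since $M_N \subseteq M_\infty$, this gives $\x_N(t) = P_{M_N}(\x(t))$ with $\x(t) \in M_\infty$. The elementary fact that $P_{M_N}\eta \to \eta$ in mean square for every $\eta \in M_\infty$ when $\overline{\bigcup_N M_N} = M_\infty$ (approximate $\eta$ to within $\varepsilon$ by an element of some $M_{N_0}$, which lies in every $M_N$ with $N \ge N_0$) then yields $\x_N(t) \to \x(t)$. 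Applying the same argument to $\y(t)$ gives $E_{l}[\y(t) \mid \{\z_w(t) \mid w \in \Sigma^{N}\}] \to E_{l}[\y(t) \mid \{\z_w(t) \mid w \in \Sigma^{+}\}]$, hence $\e_N(t) \to \e(t)$. In particular, every coordinate of $\x_N(t) - \x(t)$ and of $\e_N(t) - \e(t)$ lies in $M_\infty$.

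The crux is the claim that for every scalar $\xi \in M_\infty$ the product $\bu_\sigma(t)\xi$ is square integrable and $\|\bu_\sigma(t)\xi\| \le \sqrt{p_\sigma}\,\|\xi\|$. Granting this and applying it to each coordinate, $E[\|(\x_N(t)-\x(t))\bu_\sigma(t)\|^2] \le p_\sigma\, E[\|\x_N(t)-\x(t)\|^2] \to 0$ and likewise $E[\|(\e_N(t)-\e(t))\bu_\sigma(t)\|^2] \to 0$, which are the two remaining assertions. To prove the claim I would start from the identity
\[
  \z_w(t)\,\bu_\sigma(t) = \sqrt{p_\sigma}\;\z_{w\sigma}(t+1), \qquad w \in \Sigma^{+},\ \sigma \in \Sigma ,
\]
which follows from $\bu_w(t-1)\bu_\sigma(t) = \bu_{w\sigma}(t)$ (compare \eqref{gen:filt:eqzdef0}) and $p_{w\sigma} = p_w p_\sigma$ (Notation \ref{ppnot1}), the case $w\sigma \notin L$ being trivial since then $\bu_{w\sigma}(t) = 0$ almost surely (Definition \ref{gen:filt:ass1:cond1:def1}); similarly $\y(t)\bu_\sigma(t) = \sqrt{p_\sigma}\,\z_\sigma(t+1)$. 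Hence for a finite linear combination $\xi = \sum_w c_w^{T}\z_w(t)$ with $c_w \in \mathbb{R}^{p}$ one has $\bu_\sigma(t)\xi = \sqrt{p_\sigma}\sum_w c_w^{T}\z_{w\sigma}(t+1)$, which is square integrable because $\y$ is \RC, and by joint wide-sense stationarity (part \ref{RC1} of Definition \ref{def:RC}, which holds since $\y$ is \RC\ by Assumption \ref{output:assumptions})
\[
  E[(\bu_\sigma(t)\xi)^2] = p_\sigma \sum_{w,v} c_w^{T} T^{\y}_{w\sigma,\,v\sigma}\, c_v .
\]
By the recursive covariance relations (parts \ref{RC3}--\ref{RC4} of Definition \ref{def:RC}), $T^{\y}_{w\sigma,v\sigma}$ vanishes unless both $w\sigma, v\sigma \in L$, in which case $T^{\y}_{w\sigma,v\sigma} = T^{\y}_{w,v}$, and moreover $T^{\y}_{w,v} = 0$ whenever exactly one of $w\sigma, v\sigma$ belongs to $L$. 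Writing $\xi = \xi_1 + \xi_2$ with $\xi_1 = \sum_{w : w\sigma \in L} c_w^{T}\z_w(t)$ and $\xi_2 = \sum_{v : v\sigma \notin L} c_v^{T}\z_v(t)$, these relations say exactly that $\xi_1 \perp \xi_2$ and that the last sum equals $p_\sigma\|\xi_1\|^2$; therefore $E[(\bu_\sigma(t)\xi)^2] = p_\sigma\|\xi_1\|^2 \le p_\sigma(\|\xi_1\|^2 + \|\xi_2\|^2) = p_\sigma\|\xi\|^2$. Thus $\xi \mapsto \bu_\sigma(t)\xi$ is bounded of norm at most $\sqrt{p_\sigma}$ on the dense subspace of finite linear combinations; extending it by continuity to $M_\infty$ and identifying the extension with pointwise multiplication by $\bu_\sigma(t)$ along an almost-surely convergent subsequence proves the claim.

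The main obstacle is this boundedness step. Unlike the linear case, multiplication by the input $\bu_\sigma(t)$ need not be a bounded operator on the whole Hilbert space of square-integrable random variables, so one genuinely has to exploit the recursive covariance structure of $\y$ --- in particular the orthogonality relations encoded in part \ref{RC4} of Definition \ref{def:RC} --- to obtain boundedness on the past subspace $M_\infty$. Once that is in place, the first two convergences reduce to the standard convergence of nested orthogonal projections, and the conclusion follows by combining the two.
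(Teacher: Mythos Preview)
Your proposal is correct and essentially reproduces the paper's own proof. The paper invokes its Lemma~\ref{gen:filt:proof:lemma3.1} (convergence of projections onto a nested increasing family of closed subspaces) for $\x_N(t)\to\x(t)$, deduces $\e_N(t)\to\e(t)$ from $\e_N(t)=\y(t)-C\x_N(t)$, and then invokes Lemma~\ref{gen:filt:proof:lemma5} (together with the boundedness estimate of Lemma~\ref{gen:filt:proof:lemma5.1}, which is exactly your inequality $E[(\bu_\sigma(t)\xi)^2]\le p_\sigma E[\xi^2]$ proved via the \RC\ relations) for the two product limits; your write-up just makes these steps self-contained rather than citing them.
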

   It turns out that an analog of \eqref{gen:filt:eq2} holds for $\x_N$.
   \begin{Lemma}
   \label{gen:filt:proof:new:lemma2}
    There exist $n \times p$ matrices $K_{\sigma}^N$, $\sigma \in \Sigma$
    such that 
   \begin{equation}
   \label{gen:filt:proof:eq2}
   \begin{split}
     & \x_{N+1}(t+1)=\sum_{\sigma \in \Sigma}
    (\frac{1}{\sqrt{p_{\sigma}}}A_{\sigma}\x_{N}(t)+
     K^{N}_{\sigma}\e_{N}(t))\bu_{\sigma}(t) \\
     & \y(t)=C\x_N(t)+\e_N(t).
   \end{split}
 \end{equation} 
  If $P_{\sigma}^{N}=E[\x_N(t)\x_{N}(t)\bu_{\sigma}^{2}(t)]$, 
  and $(p_{\sigma}T_{\sigma,\sigma}-CP_{\sigma}^NC^T)$ is invertible,
  then $K_{\sigma}^{N}$
  \begin{equation}  
  \label{gen:filt:proof:lemma4:eq4} 
   K_N=(\sqrt{p_{\sigma}}B_{\sigma} - \frac{1}{\sqrt{p_{\sigma}}}A_{\sigma}P_{\sigma}^NC^T)(p_{\sigma}T_{\sigma,\sigma}-CP_{\sigma}^NC^T)^{-1}.  
\end{equation}
\end{Lemma}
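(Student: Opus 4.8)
The plan is to derive this as the level-$N$ counterpart of Theorem~\ref{gen:filt:theo3.3}. Write $H_M(t)$ for the closed span of the coordinates of $\{\z_w(t)\mid w\in\Sigma^M\}$, put $\zeta^f_u(t):=(\z^f_u(t))^T$, and for $u=\tau u''\in\Sigma^+$ with first letter $\tau$ set $r_u:=A_{u''}B_\tau$. First I would transfer to level $N$ the basic state-space identities the construction in Theorem~\ref{gen:filt:theo3.3} yields for the forward-innovation realization $\BS$ with state $\x(t)$: (a) $E_l[Y_n(t)\mid\{\z_w(t)\mid w\in\Sigma^+\}]=O_R\x(t)$, whence by the tower property, $H_N(t)\subseteq H_\infty(t)$ and $O_R^{-1}O_R=I$ one gets $\x_N(t)=E_l[\x(t)\mid H_N(t)]$; (b) $\y(t)=C\x(t)+\e(t)$ with $\e(t)\perp H_\infty(t)$, which upon projecting onto $H_N(t)$ gives $C\x_N(t)=E_l[\y(t)\mid H_N(t)]$, \ie the output equation $\y(t)=C\x_N(t)+\e_N(t)$ of \eqref{gen:filt:proof:eq2}; and (c) $E_l[\zeta^f_u(t)\mid H_\infty(t)]=CA_u\x(t)$ for every word $u$, hence $E_l[\zeta^f_u(t)\mid H_N(t)]=CA_u\x_N(t)$. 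Since $R$ represents $\Psi_\y$ one has $E[\zeta^f_v(s)\z_u^T(s)]=\Lambda^\y_{uv}=CA_v r_u$ for all words $u,v$, so combining with (c) and observability of the minimal $R$ (Theorem~\ref{sect:pow:theo1}) gives $E[\x(s)\z_u^T(s)]=r_u$ for all $u\in\Sigma^+$; in particular $r_\sigma=B_\sigma$ and $r_{u\sigma}=A_\sigma r_u$.

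Next I would set up the orthogonal decomposition of the enlarged past. Unwinding \eqref{gen:filt:eqzdef0}--\eqref{gen:filt:eqzdef1} gives $\z_{w\sigma}(t+1)=\tfrac1{\sqrt{p_\sigma}}\z_w(t)\bu_\sigma(t)$ for $w\in\Sigma^+$ and $\z_\sigma(t+1)=\tfrac1{\sqrt{p_\sigma}}\y(t)\bu_\sigma(t)$; substituting $\y(t)=C\x_N(t)+\e_N(t)$ with $\x_N(t)\in H_N(t)$ then shows $H_{N+1}(t+1)=\sum_{\sigma\in\Sigma}(\mathcal P_\sigma+\mathcal E_\sigma)$, where $\mathcal P_\sigma$ is the span of the coordinates of $\{\z_w(t)\bu_\sigma(t)\mid w\in\Sigma^N\}$ and $\mathcal E_\sigma$ that of $\e_N(t)\bu_\sigma(t)$. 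This sum is orthogonal: cross covariances between blocks with distinct trailing letters are $T^\y_{u\sigma,v\sigma'}$ or $T^\y_{u\sigma,\sigma'}$ and vanish by Part~\ref{RC3} of Definition~\ref{def:RC}, while $\mathcal E_\sigma\perp\mathcal P_\sigma$ follows by pushing $\e_N(t)\perp H_N(t)$ through $T^\y_{u\sigma,w\sigma}=T^\y_{u,w}$, $T^\y_{\sigma,w\sigma}=\Lambda^\y_w$. Two consequences of these identities I would record: for $\xi,\eta\in H_N(t)$ one has $E[\xi\eta^T\bu_\sigma^2(t)]=p_\sigma E[\xi\eta^T]$ (write $\z_u(t)\bu_\sigma(t)=\sqrt{p_\sigma}\z_{u\sigma}(t+1)$ and apply $T^\y_{u\sigma,v\sigma}=T^\y_{u,v}$), so $P_\sigma^N=p_\sigma E[\x_N(t)\x_N^T(t)]$; and $\x_N(t)\bu_\sigma(t)\in\mathcal P_\sigma\perp\mathcal E_\sigma\ni\e_N(t)\bu_\sigma(t)$, so $E[\x_N(t)\e_N^T(t)\bu_\sigma^2(t)]=0$.

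Because $H_{N+1}(t+1)=\bigoplus_\sigma\mathcal P_\sigma\oplus\bigoplus_\sigma\mathcal E_\sigma$, projection onto it splits as $\sum_\sigma E_l[\,\cdot\mid\mathcal P_\sigma]+\sum_\sigma E_l[\,\cdot\mid\mathcal E_\sigma]$; applying $O_R^{-1}$ to $E_l[Y_n(t+1)\mid\mathcal E_\sigma]\in\mathcal E_\sigma$ produces a matrix $K_\sigma^N$ with $O_R^{-1}E_l[Y_n(t+1)\mid\mathcal E_\sigma]=K_\sigma^N\e_N(t)\bu_\sigma(t)$. The main step is to show $O_R^{-1}E_l[Y_n(t+1)\mid\mathcal P_\sigma]=\tfrac1{\sqrt{p_\sigma}}A_\sigma\x_N(t)\bu_\sigma(t)$. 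Here $\mathcal P_\sigma$ is the span of the $\z_{w\sigma}(t+1)$, $w\in\Sigma^N$, with $E[\z_{w'\sigma}(t+1)\z_{w\sigma}^T(t+1)]=T^\y_{w',w}$, and $E[\zeta^f_{v_i}(t+1)\z_{w\sigma}^T(t+1)]=\Lambda^\y_{w\sigma v_i}=E[\zeta^f_{\sigma v_i}(t)\z_w^T(t)]$; thus the normal equations defining $E_l[\zeta^f_{v_i}(t+1)\mid\mathcal P_\sigma]$ and $E_l[\zeta^f_{\sigma v_i}(t)\mid H_N(t)]$ coincide, so $E_l[\zeta^f_{v_i}(t+1)\mid\mathcal P_\sigma]=\tfrac1{\sqrt{p_\sigma}}E_l[\zeta^f_{\sigma v_i}(t)\mid H_N(t)]\bu_\sigma(t)=\tfrac1{\sqrt{p_\sigma}}CA_{v_i}A_\sigma\x_N(t)\bu_\sigma(t)$ by (c) and $A_{\sigma v_i}=A_{v_i}A_\sigma$. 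Stacking over $i$ and applying $O_R^{-1}$ gives the claim, and summing the two contributions over $\sigma$ yields the state recursion in \eqref{gen:filt:proof:eq2}.

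Finally, for \eqref{gen:filt:proof:lemma4:eq4}, multiply \eqref{gen:filt:proof:eq2} on the right by $\e_N^T(t)\bu_\sigma(t)$ and take expectations; the $\sigma'\ne\sigma$ terms vanish because $\mathcal P_{\sigma'}\perp\mathcal E_\sigma$ and $\mathcal E_{\sigma'}\perp\mathcal E_\sigma$, and the $A_\sigma$-term vanishes because $E[\x_N(t)\e_N^T(t)\bu_\sigma^2(t)]=0$, leaving $E[\x_{N+1}(t+1)\e_N^T(t)\bu_\sigma(t)]=K_\sigma^N\,E[\e_N(t)\e_N^T(t)\bu_\sigma^2(t)]$. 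Using $\y(t)=C\x_N(t)+\e_N(t)$, $\mathcal P_\sigma\perp\mathcal E_\sigma$, the weighting identity above and $E[\y(t)\y^T(t)\bu_\sigma^2(t)]=p_\sigma T_{\sigma,\sigma}$, the factor computes to $E[\e_N(t)\e_N^T(t)\bu_\sigma^2(t)]=p_\sigma T_{\sigma,\sigma}-CP_\sigma^NC^T$, which is the one inverted in \eqref{gen:filt:proof:lemma4:eq4}. For the left side, since $\e_N(t)\bu_\sigma(t)\in H_{N+1}(t+1)$ and $\x_{N+1}(t+1)=E_l[\x(t+1)\mid H_{N+1}(t+1)]$, one has $E[\x_{N+1}(t+1)\e_N^T(t)\bu_\sigma(t)]=E[\x(t+1)\e_N^T(t)\bu_\sigma(t)]$; writing $\e_N(t)\bu_\sigma(t)=\sqrt{p_\sigma}(\z_\sigma(t+1)-\sum_uF_u\z_{u\sigma}(t+1))$ with $C\x_N(t)=\sum_uF_u\z_u(t)$, and using $E[\x(t+1)\z_v^T(t+1)]=r_v$, $r_\sigma=B_\sigma$, $r_{u\sigma}=A_\sigma r_u$ and $\sum_u r_uF_u^T=E[\x_N(t)\x_N^T(t)]C^T$, this equals $\sqrt{p_\sigma}B_\sigma-\sqrt{p_\sigma}A_\sigma E[\x_N(t)\x_N^T(t)]C^T=\sqrt{p_\sigma}B_\sigma-\tfrac1{\sqrt{p_\sigma}}A_\sigma P_\sigma^NC^T$; solving for $K_\sigma^N$ gives \eqref{gen:filt:proof:lemma4:eq4}. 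I expect the principal obstacle to be the coefficient-matching of the third paragraph --- seeing that a one-step time shift together with insertion of the letter $\sigma$ amounts to left multiplication by $A_\sigma$ under $R$, which $O_R^{-1}$ then undoes --- together with the infinite-case identities (a)--(c) it rests on; Lemma~\ref{gen:filt:proof:new:lemma1} is not needed here, only afterwards to pass to the limit in $N$.
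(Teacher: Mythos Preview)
Your overall strategy matches the paper's: decompose $H_{N+1}(t+1)$ orthogonally into the shifted past blocks $\mathcal P_\sigma$ and the innovation blocks $\mathcal E_\sigma$, project $O_R^{-1}Y_n(t+1)$ onto each, and identify the $\mathcal P_\sigma$-part as $\tfrac{1}{\sqrt{p_\sigma}}A_\sigma\x_N(t)\bu_\sigma(t)$. The paper's own proof is little more than a pointer back to the argument for \eqref{gen:filt:eq2} and \eqref{gen:filt:eq3}, so your more explicit version is sound in structure.

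There is, however, one concrete gap. Your identity ``$T^{\y}_{u\sigma,v\sigma}=T^{\y}_{u,v}$'' and its consequence $P_\sigma^N=p_\sigma E[\x_N(t)\x_N^T(t)]$ hold only when $L=\Sigma^+$. Part~\ref{RC3} of Definition~\ref{def:RC} gives $T^{\y}_{u\sigma,v\sigma}=T^{\y}_{u,v}$ only if $u\sigma\in L$ or $v\sigma\in L$; when both fail, $\z_{u\sigma},\z_{v\sigma}$ vanish almost surely, so $T^{\y}_{u\sigma,v\sigma}=0$ while $T^{\y}_{u,v}$ need not. This is exactly why, immediately after the lemma, the paper writes $P_\sigma^N=p_\sigma\alpha_N T_N D\alpha_N^T$ with a diagonal $D$ zeroing out the indices with $v_i\sigma\notin L$, rather than $p_\sigma\alpha_N T_N\alpha_N^T$. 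You invoke $P_\sigma^N=p_\sigma E[\x_N\x_N^T]$ in the very last substitution for \eqref{gen:filt:proof:lemma4:eq4}, so that step fails for general $L$. Two repairs are available: (i) follow the paper and multiply the state recursion by $\z_\sigma^T(t+1)$ rather than by $\e_N^T(t)\bu_\sigma(t)$, so that $P_\sigma^N$ enters directly through $E[\x_N(t)\x_N^T(t)\bu_\sigma^2(t)]$ and no auxiliary identity is needed; or (ii) keep your route but replace the false equality by the weaker true one $A_\sigma P_\sigma^N=p_\sigma A_\sigma E[\x_N\x_N^T]$, which holds because the minimal $R$ satisfies $A_{\sigma}A_{\sigma_1}=0$ and $A_{\sigma}B_{\sigma_1}=0$ whenever $\sigma_1\sigma\notin L$ (this is established in the proof of Theorem~\ref{gen:filt:theo3.3} while verifying Part~\ref{gbs:def:prop7} of Assumption~\ref{gbs:def}), hence $A_\sigma r_u=0$ whenever $u\sigma\notin L$, and the block columns of $\alpha_N T_N(D-I)$ are precisely such $r_u$. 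The same caveat touches your ``normal equations coincide'' step: the Gram matrices of $\{\z_{w\sigma}(t+1)\}_w$ and $\{\z_w(t)\}_w$ differ on indices with $w\sigma\notin L$; the conclusion survives only because those extra generators of $\mathcal P_\sigma$ are identically zero.
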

In fact, we will show later on that
\( P_{\sigma}=\lim_{N \rightarrow \infty} P_{\sigma}^{N} \) and
\( K_{\sigma}=\lim_{N \rightarrow \infty} K_{\sigma}^N \). Hence,
 if we know $P_{\sigma}^N$ and $K_{\sigma}^N$, then 
 Lemma  \ref{gen:filt:proof:new:lemma2} yields an approximation of the
 weak \GBS\ realization described in Theorem \ref{gen:filt:theo3.3}.

  The computation of $P_{\sigma}^N$ and $K_{\sigma}^{N}$
  requires the knowledge of the random variables
  $\{\z_{w}(t) \mid w \in \Sigma^{N}\}$. In practice, however, one 
  has only data, i.e. samples of the random variables
  $\{\z_{w}(t) \mid w \in \Sigma^{N}\}$.  Below we present a formula on approximating
  $P^N_{\sigma}$ (and hence $K^N_{\sigma}$) from such a sample.
    To this end, notice that 
    $\x_N(t)$ belongs to the space spanned by the entries of
  $\{\z_{w}(t) \mid w \in \Sigma^{N}\}$.
  Recall that $M(N)=|\Sigma^N|$.
  and $v_1 \prec v_2 \prec \cdots \prec v_{M(N)}$ 
  is an enumeration of $\Sigma^{N}$ based
  on lexicographic ordering. Then there exists
  $\alpha^{N} \in \mathbb{R}^{n \times pM(N)}$,
  such that 
\begin{equation}
 \label{gen:filt:proof:lemma4:eq1.1}
  \x_N(t)=
          \alpha^N \BZ_N(t),
 \end{equation}
 where
 \( \BZ_N(t)=\begin{bmatrix} \z_{v_1}^T(t) & \ldots & \z_{v_{M(N)}}^T(t) \end{bmatrix}^T \in \mathbb{R}^{pM(N) \times 1} \).
If we  define
 \[ T_N=E[\BZ_N(t)\BZ_{N}^T(t)] \mbox{ and } \widetilde{\Lambda}_N=E[O_R^{-1}(Y_{n}(t+1))\BZ_N^T(t)], \]
  then by the well-known properties of orthogonal projection,
  $\alpha_N$ is determined by $\widetilde{\Lambda}_N$ and $T_N$. In fact, 
  if $T_N$ is invertable, then
  \( \alpha_N=\widetilde{\Lambda}_N T_{N}^{-1} \).
 From \eqref{gen:filt:proof:lemma4:eq1.1} and the assumption that $\y$ is \RC\ 
 it then follows that
 %
 \begin{equation}
 \label{gen:filt:proof:lemma4:eq1.41}
    P_{\sigma}^{N}=p_{\sigma}\alpha_NT_ND\alpha_N^T.
 \end{equation}
 where $D$ is a diagonal matrix such that the $i$th diagonal entry $D_ii$ is $1$ if
 $u_{i}\sigma \in L$ or it is zero otherwise.  
 It then follows that the knowledge of $\widetilde{\Lambda}_N$ and $T_N$ yields $\alpha_N$ and $P_{\sigma}^N$. 
  Note that $\widetilde{\Lambda}_N$ can be computed from a
  minimal representation $R$ of $\Psi_{\y}$ as follows:
  $\widetilde{\Lambda}_N=\begin{bmatrix} \widetilde{\Lambda}_{\wu1} & \ldots & \widetilde{\Lambda}_{\wu_{M(N)}} \end{bmatrix}$, where $\widetilde{\Lambda}_{\sigma v}=A_vB_{\sigma}$ 
  with 
 \( B_{\sigma}=\begin{bmatrix} B_{1,\sigma} & \ldots & B_{p,\sigma} \end{bmatrix} \), 
  for all $v \in \Sigma^{*}$, $\sigma \in \Sigma$.

The discussion above yields the realization algorithm
presented in Algorithm \ref{alg2}.
In Algorithm \ref{alg2} we assume that we measure the finite time series
$\{y(t),u_{\sigma}(t) \mid \sigma \in \Sigma, t=0,\ldots, N+M\}$ for some $N, M \ge 0$
and that we have a $(n,n)$-selection $(\alpha,\beta)$ at our disposal.
\begin{algorithm}
\caption{\label{alg2}
 \newline
  \textbf{Input:} data $\{y_t,u_{\sigma}(t) \mid t=0,\ldots, N+M, \sigma \in \Sigma\}$
   and $(n.n)$--selection $(\alpha,\beta)$.
 \newline
  \textbf{Output: } weak realization 
   $\Sigma_{N,M}=(\{{}^M F_{\sigma}, K_{\sigma}^{N,M}, P_{\sigma}^{N,M},Q_{\sigma}^{N,M} \}_{\sigma \in \Sigma}, {}^M H, I_p)$.
}
\begin{algorithmic}[1]
\STATE
  Approximate the covariances $\Lambda_{w}$, $w \in \Sigma^{2n-1}$,
  and the covariances $T_{v_1,v_2}$ for $v_1,v_2 \in \Sigma^{N}$
 from the time-series using the formula:
 \[
   \begin{split}
    \Lambda_w \approx \Lambda_{w}^{M} \stackrel{\mbox{def}}{=} \frac{1}{M} \sum_{t=2n-1}^{M+2n-1} y(t)z_{w}(t)  \\
    T_{v_1,v_2} \approx T_{v_1,v_2}^{M} \stackrel{\mbox{def}}{=} \frac{1}{M} \sum_{t=N}^{N+M+1} z_{v_1}(t)z_{v_2}^T(t) 
   \end{split}
 \]
 where for any $w=\sigma_1\cdots \sigma_k \in \Sigma^{2n-1}$, 
 $\sigma_1,\ldots,\sigma_k \in \Sigma$, $k \ge 2n-1$,
 $z_w(t)=y(t-k)u_{\sigma_1}(t-k)\cdots u_{\sigma_k}(t-1)$.

\STATE
 Construct the finite
Hankel matrix $H^M_{\Psi_{\y},n+1,n}$ by replacing the covariances 
$\Lambda^{\y}_w$, $w \in \Sigma^{2n-1}$ in the definition of $H_{\Psi_{\y},n,n+1}$
by the estimates $\Lambda^M_{w}$, $w \in \Sigma^{2n-1}$.
\STATE
 Choose a $n,n$-selection $(\alpha,\beta)$ such that $\Rank H_{\Psi_{\y},\alpha,\beta}=\Rank H_{\Psi,N,N}$.
Apply Algorithm \ref{alg1} Section \ref{sect:pow} to 
 $H^M_{\Psi_{\y},n+1,n}$ and the $n,n$-selection $(\alpha,\beta)$
 to obtain a representation 
$R_M=(\mathbb{R}^{n},\{{}^MF_{\sigma}\}_{\sigma \in \Sigma}, {}^MG,{}^M H)$.  
\STATE
   Use the estimates $T^M_{v_1,v_2}$, $v_1,v_2 \in \Sigma^{N}$ to construct the
   matrix $T_{N,M}$: the matrix $T_{N,M}$ has the same structure as 
   $T_N$, but instead of the covariances $T_{v_1,v_2}$ we use the approximations
   $T^{M}_{v_1,v_2}$.

\STATE
    Define $\widetilde{\Lambda}_{N,M}$ in the same way $\widetilde{\Lambda}_{N}$, but using
    ${}^MF_v {}^{M}G_{\sigma}$ instead of $\widetilde{\Lambda}_{\sigma v}$, where
   \( {}^MG_{\sigma}=\begin{bmatrix} {}^MG_{1,\sigma} & \ldots & {}^MG_{p,\sigma} \end{bmatrix} \).

\STATE
   Assume that $T_{N,M}$ is invertable, and find
   \[ 
     \begin{split}
     & \alpha_{N,M}=\widetilde{\Lambda}_{N,M}T_{N,M}^{-1}  \\
     & P_{\sigma}^{N,M}=p_{\sigma}\alpha_{N,M}T_{N,M}D\alpha_{N,M}^T \\
     & Q_{\sigma}^{N,M}=p_{\sigma}(T_{\sigma,\sigma}^{M}-\frac{1}{p_{\sigma}}{}^MHP_{\sigma}^{N,M}{}^MH^T) \\
  & K_{\sigma}^{N,M}= (\sqrt{p_{\sigma}}{}^M G_{\sigma} - \frac{1}{\sqrt{p_{\sigma}}}{}^{M}F_{\sigma}P_{\sigma}^{N,M}{}^M H^T) (p_{\sigma}T_{\sigma,\sigma}^{M}-{}^MHP_{\sigma}^{N,M}{}^MH^T)^{-1} 
     \end{split}
   \]
    Here $D$ is a diagonal matrix such that
 the $i$th diagonal entry $D_{ii}$ is $1$ if $v_{i}\sigma \in L$ or it is zero otherwise.  


\STATE
    Return the weak realization $\Sigma_{N,M}=(\{{}^M F_{\sigma}, K_{\sigma}^{N,M}, P_{\sigma}^{N,M},Q_{\sigma}^{N,M} \}_{\sigma \in \Sigma}, {}^M H, I_p)$.
\end{algorithmic}
\end{algorithm}
 \begin{Theorem}[Correctness of Algorithm \ref{alg1}]
 \label{alg2:theo}
  Assume that the following holds:
  \begin{enumerate}
  \item
     \label{alg2:theo:assump1}
     The process $(\y,\{\bu_{w} \mid w \in \Sigma^{+}\})$ is ergodic and
     the time series $\{y(t),u_{\sigma}(t) \mid \sigma \in \Sigma, t=0,1,\ldots, \}$
     are such that for all $v,w \in \Sigma^{+}$.
     \begin{equation}
     \label{alg2:theo:assump1:eq1}
      \begin{split}
         & E[\y(t)\z_{w}^T(t)]=\lim_{N \rightarrow \infty} \frac{1}{N} \sum_{r=|w|}^{N} y(r)z_{w}^T(r) \\
         & E[\z_{v}(t)\z_{w}^T(t)]=\lim_{N \rightarrow \infty} \frac{1}{N} \sum_{r=\max{|w|,|v|}}^{N} z_{v}(r)z_{w}^T(r) \\
      \end{split}
     \end{equation}
  \item
     \label{alg2:theo:assump2}
     The $n,n$--selection $(\alpha,\beta)$ is such that 
     $\Rank H_{\Psi_{\y},\alpha,\beta}=\Rank H_{\Psi_{\y}} \le n$.
  \item
     \label{alg2:theo:assump3}
     The representation returned by Algorithm \ref{alg1} when applied to 
     $H_{\Psi_{\y},n,n+1}$ and $(\alpha,\beta)$ is of the form
     $R=(\mathbb{R}^{r},\{A_{\sigma}\}_{\sigma \in \Sigma},B,C)$.
  \item
     \label{alg2:theo:assump4}
    The process $\y$ satisfies Assumption \ref{output:assumptions}, Assumption 
    \ref{output:assumptions:extra} and it is full rank.
  \end{enumerate} 
   Let $\Sigma$ be the \GBS\ realization of $\y$ from \eqref{gen:filt:eq2} and 
   let $Q_{\sigma}=E[\e(t)\e^T(t)\bu_{\sigma}^{2}$. Identify $\Sigma$ with the
   corresponding weak realization $\Sigma=(\{A_{\sigma},K_{\sigma},P_{\sigma},Q_{\sigma}\}_{\sigma \in \Sigma}, C,I_p)$.
 Then the following holds
   \begin{enumerate}
  \item
   For large enough 
   $N,M$, $T_{N,M}$ and $Q_{\sigma}^{N,M}$ are invertable and 
   Algorithm \ref{alg1} is well posed.
  \item
     $\lim_{M \rightarrow \infty} \Sigma_{N,M}=(\{A_{\sigma},K^{N}_{\sigma},P^{N}_{\sigma},Q^{N}_{\sigma}\}_{\sigma \in \Sigma},C,I_p)$,
    where
     $Q^{N}_{\sigma}=E[\e_N(t)\e_N(t)\bu_{\sigma}^2(t)]$ and
     $P_{\sigma}^N$ and $K_{\sigma}^N$ are defined as  Lemma \ref{gen:filt:proof:new:lemma2}.
  \item
    $\lim_{N \rightarrow \infty} \lim_{M \rightarrow \infty} \Sigma_{N,M}=\Sigma$
  \end{enumerate}
 \end{Theorem}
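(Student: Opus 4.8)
The plan is to establish the three conclusions by removing the two limits one at a time, first letting $M\to\infty$ with $N$ fixed and then letting $N\to\infty$; since only the iterated limit $\lim_{N}\lim_{M}$ is claimed, no uniform estimates are needed and each step reduces to continuity arguments on top of the ergodicity hypothesis, Theorem \ref{sect:part:theo1}, Lemma \ref{gen:filt:proof:new:lemma1}, Lemma \ref{gen:filt:proof:new:lemma2} and Theorem \ref{gen:filt:theo3.3}.

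\textbf{Step 1: the $M$-limit.} Fix $N$ (large; see Step 2). By the ergodicity hypothesis and \eqref{alg2:theo:assump1:eq1}, the empirical covariances converge almost surely: $\Lambda^M_w \to \Lambda^{\y}_w$ for $w\in\Sigma^{2n-1}$ and $T^M_{v_1,v_2}\to T_{v_1,v_2}$ for $v_1,v_2\in\Sigma^{N}$. Hence $H^M_{\Psi_{\y},n+1,n}$ converges entrywise to the true finite Hankel matrix, $T_{N,M}\to T_N$, and the data defining $\widetilde{\Lambda}_{N,M}$ converge. By Assumption \ref{alg2:theo:assump2} the selected block $H_{\Psi_{\y},\alpha,\beta}$ is a nonsingular $r\times r$ matrix with $r=\Rank H_{\Psi_{\y}}$; since matrix inversion is continuous on the open set of nonsingular matrices, for all large $M$ the block $H^M_{\Psi_{\y},\alpha,\beta}$ is nonsingular and the matrices ${}^MF_\sigma = Z^M_\sigma (H^M_{\Psi_{\y},\alpha,\beta})^{-1}$, together with ${}^MG_\sigma$ and ${}^MH$ (which are read off Hankel entries), converge to the matrices $A_\sigma,B_\sigma,C$ of the representation $R$ of Assumption \ref{alg2:theo:assump3}. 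In particular Algorithm \ref{alg1} is well posed for large $M$, and by Theorem \ref{sect:part:theo1}, $R$ is a minimal representation of $\Psi_{\y}$.

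\textbf{Step 2: propagating the $M$-limit through Steps 4--7 of Algorithm \ref{alg2}.} By Lemma \ref{gen:filt:proof:new:lemma1} and continuity of the inner product on $\mathcal{H}$, for $N$ large the matrix $p_\sigma T_{\sigma,\sigma}-CP^N_\sigma C^T$ equals $Q^N_\sigma = E[\e_N(t)\e^T_N(t)\bu^2_\sigma(t)]$, which is close to $Q_\sigma=E[\e(t)\e^T(t)\bu^2_\sigma(t)]$; the latter is positive definite because $\y$ is full rank, so $Q^N_\sigma>0$ and, similarly, $T_N$ is nonsingular for $N$ large. Then continuity of matrix inversion yields, as $M\to\infty$ with $N$ fixed: $T_{N,M}$ nonsingular, $\alpha_{N,M}=\widetilde{\Lambda}_{N,M}T^{-1}_{N,M}\to\widetilde{\Lambda}_NT^{-1}_N=\alpha_N$, hence $P^{N,M}_\sigma\to P^N_\sigma$ via \eqref{gen:filt:proof:lemma4:eq1.41}, $Q^{N,M}_\sigma\to Q^N_\sigma$, and finally (since $Q^{N,M}_\sigma$ is then nonsingular for large $M$) $K^{N,M}_\sigma\to K^N_\sigma$ via \eqref{gen:filt:proof:lemma4:eq4}. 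This establishes conclusion 1 (well-posedness for $N,M$ large) and conclusion 2.

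\textbf{Step 3: the $N$-limit, and the main obstacle.} The representation $R=(\mathbb{R}^n,\{A_\sigma\}_{\sigma\in\Sigma},B,C)$ produced by Algorithm \ref{alg1} from the true Hankel matrix and $(\alpha,\beta)$ does not depend on $N$, and is precisely the minimal representation from which the forward-innovation realization $\Sigma$ of \eqref{gen:filt:eq2} is built in Theorem \ref{gen:filt:theo3.3}. By Lemma \ref{gen:filt:proof:new:lemma1}, $\x_N(t)\bu_\sigma(t)\to\x(t)\bu_\sigma(t)$ and $\e_N(t)\bu_\sigma(t)\to\e(t)\bu_\sigma(t)$ in mean square; continuity of the inner product then gives $P^N_\sigma=E[\x_N(t)\x^T_N(t)\bu^2_\sigma(t)]\to P_\sigma$ and $Q^N_\sigma\to Q_\sigma$, and since $p_\sigma T_{\sigma,\sigma}-CP_\sigma C^T$ is nonsingular ($\y$ full rank), \eqref{gen:filt:eq3} and continuity of inversion give $K^N_\sigma\to K_\sigma$. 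Hence $\lim_{N}\lim_{M}\Sigma_{N,M}=\lim_{N}(\{A_\sigma,K^N_\sigma,P^N_\sigma,Q^N_\sigma\}_{\sigma\in\Sigma},C,I_p)=(\{A_\sigma,K_\sigma,P_\sigma,Q_\sigma\}_{\sigma\in\Sigma},C,I_p)=\Sigma$, which is conclusion 3. The genuinely delicate points are: (i) that Algorithm \ref{alg1} depends continuously on the Hankel data at the true Hankel matrix, which rests on nonsingularity of the selected block $H_{\Psi_{\y},\alpha,\beta}$ so that ${}^MF_\sigma=Z^M_\sigma(H^M_{\Psi_{\y},\alpha,\beta})^{-1}$ is a continuous function of the entries; and (ii) the nonsingularity of $T_N$ — when $L\ne\Sigma^{+}$ some components of $\BZ_N(t)$ vanish and $T_N$ is singular, so $\alpha_N=\widetilde{\Lambda}_NT^{-1}_N$ must be interpreted via the pseudoinverse and the diagonal matrix $D$ in \eqref{gen:filt:proof:lemma4:eq1.41} is what reconciles the two readings; checking that the $\alpha_N$ so computed indeed represents $\x_N(t)$ is the main technical effort. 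Everything else is routine manipulation of mean-square limits, made possible precisely because only the iterated limit $\lim_N\lim_M$ is asserted.
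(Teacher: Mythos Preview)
Your proposal is correct and follows essentially the same route as the paper: use ergodicity to pass the $M$-limit through the empirical covariances and the Hankel block, invoke continuity of Algorithm~\ref{alg1} at the true Hankel matrix to get ${}^MF_\sigma,{}^MG,{}^MH\to A_\sigma,B,C$, then propagate through $\alpha_{N,M}$, $P^{N,M}_\sigma$, $Q^{N,M}_\sigma$, $K^{N,M}_\sigma$, and finally use Lemma~\ref{gen:filt:proof:new:lemma1} for the $N$-limit. The one place where the paper's argument differs from yours is the invertibility of $T_N$: rather than arguing it via proximity to $Q_\sigma$, the paper proves a separate auxiliary result (Lemma~\ref{gen:filt:proof:new:lemma2v2}) decomposing $T_N=R_N+Q_N$ with $R_N\ge 0$ a covariance matrix and $Q_N>0$ block-diagonal (using full rank of $\y$), so that $T_N>0$ directly --- and in particular the paper does not engage with the $L\ne\Sigma^{+}$ subtlety you flag, effectively treating the pseudoinverse/$D$-matrix issue as absorbed by that lemma.
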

 Informally, Theorem  \ref{alg2:theo} says the following. If we let $M$
 go to infinity, then the weak realization $\Sigma_{N,M}$ returned by 
 Algorithm \ref{alg2:theo} corresponds to the approximate realization
 described in Lemma \ref{gen:filt:proof:new:lemma2}. In that realization,
 the state process $\x(t)$ is approximated by $\x_N(t)$, the latter being 
 the (linear combination of) projection of future outputs to finitely many past
 outputs and inputs. If we let $N$ go to infinity too, then
 $\Sigma_{N,M}$ will converge (as a tuple of matrices) to the weak 
 realization which corresponds to the \GBS\ described in Theorem \ref{gen:filt:theo3.3}.
 %
 Theorem \ref{alg2:theo} and Algorithm \ref{alg2} open up the possibility of formulating
 subspace-like realization algorithms for \GBS{s} and for analyzing existing ones
 \cite{Verhaegen:CDC04,Verhaegen:automatica,MacChenBilSub,Favoreel:PhD99}.
 Pursuing this direction remains future work.

\section{Proof of the results on realization theory of \GBS{s}}
\label{gbs:prob:proof}
\subsection{Technical preliminaries and the proofs of Lemma \ref{gbs:def:new_lemma1}--\ref{gbs:def:new_lemma1.3}}
\label{proof:gbs:tech}
 Below we will present a number of technical results on
 \RC\ processes. These results will allow us to prove
 Lemmas \ref{gbs:def:new_lemma1}--\ref{gbs:def:new_lemma1.3} and 
 the main theorems. 
 
  \begin{Notation}
   Let $I_k$ denote the $k \times k$ identity matrix. 
 \end{Notation}

 Let $\br(t) \in \mathbb{R}^{r}$ be an \RC\ process.
 \begin{Notation}
 \label{new:pf:not1}
  Denote by $H_t^{\br}$ the Hilbert-space generated by the entries of
  $\{\z_{w}^{\br}(t) \mid w \in \Sigma^{+}\}$.
 \end{Notation}
  \begin{Lemma}
  \label{new:pf:lemma-2}
   With the notation above, if
 $H_{t}^{\br} \subseteq H_{t+1}^{\br}$,
   $\br(t) \in H_{t+1}^{\br}$.
  \end{Lemma}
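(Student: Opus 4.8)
The plan is to write $\br(t)$ directly as a finite linear combination of the generators of $H_{t+1}^{\br}$, using Assumption~\ref{input:assumption}. Recall from \eqref{gen:filt:eqzdef1} that for a single letter $\sigma\in\Sigma$ one has $|\sigma|=1$, so
\[ \z_{\sigma}^{\br}(t+1)=\br\big((t+1)-1\big)\,\bu_{\sigma}\big((t+1)-1\big)\frac{1}{\sqrt{p_{\sigma}}}=\frac{1}{\sqrt{p_{\sigma}}}\,\br(t)\,\bu_{\sigma}(t). \]
By Part~1 of Assumption~\ref{input:assumption} there are reals $\{\alpha_{\sigma}\}_{\sigma\in\Sigma}$ with $\sum_{\sigma\in\Sigma}\alpha_{\sigma}\bu_{\sigma}(t)=1$ almost surely, hence
\[ \sum_{\sigma\in\Sigma}\alpha_{\sigma}\sqrt{p_{\sigma}}\,\z_{\sigma}^{\br}(t+1)=\br(t)\sum_{\sigma\in\Sigma}\alpha_{\sigma}\bu_{\sigma}(t)=\br(t). \]

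First I would check that each $\z_{\sigma}^{\br}(t+1)$ is a legitimate element of the Hilbert space of square integrable random variables, i.e.\ that its coordinates are square integrable: this is immediate since $\br$ is an \RC\ process, so $T^{\br}_{\sigma,\sigma}=E[\z_{\sigma}^{\br}(t)(\z_{\sigma}^{\br}(t))^{T}]$ exists and is finite, and wide-sense stationarity gives $E[\z_{\sigma}^{\br}(t+1)(\z_{\sigma}^{\br}(t+1))^{T}]=T^{\br}_{\sigma,\sigma}$. Since $\sigma\in\Sigma\subseteq\Sigma^{+}$, the variable $\z_{\sigma}^{\br}(t+1)$ is one of the generators of $H_{t+1}^{\br}$, so every coordinate $(\br(t))_{i}$ of $\br(t)$ is a finite $\mathbb{R}$-linear combination of coordinates of such generators and therefore lies in $H_{t+1}^{\br}$. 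In the abuse-of-terminology sense of \S\ref{sec:hilbert:rev}, this is exactly the assertion $\br(t)\in H_{t+1}^{\br}$.

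The only thing that needs care is bookkeeping: tracking the time-shift in \eqref{gen:filt:eqzdef1} (the argument $t-|w|$ of $\br$ versus $t$ and $t-1$ of the inputs), the $1/\sqrt{p_{w}}$ normalization, and the vector-valued reading of ``$\in$''; I do not expect a genuine obstacle, the difficulty being essentially notational. The hypothesis $H_{t}^{\br}\subseteq H_{t+1}^{\br}$ is the monotonicity setting in which this lemma will be applied downstream (iterating it together with the shift-invariance of the construction then yields $\br(s)\in H_{t+1}^{\br}$ for every $s\le t$), but the displayed inclusion $\br(t)\in H_{t+1}^{\br}$ rests on Part~1 of Assumption~\ref{input:assumption} alone.
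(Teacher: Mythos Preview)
Your argument for $\br(t)\in H_{t+1}^{\br}$ is correct and is exactly the paper's argument (in fact you tracked the constants $\alpha_{\sigma}\sqrt{p_{\sigma}}$ more carefully than the paper's own proof, which drops them).

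There is, however, a reading issue. The lemma is not a conditional: despite the stray ``if'', it asserts \emph{two} conclusions, namely $H_{t}^{\br}\subseteq H_{t+1}^{\br}$ \emph{and} $\br(t)\in H_{t+1}^{\br}$. The paper's proof establishes both. You have treated $H_{t}^{\br}\subseteq H_{t+1}^{\br}$ as a hypothesis and proved only the second claim. The missing half is immediate by the same trick applied to the generators of $H_{t}^{\br}$: for any $w\in\Sigma^{+}$,
\[
\z_{w}^{\br}(t)=\sum_{\sigma\in\Sigma}\alpha_{\sigma}\,\z_{w}^{\br}(t)\bu_{\sigma}(t)=\sum_{\sigma\in\Sigma}\alpha_{\sigma}\sqrt{p_{\sigma}}\,\z_{w\sigma}^{\br}(t+1)\in H_{t+1}^{\br},
\]
using $\bu_{w}(t-1)\bu_{\sigma}(t)=\bu_{w\sigma}(t)$ and $p_{w\sigma}=p_{w}p_{\sigma}$. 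Since every generator of $H_{t}^{\br}$ lies in $H_{t+1}^{\br}$ and the latter is closed, the inclusion follows. So the gap is purely one of interpretation of the (admittedly garbled) statement; once you add this one line, your proof matches the paper's.
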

  \begin{proof}[Proof of Lemma \ref{new:pf:lemma-2}]
  From Assumption \ref{input:assumption} it follows that $\sum_{\sigma \in \Sigma} \alpha_{\sigma} \bu_{\sigma}(t)=1$  for any $t \in \mathrm{Z}$, and hence
  $\br(t)=\sum_{\sigma \in \Sigma} \alpha_{\sigma} \br(t)\bu_{\sigma}(t)=\sum_{\sigma \in \Sigma} \z^{\br}_{\sigma}(t+1) \in H_{t+1}^{\br}$.
  Similarly, $\z_{w}^{\br}(t)=\sum_{\sigma \in \Sigma} \alpha_{\sigma} \z^{\br}_{w}(t)\bu_{\sigma}(t)=\sum_{\sigma \in \Sigma} \alpha_{\sigma} \z^{\br}_{w\sigma}(t+1) \in H_{t+1}^{\br}$.
  \end{proof}
  \begin{Lemma}
  \label{new:pf:lemma-1}
  Let
  $\z(t) \in \mathbb{R}^{d}$ be a process such that the entries of $\z(t)$
  belong to $H^{\br}_{t}$ for any $t \in \mathrm{Z}$ and
  that $E[\z(t+k)(\z_{w}^{\br}(t+k))^T]=E[\z(t)(\z_{w}^{\br}(t))^T]$.
   Then the process $\begin{bmatrix} \br(t) \\ \z(t) \end{bmatrix}$ is \RC.
 \end{Lemma}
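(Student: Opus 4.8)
The plan is to verify that the stacked process $\bs(t)=\begin{bmatrix}\br^T(t) & \z^T(t)\end{bmatrix}^T$ satisfies all the conditions of Definition~\ref{def:RC}. The organising device is the shift unitary attached to the \RC\ process $\br$. Let $H$ be the Hilbert space generated by the coordinates of $\{\z^{\br}_w(t)\mid t\in\mathbb{Z},\,w\in\Sigma^{+}\}$; it already contains every coordinate of $\br(t)$, since $\br(t)=\sum_{\sigma\in\Sigma}\alpha_\sigma\z^{\br}_\sigma(t+1)$ by Assumption~\ref{input:assumption} (cf.\ the proof of Lemma~\ref{new:pf:lemma-2}). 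Using $\z^{\br}_w(t)=\sum_\sigma\alpha_\sigma\z^{\br}_{w\sigma}(t+1)$ repeatedly to express any second moment of two generators of $H$ as a fixed linear combination of the time-invariant quantities $T^{\br}_{u,v}$, one sees (as is standard for jointly wide-sense stationary families) that $\z^{\br}_w(t)\mapsto\z^{\br}_w(t+1)$ extends to a unitary $U$ on $H$ with $U\z^{\br}_w(t)=\z^{\br}_w(t+1)$ and $U\br(t)=\br(t+1)$ (coordinatewise), and $U$ carries the closed subspace $H^{\br}_t$ onto $H^{\br}_{t+1}$.

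First I would show that $U$ also shifts $\z$, i.e.\ $U(\z(t))_j=(\z(t+1))_j$. Both $(\z(t+1))_j$ and $U(\z(t))_j$ lie in $H^{\br}_{t+1}$ --- the former by hypothesis, the latter because $(\z(t))_j\in H^{\br}_t$ and $U(H^{\br}_t)=H^{\br}_{t+1}$ --- and for every generator $(\z^{\br}_w(t+1))_l$ of $H^{\br}_{t+1}$,
\[
\langle U(\z(t))_j,(\z^{\br}_w(t+1))_l\rangle=\langle (\z(t))_j,(\z^{\br}_w(t))_l\rangle=\langle (\z(t+1))_j,(\z^{\br}_w(t+1))_l\rangle ,
\]
the first equality by unitarity of $U$ and the second being the $(j,l)$ entry of the hypothesis $E[\z(t+1)(\z^{\br}_w(t+1))^T]=E[\z(t)(\z^{\br}_w(t))^T]$. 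Hence $U(\z(t))_j-(\z(t+1))_j$ is orthogonal to $H^{\br}_{t+1}$ while lying in it, so it vanishes; thus $U$ shifts every coordinate of $\bs$. In particular, if for a fixed $j$ we choose finite linear combinations $g^{(j)}_n$ of the coordinates of $\{\z^{\br}_v(0)\mid v\in\Sigma^{+}\}$ with $g^{(j)}_n\to(\z(0))_j$ in mean square, then applying $U^{t}$ shows that the \emph{same} coefficients, shifted to time $t$, approximate $(\z(t))_j$ for every $t$.

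The technical core, and the step I expect to be the main obstacle, is to show that for $w\in\Sigma^{+}$ the variables $(\z^{\z}_w(t))_j:=(\z(t-|w|))_j\,\bu_w(t-1)/\sqrt{p_w}$ again lie in $H^{\br}_t$, are the mean-square limit of the corresponding combinations of the coordinates of $\{\z^{\br}_{vw}(t)\}$, and are intertwined by $U$ between times $t$ and $t+1$. The key point is that multiplication by $\bu_w(t-1)/\sqrt{p_w}$ is a contraction from $\mathrm{span}\{(\z^{\br}_v(t-|w|))_l\mid v\in L\}$ (dense in $H^{\br}_{t-|w|}$) into $H^{\br}_t$: indeed $(\z^{\br}_v(t-|w|))_l\,\bu_w(t-1)/\sqrt{p_w}=(\z^{\br}_{vw}(t))_l$, a term that is $0$ almost surely when $vw\notin L$ (there $\|\z^{\br}_{vw}(t)\|^2=\mathrm{tr}\,T^{\br}_{vw,vw}=0$ by Part~\ref{RC4}), while for $v,v'$ with $vw,v'w\in L$ iterated application of Part~\ref{RC3} gives $T^{\br}_{vw,v'w}=T^{\br}_{v,v'}$, and Part~\ref{RC4} (peeling off the letters of $w$ one at a time) gives $T^{\br}_{v,v'}=0$ whenever $vw\in L$ but $v'w\notin L$. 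These facts show, for $X=\sum_{v\in L}c_v(\z^{\br}_v(t-|w|))_{l(v)}$, that $\|X\,\bu_w(t-1)/\sqrt{p_w}\|^{2}=\|X_A\|^{2}\le\|X\|^{2}$, where $X_A$ is the partial sum over $\{v:vw\in L\}$ and the two pieces of $X$ are orthogonal by Part~\ref{RC4}. So multiplication by $\bu_w(t-1)$ commutes with mean-square limits on this subspace, which gives the claim; the intertwining by $U$ then follows by applying $U$ term by term to the approximants and using its continuity.

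Granting all this, the rest is bookkeeping. For Part~\ref{RC1}: every coordinate of $\z^{\bs}_w(t)=\begin{bmatrix}(\z^{\br}_w(t))^T & (\z^{\z}_w(t))^T\end{bmatrix}^T$ lies in $H^{\br}_t$, hence has zero mean (a mean-square limit of zero-mean variables is zero-mean), and $U^{k}$ shifts all of $\bs(t),\z^{\bs}_w(t)$ to $\bs(t+k),\z^{\bs}_w(t+k)$, so every mixed second moment in Part~\ref{RC1} is independent of $k$ by unitarity. For Part~\ref{RC3}, split $T^{\bs}_{w,v}$ and $\Lambda^{\bs}_w$ into the four blocks of the $\br$/$\z$ decomposition: the $\br$--$\br$ block is governed by Part~\ref{RC3} for $\br$, and after substituting $(\z^{\z}_w(t))_j=\lim_n\sum_v c^{(j)}_{n,v}(\z^{\br}_{vw}(t))_{l(v)}$ each remaining block becomes a convergent combination of the $T^{\br}_{\cdot,\cdot}$ and $\Lambda^{\br}_{\cdot}$, to which Parts~\ref{RC3}--\ref{RC4} for $\br$ apply. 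Here one uses that $L$ is determined by a set $S\subseteq\Sigma\times\Sigma$ (Definition~\ref{gen:filt:ass1:cond1:def1}), so that prefixes of admissible words are admissible and $uw\sigma\in L\iff uw\in L$ whenever $w\sigma\in L$; this is exactly what makes the block identities match the prescribed recursive forms, with $T^{\br}_{vw,v'w}$ collapsing to $T^{\br}_{v,v'}$ or to $0$ as above, and $T^{\bs}_{\sigma,\sigma'}=0$ for $\sigma\ne\sigma'$ since each block reduces to quantities $T^{\br}_{\cdot\sigma,\cdot\sigma'}$ that vanish by Part~\ref{RC3}. Finally, Part~\ref{RC4} for $\bs$ is automatic from Parts~\ref{RC1}--\ref{RC3} by Remark~\ref{rem:RC}.
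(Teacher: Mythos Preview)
Your proof is correct and follows essentially the same strategy as the paper. The paper also approximates $\z(t)$ by finite sums $\z_N=\sum_{|w|\le N}\alpha_w\z^{\br}_w(t)$, shows the same coefficients work at every time $t$ via the hypothesis $E[\z(t+k)(\z^{\br}_w(t+k))^T]=E[\z(t)(\z^{\br}_w(t))^T]$, and then invokes the contraction property of multiplication by $\bu_\sigma(t)$ (its Lemmas~\ref{gen:filt:proof:lemma5.1} and~\ref{gen:filt:proof:lemma5}) to pass to the limit in $\z^{\z}_w(t)$; your unitary shift $U$ is simply a cleaner packaging of the first step, and your direct contraction argument for a full word $w$ is the iterated form of the paper's one-letter Lemma~\ref{gen:filt:proof:lemma5.1}.
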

  For the proof of this lemma we will need the following results.
\begin{Lemma}
\label{gen:filt:proof:lemma5.1}
 If $\z \in \mathbb{R}$ is a mean-square integrable random variable and 
 it belongs to the linear span of the components of
 $\z_v(t)$, $v \in L$, then
 $E[\z^2\bu_{\sigma}^{2}(t)] \le p_{\sigma} E[\z^2]$.
\end{Lemma}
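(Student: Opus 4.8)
The plan is to reduce the inequality to a Pythagorean identity in the Hilbert space of square integrable random variables. First I would write $\z=\sum_{v\in F}c_v^T\z_v(t)$ for a finite set $F\subseteq L$ and vectors $c_v\in\mathbb{R}^p$. The crucial elementary observation is the ``shift'' identity already used in the proof of Lemma \ref{new:pf:lemma-2}: from \eqref{gen:filt:eqzdef0}, \eqref{gen:filt:eqzdef1} and $p_{v\sigma}=p_vp_\sigma$ (Notation \ref{ppnot1}) one has, for every $v\in\Sigma^{+}$ and $\sigma\in\Sigma$, the pointwise identity $\z_v(t)\bu_\sigma(t)=\y(t-|v|)\bu_{v\sigma}(t)\frac{1}{\sqrt{p_v}}=\sqrt{p_\sigma}\,\z_{v\sigma}(t+1)$, since $\bu_v(t-1)\bu_\sigma(t)=\bu_{v\sigma}(t)$. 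Hence $\z\,\bu_\sigma(t)=\sqrt{p_\sigma}\sum_{v\in F}c_v^T\z_{v\sigma}(t+1)$, which in particular exhibits $\z\,\bu_\sigma(t)$ as a finite linear combination of square integrable variables, so $E[\z^2\bu_\sigma^2(t)]=E[(\z\,\bu_\sigma(t))^2]<\infty$.

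Next I would expand this second moment and exploit the recursive covariance structure. Using wide-sense stationarity of $\y$ (so that $E[\z_w(t+1)(\z_{v'}(t+1))^T]=T^{\y}_{w,v'}$ for all $w,v'\in\Sigma^{+}$), one gets $E[\z^2\bu_\sigma^2(t)]=p_\sigma\sum_{v,v'\in F}c_v^T T^{\y}_{v\sigma,v'\sigma}c_{v'}$. By Part \ref{RC3} of Definition \ref{def:RC}, $T^{\y}_{v\sigma,v'\sigma}=T^{\y}_{v,v'}$ whenever $v\sigma\in L$ or $v'\sigma\in L$, while by the first clause of Part \ref{RC4}, $T^{\y}_{v\sigma,v'\sigma}=0$ whenever $v\sigma\notin L$ or $v'\sigma\notin L$ (these two facts combined also force $T^{\y}_{v,v'}=0$ in the case where exactly one of $v\sigma,v'\sigma$ lies in $L$). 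Thus only pairs with $v,v'\in F_\sigma:=\{v\in F\mid v\sigma\in L\}$ contribute, and for those $T^{\y}_{v\sigma,v'\sigma}=T^{\y}_{v,v'}=E[\z_v(t)(\z_{v'}(t))^T]$. Setting $\tilde{\z}:=\sum_{v\in F_\sigma}c_v^T\z_v(t)$, this yields $E[\z^2\bu_\sigma^2(t)]=p_\sigma E[\tilde{\z}^2]$.

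Finally I would prove $E[\tilde{\z}^2]\le E[\z^2]$ by orthogonality. Write $\z=\tilde{\z}+\zeta$ with $\zeta:=\sum_{v'\in F\setminus F_\sigma}c_{v'}^T\z_{v'}(t)$. For $v\in F_\sigma$ and $v'\in F\setminus F_\sigma$ one has $v\sigma\in L$ and $v'\sigma\notin L$, so the second clause of Part \ref{RC4} of Definition \ref{def:RC} gives $T^{\y}_{v',v}=0$, hence $E[\z_v(t)(\z_{v'}(t))^T]=(T^{\y}_{v',v})^T=0$. Therefore $E[\tilde{\z}\,\zeta]=\sum_{v\in F_\sigma}\sum_{v'\in F\setminus F_\sigma}c_v^T E[\z_v(t)(\z_{v'}(t))^T]c_{v'}=0$, so $E[\z^2]=E[\tilde{\z}^2]+E[\zeta^2]\ge E[\tilde{\z}^2]$. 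Combining with the previous paragraph, $E[\z^2\bu_\sigma^2(t)]=p_\sigma E[\tilde{\z}^2]\le p_\sigma E[\z^2]$, as claimed.

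The argument is essentially bookkeeping and I do not anticipate a genuine obstacle; the only delicate point is the case analysis on membership in $L$, where one must invoke Parts \ref{RC3} and \ref{RC4} of Definition \ref{def:RC} in exactly the right combination, so that (i) the double sum collapses onto $F_\sigma\times F_\sigma$ and (ii) $\tilde{\z}$ is orthogonal to $\zeta$. If ``linear span'' in the statement is intended in the closed sense, one additionally passes to an almost surely convergent subsequence of approximating finite combinations and closes the estimate via Fatou's lemma; also, the whole argument works verbatim with $\y$ replaced by an arbitrary \RC\ process.
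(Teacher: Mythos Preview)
Your proof is correct and follows essentially the same route as the paper's: write $\z$ as a finite combination over $F\subseteq L$, split $F$ into $F_\sigma=\{v:v\sigma\in L\}$ and its complement, use the shift identity and Parts~\ref{RC3}--\ref{RC4} of Definition~\ref{def:RC} to reduce $E[\z^2\bu_\sigma^2(t)]$ to $p_\sigma\sum_{v,v'\in F_\sigma}c_v^TT_{v,v'}c_{v'}$, and then compare with $E[\z^2]$ via the vanishing of the $F_\sigma\times(F\setminus F_\sigma)$ cross terms. The only cosmetic difference is that you phrase the last step as the Pythagorean identity $E[\z^2]=E[\tilde\z^2]+E[\zeta^2]$, whereas the paper simply notes that the $(F\setminus F_\sigma)\times(F\setminus F_\sigma)$ covariance block is positive semidefinite; these are of course the same observation.
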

\begin{proof}[Proof of Lemma \ref{gen:filt:proof:lemma5.1}]
 Assume that for some finite subset $S \subseteq L$,
 $\z=\sum_{v \in S} \alpha_v \z_v(t)$ for some 
 $\alpha_v \in \mathbb{R}^{1 \times p}$.
 Define $S_1=\{ v \in S, v\sigma \in L\}$,
 $S_2 = \{ v \in S, v\sigma \notin L\}$. Then, by noticing
 that $E[\z_v(t)\z^T_{w}(t)\bu^2_{\sigma}(t)]=E[\z_{v\sigma}(t+1)\z^T_{w\sigma}(t+1)]=T_{v\sigma,w\sigma}$ and taking into
  account Part \ref{RC4} of Definition \ref{def:RC} and that
 $T^T_{v\sigma,s}=T_{s,v\sigma}=0$ for $v\sigma \notin L$, we 
 obtain
 \begin{equation}
 \label{gen:filt:proof:lemma5.1:eq1}
   \begin{split}
    & E[\z^2\bu^2_{\sigma}(t)]=
    \sum_{v,w \in S_1} p_{\sigma} \alpha^T_v T_{v\sigma,w\sigma}\alpha^T_{w} =
    p_{\sigma} \sum_{v,w \in S_1} \alpha_v T_{v,w}\alpha^T_{w}
    \end{split}
  \end{equation}
  On the other hand, by Part \ref{RC4} of Definition \ref{def:RC},
  if $v \in S_1$ and $w \in S_2$ or other way around, then 
  $T_{v,w}=0$. Moreover, $(T_{v,w})_{v,w \in S_2}$ is positive 
  definite, \ie  $\sum_{v,w \in S_2} \alpha_vT_{v,w}\alpha_w^T \ge 0$.
 Hence, by noticing that $S=S_1 \cup S_2$, 
  \begin{equation}
  \label{gen:filt:proof:lemma5.1:eq2}
     \begin{split}
      & E[\z^2]=\sum_{v,w \in S} \alpha_v T_{w,v}\alpha^T_{w}=
    \sum_{v,w \in S_1} \alpha_v T_{v,w}\alpha^T_{w}+
      \sum_{v,w \in S_2} \alpha_v T_{v,w}\alpha^T_{w} \ge 
      \sum_{v,w \in S_1} \alpha_v T_{v,w}\alpha^T_{w} 
    \end{split}
  \end{equation}
  Combining \eqref{gen:filt:proof:lemma5.1:eq1} and
  \eqref{gen:filt:proof:lemma5.1:eq2} yields the statement of the 
  lemma.  
\end{proof}
\begin{Lemma}
\label{gen:filt:proof:lemma5}
 Assume that $\z_N \in \mathbb{R}$ is a sequence such that
 $\z_N$ is a finite linear combination of $\z_{w}^{\br}(t)$, $w \in \Sigma^{+}$ and
 $\z=\lim_{N \rightarrow \infty} \z_N$ in the mean-square sense.
 Then for each $\sigma \in \Sigma$,
 $\z\bu_{\sigma}(t)=\lim_{N \rightarrow \infty} \z_N\bu_{\sigma}(t)$.
 in the mean-square sense.
\end{Lemma}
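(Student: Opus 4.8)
The plan is to use the bound of Lemma \ref{gen:filt:proof:lemma5.1} to show that $(\z_N\bu_{\sigma}(t))_{N}$ is Cauchy in the mean-square sense, and then to identify its limit with $\z\bu_{\sigma}(t)$. The first step is a harmless reduction: by the first condition in Definition \ref{gen:filt:ass1:cond1:def1}, for every $w \in \Sigma^{+} \setminus L$ we have $\bu_{w}(t-1)=0$ almost surely, hence $\z_{w}^{\br}(t)=0$ almost surely by \eqref{gen:filt:eqzdef1}. Consequently every finite linear combination of the $\z_{w}^{\br}(t)$, $w \in \Sigma^{+}$, agrees almost surely with a finite linear combination of the $\z_{v}^{\br}(t)$, $v \in L$; in particular this applies to each $\z_N$ and to each difference $\z_n-\z_m$, so from now on I may assume all these variables lie in the linear span of $\{\z_{v}^{\br}(t) \mid v \in L\}$.

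The quantitative core is then immediate. The estimate of Lemma \ref{gen:filt:proof:lemma5.1} is valid for the \RC\ process $\br$ (its proof uses only Part \ref{RC4} of Definition \ref{def:RC} and positive semi-definiteness of the covariance blocks $T^{\br}_{v,w}$, so it applies verbatim), giving $E[\z^2\bu_{\sigma}^2(t)] \le p_{\sigma} E[\z^2]$ for every $\z$ in the span of $\{\z_{v}^{\br}(t) \mid v \in L\}$. Applying this with $\z=\z_N$ shows that each $\z_N\bu_{\sigma}(t)$ is square integrable, and applying it with $\z=\z_n-\z_m$ gives
\[
E[(\z_n\bu_{\sigma}(t)-\z_m\bu_{\sigma}(t))^2]=E[(\z_n-\z_m)^2\bu_{\sigma}^2(t)] \le p_{\sigma} E[(\z_n-\z_m)^2].
\]
Since $\z_N \to \z$ in mean square, $(\z_N)_N$ is Cauchy in $L^2$, so the right-hand side tends to $0$ as $n,m \to \infty$; hence $(\z_N\bu_{\sigma}(t))_N$ is Cauchy in $L^2$ and, by completeness, converges in the mean-square sense to some square-integrable $\widetilde{\z}$. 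Letting $m\to\infty$ in the displayed inequality and using continuity of the norm also yields the explicit bound $E[(\z_N\bu_{\sigma}(t)-\widetilde{\z})^2] \le p_{\sigma} E[(\z_N-\z)^2]$.

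The remaining point — which I expect to be the only genuinely delicate one, though it is routine — is to identify $\widetilde{\z}$ with $\z\bu_{\sigma}(t)$, since $\z-\z_N$ is not itself a finite linear combination and the estimate above cannot be applied to it directly. Mean-square convergence implies $\z_N \to \z$ in probability, and since $\bu_{\sigma}(t)$ is a fixed, almost surely finite random variable, a standard argument gives $\z_N\bu_{\sigma}(t) \to \z\bu_{\sigma}(t)$ in probability; but $\z_N\bu_{\sigma}(t) \to \widetilde{\z}$ in mean square and hence in probability, and a limit in probability is unique up to almost sure equality, so $\widetilde{\z}=\z\bu_{\sigma}(t)$ almost surely. (Equivalently, one may extract an almost surely convergent subsequence of $(\z_N)$ and then a further subsequence of the corresponding products.) This completes the argument; the quantitative heart is entirely supplied by Lemma \ref{gen:filt:proof:lemma5.1}, and the only bookkeeping needed concerns the almost-surely-zero terms $\z_w^{\br}(t)$, $w\notin L$.
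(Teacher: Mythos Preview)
Your proof is correct and follows essentially the same route as the paper: both use Lemma \ref{gen:filt:proof:lemma5.1} to show that $(\z_N\bu_{\sigma}(t))_N$ is Cauchy in $L^2$, then identify the $L^2$ limit with $\z\bu_{\sigma}(t)$ via a weaker mode of convergence. The only difference is in that identification step: the paper shows $\z_N\bu_{\sigma}(t)\to\z\bu_{\sigma}(t)$ in $L^1$ by the Cauchy--Schwarz estimate $E[|\z_N-\z|\,|\bu_{\sigma}(t)|]\le \|\z_N-\z\|_{2}\|\bu_{\sigma}(t)\|_{2}$ and then invokes uniqueness of the $L^1$ limit, whereas you use convergence in probability and its uniqueness; both are standard and equally short, and your explicit reduction to words in $L$ is a clean way to make the applicability of Lemma \ref{gen:filt:proof:lemma5.1} transparent.
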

\begin{proof}[Proof of Lemma \ref{gen:filt:proof:lemma5}]
 If
  $\z=\lim_{N \rightarrow \infty} \z_N$ in the mean-square sense,
 then it $\z_N\bu_{\sigma}(t)$ converges to $\z_N\bu_{\sigma}(t)$ in
 mean sense.
 Indeed, from H\"olders inequality 
  it follows
  that $E[|\z_N\bu_{\sigma}(t)-z\bu_{\sigma(t)}|] = E[|(\z_N-\z)| \dot |\bu_{\sigma}(t)|] \le \sqrt{E[|z-\z_N|^{2}]}\sqrt{E[\bu_{\sigma}(t)^2]}$.
  On the other hand, it can be shown that 
  $\z_N\bu_{\sigma}(t)$ is a Cauchy-sequence in the mean-square sense.
  Notice that by Lemma \ref{gen:filt:proof:lemma5.1},
  $\z_N\bu_{\sigma}(t)$ is in fact mean-square integrable.
  Consider $\z_{N+K}-\z_N$ for any $K >0 $. Since $\z_{N+K}-\z_N$ belongs
  to the closed linear space $M_{N+K}$ generated by the entries of $\{\z_k\}_{k \le N}$, by Lemma \ref{gen:filt:proof:lemma5.1},
  \( E[|\z_{N+K}\bu_{\sigma}(t) - \z_{N}\bu_{\sigma}(t)|^2]=
     E[|\z_{N+K}-\z_N|^2\bu^2_{\sigma}(t)] \le p_{\sigma} E[|\z_{N+K} - \z_N|^2] \).
  Since $\z_N$ is convergent, it is then a Cauchy sequence and
  hence by the inequality above so is $\z_{N}\bu_{\sigma}(t)$.
  But by Jensen's inequality,
  $E[|\z_N(t)\bu_{\sigma}(t)-h|] \le \sqrt{E[|z_N(t)\bu_{\sigma}(t)-h|^2]}$, 
  and hence  
  $h$ is the limit of $\z_N\bu_{\sigma}(t)$  in the mean
  sense as well. 
  It then follows from the uniqueness of the limit
  in $L_1$ sense that $h=\z\bu_{\sigma}(t)$ almost surely.
  Hence, $\z\bu_{\sigma}(t)=h$ is indeed the limit of
  $\z_N\bu_{\sigma}(t)$ in the mean-square sense.
\end{proof}
 \begin{proof}[Proof of Lemma \ref{new:pf:lemma-1}]
  From $\z(t) \in H_t$ it follows that  $\z(t)=\lim_{N \rightarrow \infty} \z_N$
  where $\z_N=\sum_{w \in \Sigma^{+}, |w| \le N} \alpha_{w}\z^{\br}_{w}(t)$ for
  some $\alpha_w \in \mathbb{R}^{d \times r}$.  
  Define $\z_N(k)=\sum_{s \in \Sigma^{+}, |s| \le N} \alpha_{s}\z^{\br}_{s}(k)$
  for all $k \in \mathbb{Z}$.  From $E[\z(t)\z^T_N(t)]=E[\z(t+k)\z_N^T(t+k)]$,
  it follows that $E[||\z(t+k)-\z_N(t+k)||^{2}]=E[||\z(t)-\z_N(t)||^{2}]$ and
  hence $\z(k)=\lim_{N \rightarrow \infty} \z_N(k)$, $t,k \in \mathbb{Z}$.   
  For every $v \in \Sigma^{+}$, denote by $\z^{v}_N(t)$ the finite sum
  $\z_N^{v}(t)=\sum_{s \in \Sigma^{+}, |s| \le N} \alpha_{s}\z^{\br}_{sv}(t)$.
  By repeated application of Lemma \ref{gen:filt:proof:lemma5} we obtain that
  \[ \z_{v}^{\z}(t)=\lim_{N \rightarrow \infty} \z_{N}^{v}(t) \] 
  Since $E[\z_v^{\z}(t)(\z_{w}^{\z}(t))^T]$ and $E[\z^{\z}(t)(\z^{\z}_{w}(t))^T]$ are
  the limits of $E[\z_N^{v}(t)(\z_N^{w}(t))^T]$ and
  $E[\z_N(t)(\z_N^{w}(t))^T]$ respectively.
  Hence, if $\br(t)$ satisfies Part \ref{RC1} of
  Definition \ref{def:RC}, i.e. $\{\br(t),\z_{w}^{\br}(t) \mid w \in \Sigma^{+}\}$
  is zero mean wide-sense stationary, then so is
  $\{\z(t),\z_w^{\z}(t) \mid w \in \Sigma^{+}\}$. That is
  $\z$ satisfies Part \ref{RC1} of Definition \ref{def:RC}.
  Finally, in order to prove that $\z(t)$ satisfies 
  Part \ref{RC3} of Definition \ref{def:RC}, 
  notice that $T_{w\sigma, v\sigma^{'}}^{\z}$ is
  is the limit of linear combinations of $T^{\br}_{sw\sigma,hv\sigma^{'}}$
  for $s,h \in \Sigma^{+}$.  If $\sigma \ne \sigma^{'}$, then by virtue of
  $\br$ satifying Part \ref{RC3} of Definition \ref{def:RC},
  $T_{w\sigma,v\sigma^{'}}^{\br}=0$. If
  $\sigma =\sigma^{'}$ and $w\sigma, v\sigma \in L$, 
  Finally, if $w\sigma \notin L$ (respectively $v\sigma \notin L$), then 
  for all $s \in \Sigma^{+}$, $sw\sigma \notin L$ (respectively $hv\sigma \notin L$ for all $v \in \Sigma^{+}$) and hence  $T^{\br}_{sw\sigma,hv\sigma}=0$.
  By combining the results above and taking limits we readily conclude that
  $\z(t)$ satisfies Part \ref{RC3} of Definition \ref{def:RC}.
  Finally, as it was remarked in Remark \ref{rem:RC}, Part \ref{RC4} of Definition
  \ref{def:RC} follows from Parts \ref{RC1}--\ref{RC3} of Definition \ref{def:RC}.
 \end{proof}
 \begin{Notation}
 \label{new:pf:not2}
 For every $w \in \Sigma^{+}$, denote by $H^{\br}_{t,w}$ the Hilbert-space generated by
 the entries of $\{\z_{vw}^{\br}(t) \mid v \in \Sigma^{+}\}$ and denote
 $H^{\br,*}_{t,w}$ the Hilbert-space generated by
 the entries of $\{\z_{vw}(t)^{\br} \mid v \in \Sigma^{*}\}$. Clearly, $H_{t,w}^{\br} \subseteq H_{t,w}^{\br,*}$.
 \end{Notation}
\begin{Lemma}
\label{new:pf:lemma-4}
 With the notation above, for every $\sigma_1,\sigma_2 \in \Sigma$, $\sigma_1 \ne \sigma_2$,
 $H^{\br,*}_{t,\sigma_1}$ and $H^{\br,*}_{t,\sigma_2}$ are orthogonal and hence
 $H^{\br}_{t,\sigma_1}$ and $H^{\br}_{t,\sigma_2}$ are orthogonal.
 Moreover, if $\z \in H_{t}^{\br}$, then $\z\bu_{\sigma}(t) \in H_{t+1,\sigma}^{\br}$.
\end{Lemma}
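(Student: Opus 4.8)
The plan is to reduce both assertions to a single elementary \emph{shift identity} relating $\z^{\br}_{v\sigma}$ at time $t$ to $\z^{\br}_{v}$ at time $t-1$. From the definitions \eqref{gen:filt:eqzdef0}--\eqref{gen:filt:eqzdef1} and Notation \ref{ppnot1}, for any $v \in \Sigma^{+}$, $\sigma \in \Sigma$ one has $\bu_{v\sigma}(t-1) = \bu_{v}(t-2)\bu_{\sigma}(t-1)$, $p_{v\sigma} = p_{v}p_{\sigma}$ and $t-|v\sigma| = (t-1)-|v|$, hence
\[ \z^{\br}_{v\sigma}(t) = \frac{1}{\sqrt{p_{\sigma}}}\,\z^{\br}_{v}(t-1)\,\bu_{\sigma}(t-1), \]
and the same holds for $v=\epsilon$ with the reading $\z^{\br}_{\epsilon}(t)=\br(t)$. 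Equivalently, after shifting $t\mapsto t+1$, $\z^{\br}_{v}(t)\bu_{\sigma}(t)=\sqrt{p_{\sigma}}\,\z^{\br}_{v\sigma}(t+1)$.

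For the orthogonality statement, the plan is to show that $E[\z^{\br}_{v\sigma_1}(t)(\z^{\br}_{w\sigma_2}(t))^{T}] = 0$ for all $v,w \in \Sigma^{*}$ whenever $\sigma_1 \neq \sigma_2$; then every generator of $H^{\br,*}_{t,\sigma_1}$ is orthogonal to every generator of $H^{\br,*}_{t,\sigma_2}$, and orthogonality passes to the closed linear spans by continuity of the inner product, giving $H^{\br,*}_{t,\sigma_1} \perp H^{\br,*}_{t,\sigma_2}$; since $H^{\br}_{t,\sigma_i} \subseteq H^{\br,*}_{t,\sigma_i}$ by Notation \ref{new:pf:not2}, also $H^{\br}_{t,\sigma_1} \perp H^{\br}_{t,\sigma_2}$. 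The vanishing of the covariances is read off directly from Part \ref{RC3} of Definition \ref{def:RC}: for $v,w\in\Sigma^{+}$ it is the relation $T^{\br}_{v\sigma_1,w\sigma_2}=0$ for $\sigma_1\neq\sigma_2$; for exactly one of $v,w$ empty it is $T^{\br}_{w\sigma_2,\sigma_1}=0$ (or its transpose); and for $v=w=\epsilon$ it is $T^{\br}_{\sigma_1,\sigma_2}=0$.

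For the second statement it suffices, arguing componentwise, to take a scalar $\z \in H^{\br}_{t}$ and write $\z = \lim_{N\to\infty}\z_{N}$ in mean square with each $\z_{N}$ a finite linear combination of the entries of $\{\z^{\br}_{w}(t)\mid w\in\Sigma^{+}\}$. Multiplying by $\bu_{\sigma}(t)$ and applying the shift identity term by term shows that $\z_{N}\bu_{\sigma}(t)$ is a finite linear combination of the entries of $\{\z^{\br}_{w\sigma}(t+1)\mid w\in\Sigma^{+}\}$, hence $\z_{N}\bu_{\sigma}(t) \in H^{\br}_{t+1,\sigma}$. By Lemma \ref{gen:filt:proof:lemma5}, $\z_{N}\bu_{\sigma}(t) \to \z\bu_{\sigma}(t)$ in mean square, and since $H^{\br}_{t+1,\sigma}$ is closed the limit $\z\bu_{\sigma}(t)$ belongs to it.

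The arguments are short once the shift identity is in hand; the only points requiring care are the index bookkeeping in the product $\bu_{v\sigma}(t-1)=\bu_{v}(t-2)\bu_{\sigma}(t-1)$ and the empty-word cases in the orthogonality argument, where $\z^{\br}_{\sigma}$ appears in place of a longer $\z^{\br}_{v\sigma}$ and one must invoke the second and third sub-formulas of Part \ref{RC3} of Definition \ref{def:RC} rather than the first.
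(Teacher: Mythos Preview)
Your proof is correct and follows essentially the same approach as the paper: orthogonality of the generators via Part~\ref{RC3} of Definition~\ref{def:RC}, and the second statement via the shift identity $\z^{\br}_{w}(t)\bu_{\sigma}(t)=\sqrt{p_{\sigma}}\,\z^{\br}_{w\sigma}(t+1)$ combined with Lemma~\ref{gen:filt:proof:lemma5}. In fact you are slightly more careful than the paper, which only writes the orthogonality for $v,w\in\Sigma^{+}$ while $H^{\br,*}_{t,\sigma}$ also contains the $v=\epsilon$ generator $\z^{\br}_{\sigma}(t)$; your explicit treatment of the empty-word cases closes that small gap.
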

\begin{proof}[Proof of Lemma \ref{new:pf:lemma-4}]
 The first statement of the lemma is an immediate consequence of
 the fact that $E[\z^{\br}_{v\sigma_1}(t)(\z^{\br}_{w\sigma_2}(t))^T]=0$ for 
 all $w,v \in \Sigma^{+}$, $\sigma_1 \ne \sigma_2 \in \Sigma$.
The second statement follows by noticing that 
$\z^{\br}_{w}(t)\bu_{\sigma}(t) \in H_{t+1,\sigma}^{\br}$. 
If $\z \in H_t^{\br}$, then $\z=\lim_{N \rightarrow \infty} r_N$, 
where $r_N$ is a finite linear combination of $\z^{\br}_{w}(t)$, $w \in \Sigma^{+}$. 
 It then follows that $r_N\bu_{\sigma}(t) \in H_{t+1,\sigma}^{\br}$. 
From Lemma \ref{gen:filt:proof:lemma5} it follows that 
$\z=\lim_{N \rightarrow \infty} r_N\bu_{\sigma}(t)$ and hence 
$\z \in H_{t+1,\sigma}^{\br}$.
\end{proof}
\begin{Lemma}
\label{new:pf:lemma-3}
 Let $\bh(t) \in \mathbb{R}^{l}$, $\z(t)  \in \mathbb{R}^p$ be processes such that $\bs(t)=(\z^T(t),\bh^T(t))^T$ is \RC\ and
the coordinates of $\z(t)$ are orthogonal to $H_t^{\bh}$  for all $t \in \mathbb{Z}$. 
Then for all $w \in \Sigma^{+}$,  the coordinates of 
 $\z^{\z}_{w}(t)$ are orthogonal
to $H_{t,w}^{\bh}$ for all $t \in \mathbb{Z}$.
\end{Lemma}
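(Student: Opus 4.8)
The plan is to reduce the claimed orthogonality to a single covariance identity for the \RC\ process $\bs(t)=(\z^T(t),\bh^T(t))^T$, obtained by iterating the recursive covariance rules (Part \ref{RC3} of Definition \ref{def:RC}), and to invoke the hypothesis on $\z$ only at the very last step. Since, by Notation \ref{new:pf:not2}, $H^{\bh}_{t,w}$ is the closure of the linear span of the coordinates of $\{\z^{\bh}_{vw}(t)\mid v\in\Sigma^{+}\}$, and orthogonality to a generating family passes to the closed span, it suffices to prove that $E[\z^{\z}_w(t)(\z^{\bh}_{vw}(t))^T]=0$ for every fixed $w,v\in\Sigma^{+}$ and every $t\in\mathbb{Z}$.

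First I would dispose of the degenerate cases. If $w\notin L$ then $\bu_w(t)=0$ almost surely by Definition \ref{gen:filt:ass1:cond1:def1}, so $\z^{\z}_w(t)=0$; if $w\in L$ but $vw\notin L$ then $\bu_{vw}(t)=0$ almost surely, so $\z^{\bh}_{vw}(t)=0$; in either case the covariance vanishes trivially, so from now on $w,vw\in L$. Next I would record the elementary identity $\z^{\bs}_u(t)=\big((\z^{\z}_u(t))^T,(\z^{\bh}_u(t))^T\big)^T$ for all $u$, which shows that $E[\z^{\z}_a(t)(\z^{\bh}_b(t))^T]$ is exactly the top-right $p\times l$ block of $T^{\bs}_{a,b}=E[\z^{\bs}_a(t)(\z^{\bs}_b(t))^T]$, and likewise that $E[\z(t)(\z^{\bh}_v(t))^T]$ is the top-right block of $\Lambda^{\bs}_v=E[\bs(t)(\z^{\bs}_v(t))^T]$.

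The core computation is then the identity $T^{\bs}_{w,vw}=\Lambda^{\bs}_v$. Writing $w=\sigma_1\cdots\sigma_k$, I would peel the common trailing letter off $T^{\bs}_{w,vw}$ one at a time using the first rule of Part \ref{RC3}; this is legitimate at each stage because a non-empty prefix of the admissible word $w$ is again admissible (immediate from Definition \ref{gen:filt:ass1:cond1:def1}), so the word currently playing the role of $w$ always lies in $L$ and both words remain non-empty until only $\sigma_1$ and $v\sigma_1$ are left. This yields $T^{\bs}_{w,vw}=T^{\bs}_{\sigma_1\cdots\sigma_{k-1},\,v\sigma_1\cdots\sigma_{k-1}}=\cdots=T^{\bs}_{\sigma_1,\,v\sigma_1}$. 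Then, using $T^{\bs}_{a,b}=(T^{\bs}_{b,a})^T$ and the second rule of Part \ref{RC3} (which applies since the longer word $v\sigma_1$ comes first), $T^{\bs}_{\sigma_1,v\sigma_1}=(T^{\bs}_{v\sigma_1,\sigma_1})^T=\big((\Lambda^{\bs}_v)^T\big)^T=\Lambda^{\bs}_v$. Finally, the top-right block of $\Lambda^{\bs}_v$ is $E[\z(t)(\z^{\bh}_v(t))^T]$, which is $0$ by hypothesis because the coordinates of $\z(t)$ are orthogonal to $H_t^{\bh}$ while those of $\z^{\bh}_v(t)$ lie in $H_t^{\bh}$ (as $v\in\Sigma^{+}$); reading the equality $T^{\bs}_{w,vw}=\Lambda^{\bs}_v$ blockwise gives $E[\z^{\z}_w(t)(\z^{\bh}_{vw}(t))^T]=0$, which is what was needed.

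The hard part will not be any substantial mathematics but the bookkeeping: making sure every application of Part \ref{RC3} is to a legitimate pair of words (both non-empty for the first rule, the longer word first for the second), keeping straight which block of each matrix identity corresponds to $\z$ versus $\bh$, and remembering to separate out at the start the degenerate cases where $\bu_w$ or $\bu_{vw}$ is almost surely zero. The one auxiliary fact I would state explicitly along the way is that $L$ is closed under taking non-empty prefixes, which is what makes the iterated peeling valid.
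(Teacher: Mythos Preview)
Your proposal is correct and follows essentially the same route as the paper: both reduce the claim to the identity $T^{\bs}_{w,vw}=\Lambda^{\bs}_v$ for the \RC\ process $\bs$ and then read off the $\z$--$\bh$ block, which vanishes by hypothesis. The only difference is cosmetic: the paper writes $\z=C_1\bs$, $\bh=C_2\bs$ and extracts blocks via $C_1(\cdot)C_2^T$, and it simply asserts $T^{\bs}_{w,vw}=\Lambda^{\bs}_v$ as a consequence of \RC, whereas you spell out the letter-by-letter peeling and the degenerate cases explicitly.
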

\begin{proof}
It then follows that $\z(t)=C_1\bs(t)$ and $\bh(t)=C_1\bs(t)$ for suitable matrices $C_1,C_2$.
Note $E[\z(t)(\z^{\bh}_{v}(t))^T]=C_1\Lambda^{\bs}_{v}C_2^T$ 
and $E[\z^{\z}_w(t)(\z^{\bh}_{vw}(t))^T]=C_1T_{w,vw}^{\bs}C_2^T$,
$t \in \mathbb{Z}$. Since $\bs(t)$ is \RC, $T_{w,vw}^{\bs}=\Lambda_{v}^{\bs}$ if $vw \in L$ and $T_{w,vw}^{\bs}=0$ otherwise. 
  Hence, $E[\z^{\z}_w(t)(\z^{\bh}_{vw}(t))^T]=E[\z(t)(\z^{\bh}_{v}(t))^T]$ if $vw \in L$ and  $E[\z^{\z}_w(t)(\z^{\bh}_{vw}(t))^T]=0$ otherwise.
 Since by the orthogonality assumption $E[\z(t)(\z^{\bh}_{v}(t))^T]=0$, 
 it then follows that $E[\z^{\z}_w(t)(\z^{\bh}_{vw}(t))]=0$ for all $v \in \Sigma^{+}$. 
\end{proof}
 \begin{proof}[Proof of Lemma \ref{gbs:def:new_lemma1}]
  It is clear that if  $\bs(t)=\begin{bmatrix} \v^T(t), & \x^T(t) \end{bmatrix}^T$ is \RC, then $\x(t)=\begin{bmatrix} 0, & I_n \end{bmatrix}\bs(t)$ is \RC\ too. 
  The claim that $\begin{bmatrix} \v^T(t), & \x^T(t) \end{bmatrix}^T$ is \RC\ follows directly from Part \ref{gbs:def:prop4}, Assumption \ref{gbs:def}, and Lemma \ref{new:pf:lemma-1}, if we can
  show that $E[\x(t)(\z_{v}^{\v}(t))^T]$ does not depend on $t$ for
  any $v \in \Sigma^{+}$.
    For any $k \ge 0$, 
 \begin{equation}
  \label{gbs:def:new_lemma1:eq2}
   \x(t)=\sum_{w \in \Sigma^{+},|w|=k} \sqrt{p_w} A_{w}\z^{\x}_{w}(t)+
          \sum_{w \in \Sigma^{*}, |w| \le k-1} \sum_{\sigma \in \Sigma} \sqrt{p_{\sigma w}} A_{w}B_{\sigma}\z_{\sigma w}^{\v}(t).
 \end{equation}
  If $k=|v|$, then it follows that $E[\z^{\x}_{w}(t)(\z^{\v}_{v}(t)^T]=0$. Indeed, by
  Part \ref{gbs:def:prop4}, Assumption \ref{gbs:def} and
  Lemma \ref{gen:filt:proof:lemma5}, $\z^{\x}_{w}(t)$ belongs to the Hilbert-space generated by the components of $\z^{\v}_{sw}(t)$, $s \in \Sigma^{+}$. Since, $|w|=|v|=k$, $sw \ne v$ for all $s \in \Sigma^{+}$. Hence, $\z^{\v}_{v}(t)$ is orthogonal to the latter Hilbert-space.
  Hence, for $k=|v|$, 
  \[ 
     E[\x(t)(\z_{v}^{\v}(t))^T]
     =\sum_{w \in \Sigma^{*}, |w| \le k-1} \sum_{\sigma \in \Sigma} \sqrt{p_{\sigma w}}
 A_{w}B_{\sigma}T_{\sigma w,v}^{\v},
  \]
 and the latter expression does not depend on $t$.
  
  Using
  Part \ref{gbs:def:prop4}, Assumption \ref{gbs:def} and
  Lemma \ref{new:pf:lemma-4},  the coordinates of $\z^{\x}_{w}(t)$ belong to the Hilbert-space $H_{t,w}^{\v}$
  generated by the coordinates of $\z^{\v}_{sw}(t)$, $s \in \Sigma^{+}$. 
  Since, $|v| \le  |w|=k$, $sw \ne v$ for all $s \in \Sigma^{+}$, and hence $E[\z_{sw}^{\v}(t)(\z^{\v}_{v}(t))^T]=0$. That is,
  the coordinates of $\z^{\v}_{v}(t)$ 
  are orthogonal to $H_{t,w}^{\v}$ and hence to $\z_w^{\x}(t)$.

  In order to show \eqref{gbs:def:new_lemma1:eq1}, we go back to
  \eqref{gbs:def:new_lemma1:eq2}.
  We will  show that $r_k(t)=\sum_{w \in \Sigma^{+},|w|=k} \sqrt{p_w} A_{w}\z^{\x}_{w}(t)$
  converges to zero as $k \rightarrow \infty$. Since $\x(t)$ is \RC,
  $E[\z^{\x}_{w}(t)(\z_{v}^{\x}(t))^T]=0$ for any $w \ne v$ or $w=v \notin L$, $|w|=|v|=k$, and for all $w \in L$, 
  $E[\z^{\x}_{w}(t)(\z_{w}^{\x}(t))^T]=\frac{1}{p_{\sigma}} E[\x(t-k)\x^T(t-k)\bu^2_{\sigma}(t-k)]$, where $w=\sigma s$ for $\sigma \in \Sigma$ and $s \in \Sigma^{*}$.
  Denote by $P_{\sigma}=E[\x(t-k)\x^T(t-k)\bu^2_{\sigma}(t-k)]$. Note that by
  virtue of $\x(t)$ being \RC, the definition of $P_{\sigma}$ does not
  depend on $t$ and $k$. Moreover, from Part \ref{gbs:def:prop5} it follows that $A_{s}A_{\sigma}=0$ if $s \in \Sigma^{*}, \sigma \in \Sigma$, 
  $\sigma s \notin L$. 
  In then follows that
  \begin{equation} 
  \label{gbs:def:new_lemma1:eq3}
    E[r_k(t)r_k^{T}(t)]=\sum_{s \in \Sigma^{*}} \sum_{\sigma \in \Sigma } p_s A_{s}A_{\sigma}P_{\sigma}A_{\sigma}^TA_{s}^T.
  \end{equation}
  Define $S=\sum_{\sigma \in \Sigma} A_{\sigma}P_{\sigma}A_{\sigma}^T$ and
  define the linear map $\mathcal{R}$ on the space
  of matrices $\mathbb{R}^{n \times n}$ as 
  \[ \mathcal{R}(V)=\sum_{\sigma \in \Sigma} p_{\sigma} A_{\sigma}VA_{\sigma}^{T}.  \]
  Then $E[r_k(t)r_{k}^T(t)]=\mathcal{R}^{k-1}(S)$. 
  Notice that $\sum_{\sigma \in \Sigma} p_{\sigma}A^T_{\sigma} \otimes A_{\sigma}^T$
  is just the matrix representation of $\mathcal{R}(V)$ in the basis
  described in \cite[Chapter 2]{CostaBook}. Hence, by
  Part \ref{gbs:def:prop5} of Definition \ref{gbs:def} and 
  \cite[Proposition 2.5]{CostaBook},
  $\lim_{k \rightarrow \infty} \mathcal{R}^{k}(S)=0$. Hence, it follows that
  the limit of $E[||r_k(t)||^{2}]=\mathrm{trace} E[r_k(t)r^T_k(t)]$ equals
  zero as $k \rightarrow \infty$, which is a equivalent to saying that
  the mean-square limit of $r_k(t)$ is zero as $k$ goes to $\infty$.
 \end{proof}
\begin{proof}[Proof of Lemma \ref{gbs:def:new_lemma1.2}]
  In order to prove the statement of the lemma, we use the proof of
  Lemma \ref{gbs:def:new_lemma1}. Notice that
  \eqref{gbs:def:new_lemma1:eq2} remains valid for $t=k$, 
  if we replace $\x$ by $\hat{\x}$. The assumptions of the lemma ensure 
  that \eqref{gbs:def:new_lemma1:eq3} remains valid for
  $t=k$, where $r_k(k)=\sum_{w \in \Sigma^{+},|w|=k} \sqrt{p_w} A_{w}\z^{\hat{\x}}_{w}(k)$
  and $P_{\sigma}=E[\hat{\x}(0)\hat{\x}(0)^{T}\bu_{\sigma}^2(0)]$. With the
  argument as above, it then follows that $\lim_{k \rightarrow \infty} r_k(k)=0$ in the 
  mean-square sense. Notice that $\x(t)-\hat{\x}(t)=\sum_{v \in \Sigma^{*}, |v| \ge t} \Sigma_{\sigma \in \Sigma}  \sqrt{p_{\sigma v}} A_{v}B_{\sigma}\z^{\v}_{\sigma v}(t) - r_t(t)$.
  The first terms converges to zero in the mean-square sense as $t \rightarrow +\infty$, since
   the series on the right-hand side of \eqref{gbs:def:new_lemma1:eq1} is convergent in the
   mean-square sense. It was shown that the second term $r_t(t)$ converges to zero as $t \rightarrow \infty$. Hence,
   $\x(t)-\hat{\x}(t)$ converges to  $0$ in mean-square sense.
\end{proof}
\begin{proof}[Proof of Lemma \ref{gbs:def:new_lemma1.3}]
  First, we show that there exists at most one solution to \eqref{gen:filt:theo3.2:eq1}.
  To this end, assume that there are two solutions $\{P_{\sigma}\}_{\sigma \in \Sigma}$ and
  $\{P^{'}_{\sigma}\}_{\sigma \in \Sigma}$ to \eqref{gen:filt:theo3.2:eq1}. Define
  $\hat{P}_{\sigma}=P_{\sigma}-P_{\sigma}^{'}$. By subtracting th equation
  \eqref{gen:filt:theo3.2:eq1} for $P_{\sigma}$ and $P^{'}_{\sigma}$,
  \begin{equation} 
  \label{theorem3.1:pf:eq2}
   \hat{P}_{\sigma}=\sum_{\sigma_1 \in \Sigma, \sigma_1\sigma \in L} p_{\sigma}A_{\sigma_1}\hat{P}_{\sigma_1}A_{\sigma_1}^T.
  \end{equation}
  Using the equation above and the fact that $A_{\sigma}A_{\sigma_1}=0$ for $\sigma_1\sigma \notin L$, we obtain
  \begin{equation} 
  \label{theorem3.1:pf:eq1}
    A_{\sigma}\hat{P}_{\sigma}A^T_{\sigma}=\sum_{\sigma_1 \in \Sigma}
     p_{\sigma} A_{\sigma}A_{\sigma_1}\hat{P}_{\sigma_1}A_{\sigma_1}^TA_{\sigma} 
  \end{equation}
  Consider the map $\Z : \Re^{n\times n} \to \Re^{n\times n}$ defined as 
  \(  \Z(V)=\sum_{\sigma \in \Sigma} p_{\sigma} A_{\sigma} V A^T_{\sigma} .\)
  It is easy to see that $\sum_{\sigma \in \Sigma} p_{\sigma} A_{\sigma}^T \otimes A_{\sigma}^T$ is a matrix representation of $\Z$. 
  Hence,
  from Part \ref{gbs:def:prop5} of Assumption \ref{gbs:def} it follows that all the
  eigenvalues of $\Z$ are inside the unit circle.
  Define $Q=\sum_{\sigma \in \Sigma} A_{\sigma}\hat{P}_{\sigma}A^T_{\sigma}$ and 
  notice that \eqref{theorem3.1:pf:eq1} implies that 
  \( Q=\sum_{\sigma \in \Sigma} p_{\sigma}A_{\sigma} (\sum_{\sigma_1 \in \Sigma} A_{\sigma_1}P_{\sigma_1}A_{\sigma_1}^T)A_{\sigma}^T=\Z(Q) \). 
  Since $1$ is not an eigenvalue of $Q$, it implies that $Q=0$. 
  But if $Q=0$, then \eqref{theorem3.1:pf:eq1} implies that
  \( A_{\sigma}\hat{P}_{\sigma}A^T_{\sigma}=p_{\sigma}A_{\sigma}QA_{\sigma}^T=0 \).
  Applying \eqref{theorem3.1:pf:eq2} yields $\hat{P}_{\sigma}=0$, and hence
  $P_{\sigma}=P^{'}_{\sigma}$ for all $\sigma \in \Sigma$.

  Next, we show that a solution to \eqref{gen:filt:theo3.2:eq1} exists and it
  is determined by
 $P _{\sigma}=E[\x(t)\x(t)^{T}\bu_{\sigma}^{2}(t)]=p_{\sigma}E[\z_{\sigma}^{\x}(t)(\z_{\sigma}^{\x}(t))^{T}]$. 
  By Lemma \ref{new:pf:lemma-1} $\x(t)$ is \RC.
  From Part \ref{gbs:def:prop4} of Assumption \ref{gbs:def} it also follows that
  for every $w,v \in \Sigma^{+}$, $|w| \ge |v|$, $\z^{\x}_w(t)$ and
  $\z^{\v}_{v}(t)$ are orthogonal.
  Indeed by Lemma \ref{gen:filt:proof:lemma5},
   $\z^{\x}_{w}(t)$ belongs to the Hilbert
  space generated by $\z^{\v}_{sw}(t)$, $s \in \Sigma^{+}$ and  by
  Assumption \ref{gbs:def:prop2}, $\z^{\v}_{v}(t)$ and $\z^{\v}_{sw}(t)$ are
  orthogonal, since clearly $|sw| > |w| \ge |v|$.
  Notice the identities
  $P_{\sigma}=p_{\sigma}E[\z_{\sigma}^{\x}(t+1)(\z^{\x}_{\sigma}(t+1))^T]=p_{\sigma}T^{\x}_{\sigma,\sigma}$, 
   $\z_{\sigma_1\sigma}^{\x}(t+1)=\frac{1}{\sqrt{p_{\sigma_1\sigma}}}\x(t-1)\bu_{\sigma_1}(t-1)\bu_{\sigma}(t)$,
   $\z_{\sigma\sigma^{'}}^{\v}(t+1)=\frac{1}{\sqrt{p_{\sigma_1\sigma}}}\v(t-1)\bu_{\sigma_1}(t-1)\bu_{\sigma}(t)$ and
  \begin{equation} 
  \label{theorem3.1:pf:eq3}
    \z^{\x}_{\sigma}(t+1)=\sum_{\sigma_1 \in \Sigma}
      \sqrt{p}_{\sigma\sigma_1} (A_{\sigma_1}\z^{\x}_{\sigma_1\sigma}(t+1) +
      K_{\sigma_1}\z^{\v}_{\sigma_1\sigma}(t+1).
  \end{equation}
  Notice that
  $E[\z^{\x}_{\sigma_1\sigma}(t)(\z^{\x}_{\sigma_2\sigma}(t))^T]$ equals zero, 
 if $\sigma_1 \ne \sigma_2$ or $\sigma_1=\sigma_2, \sigma_1\sigma \notin L$, and  $p_{\sigma}P_{\sigma_1}$ otherwise.
 In a similar fashion,  
  $E[\z^{\v}_{\sigma_1\sigma}(t)(\z^{\v}_{\sigma_2\sigma}(t))^T]$ equals zero,
  if $\sigma_1 \ne \sigma_2$ or $\sigma_1=\sigma_2, \sigma_1\sigma \notin L$ and  $p_{\sigma}Q_{\sigma_1}$ otherwise.
  In addition, $E[\z^{\x}_{\sigma_1\sigma}(t+1)(\z^{\v}_{\sigma_2\sigma}(t+1))^T]=0$, $\sigma_1,\sigma_2,\sigma \in \Sigma$. 
  By noticing $P_{\sigma}=p_{\sigma} E[\z^{\x}_{\sigma}(t+1)(\z^{\x}_{\sigma}(t+1))^T]$, 
  and applying \eqref{theorem3.1:pf:eq3}, it follows that
  $\{P_{\sigma}\}_{\sigma \in \Sigma}$ satisfies \eqref{gen:filt:theo3.2:eq1}.
\end{proof}

\subsection{Proof of Theorem \ref{gen:filt:theo3.2}}
  We prove the claims one by one.
  
  \textbf{Proof that $\y$ is \RC} 
  From Lemma \ref{gbs:def:new_lemma1} it follows that $\bs(t)=\begin{bmatrix} \v^T(t), & \x^T(t) \end{bmatrix}^T$, and as
  Notice $\y(t)=C\x(t)+D\v(t)=\begin{bmatrix} C, & D \end{bmatrix}\bs(t)$, it then follows that $\y$ is \RC.

  \textbf{Proof that $R_{\BS}$ is well-defined and that it is a representation of $\Psi_{\y}$.} 
  From Lemma \ref{gbs:def:new_lemma1.3} it follows that \eqref{gen:filt:theo3.2:eq1}
  has at most one solution.

  Next, we show that $R_{\BS}$ is a representation of $\Psi_{\y}$.
  By induction on $|w|$ we obtain that  for all $w \in \Sigma^{*}$,
  \begin{equation}
  \label{proof:gen:filt:theo3.2:eq1}
     E[\x(t)(\z^{\x}_{\sigma w}(t))^T]= \frac{1}{\sqrt{p_{\sigma}}}
     \sqrt{p_{w}}A_{w} A_{\sigma}E[\x(t-k)\x^T(t-k)\bu_{\sigma}^{2}], 
  \end{equation}
  where $p_{w}$ is defined as in Notation \ref{ppnot1}. 
 Indeed, for $w=\epsilon$, $\z^{\x}_{\sigma}(t)=\frac{1}{\sqrt{p_{\sigma}}}\x(t-1)\bu_{\sigma}(t-1)$
  and using that $\x(t)=\sum_{\sigma \in \Sigma} \sqrt{p_{\sigma}}(A_{\sigma}\z^{\x}_{\sigma}(t)+ K_{\sigma}\z^{\v}_{\sigma}(t))$ and
 $E[\z^{\v}_{\sigma_1}(t)(\z^{\x}_{\sigma}(t))^T]=0$ for all $\sigma_1,\sigma \in \Sigma$ (see Lemma \ref{gbs:def:new_lemma1}) ,
 we obtain  \eqref{proof:gen:filt:theo3.2:eq1}.
  If $w=v\hat{\sigma}$, then using $\x(t)=\sum_{\sigma_1 \in \Sigma} \sqrt{p_{\sigma}}(A_{\sigma_1}\z^{\x}_{\sigma_1}(t)+K_{\sigma_1}\z^{\v}_{\sigma_1}(t))$, the induction hypothesis, and the
  equalities
  $E[\z^{\x}_{\sigma_1}(t)(\z^{\x}_{\sigma v\hat{\sigma}}(t))^T]=0$ if $\sigma_1 \ne \hat{\sigma}$ or $\sigma v \hat{\sigma} \notin L$, 
  and $E[\z^{\x}_{\sigma_1}(t)(\z^{\x}_{\sigma v\hat{\sigma}}(t))^T]=\sqrt{p_{\hat{\sigma}}}E[\x(t-1)(\z^{\x}_{\sigma v}(t))^T]$ for $w=\sigma v \hat{\sigma} \in L$,
  and $E[\z^{\v}_{\sigma_1}(t)(\z^{\x}_{\sigma v\hat{\sigma}}(t))^T]=0$ (see Lemma \ref{gbs:def:new_lemma1}), and using $A_{w}A_{\sigma}=0$ if $\sigma w \notin L$, 
  we again readily obtain \eqref{proof:gen:filt:theo3.2:eq1}.
  In a similar fashion, we can show that
   \[ E[\x(t)(\z^{\v}_{\sigma w}(t))^T]=\begin{cases} 
               \frac{1}{\sqrt{p_{\sigma}}} \sqrt{p_{w}}A_wK_{\sigma}Q_{\sigma}, & \sigma w \in L \\
                  0 & \mbox{ otherwise } 
              \end{cases}, 
  \]
  where $Q_{\sigma}=E[\v(t)\v^T(t)\bu_{\sigma}^2(t)]$.
  Finally, notice that
  \( \z_{w}(t)=C\z^{\x}_{w}(t)+D\z^{\v}_{w}(t) \), and $\v(t)$ is orthogonal to the
   variables
   $\z^{\x}_{w}(t)$ and $\z^{\v}_{w}(t)$. 
   Using the definition $\Lambda^{\y}_{\sigma w} = E[\y(t)\z_{\sigma w}^{T}(t)]$, $A_wA_{\sigma}=0$, $A_{w}K_{\sigma}Q_{\sigma}=0$ for $\sigma w \notin L$,
   and \eqref{proof:gen:filt:theo3.2:eq1}, we derive
  \[
     \begin{split}
      & \Lambda^{\y}_{\sigma w} = 
      CE[\x(t)(\z^{\x}_{w}(t))^T]C^{T}+CE[\x(t)(\z^{\v}_{w}(t))^T]D^{T} =
          \sqrt{p_{w}}CA_{w}\frac{1}{\sqrt{p_{\sigma}}}(A_{\sigma}P_{\sigma}C^T+K_{\sigma}Q_{\sigma}D^T)
     \end{split}
   \]
  That is, 
  \( \Lambda^{\y}_{\sigma w}=CA_{w}B_{\sigma} \), i.e.
 $R_{\BS}$ is a representation of $\Psi_{\y}$.

  Finally, from Part \ref{gbs:def:prop5} of Definition \ref{gbs:def}
  it follows that $R_{\BS}$ is a stable
  representation.

 \textbf{Proof that $\y$ satisfies Assumption \ref{output:assumptions}}
  From the discussion above it follows that $\y$ is \RC\ and 
 $R_{\BS}$ is a stable representation of $\Psi_{\y}$.
  Hence, $\Psi_{\y}$ is rational and by 
  Theorem \ref{hscc_pow_stab:theo2} $\Psi_{\y}$ is square-summable too.

  \textbf{Proof that $\y$ is full rank} 
  To this end, notice that $\z_{w}(t)=C\z^{\x}_{w}(t)+D\z^{\v}_{w}(t)$.
  Part \ref{gbs:def:prop4} of Assumption \ref{gbs:def} and repeated application of
  Lemma \ref{new:pf:lemma-4} implies the coordinates of $\z^{\x}_{w}(t)$ belong to $H_{t,w}^{\v} \subseteq H_t^{\v}$ and $H_t^{\y} \subseteq H_t^{\v}$.
  Let $H^{\bot}_t$ be the orthogonal complement of $H_t^{\y}$ in $H_t^{\v}$.
  From Definition \ref{gbs:def} it follows that
  $E[\v(t)h]=0$ for any $h \in H^{\v}_t$. Hence, $\v(t)$ is orthogonal to $H^{\y}_t$.
  Notice that the entries of $\x(t)$ belong to $H^{\v}_t$ and hence it can be written as 
  $\x(t)=\x_1(t)+\x_2(t)$ such that the entries of $\x_2(t)$ belong to
  $\mathcal{H}_t^{\bot}$. It then follows that $E_l[\x(t) \mid \{\z_{w}(t) \mid w \in \Sigma^{+}\}]=\x_1(t)$ since
  for all $w \in \Sigma^{+}$, $E[\x_2(t)\z_{w}^T(t)]=0$ and hence $E[\x(t)\z_{w}^T(t)]=E[\x_1(t)\z_{w}^{T}(t)]$.
 Then   $E_l[\y(t) \mid \{\z_{w}(t) \mid w \in \Sigma^{+}\}]=C\x_1(t)$, since 
  $E[\y(t)\z_{w}^T(t)]=CE[\x_1(t)\z_{w}^{T}(t)]$ and the entries of $C\x_1(t)$ 
  belong to $H_t$.  Moreover, from Lemma \ref{new:pf:lemma-1} it follows that $(\y^T(t),\x^T_2(t))^T$ is \RC.
  Similarly, since $\v(t)$ is orthogonal to $H_{t}^{\v}$,
  by Lemma \ref{new:pf:lemma-3} $\v(t)\bu_{\sigma}(t)$ is orthogonal to  $H_{t+1,\sigma}^{\v}$.
  Since by Lemma \ref{new:pf:lemma-4} the entries of $\x_i(t)\bu_{\sigma}(t)$, $i=1,2$  belong to
  $H_{t+1,\sigma}^{\v}$, it then follows that $\x_i(t)\bu_{\sigma}(t)$, $i=1,2$ and
  $\v(t)\bu_{\sigma}(t)$ are orthogonal.
  Notice that  $\e(t)=\y(t)-C\x_1(t)=C\x_2(t)+D\v(t)$.
  Hence, for all $\sigma \in \Sigma$, $E[\e(t)\e^T(t)\bu_{\sigma}^2(t)]=CE[\x_2(t)\x^T_2(t)\bu_{\sigma}^2(t)]C^T+DE[\v(t)\v^T(t)\bu_{\sigma}^2(t)]D^{T} > 0$, i.e. $\y$ is full rank.

\subsection{Proof of Theorem \ref{gen:filt:theo3.3}}
 The proof of the theorem is organized as follows. First, we present a number of
 properties of the state process
 $\x(t)$ and the innovation process $\e(t)$. Then we show the existence of the matrix
 $K_{\sigma}$, $\sigma \in \Sigma$.  Finally, we prove \eqref{gen:filt:eq2}.

 
 \textbf{Properties of $\x(t)$ and $\e(t)$}
  Below we present some important
  properties of $\x(t)$ and $\e(t)$ constructed above. The exposition is 
  organized as a series of lemmas.
  \begin{Lemma}
  \label{gen:filt:proof:lemma1}
   For each $w \in \Sigma^{*}$ and $\sigma \in \Sigma$ such
   \[ E[\x(t)\z^T_{w\sigma}(t)]=E[O_{R}^{-1}(Y_n(t))\z^T_{w\sigma}(t)]=A_{w}B_{\sigma}, \]
  where
 \( B_{\sigma}=\begin{bmatrix} B_{1,\sigma},& \ldots,& B_{p,\sigma} \end{bmatrix}
  \).
  \end{Lemma}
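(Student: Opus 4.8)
The plan is to treat the two claimed equalities separately. First I would dispose of $E[\x(t)\z_{w\sigma}^T(t)]=E[O_R^{-1}(Y_n(t))\z_{w\sigma}^T(t)]$ by a pure orthogonal-projection argument. Recall that $\x(t)$ equals the orthogonal projection $E_l[O_R^{-1}(Y_n(t))\mid\{\z_u(t)\mid u\in\Sigma^+\}]$; this is meaningful because $Y_n(t)$ is square integrable (its block $\z^f_\epsilon(t)=\y^T(t)$ is square integrable since $\y$ is \RC, and the remaining blocks are square integrable by Assumption~\ref{output:assumptions:extra}), hence so is its linear image $O_R^{-1}(Y_n(t))$. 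The residual $O_R^{-1}(Y_n(t))-\x(t)$ is, by the definition of orthogonal projection, orthogonal to every coordinate of every $\z_u(t)$, $u\in\Sigma^+$; taking $u=w\sigma$ gives $E[(O_R^{-1}(Y_n(t))-\x(t))\z_{w\sigma}^T(t)]=0$, which is exactly the first equality. (If $w\sigma\notin L$ then $\z_{w\sigma}(t)=0$ almost surely by Definition~\ref{gen:filt:ass1:cond1:def1} and there is nothing to prove.)

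Next I would prove $E[O_R^{-1}(Y_n(t))\z_{w\sigma}^T(t)]=A_wB_\sigma$ by computing the block column $E[Y_n(t)\z_{w\sigma}^T(t)]$ and then left-multiplying by $O_R^{-1}$. Its $i$-th block is $E[\z^f_{v_i}(t)^T\z_{w\sigma}^T(t)]$; unfolding the definitions of $\z^f$ and $\z$ rewrites this as $\frac{1}{\sqrt{p_{v_i}p_{w\sigma}}}E[\y(t+|v_i|)\y^T(t-|w\sigma|)\bu_{v_i}(t+|v_i|-1)\bu_{w\sigma}(t-1)]$. The two products of time-shifted inputs fuse into the single product $\bu_{w\sigma v_i}(t+|v_i|-1)$, while $p_{w\sigma v_i}=p_{w\sigma}p_{v_i}$ by Notation~\ref{ppnot1}, so the normalizing factors cancel; then, using the wide-sense stationarity part (Part~\ref{RC1} of Definition~\ref{def:RC}) of the \RC\ property of $\y$, the $i$-th block equals $E[\y(t)\z_{w\sigma v_i}^T(t)]=\Lambda^{\y}_{w\sigma v_i}$. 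Now I would invoke that $R$ is a representation of $\Psi_\y$: from the definition of $\Psi_\y$ and Notation~\ref{matrix:prod} one has $\Lambda^{\y}_{\sigma x}=CA_xB_\sigma$ for every letter $\sigma\in\Sigma$ and word $x\in\Sigma^*$, and iterating $A_{\sigma x}=A_xA_\sigma$ gives $\Lambda^{\y}_{w\sigma v_i}=CA_{v_i}(A_wB_\sigma)$. Stacking over $i=0,\ldots,M(n-1)$ then shows $E[Y_n(t)\z_{w\sigma}^T(t)]=O_R(A_wB_\sigma)$, and applying $O_R^{-1}$ on the left finishes the argument.

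The hard part will be the word-algebra bookkeeping in the second step: one has to check carefully that the two time-shifted input products combine into exactly $\bu_{w\sigma v_i}$, that the $1/\sqrt{p_{(\cdot)}}$ normalizations telescope to $1$, and --- the point most likely to cause order errors --- that the concatenation order used in the definition of $\z^{\br}_w$ stays consistent with the convention $A_w=A_{\sigma_k}\cdots A_{\sigma_1}$ of Notation~\ref{matrix:prod}, so that $\Lambda^{\y}_{w\sigma v_i}$ genuinely factors through the observability matrix as $CA_{v_i}(A_wB_\sigma)$. A smaller point worth one line is the degenerate case $w\sigma\notin L$: there $\z_{w\sigma v_i}(t)=0$ for all $i$ forces $O_R(A_wB_\sigma)=0$, hence $A_wB_\sigma=0$ since $R$ is observable, so both sides of the claimed identity vanish.
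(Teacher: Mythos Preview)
Your approach is identical to the paper's: the first equality by orthogonality of the projection residual, the second by computing $E[Y_n(t)\z^T_{v}(t)]$ blockwise as $\Lambda^{\y}_{v v_i}$ and recognizing $O_R$ applied to $A_w B_\sigma$.

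The one real gap is precisely the word-order issue you yourself flagged as risky. The statement as printed carries a typo: the subscript should be $\sigma w$, not $w\sigma$ (the paper's own proof writes ``applying the result above to $v=\sigma w$'', and Corollary~\ref{gen:filt:proof:lemma2} and all later uses consistently take $\sigma$ to be the \emph{first} letter). With $\sigma$ first, the representation identity $\Lambda^{\y}_{\sigma x}=CA_x B_\sigma$ applied to $x=w v_i$ gives $\Lambda^{\y}_{\sigma w v_i}=CA_{w v_i}B_\sigma=CA_{v_i}A_w B_\sigma$, and stacking over $i$ yields $O_R A_w B_\sigma$ as desired. Your step ``iterating $A_{\sigma x}=A_x A_\sigma$ gives $\Lambda^{\y}_{w\sigma v_i}=CA_{v_i}(A_w B_\sigma)$'' is not correct as written: if $w=\sigma_1\cdots\sigma_k$ with $k\ge 1$, then the first letter of $w\sigma v_i$ is $\sigma_1$, not $\sigma$, and the representation formula yields
\[
\Lambda^{\y}_{w\sigma v_i}=CA_{\sigma_2\cdots\sigma_k\sigma v_i}B_{\sigma_1}=CA_{v_i}A_\sigma A_{\sigma_k}\cdots A_{\sigma_2}B_{\sigma_1},
\]
which is not $CA_{v_i}A_w B_\sigma$ in general. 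Once you read the lemma with $\sigma w$ in place of $w\sigma$, your argument goes through verbatim and matches the paper's.
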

  \begin{proof}[Proof of Lemma \ref{gen:filt:proof:lemma1}]
   Notice that because of the properties of orthogonal projection
   it holds that
   \( \forall v \in \Sigma^{+}:
        E[\x(t)\z^T_{v}(t)]=E[O^{-1}_R(Y_n(t))\z_{v}^T(t)].
   \)
    Notice that the entries of $E[Y_n(t)\z^T_{v}(t)]$ are of the
    form
   $E[(\z^f_{v_i})^T(t)\z^T_{v}(t)]$ 
   $i=0,\ldots,M(n-1)$. 
   By writing out the definition of 
   $\z^{f}_{s}$ and $\z_{v}(t)$, it follows that
   for any $v,s \in \Sigma^{+}$,
   \[ 
    \begin{split}
     & E[(\z^{f}_{s})^T(t)\z^T_{v}(t)]= 
      E[\y(t+l)\z_{vs}^T(t+l)]=\Lambda^{\y}_{vs}
    \end{split}.
  \]
   Hence,  applying the result above to 
   $v=\sigma w$ and noticing that
   \( \Lambda^{\y}_{\sigma w v_i} = CA_{v_i}A_{w}B_{\sigma} \) we
   obtain
   \(  E[Y_n(t)\z_{\sigma w}(t)]=O_{R}A_{w}B_{\sigma}. \)
  From this, by taking into account that 
  $E[\x(t)\z_{\sigma w}^T(t)]=O_R^{-1}E[Y_n(t)\z_{w}(t)]$, the statement of
 the lemma follows.
  \end{proof}
   Lemma \ref{gen:filt:proof:lemma1} explains the relationship between
  states of the would-be generalized bilinear realization and the states of
  the rational representation $R$ of $\Psi_{\y}$.  In particular, it yields the
  following corollary.

  \begin{Corollary}
  \label{gen:filt:proof:lemma2}
   With the notation of Lemma \ref{gen:filt:proof:lemma1},
   \( C\x(t)=E_{l}[\y(t) \mid \{ \z_{w}(t) \mid w \in \Sigma^{+}\}]. \)
  \end{Corollary}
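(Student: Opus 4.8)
The plan is to show that $C\x(t)$ satisfies the two properties that, by the description of vector‑valued orthogonal projections in \S\ref{sec:hilbert:rev}, uniquely characterize $E_{l}[\y(t) \mid \{\z_{w}(t) \mid w \in \Sigma^{+}\}]$: first, that every coordinate of $C\x(t)$ belongs to the closed subspace $M$ generated by the coordinates of $\{\z_{w}(t) \mid w \in \Sigma^{+}\}$; and second, that $E[(\y(t)-C\x(t))\z_{v}^{T}(t)]=0$ for every $v \in \Sigma^{+}$.

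The first property is immediate: as defined in Theorem \ref{gen:filt:theo3.3}, $\x(t)=E_{l}[O_{R}^{-1}(Y_{n}(t)) \mid \{\z_{w}(t) \mid w \in \Sigma^{+}\}]$ is the orthogonal projection of $O_{R}^{-1}(Y_{n}(t))$ onto $M$, so its coordinates lie in $M$, and hence so do the coordinates of $C\x(t)$, each being a finite linear combination of coordinates of $\x(t)$. For the second property, fix $v \in \Sigma^{+}$ and write $v=w\sigma$ with $\sigma \in \Sigma$ its last letter and $w \in \Sigma^{*}$ the preceding prefix. By Lemma \ref{gen:filt:proof:lemma1}, $E[\x(t)\z_{w\sigma}^{T}(t)]=A_{w}B_{\sigma}$, hence $E[C\x(t)\z_{v}^{T}(t)]=CA_{w}B_{\sigma}$. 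On the other hand $E[\y(t)\z_{v}^{T}(t)]=\Lambda^{\y}_{w\sigma}$ by definition, and since $R$ is a representation of $\Psi_{\y}$ the $j$th column of $\Lambda^{\y}_{w\sigma}$ is $S_{(j,\sigma)}(w)=CA_{w}B_{(j,\sigma)}$, so $\Lambda^{\y}_{w\sigma}=CA_{w}B_{\sigma}$. Therefore $E[(\y(t)-C\x(t))\z_{v}^{T}(t)]=\Lambda^{\y}_{w\sigma}-CA_{w}B_{\sigma}=0$.

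To conclude, the coordinates of $\{\z_{v}(t) \mid v \in \Sigma^{+}\}$ span a dense subspace of $M$ and the mean‑square inner product is continuous, so orthogonality of $\y(t)-C\x(t)$ to each $\z_{v}(t)$ extends to orthogonality to all of $M$; combined with the first property and the uniqueness of the orthogonal projection this gives $C\x(t)=E_{l}[\y(t) \mid \{\z_{w}(t) \mid w \in \Sigma^{+}\}]$. The only genuinely delicate point is keeping track of the word‑reversal convention that relates $A_{w}$, the power‑series coefficients $S_{(j,\sigma)}$, and the covariances $\Lambda^{\y}_{w\sigma}$; but this bookkeeping is exactly what Lemma \ref{gen:filt:proof:lemma1} and the definition of $\Psi_{\y}$ were set up to handle, so no new obstacle arises beyond those already dispatched.
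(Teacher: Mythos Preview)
Your argument is essentially the paper's own proof: verify that $C\x(t)$ lies in the closed span $M$ and that $\y(t)-C\x(t)$ is orthogonal to every $\z_v(t)$ via Lemma~\ref{gen:filt:proof:lemma1} and the representation property of $R$. One small indexing slip: given the definition $S_{(j,\sigma)}(w)=(\Lambda^{\y}_{\sigma w})_{.,j}$, the letter $\sigma$ you peel off should be the \emph{first} letter of $v$ (so $v=\sigma w$), not the last; with that correction your computation $E[C\x(t)\z_{\sigma w}^T(t)]=CA_wB_\sigma=\Lambda^{\y}_{\sigma w}$ goes through exactly as in the paper.
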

  \begin{proof}[Proof of Corollary \ref{gen:filt:proof:lemma2}]
   Indeed, from Lemma \ref{gen:filt:proof:lemma1} it follows that for
   any $v \in \Sigma^{+}$ of the form $v=\sigma w$ for some $w \in \Sigma^{*}$,
   $\sigma \in \Sigma$: 
   \( 
      E[C\x(t)\z^T_{\sigma w}(t)]=CA_{w}B_{\sigma}
      =\Lambda^{\y}_{\sigma w}=E[\y(t)\z^T_{\sigma w}(t)],
   \)
  and hence for any $v \in \Sigma^{+}$, 
  $E[(\y(t)-C\x(t))\z_{v}^T(t)]=0$, i.e. the entries of 
  $\y(t)-C\x(t)$ are orthogonal to the Hilbert-space generated by $\{\z_{v}(t) \mid v \in \Sigma^{+}\}$. Since the entries $C\x(t)$, obviously belong to that Hilbert-space, the corollary follows.
  \end{proof}
 The corollary above says that $C\x(t)$ is the projection of the current
 output to past outputs and inputs.
  \begin{Lemma}
  \label{gen:filt:proof:lemma9.4}
   The processes $\x(t)$ and $\e(t)=\v(t)$ satisfy Part \ref{gbs:def:prop1}--\ref{gbs:def:prop2} of Assumption \ref{gbs:def}. Moreover, $\bs(t)=\begin{bmatrix} \x^T(t),\y^T(t),\e^T(t) \end{bmatrix}^T$ is \RC.
  \end{Lemma}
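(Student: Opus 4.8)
The plan is to bootstrap everything from the fact that $\y$ is \RC\ (Assumption~\ref{output:assumptions}, which is in force throughout the proof of Theorem~\ref{gen:filt:theo3.3}), using Lemma~\ref{new:pf:lemma-1} and the fact that the \RC\ property is preserved under left multiplication by constant matrices, and then exploiting that $\e(t)$ is by construction orthogonal to $H_{t}^{\y}$.

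First I would show that $[\y^{T}(t),\x^{T}(t)]^{T}$ is \RC. Since $\x(t)$ is the orthogonal projection of $O_{R}^{-1}(Y_{n}(t))$ onto the closure of the span of the coordinates of $\{\z_{w}(t)\mid w\in\Sigma^{+}\}$, its coordinates belong to $H_{t}^{\y}$ for every $t$. By Lemma~\ref{gen:filt:proof:lemma1}, $E[\x(t)\z^{T}_{w\sigma}(t)]=A_{w}B_{\sigma}$; this value does not depend on $t$, and every $v\in\Sigma^{+}$ is of the form $w\sigma$ with $w\in\Sigma^{*}$, $\sigma\in\Sigma$, so $E[\x(t)(\z^{\y}_{v}(t))^{T}]$ is independent of $t$. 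Lemma~\ref{new:pf:lemma-1} with $\br=\y$ and $\z=\x$ then yields that $[\y^{T}(t),\x^{T}(t)]^{T}$ is \RC. Since $\e(t)=\y(t)-C\x(t)$ by Corollary~\ref{gen:filt:proof:lemma2}, the process $\bs(t)=[\x^{T}(t),\y^{T}(t),\e^{T}(t)]^{T}$ is the image of $[\y^{T}(t),\x^{T}(t)]^{T}$ under left multiplication by the constant matrix $\left[\begin{smallmatrix}0&I_{n}\\ I_{p}&0\\ I_{p}&-C\end{smallmatrix}\right]$, so by the remark following Definition~\ref{def:RC} that the \RC\ property is preserved under left multiplication by constant matrices, $\bs(t)$ is \RC; in particular its sub-block $\e(t)=\v(t)$ is \RC, which is Part~\ref{gbs:def:prop1} of Assumption~\ref{gbs:def}.

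It remains to establish Part~\ref{gbs:def:prop2}, namely $T^{\e}_{w,v}=0$ for $w\neq v$. Using the recursive structure of $T^{\e}$ now available from Parts~\ref{RC3}--\ref{RC4} of Definition~\ref{def:RC}, I would reduce this to the statement $\Lambda^{\e}_{v}=E[\e(t)(\z^{\e}_{v}(t))^{T}]=0$ for all $v\in\Sigma^{+}$: repeatedly stripping equal trailing letters from $w$ and $v$ either reaches a pair of words with distinct last letters, in which case $T^{\e}=0$, or reduces $T^{\e}_{w,v}$ to $\pm(\Lambda^{\e}_{v''})^{T}$ for some $v''\in\Sigma^{+}$ — here one also uses that prefixes of admissible words are admissible and that $T^{\e}_{w,v}=0$ when $w\notin L$ or $v\notin L$. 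To prove $\Lambda^{\e}_{v}=0$, note that $\z^{\e}_{v}(t)=\z^{\y}_{v}(t)-C\z^{\x}_{v}(t)$; the coordinates of $\z^{\y}_{v}(t)=\z_{v}(t)$ lie in $H_{t}^{\y}$ by definition, and repeated application of Lemma~\ref{new:pf:lemma-4} starting from $\x(t-|v|)\in H_{t-|v|}^{\y}$ shows that the coordinates of $\z^{\x}_{v}(t)$ also lie in $H_{t}^{\y}$. Since $\e(t)$ is orthogonal to $H_{t}^{\y}$ by the definition~\eqref{gen:filt:inneq} of the forward innovation, it follows that $E[\e(t)(\z^{\e}_{v}(t))^{T}]=0$.

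The main obstacle is the bookkeeping in the last step: making the reduction from $T^{\e}_{w,v}$ to $\Lambda^{\e}_{v''}$ precise while keeping track of membership in $L$, and verifying that the iterated products $\x(t-|v|)\bu_{\tau_{1}}(t-|v|)\cdots\bu_{\tau_{k}}(t-1)$ (with $v=\tau_{1}\cdots\tau_{k}$) are legitimate mean-square limits of elements of $H_{t}^{\y}$ — this is precisely where Lemmas~\ref{gen:filt:proof:lemma5.1} and~\ref{gen:filt:proof:lemma5} are used, just as in the proof of Lemma~\ref{new:pf:lemma-4}. The remaining steps are routine applications of the machinery already established.
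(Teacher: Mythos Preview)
Your argument is correct and, for the \RC\ part, identical to the paper's: both invoke Lemma~\ref{gen:filt:proof:lemma1} to get time-independence of $E[\x(t)(\z^{\y}_{v}(t))^{T}]$, apply Lemma~\ref{new:pf:lemma-1} to conclude $[\y^{T},\x^{T}]^{T}$ is \RC, and then pass to $\bs(t)$ by a constant matrix.

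For Part~\ref{gbs:def:prop2} you take a slightly different route. The paper works geometrically: it applies Lemma~\ref{new:pf:lemma-3} (with $\z=\e$, $\bh=\y$) to obtain $\z^{\e}_{v}(t)\perp H^{\y}_{t,v}$, then shows $\z^{\e}_{w}(t)\in H^{\y}_{t,v}$ whenever $w=sv$, and handles the remaining cases by the \RC\ recursion. You instead push the \RC\ recursion all the way down first, reducing $T^{\e}_{w,v}$ (for $w\neq v$) to either $0$ or $(\Lambda^{\e}_{v''})^{T}$, and then dispose of $\Lambda^{\e}_{v''}$ by the single observation that $\z^{\e}_{v''}(t)\in H_{t}^{\y}$ while $\e(t)\perp H_{t}^{\y}$. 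This is a legitimate reorganization: it bypasses Lemma~\ref{new:pf:lemma-3} entirely (in effect inlining it via the recursion), at the cost of the suffix-stripping bookkeeping you flag. The paper's version is a bit more direct once Lemma~\ref{new:pf:lemma-3} is available; yours is marginally more self-contained.
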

 \begin{proof}[Proof of Lemma \ref{gen:filt:proof:lemma9.4}]

  From Lemma \ref{gen:filt:proof:lemma1} it follows that
  $E[\x(t)(\z^{\y}_{v}(t))^T]$, 
  $v \in \Sigma^{+}$ does not depend on $t$. 
  By noticing that
  the entries of $\x(t)$ belong to $H_{t}^{\y}$ and applying Lemma \ref{new:pf:lemma-1}, it then follows that
  $\bs_1(t)=\begin{bmatrix} \x^T(t), & \y^T(t) \end{bmatrix}^T$ is \RC.
  By noticing that $\bs(t)=\begin{bmatrix} I_n & 0 \\ 0 & I_p \\ -C & I_p \end{bmatrix} \bs_1(t)$, it follows that $\bs(t)$ is \RC. 

 \textbf{Proof of Part \ref{gbs:def:prop1} of Assumption \ref{gbs:def}}
  Since $\x(t)$ and $\e(t)$ are components of $\bs(t)$, it follows that 
  $\x(t)$ and $\e(t)$ are \RC.

 \textbf{Proof of Part \ref{gbs:def:prop2} of Assumption \ref{gbs:def}}
  Assume that $w,v \in \Sigma^{+}$. Assume first that $|w| > |v|$ and $w=sv$ for
  some $s \in \Sigma^{+}$.
  Since the coordinates of $\x(t)$ belong to $H_t$, 
  from Lemma \ref{new:pf:lemma-4} it follows that the entries of $\z^{\x}_{w}(t)$ belong to $H_{t,w}^{\y} \subseteq H_{t,v}^{\y}$.
  From the construction of  $\e(t)=\y(t)-C\x$ it follows that $\z^{\e}_w(t)=\z_{w}(t)-C\z^{\x}_w(t)$. Hence, as the coordinates of 
  $\z_w(t)$ belong to $H_{t,v}^{\y}$, the coordinates of $\z^{\e}_w(t)$ belong to $H_{t,v}^{\y}$.
  Note that the coordinates of $\e(t)$ are orthogonal to $H_t^{\y}$. 
  Moreover, recall that $\bs(t)=\begin{bmatrix} \x^T(t), & \e^T(t), & \y^T(t) \end{bmatrix}^T$ is \RC. 
  By applying Lemma \ref{new:pf:lemma-3} to $\z(t)=\e(t)$, $\bh(t)=\y(t)$, it follows that $\z^{\e}_{v}(t)$ is
  orthogonal to $H_{t,v}^{\y}$. 
  Hence, it follows that $E[\z^{\e}_{v}(t)(\z^{\e}_{w}(t))^T]=0$.
  If $|w| > |v|$ but $w$ does not end with $v$, then from the fact that $\e(t)$ is \RC\ and
  Part \ref{RC4} of Definition \ref{def:RC} it follows that 
  $E[\z^{\e}_{v}(t)(\z^{\e}_{w}(t))^T]=0$. If $|w| < |v|$, then $E[\z^{\e}_{v}(t)(\z^{\e}_{w}(t))^T]=0$ follows from
  the discussion above by considering the transpose of $E[\z^{\e}_{w}(t)(\z^{\e}_{v}(t))^T]$. If $|w|=|v|$ but $w \ne v$, $E[\z^{\e}_{v}(t)(\z^{\e}_{w}(t))^T]=0$
  follows from the fact that $\e(t)$ is \RC, by repeated application of Part \ref{RC4} of Definition \ref{def:RC} to $\br(t)=\e(t)$. 
 \end{proof}
 \begin{Lemma}
  \label{gen:filt:proof:lemma9.1}
   For any $w \in \Sigma^{+}$ and $\sigma \in \Sigma$ such that
   $w\sigma \in L$,  $E[\x(t)\z_{w}^T(t)\bu_{\sigma}^{2}(t)]=p_{\sigma}E[\x(t)\z^T_{w}(t)]$.
  \end{Lemma}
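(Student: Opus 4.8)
The plan is to expand $\x(t)$ along the generators of the Hilbert space it lives in, reduce the claimed identity to a statement about the mixed covariances $T^{\y}_{v,w}$, and close it using the recursive-covariance structure of $\y$ together with a limiting argument. Since $\x(t)$ is, by construction, an orthogonal projection onto the closed span of $\{\z_{v}(t)\mid v\in\Sigma^{+}\}$, its coordinates lie in $H_{t}^{\y}$, so there is a sequence $\x_{N}(t)=\sum_{v\in L,\,|v|\le N}\alpha_{v}^{(N)}\z_{v}(t)$ with $\alpha_{v}^{(N)}\in\mathbb{R}^{n\times p}$ converging to $\x(t)$ coordinate-wise in mean square; the sum may be restricted to $v\in L$ because $\z_{v}(t)=0$ almost surely for $v\notin L$ by Definition~\ref{gen:filt:ass1:cond1:def1}.

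Next I would compute, for a single word $v\in L$, the quantity $E[\z_{v}(t)\z_{w}^{T}(t)\bu_{\sigma}^{2}(t)]=E[(\z_{v}(t)\bu_{\sigma}(t))(\z_{w}(t)\bu_{\sigma}(t))^{T}]$. The elementary identity $\bu_{v}(t-1)\bu_{\sigma}(t)=\bu_{v\sigma}(t)$ together with $p_{v\sigma}=p_{v}p_{\sigma}$ gives $\z_{v}(t)\bu_{\sigma}(t)=\sqrt{p_{\sigma}}\,\z_{v\sigma}(t+1)$ if $v\sigma\in L$ and $\z_{v}(t)\bu_{\sigma}(t)=0$ a.s.\ if $v\sigma\notin L$, and likewise for $w$, where $w\sigma\in L$ by hypothesis. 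Hence if $v\sigma\in L$, wide-sense stationarity of $\y$ and Part~\ref{RC3} of Definition~\ref{def:RC} (applicable since $w\sigma\in L$) give $E[\z_{v}(t)\z_{w}^{T}(t)\bu_{\sigma}^{2}(t)]=p_{\sigma}T^{\y}_{v\sigma,w\sigma}=p_{\sigma}T^{\y}_{v,w}=p_{\sigma}E[\z_{v}(t)\z_{w}^{T}(t)]$; if instead $v\sigma\notin L$ the left side is $0$, while Part~\ref{RC4} of Definition~\ref{def:RC} --- which says that $w\sigma\in L$ and $v\sigma\notin L$ force $T^{\y}_{v,w}=0$ --- makes the right side $0$ as well. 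So in every case $E[\z_{v}(t)\z_{w}^{T}(t)\bu_{\sigma}^{2}(t)]=p_{\sigma}E[\z_{v}(t)\z_{w}^{T}(t)]$, and by linearity $E[\x_{N}(t)\z_{w}^{T}(t)\bu_{\sigma}^{2}(t)]=p_{\sigma}E[\x_{N}(t)\z_{w}^{T}(t)]$ for every $N$.

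Finally I would let $N\to\infty$. On the right, $\x_{N}(t)\to\x(t)$ in mean square and $\z_{w}(t)$ is square-integrable (because $\y$ is $\RC$), so $E[\x_{N}(t)\z_{w}^{T}(t)]\to E[\x(t)\z_{w}^{T}(t)]$ by continuity of the inner product; on the left, Lemma~\ref{gen:filt:proof:lemma5} applied coordinate-wise with $\br=\y$ gives $\x_{N}(t)\bu_{\sigma}(t)\to\x(t)\bu_{\sigma}(t)$ in mean square, and $\z_{w}(t)\bu_{\sigma}(t)=\sqrt{p_{\sigma}}\,\z_{w\sigma}(t+1)$ is square-integrable since $w\sigma\in L$, so $E[\x_{N}(t)\z_{w}^{T}(t)\bu_{\sigma}^{2}(t)]\to E[\x(t)\z_{w}^{T}(t)\bu_{\sigma}^{2}(t)]$, and the identity follows. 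The routine part is the bookkeeping with the $\bu_{w}$'s; the step that needs care is the case $v\sigma\notin L$ --- which genuinely occurs, since $v\in L$ does not imply $v\sigma\in L$ --- where Part~\ref{RC4} of the $\RC$ definition is indispensable (this is exactly why the admissibility set $L$ and Part~\ref{RC4} were introduced), and the limit passage, which is harmless but relies on Lemma~\ref{gen:filt:proof:lemma5} to move the factor $\bu_{\sigma}(t)$ past the mean-square limit.
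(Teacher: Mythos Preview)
Your argument is correct. The expansion of $\x(t)$ in terms of the $\z_{v}(t)$'s, the case split on $v\sigma\in L$ versus $v\sigma\notin L$ (with Part~\ref{RC4} handling the latter), and the limit passage via Lemma~\ref{gen:filt:proof:lemma5} all work as you describe.

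The paper takes a shorter, more packaged route: rather than expanding $\x(t)$ explicitly, it invokes Lemma~\ref{new:pf:lemma-1} to conclude that the joint process $\bs(t)=(\x^{T}(t),\y^{T}(t))^{T}$ is itself \RC, and then reads off the claimed identity directly from the \RC\ structure of $\bs$. Concretely, $E[\x(t)\z_{w}^{T}(t)\bu_{\sigma}^{2}(t)]$ is a sub-block of $p_{\sigma}T^{\bs}_{\sigma,w\sigma}$ and $E[\x(t)\z_{w}^{T}(t)]$ is the corresponding sub-block of $\Lambda^{\bs}_{w}$; since $\bs$ is \RC\ and $w\sigma\in L$, Part~\ref{RC3} gives $T^{\bs}_{\sigma,w\sigma}=\Lambda^{\bs}_{w}$, and the result follows in one line. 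Your proof essentially unrolls the content of Lemma~\ref{new:pf:lemma-1} for this particular sub-block, redoing by hand the limiting and case analysis that the abstract lemma already encapsulates. Both approaches are valid; the paper's buys brevity and reuse, while yours is self-contained and makes the role of Part~\ref{RC4} in the $v\sigma\notin L$ case completely explicit.
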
 
 \begin{proof}[Proof of Lemma \ref{gen:filt:proof:lemma9.1}]
  From Lemma \ref{new:pf:lemma-1} it follows that
  $\bs(t)=(\x^T(t),\y^T(t))^T$ is a \RC\ process and hence
  $T_{\sigma,w\sigma}^{\bs}=\Lambda^{\bs}_{w}$ 
  for $w\sigma \in L$.   Since
  $E[\x(t)\z_{w}^T(t)\bu_{\sigma}^{2}(t)]$ and $E[\x(t)\z^T_{w}(t)]$  are
  the sub-matrices of $p_{\sigma}T_{\sigma, w\sigma}^{\bs}$ and respectively $\Lambda^{\bs}_{w}$, the statement of the 
lemma follows.
 \end{proof}

 \textbf{Definition of $K_{\sigma}$}
  In order to define $K_{\sigma}$, we need the following auxiliary result.
  \begin{Lemma}
  \label{new:pf:lemma1}
   \begin{equation}
    H_{t+1}^{\y}=\bigoplus_{\sigma \in \Sigma} H_{t,\sigma}^{\y} \oplus \bigoplus_{\sigma \in \Sigma} <\e(t)\bu_{\sigma}(t)> 
   \end{equation}
   where $\bigoplus$ denotes the direct sum and $<\e(t)\bu_{\sigma}(t)>$ denoted the
   Hilbert-space generated by the entries of $\e(t)\bu_{\sigma}(t)$.
   Here we used Notation \ref{new:pf:not1}--\ref{new:pf:not2}.
  \end{Lemma}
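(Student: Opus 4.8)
The plan is to split $H_{t+1}^{\y}$ according to the last letter of the words indexing its generators, and then isolate the innovation directions inside each block. First I would record the elementary identities that make the recursion visible: writing an arbitrary $w\in\Sigma^{+}$ either as $w=\sigma$ with $\sigma\in\Sigma$, or uniquely as $w=v\sigma$ with $v\in\Sigma^{+}$, $\sigma\in\Sigma$, the definition \eqref{gen:filt:eqzdef1} of $\z_{w}^{\y}$ together with Assumption \ref{input:assumption} give $\z_{v\sigma}^{\y}(t+1)=\tfrac{1}{\sqrt{p_{\sigma}}}\z_{v}^{\y}(t)\bu_{\sigma}(t)$ for $v\in\Sigma^{+}$ and $\z_{\sigma}^{\y}(t+1)=\tfrac{1}{\sqrt{p_{\sigma}}}\y(t)\bu_{\sigma}(t)$. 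Hence $H_{t+1}^{\y}$ is the closed linear span of $\{\z_{v}^{\y}(t)\bu_{\sigma}(t)\mid v\in\Sigma^{+},\ \sigma\in\Sigma\}\cup\{\y(t)\bu_{\sigma}(t)\mid\sigma\in\Sigma\}$. I would then call $K_{\sigma}$ the closed span of those generators carrying a fixed factor $\bu_{\sigma}(t)$, so that $H_{t+1}^{\y}=\overline{\sum_{\sigma}K_{\sigma}}$, and the whole argument reduces to analysing a single $K_{\sigma}$ and checking the blocks do not overlap.

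Next I would establish that the blocks $K_{\sigma}$ are mutually orthogonal. Since $\y(t)\bu_{\sigma}(t)=\sqrt{p_{\sigma}}\,\z_{\sigma}^{\y}(t+1)$, every generator of $K_{\sigma}$ is of the form $\z_{v\sigma}^{\y}(t+1)$ with $v\in\Sigma^{*}$, so $K_{\sigma}\subseteq H_{t+1,\sigma}^{\y,*}$ in the notation of Notation \ref{new:pf:not2}. By Lemma \ref{new:pf:lemma-4} the spaces $H_{t+1,\sigma_{1}}^{\y,*}$ and $H_{t+1,\sigma_{2}}^{\y,*}$ are orthogonal when $\sigma_{1}\ne\sigma_{2}$, so $K_{\sigma_{1}}\perp K_{\sigma_{2}}$ and $H_{t+1}^{\y}=\bigoplus_{\sigma\in\Sigma}K_{\sigma}$ is an orthogonal direct sum.

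It remains to split each $K_{\sigma}$ as $H_{t+1,\sigma}^{\y}\oplus\langle\e(t)\bu_{\sigma}(t)\rangle$. By Corollary \ref{gen:filt:proof:lemma2} we have $\y(t)=C\x(t)+\e(t)$ with $C\x(t)=E_{l}[\y(t)\mid\{\z_{w}(t)\mid w\in\Sigma^{+}\}]\in H_{t}^{\y}$. For any $z$ in the span of the $\z_{v}^{\y}(t)$, Lemma \ref{new:pf:lemma-4} gives $z\,\bu_{\sigma}(t)\in H_{t+1,\sigma}^{\y}$, and conversely $H_{t+1,\sigma}^{\y}$, being the closed span of $\{\z_{v}^{\y}(t)\bu_{\sigma}(t)\mid v\in\Sigma^{+}\}$, equals $\overline{H_{t}^{\y}\bu_{\sigma}(t)}$ (using that multiplication by $\bu_{\sigma}(t)$ is mean-square continuous on this span, Lemmas \ref{gen:filt:proof:lemma5.1}--\ref{gen:filt:proof:lemma5}). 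Consequently, inside $K_{\sigma}$ the generator $\y(t)\bu_{\sigma}(t)$ differs from $\e(t)\bu_{\sigma}(t)$ by the element $C\x(t)\bu_{\sigma}(t)\in H_{t+1,\sigma}^{\y}$, so $K_{\sigma}=H_{t+1,\sigma}^{\y}+\langle\e(t)\bu_{\sigma}(t)\rangle$; the reverse inclusions $H_{t+1,\sigma}^{\y}\subseteq H_{t+1}^{\y}$ and $\e(t)\bu_{\sigma}(t)\in H_{t+1}^{\y}$ are immediate. To see this sum is orthogonal I would invoke Lemma \ref{gen:filt:proof:lemma9.4}, by which $(\e^{T}(t),\y^{T}(t))^{T}$ is \RC, together with the defining property $\e(t)\perp H_{t}^{\y}$; Lemma \ref{new:pf:lemma-3} applied with $\z(t)=\e(t)$, $\bh(t)=\y(t)$ then propagates this to: $\z_{\sigma}^{\e}(t+1)$, which is $\e(t)\bu_{\sigma}(t)$ up to a scalar, is orthogonal to $H_{t+1,\sigma}^{\y}$. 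Assembling the per-$\sigma$ splittings with the orthogonality of the $K_{\sigma}$'s yields $H_{t+1}^{\y}=\bigoplus_{\sigma}H_{t+1,\sigma}^{\y}\oplus\bigoplus_{\sigma}\langle\e(t)\bu_{\sigma}(t)\rangle$.

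The main obstacle is the orthogonality $\e(t)\bu_{\sigma}(t)\perp H_{t+1,\sigma}^{\y}$, equivalently $E[\e(t)(\z_{v}^{\y}(t))^{T}\bu_{\sigma}^{2}(t)]=0$ for all $v\in\Sigma^{+}$: this does not follow merely from $\e(t)\perp H_{t}^{\y}$, but requires the full \RC\ structure of the joint process $(\x^{T}(t),\y^{T}(t),\e^{T}(t))^{T}$ so that Lemma \ref{new:pf:lemma-3} can carry the orthogonality across multiplication by $\bu_{\sigma}$. A secondary technical point, easily overlooked, is the identity $\overline{H_{t}^{\y}\bu_{\sigma}(t)}=H_{t+1,\sigma}^{\y}$ and the fact that closure commutes with multiplication by $\bu_{\sigma}(t)$ on the relevant span; both rely on the bound $E[z^{2}\bu_{\sigma}^{2}(t)]\le p_{\sigma}E[z^{2}]$ of Lemmas \ref{gen:filt:proof:lemma5.1}--\ref{gen:filt:proof:lemma5}, since multiplication by the (possibly unbounded) variable $\bu_{\sigma}(t)$ is not continuous on all of $\mathcal{H}$.
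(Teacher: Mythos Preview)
Your proposal is correct and follows essentially the same route as the paper: both arguments identify the generators of $H_{t+1}^{\y}$ via the recursion $\z_{v\sigma}^{\y}(t+1)=\tfrac{1}{\sqrt{p_{\sigma}}}\z_{v}^{\y}(t)\bu_{\sigma}(t)$, use Lemma~\ref{new:pf:lemma-4} for the orthogonality of blocks with different last letter, and invoke Lemma~\ref{new:pf:lemma-3} (together with the \RC\ property of $(\e^{T},\y^{T})^{T}$ from Lemma~\ref{gen:filt:proof:lemma9.4}) to obtain $\e(t)\bu_{\sigma}(t)\perp H_{t+1,\sigma}^{\y}$. Your organisation---first splitting into the blocks $K_{\sigma}\subseteq H_{t+1,\sigma}^{\y,*}$ and then decomposing each $K_{\sigma}$---is a mild rearrangement of the paper's pairwise-orthogonality check, and your explicit appeal to Lemmas~\ref{gen:filt:proof:lemma5.1}--\ref{gen:filt:proof:lemma5} for the closure identity $\overline{H_{t}^{\y}\bu_{\sigma}(t)}=H_{t+1,\sigma}^{\y}$ makes a point the paper leaves implicit.
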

  \begin{proof}[Proof of Lemma \ref{new:pf:lemma1}]
   Indeed, from the definition of $H_{t+1}^{\y}$ and
   Lemma \ref{new:pf:lemma-1} it is clear that $H_{t+1}^{\y}$ is the closure of
   the space $\sum_{\sigma \in \Sigma}(H_{t,\sigma}^{\y} + <\e(t)\bu_{\sigma}(t)>)$.
   From Lemma \ref{new:pf:lemma-4} it follows that
   $H_{t,\sigma}^{\y}$ and $H_{t,\hat{\sigma}}^{\y}$ are orthogonal for all $\sigma,\hat{\sigma} \in \Sigma$. 
   From 
   Lemma \ref{new:pf:lemma-3} it follows that
   $H_{t,\sigma}$, $<\e(t)\bu_{\sigma}(t)>$ are orthogonal. Finally,
   we will show that $H_{t+1,\sigma}$, $<e_{t}\bu_{\hat{\sigma}}(t)>$ are
   orthogonal for $\sigma \ne \hat{\sigma}$. To this end, notice that
   for all $\sigma,\hat{\sigma} \in \Sigma$, $\e(t)\bu_{\hat{\sigma}}(t) \in H_{t+1,\hat{\sigma}}^{\y,*}$ and $H_{t+1,\sigma}^{\y} \subseteq H_{t+1,\sigma}^{\y,*}$.
   From Lemma \ref{new:pf:lemma-4}
   it then follows that $H_{t+1,\sigma}^{\y,*}$ and $H_{t+1,\hat{\sigma}}^{\y,*}$ are 
   orthogonal for each $\sigma,\hat{\sigma}$. Hence, $\e(t)\bu_{\hat{\sigma}}(t)$ is 
   orthogonal to $H_{t+1,\sigma}^{\y} \subseteq H_{t+1,\sigma}^{\y,*}$. 
   Since all the involved spaces 
   $H_{t+1,\sigma}^{\y}$ and $<\e(t)\bu_{\sigma}(t)>$ are mutually orthogonal, the closure of
   their sum equals their direct sum.
  \end{proof}
  From Lemma \ref{new:pf:lemma1} it follows that 
  \begin{equation}
  \label{new:pf:eq1}
   \begin{split}
     & \x(t+1)=
     \sum_{\sigma \in \Sigma} 
     E_l[O_R^{-1}(Y_n(t+1))\mid H_{t}^{\sigma}]+E_l[O_R^{-1}(Y_n(t+1))\mid <\e(t)\bu_{\sigma}(t)>]
   \end{split}
  \end{equation}
  Define now $K_{\sigma}$ as a $n \times p$ matrix such that
  \[ E_l[O_R^{-1}(Y_n(t+1))\mid <\e(t)\bu_{\sigma}(t)>]=K_{\sigma}\e(t)\bu_{\sigma}(t). \]
  If $\y$ is full rank, then $K_{\sigma}$ is unique and 
  \( K_{\sigma}=
     E[O_R^{-1}(Y_n(t+1))\e^T(t)\bu_{\sigma}](E[\e(t)\e^T(t)\bu_{\sigma}^2(t)])^{-1}.
  \)

 \textbf{Proof of \eqref{gen:filt:eq2}}
   The second equation of \eqref{gen:filt:eq2} follows directly from the definition of
   $\e(t)=\y(t)-C\x(t)$.
   We will concentrate on the first equation. To this end, we will show that
   \begin{equation} 
   \label{new:pf:eq2}
     E_l[O_R^{-1}(Y_n(t+1))\mid H_{t+1,\sigma}^{\y}]=\frac{1}{\sqrt{p_{\sigma}}} A_{\sigma}\x(t)\bu_{\sigma}(t).   \end{equation}
   From this and from \eqref{new:pf:eq1} the first equation of \eqref{gen:filt:eq2} follows.
   From Lemma \ref{new:pf:lemma-4} and the fact that
   the entries of $\x(t)$ belong to $H_{t}$ it follows that the entries of
  $\x(t)\bu_{\sigma}(t)$ 
   belongs to $H_{t+1,\sigma}^{\y}$.  Hence, by Lemma \ref{new:pf:lemma1} 
   in order to show that \eqref{new:pf:eq2}, it is enough to show that $w \in \Sigma^{+}$, $\sigma \in \Sigma$, 
   \begin{equation}
   \label{new:pf:eq3}
      E[O_R^{-1}(Y_n(t+1))\z^T_{w\sigma}(t+1)]=\frac{1}{\sqrt{p_{\sigma}}}A_{\sigma}E[\x(t)\bu_{\sigma}(t)\z^T_{w\sigma}(t+1)].
    \end{equation}
   If $w\sigma \notin L$, then $\z^T_{w\sigma}(t+1)=0$ and hence \eqref{new:pf:eq3}
   trivially holds. Hence, in the sequel we assume that $w\sigma \in L$.
   Then from Lemma \ref{gen:filt:proof:lemma9.1} and Lemma \ref{gen:filt:proof:lemma1}
   it follows that
   \begin{equation} 
   \label{new:pf:eq3.1}
        \frac{1}{\sqrt{p}_{\sigma}}A_{\sigma}E[\x(t)\bu_{\sigma}(t)\z^T_{w\sigma}(t+1)]=\frac{1}{\sqrt{p_{\sigma}}}A_{\sigma}(\sqrt{p_{\sigma}}E[\x(t)\z^T_{w}(t)])=
        A_{\sigma}A_{w}B_{\hat{\sigma}}.
   \end{equation}
   where $\hat{\sigma}$ is the first letter of $w$.
   By applying Lemma \ref{gen:filt:proof:lemma1} to $t+1$ instead of $t$, we obtain
   \(  E[O_R^{-1}(Y_n(t+1))\z^T_{w\sigma}(t+1)]=A_{\sigma}A_{v}B_{\hat{\sigma}} \). 
   Hence, by combining this with \eqref{new:pf:eq3.1}, 
   \eqref{new:pf:eq3} follows.

\textbf{Proof of \eqref{gen:filt:eq3.1}, \eqref{gen:filt:eq3}} 
   Notice that on the one hand, by Lemma \ref{gen:filt:proof:lemma1}
   \(   E[\x(t+1)\z^T_{\sigma}(t+1)]=B_{\sigma}  \)
   and on the other hand, if we use \eqref{gen:filt:eq2},
   \begin{equation}
   \label{new:pf:eq8}
      E[\x(t+1)\z^T_{\sigma}(t+1)]=\frac{1}{\sqrt{p_{\sigma}}} A_{\sigma}E[\x(t)\bu_{\sigma}\z^T_{\sigma}(t+1)]+K_{\sigma}E[\e(t)\bu_{\sigma}(t)\z_{\sigma}^T(t+1)].
   \end{equation}
   Here we used the corollary of Lemma \ref{new:pf:lemma1} that 
   $\x(t)\bu_{\sigma}(t)$, $\e(t)\bu_{\sigma}(t)$ are orthogonal to
   $\z_{\hat{\sigma}}(t+1)$ for $\hat{\sigma} \ne \sigma$, $\sigma,\hat{\sigma} \in \Sigma$ and that
   $\x(t)\bu_{\sigma}(t)$ and $\e(t)\bu_{\sigma}(t)$ are orthogonal. 
  Indeed, from Lemma \ref{new:pf:lemma1} it follows that
   $\x(t)\bu_{\sigma}(t)$ and $\e(t)\bu_{\sigma}(t)$ are orthogonal. 
   From Lemma \ref{new:pf:lemma-4} it follows that the entries of $\x(t)\bu_{\sigma}(t)$ belong to $\mathcal{H}^{\y}_{t+1,\sigma} \subseteq \mathcal{H}^{\y,*}_{t+1,\sigma}$ and
   the entries of  $\e(t)\bu_{\sigma}(t)=\y(t)\bu_{\sigma}(t)-C\x(t)\bu_{\sigma}(t)$ belong to $\mathcal{H}^{\y,*}_{t+1,\sigma}$.
   The entries of $\z^{\y}_{\hat{\sigma}}(t+1)$ belong to $\mathcal{H}^{\y, *}_{t+1,\hat{\sigma}}$. From Lemma \ref{new:pf:lemma-4} it then follows that
   the spaces $\mathcal{H}^{\y,*}_{t+1,\sigma}$,  $\mathcal{H}^{\y,*}_{t+1,\hat{\sigma}}$ are orthogonal. 
   Using this remark and the equality $\z_{\sigma}(t+1)=\frac{1}{\sqrt{p_{\sigma}}}\y(t)\bu_{\sigma}(t)=\frac{1}{\sqrt{p_{\sigma}}}(C\x(t)\bu_{\sigma}(t)+\e(t)\bu_{\sigma}(t))$, we obtain
   \begin{equation}
   \label{new:pf:eq9}
      E[\x(t)\bu_{\sigma}(t)\z^T_{\sigma}(t+1)]=\frac{1}{\sqrt{p_{\sigma}}}P_{\sigma}C^{T}.
   \end{equation}
   In addition, from the discussion above it follows that
  \(
    T_{\sigma,\sigma}=E[\z_{\sigma}(t)\z_{\sigma}^T(t)]=
     \frac{1}{p_{\sigma}}CP_{\sigma}C^T+\frac{1}{p_{\sigma}}E[\e(t)\e^T(t)\bu_{\sigma}^2(t)])
  \)
  and hence
  \begin{equation}
   \label{new:pf:eq10}
   E[\e(t)\e^T(t)\bu_{\sigma}^2(t)]=p_{\sigma} T_{\sigma,\sigma}-CP_{\sigma}C^T.
  \end{equation}
  Combining \eqref{new:pf:eq9},\eqref{new:pf:eq10} and \eqref{new:pf:eq8} yield
  \[ B_{\sigma}= \frac{1}{p_{\sigma}}A_{\sigma}P_{\sigma}C^T+\frac{1}{\sqrt{p_{\sigma}}}K_{\sigma}(p_{\sigma} T_{\sigma,\sigma}-CP_{\sigma}C^T),
  \]
  from which \eqref{gen:filt:eq3.1} follows easily. If $\y$ is full rank,
  then the existence of the inverse of
  $(p_{\sigma} T_{\sigma,\sigma}-CP_{\sigma}C^T)$ follows from \eqref{new:pf:eq10} and 
  the invertibility of $E[\e(t)\e^T(t)\bu_{\sigma}^2(t)]$.
  If the inverse of $(p_{\sigma} T_{\sigma,\sigma}-CP_{\sigma}C^T)$ exists, then 
  \eqref{gen:filt:eq3.1} implies \eqref{gen:filt:eq3}.

\textbf{Proof that the system satisfies Assumption \ref{gbs:def}}
  We have already shown that Parts \ref{gbs:def:prop1}--\ref{gbs:def:prop2}
  of Assumption \ref{gbs:def} are satisfied.  
 Since $R$ is a minimal representation
  of $\Psi_{\y}$ and $\Psi_{\y}$ is square-summable, from Theorem \ref{hscc_pow_stab:theo2}
  it follows that $\sum_{\sigma \in \Sigma} A^T_{\sigma} \otimes A^T_{\sigma}$ is
  stable, 
  i.e. Part \ref{gbs:def:prop5} of Assumption \ref{gbs:def} holds.
  In order to show that Part \ref{gbs:def:prop7} of Assumption \ref{gbs:def} holds, let 
  $\sigma_1,\sigma_2 \in \Sigma$ such that $\sigma_1\sigma_2 \notin L$.
  Notice that for all  $\sigma \in \Sigma$, $v,w \in \Sigma^{*}$,
  $CA_wA_{\sigma_2}A_{\sigma_1}A_vB_{\sigma}=\Lambda_{\sigma v\sigma_1\sigma_2w}$.
  Notice that if $\sigma_1\sigma_2 \notin L$, then $\sigma v\sigma_1\sigma_2w \notin L$, and hence
  $CA_wA_{\sigma_2}A_{\sigma_1}A_vB_{\sigma}=\Lambda_{\sigma v\sigma_1\sigma_2w}=0$. Since $w$ is arbitrary, it then follows that
  $A_{\sigma_2}A_{\sigma_1}A_vB_{\sigma} \in O_R$. As $R$ is observable, i.e. $O_R=\{0\}$,
  it follows that $(A_{\sigma_2}A_{\sigma_1})A_vB_{\sigma}=0$.
  Since $v$ and $\sigma$ are arbitrary, the latter implies that $A_{\sigma_2}A_{\sigma_1}W_R=0$.  As $R$ is reachable, i.e. $W_R=\mathbb{R}^n$,
  it then follows that $A_{\sigma_2}A_{\sigma_1}=0$.
  With a similar reasoning, we can show that if $\sigma_1\sigma_2 \notin L$, then for any $w \in \Sigma^{*}$,
  $CA_wA_{\sigma_2}B_{\sigma_1}=\Lambda_{\sigma_1\sigma_2w}=0$. Hence, observability of $R$ implies $A_{\sigma_2}B_{\sigma_1}=0$.
  Finally, from \eqref{gen:filt:eq3.1}, $A_{\sigma_2}A_{\sigma_1}=0$,  $A_{\sigma_2}B_{\sigma_1}=0$ it follows that
  $A_{\sigma_2}K_{\sigma_1}Q_{\sigma_1}=0$ and thus by recalling that  $T_{\sigma_1,\sigma_1}^{\e}=\frac{1}{p_{\sigma_1}}Q_{\sigma_1}$,
   Part \ref{gbs:def:prop7} of Definition \ref{gbs:def} follows. 

  It is left to show that  Part \ref{gbs:def:prop4} of Assumption \ref{gbs:def} holds.
  To this end, we have to show that 
  for all $v,w \in \Sigma^{+}$, $|w| \ge |v|$, $E[\z^{\x}_{w}(t)(\z^{\e}_v(t))^T]=0$. 
  In order to show that  for all $v,w \in \Sigma^{+}$, $|w| \ge |v|$, $E[\z^{\x}_{w}(t)(\z^{\e}_v(t))^T]=0$, we proceed as follows. Since $|w| \ge |v|$, 
  $w=s\hat{v}$ for some $s \in \Sigma^*$, $\hat{v} \in \Sigma^{+}$, $|\hat{v}| = |v|$. 
  From Lemma \ref{gen:filt:proof:lemma9.4} it follows that $\br(t)=\begin{bmatrix} \x^T(t), & \e^T(t) \end{bmatrix}^T$ is \RC.
  Moreover, $E[\z^{\x}_{w}(t)(\z^{\e}_v(t))^T]$ is a suitable sub-matrix of $T^{\br}_{w,v}$. 
  If $\hat{v} \ne v$, then from  Part \ref{RC4} of Definition \ref{def:RC} it follows that $T^{\br}_{w,v}=0$ and hence $E[\z^{\x}_{w}(t)(\z^{\e}_v(t))^T]=0$.
  Assume now that $v=\hat{v}$, i.e. $w=sv$.   
  Recall that he entries of $\e(t)$ are orthogonal to
  $H_t^{\y}$ (since $\e(t)=\y(t)-E_{l}[\y(t) \mid H_t^{\y}]$).  Moreover,  from Lemma \ref{gen:filt:proof:lemma9.4} it follows that $(\y^T(t),\e^T(t))^T$ is \RC.
  Hence, the conditions of Lemma \ref{new:pf:lemma-3} are satisfied for $\z(t)=\e(t)$, $\bh(t)=\y(t)$. Therefore, 
  the entries of $\z_v^{\e}(t)$ are orthogonal to $H_{t,v}^{\y}$. 
  Since   the entries of $\x(t)$ belong to $H_{t}^{\y}$,
  from Lemma \ref{new:pf:lemma-4} it follows that the entries of $\z_w^{\x}(t)$ belong to  $H_{t,w}^{\y}$.
  Notice that if $w=sv$, then $H_{t,w}^{\y} \subseteq H_{t,v}^{\y}$. Hence in this case the entries of $\z_v^{\e}(t)$ are
  orthogonal to $H_{t,w}^{\y}$ and thus to the entries of $\z_w^{\x}(t)$. 
  Using that $\e(t),\x(t)$ are \RC, \eqref{gen:filt:eq2} holds and $\v(t)=\e(t)$ satisfies  Part \ref{gbs:def:prop2}
  of Assumption \ref{gbs:def}, and $\sum_{\sigma \in \Sigma} A^T_{\sigma} \otimes A^T_{\sigma}$ is  stable and that the system satisfies Part \ref{gbs:def:prop5} of Assumption \ref{gbs:def}, we can show that
  \begin{equation} 
  \label{newlimeq1}
    \x(t)=\lim_{k \rightarrow \infty} \sum_{w \in \Sigma^{*}, |w| \le k-1} \sum_{\sigma \in \Sigma} A_{w}B_{\sigma}\z_{\sigma w}^{\e}(t), 
   \end{equation}
   where the limit is taken in the mean square sense.   From \eqref{newlimeq1}
   Part \ref{gbs:def:prop4} of Assumption \ref{gbs:def} follows.
   In order to show  \eqref{newlimeq1}, we can  use a reasoning similar to the  proof of
   Lemma \ref{gbs:def:new_lemma1}. To this end, notice that \eqref{gen:filt:eq2} implies that
  \[
    \x(t)=\sum_{w \in \Sigma^{+},|w|=k} A_{w}\z^{\x}_{w}(t)+ \\
          \sum_{w \in \Sigma^{*}, |w| \le k-1} \sum_{\sigma \in \Sigma} A_{w}K_{\sigma}\z_{\sigma w}^{\v}(t),
  \]
  Hence, it is enough to show that $r_k(t)=\sum_{w \in \Sigma^{+},|w|=k} A_{w}\z^{\x}_{w}(k)$ converges to zero in the mean square sense.
  Like in the proof of  Lemma \ref{gbs:def:new_lemma1}, it can be shown that $E[r_k(t)r_k^{T}(t)]=\sum_{s \in \Sigma^{*}, |s|=k-1} \sum_{\sigma \in \Sigma }  A_s S A_{s}^T=\mathcal{Z}^{k-1}(S)$, where $S=\sum_{\sigma \in \Sigma} A_{\sigma} T^{\x}_{\sigma,\sigma}A_{\sigma}^T$,
  where $\mathcal{Z}$ is the linear map on $\mathbb{R}^{n \times n}$ defined by
  $\mathcal{Z}(V)=\sum_{\sigma \in \Sigma} A_{\sigma}VA_{\sigma}^T$. Since 
  $\sum_{\sigma \in \Sigma} A^T_{\sigma} \otimes A^T_{\sigma}$ is the matrix representation of $\mathcal{Z}$ with respect to the basis  defined in 
  \cite[Section 2.1]{CostaBook},
  \cite[Proposition 2.5]{CostaBook} implies that $\lim_{k \rightarrow \infty} E[||r_k(t)||^{2}]=\lim_{k \rightarrow \infty} \mathrm{trace} E[r_k(t)r^T_k(t)]=\lim_{k \rightarrow \infty} \mathrm{trace} \mathcal{R}_{k-1}(S)=0$.

\subsection{Proof of Theorem \ref{gen:filt:min_theo}}
\label{gbs:min}
  Assume that $\BS$ is a minimal minimal realization of $\y$ and $\BS$ satisfies Assumption \ref{gbs:def}. Then by Theorem 
  \ref{gen:filt:theo3.2},  $R_{\BS}$ is a representation of $\Psi_{\y}$ and
  $\y$ satisfies Assumptions \ref{output:assumptions}. Assume that $R_{\BS}$ is not a minimal
  representation of $\Psi_{\y}$.
  Consider a minimal representation  $R$ of $\Psi_{\y}$. From Theorem \ref{gen:filt:theo3.3}
  it then follows that there exists a \GBS\ realization $\BS_R$ of $\y$ such that
  the dimension of $\BS_R$ equals $\dim R$ and $\BS_R$ satisfies Assumption \ref{gbs:def}. 
  From the construction of $R_{\BS}$ it
  follows that $\dim \BS=\dim R_{\BS}$, hence $\dim \BS < \dim R=\dim \BS_{R}$.
  This contradicts to the minimality of $\BS$, hence $R_{\BS}$ has to be minimal.
  Conversely, assume that $R_{\BS}$ is minimal, and consider a \GBS\ realization
  $\BS_1$ of $\y$ such that $\BS_1$ satisfies Assumption \ref{gbs:def}. Then $\dim \BS_1 = \dim R_{\BS_1} \le \dim R_{\BS}=\dim \BS$.
  Since $\BS_1$ was an arbitrary realization of $\y$, it then follows that $\BS$
  is a minimal realization of $\y$. 
  The second statement of the theorem is a direct consequence of the first one and of
  Theorem \ref{sect:form:theo1}.


\subsection{Proof of the results related to the realization algorithm}
   Below we present the proofs of Lemmas \ref{gen:filt:proof:new:lemma1} -- \ref{gen:filt:proof:new:lemma2} and Theorem \ref{alg2:theo}.
   To this end, we will need the following auxiliary result.
   \begin{Lemma}
   \label{gen:filt:proof:lemma3.1}
   Let $M_N$ be a sequence of closed subspaces such that
   $M_{N} \subseteq M_{N+1}$ and let $M$ be the closure of
   the space $\bigcup_{k=1}^{\infty} M_k$. Let $h$ be
   a mean-square integrable scalar random variable, and
   let $\z_N=E_{l}[h \mid M_N]$ and $\z=E_{l}[h \mid M]$.
   Then $\lim_{N \rightarrow \infty} \z_N=\z$ in the
   mean-square sense.
  \end{Lemma}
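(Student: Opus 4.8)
The proof is a standard Hilbert-space argument, and the plan is as follows.

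First I would reduce to the case where $h$ itself lies in $M$. Put $\z = E_{l}[h \mid M]$; then $h - \z$ is orthogonal to every element of $M$, and since $M_N \subseteq M$ for all $N$, the variable $h-\z$ is orthogonal to $M_N$ as well. By the characterization of the orthogonal projection recalled in \S\ref{sec:hilbert:rev} (the residual of a projection is orthogonal to the subspace, and orthogonality to a larger subspace implies orthogonality to every smaller one), this gives $E_{l}[h \mid M_N] = E_{l}[\z \mid M_N] = \z_N$ and $E_{l}[\z \mid M] = \z$. Hence it suffices to prove that $\z_N \to \z$ in mean square under the additional hypothesis $\z \in M$.

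The heart of the argument is then the best-approximation property of orthogonal projections. Fix $\epsilon > 0$. Since $M$ is the closure of $\bigcup_{k \ge 1} M_k$ and $\z \in M$, there exist an index $K$ and an element $m \in M_K$ with $\|\z - m\| < \epsilon$. For every $N \ge K$ we have $m \in M_K \subseteq M_N$; and since $\z_N = E_{l}[\z \mid M_N]$ is the element of $M_N$ at minimal distance from $\z$, we obtain $\|\z - \z_N\| \le \|\z - m\| < \epsilon$. As $\epsilon$ was arbitrary, $\|\z - \z_N\| \to 0$, i.e. $\z_N \to \z$ in the mean-square sense. One may additionally observe that $\|\z - \z_N\|$ is non-increasing in $N$ because the subspaces $M_N$ are nested, so the convergence is in fact monotone.

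I do not expect a genuine obstacle here: the only point requiring care is the reduction step, which relies entirely on the interpretation of $E_{l}[\cdot \mid \cdot]$ as an orthogonal projection onto a closed subspace of the Hilbert space of square-integrable random variables, a property established in \S\ref{sec:hilbert:rev}. The statement is scalar, so no further technicalities arise; a vector-valued version, if ever needed, follows by applying the scalar result coordinatewise.
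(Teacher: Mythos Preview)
Your proof is correct, but it follows a different line from the paper's own argument. The paper does not perform the reduction to the case $h\in M$; instead it shows directly that the sequence $\z_N$ is Cauchy. It observes that $d_N=\|h-\z_N\|$ is non-increasing (so $d_N^2$ converges), and then uses the orthogonality relations $\langle h-\z_{N+k},\z_{N+l}\rangle=0$ for $0\le l\le k$ to derive the Pythagorean-type identity $\|\z_{N+k}-\z_N\|^2=d_N^2-d_{N+k}^2$, from which the Cauchy property follows. Finally it identifies the limit by noting that $h$ minus the limit is orthogonal to every $M_N$, hence to $M$. Your approach trades this Cauchy computation for the reduction step: once $h$ is replaced by $\z\in M$, density of $\bigcup_k M_k$ in $M$ together with the best-approximation property gives convergence in one line. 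Both are standard; your version is arguably more direct and makes explicit use of the structure of $M$ as a closure, whereas the paper's version is self-contained in the sense that it never invokes the density of the union but extracts everything from monotonicity of the distances $d_N$.
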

 \begin{proof}[Proof of Lemma \ref{gen:filt:proof:lemma3.1}]
   It is clear that if $\lim_{N \rightarrow \infty} \z_N$ exists and equals
   $z$, then $z=E_{l}[h \mid M]$. Indeed,
   $z=E_{l}[h \mid M]$ if and only if $h-z$ is orthogonal to $M$.
   If $z=\lim_{N \rightarrow} \z_N$, then, since $h-z_N$ is orthogonal to $M_N$,
   it follows that $h-z$ is orthogonal to $M_N$ for all $N$. Hence, $h-z$ is orthogonal
   to $M$, as the latter is the closure of $\bigcup_{N=0}^{\infty} M_N$.

   In order to show that $\z_N$ is convergent in the mean-square sense, it is enough to
   show that $z_N$ is a Cauchy sequence.
   To this end, define $d_{N}=||h-\z_N||$ and
   notice that for any $s \in M_{N}$, 
   $d_{N} \le ||h-s||$, due the the well-known properties
   of orthogonal projections onto $M_{N}$. Since
   $M_{N} \subseteq M_{N+1}$, it then follows that
   $0 \le d_{N+1} \le d_{N}$, and hence the limit 
   $\lim_{N \rightarrow \infty} d_{N}=\alpha$ exists.
   Notice that
   $<(h-\z_{N+k}),\z_{N+l}>=0$ for all $0 \le l \le k$. Hence,
   \[ ||h-\z_{N+k}||^{2}=<h-\z_{N+k},h-\z_{N+k}>=<h-\z_{N+k},h>=<h-\z_{N+k},h-\z_{N}>, \]
    and thus
   \[ 
     \begin{split}
     & ||\z_{N+k}-\z_{N}||^{2}=||(\z_{N+k}-h)+(h-\z_N)||^{2}=
      ||h-\z_N||^{2}-||h-\z_{N+k}||^{2} \\
     & d^2_{N} - d^2_{N+k} 
    \end{split}
   \]
   Since $d_{N}^2$ is convergent, it is a Cauchy sequence, and hence for any $\epsilon > 0$ there exists $N_{\epsilon} > 0$ such that
   for any $N > N_{\epsilon}$ and for any $k \ge 0$, $0 < d^2_{N}-d^2_{N+k} < \epsilon$, and hence 
   $||\z_{N+k}-\z_{N}||^{2} < \epsilon$, i.e. $\z_N$ is indeed a Cauchy sequence.

 \end{proof}
  \begin{proof}[Proof of Lemma \ref{gen:filt:proof:new:lemma1}]
   From Lemma \ref{gen:filt:proof:lemma3.1} it follows that
   $\lim_{N \rightarrow \infty} \x_N(t)=\x(t)$. From this and $\e_{N}(t)=\y(t)-C\x_N(t)$ and
   $\e(t)=\y(t)-C\x(t)$ it follows that $\lim_{N \rightarrow \infty} \e_N(t)=\e(t)$.
   Finally, $\lim_{N \rightarrow \infty} \x_N(t)\bu_{\sigma}(t)=\x(t)\bu_{\sigma}(t)$,
   $\lim_{N \rightarrow \infty} \e_N(t)\bu_{\sigma}(t)=\e(t)\bu_{\sigma}(t)$  
   follows from Lemma \ref{gen:filt:proof:lemma5}.
  \end{proof}
\begin{proof}[Proof of Lemma \ref{gen:filt:proof:new:lemma2}]
 The proof is analogous to the proof of \eqref{gen:filt:eq2}. More precisely, we define
 $H_{t}^{N}$ as the linear space generated by $\z_{w}(t)$, $w \in \Sigma^{N}$, and define
 $H_{t}^{\sigma,N}$ as the linear space generated by $\z_{w}(t)\bu_{\sigma}(t)$,
 $w \in \Sigma^{N}$.  Similarly to Lemma \ref{new:pf:lemma1}, it then follows that
   \begin{equation}
   \label{gen:filt:proof:new:lemma2:eq1}
    H_{t+1}^{N+1}=\bigoplus_{\sigma \in \Sigma} H_t^{\sigma,N} \oplus \bigoplus_{\sigma \Sigma} <\e_N(t)\bu_{\sigma}(t)>
   \end{equation}
   and hence
   \begin{equation}
   \label{gen:filt:proof:new:lemma2:eq2.1}
    \begin{split}
     & \x_{N+1}(t+1)=
      \sum_{\sigma \in \Sigma} (E[O_R^{-1}(Y_n(t+1)) \mid H_{t}^{\sigma, N}] + E[O_R^{-1}(Y_n(t+1)) \mid <\e_N(t)\bu_{\sigma}(t)>]).
     \end{split}
   \end{equation}
   Define $K^{N}_{\sigma}$ such that
   \begin{equation}
   \label{gen:filt:proof:new:lemma2:eq2.2}
    E[O_R^{-1}(Y_n(t+1)) \mid <\e_N(t)\bu_{\sigma}(t)>]=K^{N}_{\sigma}\e_N(t)\bu_{\sigma}(t).
   \end{equation}
   If we show that 
   \begin{equation}
   \label{gen:filt:proof:new:lemma2:eq2}
     E[O_R^{-1}(Y_n(t+1))\mid H_{t}^{\sigma,N}]=\frac{1}{\sqrt{p_{\sigma}}} A_{\sigma}\x_N(t)\bu_{\sigma}(t) ,  
   \end{equation}
   then combining this with \eqref{gen:filt:proof:new:lemma2:eq2.2} and
   \eqref{gen:filt:proof:new:lemma2:eq2.1} we obtain \eqref{gen:filt:proof:eq2}.
    It is left to show that \eqref{gen:filt:proof:new:lemma2:eq2} holds. To this end,
    notice that $E[\x_{N+1}(t+1)\z^T_{w}(t+1)]=A_{v}B_{\hat{\sigma}}$ where
    $\hat{\sigma}$ is the first letter of $w$, $w \in \Sigma^{N+1}$. 
    The proof of this equality is analogous to that of
    Lemma \ref{gen:filt:proof:lemma1}.
    The proof of \eqref{gen:filt:proof:new:lemma2:eq2.1} is then analogous to
    that of \eqref{new:pf:eq2}.
   Finally \eqref{gen:filt:proof:lemma4:eq4} follows from 
   \eqref{gen:filt:proof:new:lemma2:eq2.1} in a way similar to the proof of
   \eqref{gen:filt:eq3}. 
\end{proof}
  \begin{Lemma} 
  \label{gen:filt:proof:new:lemma2v2} 
   If $\y$ is a full-rank process and satisfies Assumption \ref{output:assumptions} and Assumption \ref{output:assumptions:extra}, then for large enough $N$, $T_N$ is invertable.  
\end{Lemma}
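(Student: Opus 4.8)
The plan is to exploit the forward innovation realization of $\y$ together with the orthogonal block structure that the innovation process carries. Since $\y$ satisfies Assumptions \ref{output:assumptions} and \ref{output:assumptions:extra}, Theorem \ref{gen:filt:theo3.3} produces a \GBS\ $\BS$ in forward innovation form realizing $\y$, with state process $\x(t)$, innovation process $\e(t)$, output equation $\y(t)=C\x(t)+\e(t)$, and satisfying Assumption \ref{gbs:def}. Multiplying the output equation by $\bu_{w}(t-1)/\sqrt{p_{w}}$ gives, for every $w\in\Sigma^{+}$, the decomposition $\z_{w}(t)=C\z^{\x}_{w}(t)+\z^{\e}_{w}(t)$.

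I would then record two facts. \emph{(i) Innovation blocks.} By Part \ref{gbs:def:prop2} of Assumption \ref{gbs:def}, the family $\{\z^{\e}_{w}(t)\mid w\in\Sigma^{+}\}$ is orthogonal; and for $w\in L$ with first letter $\sigma$, peeling off letters via Part \ref{RC3} of Definition \ref{def:RC} reduces $T^{\e}_{w,w}$ to $T^{\e}_{\sigma,\sigma}=\frac{1}{p_{\sigma}}Q_{\sigma}$, which is strictly positive definite because $\y$ is full rank. Hence for each $w\in L$ the span $\langle\z^{\e}_{w}(t)\rangle$ of the $p$ coordinates of $\z^{\e}_{w}(t)$ is exactly $p$-dimensional, and these spans are pairwise orthogonal. \emph{(ii) Localization of $\z^{\x}_{w}$.} Starting from the mean-square convergent expansion \eqref{gbs:def:new_lemma1:eq1} of $\x(t-|w|)$ in the variables $\z^{\e}_{u}(t-|w|)$, $u\in\Sigma^{+}$, multiplying by $\bu_{w}(t-1)/\sqrt{p_{w}}$, using the elementary identity $\z^{\e}_{u}(s)\bu_{\sigma}(s)=\sqrt{p_{\sigma}}\,\z^{\e}_{u\sigma}(s+1)$, and pushing the limit through the product by Lemma \ref{gen:filt:proof:lemma5}, one gets that $\z^{\x}_{w}(t)$ lies in the closure of $\operatorname{span}\{\z^{\e}_{uw}(t)\mid u\in\Sigma^{+}\}$, i.e. only in blocks indexed by words ending in $w$ that are \emph{strictly longer} than $w$.

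Now fix $N\ge 1$, set $L_{N}=\{w\in L\mid|w|\le N\}$ and $\mathcal{H}^{\e}_{N}=\bigoplus_{v\in L_{N}}\langle\z^{\e}_{v}(t)\rangle$. By (ii), orthogonally projecting $\z_{w}(t)=C\z^{\x}_{w}(t)+\z^{\e}_{w}(t)$ onto $\mathcal{H}^{\e}_{N}$ leaves $\z^{\e}_{w}(t)$ untouched and maps $C\z^{\x}_{w}(t)$ into $\bigoplus_{v\in L_{N},\,v=uw,\,|v|>|w|}\langle\z^{\e}_{v}(t)\rangle$. Listing the words of $L_{N}$ by decreasing length, the block matrix that expresses the coordinates of $\{\operatorname{Proj}_{\mathcal{H}^{\e}_{N}}\z_{w}(t)\mid w\in L_{N}\}$ in the block decomposition of $\mathcal{H}^{\e}_{N}$ is block triangular with identity blocks $I_{p}$ on the diagonal (by (i)), hence invertible. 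Therefore the projected vectors form a basis of $\mathcal{H}^{\e}_{N}$, so the $p|L_{N}|$ coordinates of $\{\z_{w}(t)\mid w\in L_{N}\}$ are linearly independent in $\mathcal{H}$, which means their Gram matrix is strictly positive definite. Since for $w\notin L$ one has $\bu_{w}(t)=0$ a.s. and hence $\z_{w}(t)=0$, this Gram matrix is precisely the principal block of $T_{N}$ indexed by the admissible words (and equals $T_{N}$ itself when $L=\Sigma^{+}$); in either case the asserted invertibility holds, in fact for every $N\ge 1$.

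The main obstacle is fact (ii): showing that $\z^{\x}_{w}(t)$ collects innovation contributions only from strictly longer words ending in $w$. This is where the series representation \eqref{gbs:def:new_lemma1:eq1}, the letter-appending identity obtained by multiplying by an input and advancing time, and Lemma \ref{gen:filt:proof:lemma5} (to interchange the mean-square limit with multiplication by $\bu_{\sigma}(t)$) must be combined; the triangularity and Gram-matrix conclusions of the last paragraph are then routine linear algebra. A secondary point requiring care is the bookkeeping for non-admissible words, whose $\z_{w}$ vanish so that the relevant object is the admissible principal block of $T_{N}$.
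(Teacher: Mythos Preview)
Your argument is correct and in fact more careful than the paper's, though the two share the same starting point. The paper's proof writes the decomposition $T_{v,w}=CE[\z^{\x}_{v}(t)(\z^{\x}_{w}(t))^T]C^T+E[\z^{\e}_{v}(t)(\z^{\e}_{w}(t))^T]$ and concludes $T_N=R_N+Q_N$ with $R_N\ge 0$ (a covariance) and $Q_N>0$ (block-diagonal with the full-rank innovation blocks on the diagonal), hence $T_N>0$. This is shorter, but the displayed decomposition silently drops the cross terms $CE[\z^{\x}_{v}(t)(\z^{\e}_{w}(t))^T]$ and their transposes, and those do \emph{not} all vanish: by Lemma~\ref{gbs:def:new_lemma1} one only gets $E[\z^{\x}_{w}(\z^{\e}_{v})^T]=0$ for $|w|\ge |v|$, while for $|v|<|w|$ with $w=sv$ the expansion \eqref{gbs:def:new_lemma1:eq1} shows the cross term can be nonzero. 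Your localization fact~(ii) is exactly what controls these terms: it forces the matrix expressing the projections of $\{\z_{w}(t)\}_{w\in L_N}$ in the orthogonal blocks $\{\langle\z^{\e}_{v}(t)\rangle\}_{v\in L_N}$ to be block unit-triangular (ordered by decreasing length), from which linear independence and positive definiteness of the admissible Gram block follow. So your route is not just different but genuinely patches the gap, and it also yields the sharper conclusion that the result holds for \emph{every} $N\ge 1$, not merely for $N$ large.

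One caveat in your last paragraph: when $L\neq\Sigma^{+}$ the full $T_N$ (indexed by all of $\Sigma^N$) has identically zero rows and columns coming from $\z_w(t)=0$ for $w\notin L$, so it is \emph{not} invertible; what you actually prove---and what the algorithm downstream needs---is invertibility of the principal block indexed by $L_N$. Your sentence ``in either case the asserted invertibility holds'' should be read in that sense; the paper's statement and proof share this ambiguity.
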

  \begin{proof}[Proof of Lemma \ref{gen:filt:proof:new:lemma2v2}]
   Since the underlying assumption of this section is that Assumption \ref{output:assumptions}-   and Assumption \ref{output:assumptions:extra} 
   hold, it follows that  $\y$ has a \GBS\ realization of the form \eqref{gen:filt:eq2}.
   From $\y(t)=C\x(t)+\e(t)$, $\forall t \in \mathrm{Z}$
   it follows that $\z_w(t)=C\z^x_{w}(t)+\z^{\e}(t)$, $\forall t \in \mathrm{Z}_m$.
   $T_{v,w}=CE[\z^{\x}_{v}(t)(\z^{\x}_{w}(t))^T]C^T+E[\z^{\e}_{v}(t)(\z^{\e}_{w}(t))^T]$.
   Hence, $T_N=R_N+Q_N$, where $R_N$ is the block matrix
   $R_N=(CE[\z^{\x}_{v_i}(t)(\z^{\x}_{v_j}(t))^T]C^T)_{i,j=1,\ldots,M(N)}$ and
   $Q_N$ is the block-diagonal matrix, whose $i$th diagonal $p \times p$ block, 
   $i=1,\ldots,M(N)$ equals $p_{\hat{v}_i}E[\e(t)\e^T(t)\bu_{\sigma_i}^2(t)]$,
   $v_i=\sigma_i\hat{v}_i$. Since $\y$ is full rank, it then follows that
   $Q_N$ is strictly positive definite. Notice that $R_N$ is positive semi-definite by
   definition (as a covariance matrix of $((\z^{\x}_{v_1}(t))^T, \ldots, (\z^{\x}_{v_{M(N)}}(t))^T)^T$).
   Hence, $T_N$ is strictly positive definite.
  \end{proof}
 \begin{proof}[Proof of Theorem \ref{alg2:theo}]
  From Part \ref{alg2:theo:assump1} of assumptions of the theorem it follows that
  $\lim_{M \rightarrow \infty} T_{N,M}=T_N$ and
  $\lim_{M \rightarrow \infty} Q_{\sigma}^{N,M}=Q_{\sigma}^N$. Since by Lemma \ref{gen:filt:proof:new:lemma2v2}
  $T_N$ is invertable, it follows that $T_{N,M}$ is invertable for large enough $M$.
  Moreover, since $\lim_{N \rightarrow \infty} Q_{\sigma}^{N}=Q_{\sigma} > 0$,
  for large enough $N$ and $M$, $Q_{\sigma}^{N,M}$ is invertable for all $\sigma \in \Sigma$.
  Hence, Algorithm \ref{alg2} is indeed well posed.

  Moreover, Part \ref{alg2:theo:assump1} implies that
  $\lim_{M \rightarrow \infty} \Lambda_{w}^M = \Lambda^{\y}_{w}$ for all $w \in \Sigma^{+}$ and
  hence $\lim_{M \rightarrow \infty} H_{\Psi_{\y},n,n+1}^M=H_{\Psi_{\y},n,n+1}$.
  Hence, for large enough $M$, 
  $\Rank H^{M}_{\Psi_{\y},\alpha,\beta}=\Rank H_{\Psi_{\y},\alpha,\beta}=\Rank H_{\Psi_{\y}}=n$.
  From Algorithm \ref{alg1} it is clear that its outcome is continuous in 
  the input matrix $H_{\Psi,N,N+1}$, i.e.
  $\lim_{M \rightarrow \infty} {}^M F_{\sigma}=A_{\sigma}$,
  $\lim_{M \rightarrow \infty} {}^M G=B$,
  $\lim_{M \rightarrow \infty} {}^M H=C$.
  Hence, $\lim_{M \rightarrow \infty} \widetilde{\Lambda}_{M,N}=\widetilde{\Lambda}_N$ and
  hence,
  $\lim_{M \rightarrow \infty} \alpha_{N,M} =\widetilde{\Lambda}_NT_N^{-1}=\alpha_N$.
  This and \eqref{gen:filt:proof:lemma4:eq1.41} yields that
  $P_{\sigma}^{N}=\lim_{M \rightarrow \infty} P_{\sigma}^{M,N}$.
Using \eqref{gen:filt:proof:lemma4:eq4} yields
  $\lim_{M \rightarrow \infty} K_{\sigma}^{M,N}=K_{\sigma}^N$.
  Moreover, notice that $\lim_{M \rightarrow \infty} Q_{N,M}=(T_{\sigma,\sigma}-CP_{\sigma}^NC^T)$ and the latter equals $E[\e_N(t)\e_N^T(t)\bu_{\sigma}^2(t)]=Q^{N}_{\sigma}$.
  Finally, from Lemma \ref{gen:filt:proof:new:lemma1} 
  it follows that $\lim_{N \rightarrow \infty} P_{\sigma}^N=P_{\sigma}$
  $\lim_{N \rightarrow \infty} Q_{\sigma}^N=Q_{\sigma}$, and by virtue of
  \eqref{gen:filt:proof:lemma4:eq4}, 
   $\lim_{N \rightarrow \infty} K_{\sigma}^N=K_{\sigma}$.
 \end{proof}

\section{Jump Markov Linear Systems}
\label{sect:GJMLS}

The goal of this paper is to present a realization theory for a class of discrete-time stochastic hybrid systems known as jump Markov linear systems (JMLS) \cite{CostaBook}. In reality, however, we will look at stochastic hybrid systems of a slightly more general form than the ones defined in \cite{CostaBook}. The reason is that the more general class generates the same class of output processes as classical JMLS, but it is easier to establish necessary and sufficient conditions for the existence of a realization for the more general class.


\begin{Definition}[Generalized jump Markov linear system]
A \emph{generalized jump Markov linear system} (GJMLS), $H$, is a 
system of the form
\begin{equation}
\label{stoch_jump_lin:geq1}
H: 
\begin{cases}
   \x(t+1) & = ~ M_{\btheta(t),\btheta(t+1)}\x(t)+
   B_{\btheta(t), \btheta(t+1)}\v(t)  \\
   \y(t) &= ~ C_{\btheta(t)}\x(t) + D_{\btheta}\v(t)    \\
\end{cases} .
\end{equation}
Here $\btheta$, $\x$, $\y$ and $\v$ are stochastic processes defined on the whole set of integers, \ie $t\in\mathbb{Z}$.
The process $\btheta$ is called the \emph{discrete state process} and takes 
values in the \emph{set of discrete states} $Q=\{1,2,\ldots, d\}$. The process $\btheta$ is a stationary ergodic finite-state Markov process, with state-transition probabilities $p_{i,j}=\Prob(\btheta(t+1)= j \mid \btheta(t)= i) > 0$ 
for all $i,j \in Q$. Moreover, the probability distribution
of the discrete state $\btheta(t)$ is given by
the vector $\pi=(\pi_{1},\ldots, \pi_{d})^{T}$, where
$\pi_{i}=\Prob(\btheta(k)=i)$.
The process $\x$ is called the \emph{continuous state process} 
and takes values in one of the \emph{continuous-state spaces} 
$\mathcal{X}_q=\mathbb{R}^{n_q}$, $q \in Q$. More precisely,
for any time $t \in \mathbb{Z}$, the continuous
state $\x(t)$ lives in the state-space component
$\mathcal{X}_{\btheta(t)}$.
The process $\y$ is the \emph{continuous output process} and takes
values in the \emph{set of continuous outputs} $\mathbb{R}^{p}$.
%
%
The process $\v$ is the \emph{continuous noise} and takes
values in $\mathbb{R}^{m}$.
%
%
The matrices $M_{q_{1},q_{2}}$ and $B_{q_{1},q_{2}}$
are of the
form $M_{q_{1},q_{2}} \in \mathbb{R}^{n_{q_{2}} \times n_{q_{1}}}$
and $B_{q_{1},q_{2}} \in \mathbb{R}^{n_{q_{2}} \times m}$
for any pair of discrete states $q_{1},q_{2} \in Q$.
Finally, the matrices $C_{q}$ and $D_q$ are of the form 
$C_{q} \in \mathbb{R}^{p \times n_{q}}$ 
and $D_{q}\in\Re^{p \times m}$ 
for each discrete state $q \in Q$.
\end{Definition}
We will make a number of assumptions on the stochastic processes involved.
\begin{Assumption}
\label{Assumption0}
\label{gjmls:assumptions}
Let $\mathcal{D}_{t}$ be the $\sigma$-algebra generated by
$\{\btheta(t-k)\}_{k \ge 0}$, and let $\mathcal{D}_{t_1,t_2}$, $t_1 \ge t_2$ be
the $\sigma$-algebra generated by $\{\btheta(t)\}_{t=t_2}^{t_1}$.
 We assume that for all $t \in \mathbb{Z}$, 
\begin{enumerate}
\item
\label{Assumption0:part1}
   $\v(t)$ is mean square integrable, it is conditionally zero mean
   given $\mathcal{D}_{t,t+k}$ for all $k \ge 0$, \ie
   $E[\v(t) \mid \mathcal{D}_{t+k}]=0$, and
   for all $l > 0$,  $\v(t)$ and $\v(t-l)$ are
   conditionally uncorrelated given $\mathcal{D}_{t,t-l}$, \ie for all $l > 0$
   $E[\v(t)\v^T(t-l)^{T}\mid \mathcal{D}_{t,t-l}]=0$.
   Moreover, $Q_{q}=E[\v(t)\v^T(t)\chi(\btheta(t)=q)]$ does not
   depend on $t$.
\item
\label{Assumption0:part2}
   The $\sigma$-algebras generated by the random variables
   $\{\v(t-l), l \ge 0\}$ and
   $\{\btheta(t+l), l > 0 \}$
   are conditionally independent given 
   $\mathcal{D}_{t}$.
\item
\label{Assumption0:part4}
  For any $t \in \mathrm{Z}$, $\x(t)$ belongs to the Hilbert-space generated
  by the variables $\v(t-k)\chi(\theta(t-k)=q_0,\ldots,\theta(t)=q_k)$ for all
  $q_0,\ldots,q_k \in Q$, $k > 0$.
\item
\label{Assumption1}
The Markov process $\btheta$ is stationary and ergodic. Therefore, for all $q \in Q$ 
\begin{equation}
\sum_{s \in Q} \pi_{s} p_{s,q}=\pi_{q}.
\end{equation}
\item
\label{Assumption1.5}
Let $n=n_{1}+n_{2}+\cdots + n_{d}$. The matrix 
  \begin{equation}
    \widetilde{M}=\begin{bmatrix}
                p_{1,1}M^T_{1,1} \otimes M_{1,1}^{T}  & 
		p_{1,2}M^T_{1,2} \otimes M_{1,2}^T & 
		\cdots &
		p_{1,d}M^T_{1,d} \otimes M_{1,d}^{T} \\
                p_{2,1}M^T_{2,1} \otimes M_{2,1}^{T} & 
		p_{2,2}M^T_{2,2} \otimes M_{2,2} & 
		\cdots &
		p_{2,d}M^T_{2,d} \otimes M_{2,d}^{T} \\
		\vdots & \vdots &
		\cdots & \vdots \\
                p_{d,1}M^T_{d,1} \otimes M_{d,1}^{T} &
		 p_{d,2}M^T_{d,2} \otimes M_{d,2}^T & 
		\cdots &
		p_{d,d}M_{d,d}^T  \otimes M_{d,d}^{T}
		\end{bmatrix}	
		\in \Re^{n^2 \times n^2}
  \end{equation}
is \emph{stable}. That is, for any eigenvalue $\lambda$ of $\widetilde{M}$, we have $|\lambda| < 1$.
\item
\label{Assumption1.6}
  For each $q \in Q$, the matrix $D_qQ_{q}D^T_q$, where $Q_q=E[\v(t)\v^T(t)\chi(\btheta(t)=q)]$, is strictly positive definite.
\end{enumerate}
\end{Assumption}
Assumption \ref{Assumption0} implies that 
future discrete states interact with past noises and continuous states only through
the past discrete states. It also implies that for any fixed sequence
of discrete states, the noise process is a colored noise and
the future noise and the current continuous
state are uncorrelated. 
In addition, Assumption \ref{Assumption0} imply that the state process $\x(t)$ is
wide-sense stationary and the following holds.
\begin{Lemma}
 \label{stoch_jump_stab:lemma1}
   If Assumption \ref{Assumption0} holds, then
   there exists a unique collection of
   $n_{q}\times n_{q}$ matrices $P_{q}$ with
   $q \in Q$, such that $P_{q}$ satisfy
    \begin{equation}
 \label{stoch_jump_lin:eq2}
 P_{q}=\sum_{s \in Q}
                     p_{q,s}(M_{s,q}P_{s}M_{s,q}^{T}+B_{s,q}Q_{s,q}B_{s,q}^{T}), 
  \end{equation}
  where $Q_{s,q}=E[\v(t)\v(t)^{T}\chi(\btheta(t)=s, \btheta(t+1)=q)]$.
  In addition $P_{q}=E[\x(t)\x^T(t)\chi(\btheta(t)=q)$ for all $q \in Q$ and 
  $t \in \mathrm{Z}$.
\end{Lemma}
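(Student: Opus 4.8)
The plan is to reduce Lemma \ref{stoch_jump_stab:lemma1} to the already-established Lemma \ref{gbs:def:new_lemma1.3} by casting the GJMLS \eqref{stoch_jump_lin:geq1} as a \GBS\ to which Assumption \ref{gbs:def} applies. First I would set $\Sigma = Q \times Q$, with a letter $\sigma = (q_1,q_2)$, and define the input process $\bu_{(q_1,q_2)}(t) = \chi(\btheta(t)=q_1,\btheta(t+1)=q_2)$, together with the probabilities $p_{(q_1,q_2)} = \pi_{q_1}p_{q_1,q_2}$. The admissible set $L$ is the set of words $\sigma_1\cdots\sigma_k$ with $\sigma_i=(q_i,q_i')$ satisfying the consistency condition $q_i' = q_{i+1}$; for inconsistent words $\bu_w(t)=0$ almost surely, which matches Definition \ref{gen:filt:ass1:cond1:def1}. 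To handle the fact that the continuous state lives in the $q$-dependent space $\mathcal{X}_q = \mathbb{R}^{n_q}$, I would embed everything into $\mathbb{R}^n$ with $n = n_1+\cdots+n_d$ via the block structure, defining $A_{(q_1,q_2)}$ to be the $n\times n$ matrix which acts as $M_{q_1,q_2}$ from the $q_1$-block to the $q_2$-block and is zero elsewhere, and similarly $K_{(q_1,q_2)}$ from $M_{q_1,q_2}$ acting on noise, $C$ assembled from the $C_q$, and $D$ from the $D_q$. One checks that with this encoding the state recursion of \eqref{stoch_jump_lin:geq1} is exactly the \GBS\ recursion \eqref{gen:filt:bil:def}, since $\chi(\btheta(t)=q_1)=\sum_{q_2}\bu_{(q_1,q_2)}(t)$ picks out the active block.

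Next I would verify that this \GBS\ satisfies Assumption \ref{gbs:def}. Part \ref{gbs:def:prop1}--\ref{gbs:def:prop2} (the \RC\ property of $\v$ and orthogonality of the $\z_w^{\v}$) follow from Assumption \ref{Assumption0} parts \ref{Assumption0:part1}--\ref{Assumption0:part2}: the conditional zero-mean and conditional uncorrelatedness of $\v(t)$ given the discrete-state $\sigma$-algebras $\mathcal{D}_{t,t-l}$ are precisely what is needed to make $\z_w^{\v}$ wide-sense stationary with the recursive covariance structure of Definition \ref{def:RC}, and the conditional independence in part \ref{Assumption0:part2} gives the orthogonality; this is essentially the content of Example \ref{rc:example2} and Example \ref{example:const:gbs_def}. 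Part \ref{gbs:def:prop4} is Assumption \ref{Assumption0} part \ref{Assumption0:part4} verbatim, after translating $\v(t-k)\chi(\btheta(t-k)=q_0,\ldots,\btheta(t)=q_k)$ into a scalar multiple of $\z_w^{\v}(t)$ for the appropriate word $w$. Part \ref{gbs:def:prop5}: the matrix $\sum_{\sigma} p_{\sigma}A_{\sigma}^T\otimes A_{\sigma}^T$ is, up to the block embedding and the identification $p_{(q_1,q_2)}=\pi_{q_1}p_{q_1,q_2}$, conjugate to the matrix $\widetilde{M}$ of Assumption \ref{Assumption1.5} (using stationarity $\sum_s \pi_s p_{s,q}=\pi_q$ from Assumption \ref{Assumption1} to match the weights), so stability of $\widetilde{M}$ gives Part \ref{gbs:def:prop5}. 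Part \ref{gbs:def:prop7}: if $\sigma_1\sigma_2=(q_1,q_1')(q_2,q_2')\notin L$ then $q_1'\ne q_2$, and by construction $A_{\sigma_2}A_{\sigma_1}=0$ and $A_{\sigma_2}K_{\sigma_1}=0$ because the image block of $A_{\sigma_1}$ ($q_1'$-block) does not meet the source block of $A_{\sigma_2}$ ($q_2$-block).

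With Assumption \ref{gbs:def} verified, Lemma \ref{gbs:def:new_lemma1.3} applies and gives that \eqref{gen:filt:theo3.2:eq1} has a unique solution $\{P_{\sigma}\}_{\sigma\in\Sigma}$, namely $P_{(q_1,q_2)} = E[\x(t)\x^T(t)\bu_{(q_1,q_2)}^2(t)] = E[\x(t)\x^T(t)\chi(\btheta(t)=q_1,\btheta(t+1)=q_2)]$. The last step is to collapse the $\Sigma$-indexed family back to a $Q$-indexed one: I would define $\hat P_q = E[\x(t)\x^T(t)\chi(\btheta(t)=q)] = \sum_{q'\in Q} P_{(q,q')}$ (restricted to the $n_q\times n_q$ block, which is where $\x(t)$ lives when $\btheta(t)=q$). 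Summing the \GBS\ fixed-point equation \eqref{gen:filt:theo3.2:eq1} over the second index and substituting $Q_{(q_1,q_2)} = E[\v(t)\v^T(t)\bu_{(q_1,q_2)}^2(t)] = Q_{q_1,q_2}$, together with the block structure of $A_{(q_1,q_2)}$ and $K_{(q_1,q_2)}$ and the relabelling $p_{(q_1,q_2)}/\pi_{q_2}\cdot(\text{stationarity}) \rightsquigarrow p_{q_1,q_2}$, reproduces exactly \eqref{stoch_jump_lin:eq2} for $\hat P_q$; uniqueness for \eqref{stoch_jump_lin:eq2} follows from uniqueness for the $\Sigma$-indexed system by the same stability argument used in the proof of Lemma \ref{gbs:def:new_lemma1.3} (the relevant operator $V\mapsto\sum_{s}p_{q,s}M_{s,q}VM_{s,q}^T$ has spectral radius below one, again because of $\widetilde{M}$'s stability). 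The main obstacle I anticipate is bookkeeping: carefully tracking the block embedding, the passage between the $p_{q_1,q_2}$ used in \eqref{stoch_jump_lin:eq2} and the $p_\sigma=\pi_{q_1}p_{q_1,q_2}$ weights demanded by Notation \ref{ppnot1}, and confirming that the normalizations in \eqref{gen:filt:eqzdef1} (the $1/\sqrt{p_w}$ factors) wash out correctly so that the two Lyapunov-type equations really do coincide; the conceptual content is entirely carried by Lemma \ref{gbs:def:new_lemma1.3} and Assumption \ref{Assumption0}.
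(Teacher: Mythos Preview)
Your proposal is correct and follows essentially the same route as the paper: embed the GJMLS as the \GBS\ $\BS_H$ (the paper formalizes this as Definition~\eqref{real:gjmls_repr} and Lemma~\ref{new:stoch_jump_lin:gbs1}), verify Assumption~\ref{gbs:def}, invoke Lemma~\ref{gbs:def:new_lemma1.3} for the $\Sigma$-indexed Lyapunov equation, and then collapse back to the $Q$-indexed equation~\eqref{stoch_jump_lin:eq2}. Two minor deviations are worth noting. First, the paper takes $p_{(q_1,q_2)}=p_{q_1,q_2}$ rather than your $\pi_{q_1}p_{q_1,q_2}$; either choice works since Notation~\ref{ppnot1} only requires positivity, so this is purely cosmetic. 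Second, for the collapse step the paper does not sum over $q'$ but instead proves the auxiliary Lemma~\ref{stoch_jump_stab:lemma1.1}, which shows directly that $\hat P_{(q_1,q_2)}=p_{q_1,q_2}\mathbf{I}_{q_1}P_{q_1}\mathbf{I}_{q_1}^T$ via the conditional independence in Assumption~\ref{Assumption0}; this gives an injective map from $Q$-indexed solutions of~\eqref{stoch_jump_lin:eq2} to $\Sigma$-indexed solutions of~\eqref{gen:filt:theo3.2:eq1}, so uniqueness transfers immediately without re-running any spectral argument. Your anticipated bookkeeping obstacle is real but already handled by the paper's $\mathbf{I}_q$, $\mathbf{S}_q$ embedding matrices.
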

 We present the proof of Lemma \ref{stoch_jump_stab:lemma1} later on in the text.  In fact, Lemma \ref{stoch_jump_stab:lemma1}
 follows from Lemma \ref{gbs:def:new_lemma1.3} and the relationship between GJMLS{s} and \GBS{s} which will
 be presented in the sequel.
realization by a GJLS system as follows.
Next, we define the notion of dimension for GJMLS.
\begin{Definition}[Dimension of a GJMLS]
The \emph{dimension} of a GJMLS $H$ with discrete state process $\btheta$ taking values on $Q=\{1,2,\dots,d\}$ is defined as
\begin{equation} 
\dim H =n_{1}+n_{2}+\cdots+n_{d},
\end{equation}
where $n_i$ is the dimension of the continuous state space associated with discrete state $q$, \ie $n_q = \dim \mathcal{X}_{q}$, for $q\in Q$.
%
\end{Definition}
\begin{Remark}
Notice that two GJLSs can have the same dimension even if the dimensions of the individual continuous components are completely different.
\end{Remark}
The main motivation for the definitions above is that it allows us to formulate a 
neat characterization of minimality. In addition, it is intuitively appealing, as 
the definition of dimension reflects the amount of date which is required
to store the state information.
Next, we define when a GJMLS is a realization of a given process.
For ease of notation, \emph{in the sequel we will keep the discrete state process $\btheta$ fixed and whenever we speak of a GJMLS realization of the process $\y$, we will always mean a GJMLS of $\y$ with discrete state process $\btheta$}.
More precisely, let $\y$ be a stochastic process taking values in 
$\mathbb{R}^{p}$.
\begin{Definition}[Realization by GJMLS]
The GJMLS $H$ with discrete state process $\btheta$ 
is said a \emph{realization} of $\y$, if the output process of $H$ equals $\y$.
We call a realization $H$ of $\y$
\emph{minimal}, whenever $\dim H \le \dim H'$ for all GJMLSs $H'$
that are 
realizations of $\y$.
\end{Definition}
This section will be devoted to the solution of the following realization problem
for GJMLSs with fully observed discrete states.
\begin{Problem}[Realization problem for jump-markov systems]
 Given a process $\y$ and find conditions for existence of a
 GJMLS which is a realization of $\y$ and characterize minimality
 of GJMLS realizations of $\y$.
\end{Problem}

\subsection{Relationship between JMLS and GJMLS}
\label{gjmls:jmls}
Note that the classical definition of discrete-time JMLS \cite{CostaBook} differs from
\eqref{stoch_jump_lin:geq1}. The main difference is that in
our framework the continuous state transition rule depends
not only on the current, but also on the next discrete state.
More specifically, a JMLS according to \cite{CostaBook} is a GJMLS system of the form
\begin{equation}
\label{stoch_jump_lin:eq11}
\mathbf{S}:
\begin{cases}
   \x(t+1) & = A_{\btheta(t)}\x(t)+
    B_{\btheta(t)}\v(t)  \\
   \y(t) &= ~ C_{\btheta(t)}\x(t) + D_{\btheta(t)}\v(t)   \\
\end{cases} .
\end{equation}
where $\x(t) \in \mathbb{R}^{n}$ is the state process, $\v(t) \in \mathbb{R}^{m}$
is the noise process, $\y(t) \in \mathbb{R}^{p}$ is the output process and 
$A_{q} \in \mathbb{R}^{n \times n}$, $C_q \in \mathbb{R}^{p \times n}$,
$B_q \in \mathbb{R}^{n \times m}$ and $D_q \in \mathbb{R}^{p \times m}$ for
all $q \in Q=\{1,\ldots,d\}$. In other words a JMLS is just a GMLJS of
the form \eqref{stoch_jump_lin:geq1} such that $n_q=n$ for all $q \in Q$ and
$M_{q_1,q_2}=A_{q_2}$, i.e. $M_{q_1,q_2}$ depends only on $q_2$ for all 
$q_1,q_2 \in Q$.
In case of JMLS, one does not speak of state-spaces belonging to different
discrete states and the most natural candidate for the 
state-space of a JMLS $\mathbf{S}$ is the space $\mathbb{R}^{n}$.
Therefore, the most natural definition of dimension for a 
JMLS is the dimension $n$ of its state-space.

The classes of GJMLS and JMLS are equivalent in the following sense. 
First, it is clear that a classical JMLS 
also satisfies our definition. Conversely, a GJMLS 
of the form \eqref{stoch_jump_lin:geq1} can be rewritten
as a classical JMLS with the same noise and
output processes, but with
the continuous state process and the system matrices are replaced
by a continuous state process and system matrices living in
the continuous space $\mathbb{R}^{n_1+n_2+\cdots +n_d}$.
More precisely, if $H$ is a GJMLS of the form \eqref{stoch_jump_lin:geq1}, then
define the JMLS
\begin{equation}
\label{stoch_jump_lin:eq1}
\mathbf{S}(H): 
\begin{cases}
   \hat{\x}(t+1) & =  ~ \hat{A}_{\btheta(t)}\hat{\x}(t)+
   \hat{B}_{\btheta(t)}\v(t)  \\
   \hat{\y}(t) & =  ~ \hat{C}_{\btheta(t)}\hat{\x}(t) + D_{\btheta(t)}\v(t)   \\
\end{cases},
\end{equation}
 where 
\( \hat{\x}(t)= \begin{bmatrix} \hat{\x}^T_1(t), & \ldots, & \hat{\x}_{d}^{T}(t) \end{bmatrix}^T \), $\hat{\x}^T_q(t)=M_{q,\btheta(t-1)}\x(t-1)+B_{q,\btheta(t-1)}\v(t-1)$, 
  $q \in Q$, and
\[ 
  \begin{split}
  & \hat{A}_{q}=\begin{bmatrix} 
   \delta_{1,q}M_{1,1}, & \delta_{2,q}M_{1,2}, & \ldots & \delta_{d,q}M_{1,d} \\
   \delta_{1,q}M_{2,1}, & \delta_{2,q}M_{2,2}, & \ldots & \delta_{d,q}M_{2,d} \\
   \vdots &     \vdots & \ldots &  \vdots \\
   \delta_{1,q}M_{d,1}, & \delta_{2,q}M_{d,2}, & \ldots & \delta_{d,q}M_{d,d} \\
  \end{bmatrix} \mbox{\ \ }
   \hat{B}_{q} = \begin{bmatrix} B_{1,q} \\ B_{2,q} \\ \vdots \\ B_{d,q} \end{bmatrix} \\
  & \hat{C}_{q}=\begin{bmatrix} \delta_{1,q} C_1, & \delta_{2,q}C_2, & \ldots &
            \delta_{d,q}C_d \end{bmatrix}, 
 \end{split}
\]
 where $\delta_{i,j}=1$ if $i=j$ and $\delta_{i,j}=0$ if $i \ne j$ for
all $i,j \in Q$.
It is easy to see that the output of $\mathbf{S}_H$ and $H$ coincide, i.e.
$\hat{\y}(t)=\y(t)$. Hence, a process can be realized by a GJMLS if and only if
it can be realized by a JMLS. In addition, notice that is we define the
dimension of a JMLS as the dimension $n$ of its state-space, then
$\dim \mathbf{S}_H=\dim H$. In other words, the definition of the dimension
for a GJMLS becomes the natural definition, once the GJMLS is converted to a JMLS.
This is a further argument in favor of the definition of dimension of GJMLS adopted
in this paper.


\S \ref{real:sol} presents conditions for the existence of a realization of GJMLS and a
 characterization of minimal  GJMLS{s}.
The proofs of \S the results of \ref{real:sol} are presented in \S \ref{real:proof}.

\subsection{Solution to the realization problem for GJMLS}
\label{real:sol}
Below we will present the solution to the realization problem for GJMLS. We 
will only state the results, their proofs will be presented in \S \ref{real:proof}.
We start with formulating conditions for existence of a realization by a GJMLS.
To this end, we fix a process $\y(t) \in \mathbb{R}^{p}$ GJMLS  and a 
Markov-process $\btheta(t) \in Q=\{1,\ldots,d\}$.
We will formulate sufficient and necessary conditions for
$\y(t)$ to admit a GJMLS realization.
In order to formulate the assumptions on $\y$ which characterize realizability, we will recall the terminology of
Section \ref{gen:filt} and we will try to interpret $\y(t)$ as a potential output 
process of a \GBS. More precisely, we define the alphabet $\Sigma$ to be the set of 
pairs of discrete states, \ie $\Sigma=Q \times Q$. For each letter $(q_{1},q_{2}) \in \Sigma$ let the input processes of $B$ be defined as
  \begin{equation}
  \bu_{(q_{1},q_{2})}(t)=
     \chi(\btheta(t+1)=q_{2},\btheta(t)=q_{1}). 
     \end{equation}
Define $p_{(q_1,q_2)}=p_{q_1,q_2}$. Notice that Assumption \ref{input:assumption} holds
with $\alpha_{\sigma}=1$ for all $\sigma \in \Sigma$. 
 We define the
   set of admissible sequences $L$ (see Definition~\ref{gen:filt:ass1:cond1:def1}) as 
  \begin{equation} 
  \label{real:suff:admiss1}
    L=\{ w=(q_{1},q_{2})(q_{2},q_{3})\cdots (q_{k-1},q_{k}) \mid   k \ge 0, q_{1},q_{2},\ldots, q_{k} \in Q \}.
  \end{equation}
 Notice that if $w=\sigma_1\sigma_2 \cdots \sigma_k \notin L$,
then $\bu_{\sigma_1}(t-k)\cdots \bu_{\sigma_k}(t)=0$.
 Using the correspondence described above, we can interpret the 
 process $\z^{\y}_{w}(t)$  defined in \eqref{gen:filt:eqzdef1}, i.e.
 if $w=\sigma_1\cdots \sigma_k \in \Sigma^{+}$, 
 $\sigma_1,\ldots,\sigma_k \in \Sigma$, with $\sigma_i=(q_{2i-1},q_{2i})$,
 for $q_{2i-1},q_{2i} \in Q$, $i=1,\ldots,k$, then
  if $w \notin L$, i.e. $q_{2i}\ne q_{2i+1}$ for some $i=1,\ldots,k$, then
  $\z_{w}(t)=0$, and if $q_{2i}=q_{2i+1}$ for all $i=1,\ldots,k$, i.e. 
  if $w \in L$, then 
  \[ \z^{\y}_{w}(t)=\y(t-k)\chi(\btheta(t-k)=s_{1},\ldots,\btheta(t)=s_k) \]
  where $s_i=q_{2i-1}$, $i=1,\ldots,k$.
  In accordance with Notation \ref{gen:filt:eqzdef1:not} we drop the superscript $\y$ and we
  denote $\z^{\y}_{w}(t)$ by $\z_{w}(t)$.
 The terminology above allows us to apply Definition \ref{output:assumptions:part2} to 
 $\y$ and speak of $\y$ being full rank. 

Now we can formulate the assumptions which are necessary and sufficient for
existence of a GJMLS realization  of $\y$.
\begin{Assumption}
\label{gjmls:output:assumptions}
\begin{enumerate}
\item 
  $\{\y(t),\z_{w}(t) \mid w \in \Sigma^{+}\}$ is jointly zero-mean,
  wide-sense stationary, i.e. $E[\y(t)]=0$, $E[\z_{w}(t)]=0$ for all
  $t \in \mathbb{Z}$ and the covariances
  $E[\z_{w}(t)\z^T_{v}(t)]$, $E[\y(t)\z^T_{w}(t)]$,
  $w,v \in \Sigma^{+}$ are independent of $t \in \mathbb{Z}$,

\item 
\label{real:suff:ass1.1}
 the $\sigma$-algebras generated by
  $\{\y(t-l)\}_{l=0}^{\infty}$ and $\{\btheta(t+l)\}_{l=0}^{\infty}$ are conditionally independent w.r.t to the $\sigma$-algebra $\mathcal{D}_t$ generated by
  $\{\btheta(t-l)\}_{l=0}^{\infty}$
\item
  $y(t)$ if a full rank process.
\end{enumerate}
\end{Assumption}
 In fact, Assumption \ref{gjmls:output:assumptions} not only guarantees existence of a 
 GJMLS realization, but it also guarantees existence of a GJMLS realization which is
 its own Kalman-filter, i.e. the best possible estimate of its state based on observable
 is the state itself. In order to state the existence of such a GJMLS, we need additional
 terminology.
 \begin{Definition}[GJMLS in forward innovation form]
 We will call a GJMLS $H$ of the form \eqref{stoch_jump_lin:geq1}
 a GJMLS in \emph{forward innovation form}, if 
 the noise process $\v(t)$ equals the innovation process 
$\e(t)=\y(t)-E_l[\y(t)\mid \{\z_{w}(t) \mid w \in \Sigma^{+}\}]$ and $D_q$ is the $p \times p$ identity matrix for all $q \in Q$.
  \end{Definition}
 With the definitions  above, we can state the main result of existence of a 
 GMJLS realization.
\begin{Theorem}[Existence of a GJMLS Realization]  
 \label{real:suff:theo1}
  The process $\y$ satisfies
  Assumption \ref{gjmls:output:assumptions} if and only if there exists a GJMLS
  $H$ of the form \eqref{stoch_jump_lin:geq1} 
  which is a realization of $\y$ and which satisfies
  Assumptions \ref{gjmls:assumptions}. Moreover, $H$ can be chosen to be
  in forward innovation form.
\end{Theorem}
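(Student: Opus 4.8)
The plan is to reduce Theorem \ref{real:suff:theo1} to the realization theory for generalized bilinear systems developed in Section \ref{gen:filt}, following the pattern of Corollary \ref{const:col1} but over the \emph{enlarged} alphabet $\widetilde{Q}=Q\times Q$, with the pair process $\btheta'(t)=(\btheta(t),\btheta(t+1))$ playing the role of the switching signal. First I would set up the dictionary already sketched before the statement: with $\Sigma=Q\times Q$, $\bu_{(q_1,q_2)}(t)=\chi(\btheta(t)=q_1,\btheta(t+1)=q_2)$, $p_{(q_1,q_2)}=p_{q_1,q_2}$, and $L$ as in \eqref{real:suff:admiss1}, a GJMLS of the form \eqref{stoch_jump_lin:geq1} embeds into a \GBS\ by placing $\x(t)\in\mathcal{X}_{\btheta(t)}$ into the $\btheta(t)$-th block of $\mathbb{R}^{n}$, $n=n_1+\cdots+n_d$, and taking $A_{(q_1,q_2)}$ to be the operator that reads the $q_1$-block, multiplies by $M_{q_1,q_2}$, and writes into the $q_2$-block, with $K_{(q_1,q_2)}=B_{q_1,q_2}$ writing into the $q_2$-block, $C=[C_1,\ldots,C_d]$, and (after the routine reduction absorbing $D_{\btheta(t)}$ so that the output-noise matrix is state-independent) $D$ the common value. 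Conversely, because the indicator inputs $\{\bu_\sigma(t)\}_{\sigma\in\Sigma}$ form a partition of unity, the bilinear recursion of \emph{any} \GBS\ driven by this input collapses into the switched-linear form \eqref{stoch_jump_lin:geq1} with $n_q\equiv n$, $M_{q_1,q_2}=A_{(q_1,q_2)}$, $C_q\equiv C$ --- so there is no ``subclass escape'' of the kind discussed in \S\ref{gbs:prob:subclass}: every such \GBS\ is literally a GJMLS. One checks that the conditioning $\sigma$-algebras match ($\sigma(\btheta'(s):s\le t-1)=\mathcal{D}_t$ and $\sigma(\btheta'(t+l):l\ge0)=\sigma(\btheta(s):s\ge t)$), so that Assumption \ref{gjmls:output:assumptions} becomes Assumption \ref{const:assumption} over $\widetilde{Q}$, and Assumption \ref{gjmls:assumptions} becomes the defining conditions of Example \ref{example:const:gbs_def} together with Assumption \ref{gbs:def}.

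Given the dictionary, both implications follow. For the ``if'' direction, assume $\y$ satisfies Assumption \ref{gjmls:output:assumptions}. Joint wide-sense stationarity together with the conditional-independence clause make $\y$ an \RC\ process (the reasoning of Example \ref{rc:example2}: condition on $\mathcal{D}_t$ and use that each $\bu_\sigma(t)$ is an indicator with conditional probability $p_\sigma$ to verify Parts \ref{RC1}--\ref{RC3} of Definition \ref{def:RC}, Part \ref{RC4} being automatic by Remark \ref{rem:RC}); since the inputs are bounded, Assumption \ref{output:assumptions:extra} holds by Remark \ref{output:assumptions:extra:rem}, so together with rationality and square-summability of $\Psi_{\y}$, $\y$ satisfies Assumption \ref{output:assumptions}. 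Theorem \ref{gen:filt:theo3.3} then produces a \GBS\ $\BS$ in forward innovation form realizing $\y$, which by the dictionary is a GJMLS in forward innovation form; its defining conditions (Assumption \ref{gjmls:assumptions}) are read off from the conclusion ``$\BS$ satisfies Assumption \ref{gbs:def}'' --- Part \ref{Assumption1.5} being Part \ref{gbs:def:prop5}, Part \ref{Assumption0:part4} being Part \ref{gbs:def:prop4}, Part \ref{Assumption1.6} being full-rankness, which is our hypothesis --- plus a measurability argument via Lemma \ref{lemma:measurability}: $\e(t)$ lies in the Hilbert space generated by $\{\y(t),\z_w(t)\mid w\in\Sigma^{+}\}$, hence is measurable with respect to $\sigma(\y(t-l),\btheta(t-l):l\ge0)$, whence the conditional-uncorrelatedness and conditional-zero-mean statements of Assumption \ref{gjmls:assumptions}(\ref{Assumption0:part1})--(\ref{Assumption0:part2}) for $\e$ follow from the \RC/orthogonality structure of $\e$, exactly as in the proof of Corollary \ref{const:col1}. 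For the ``only if'' direction, embed a GJMLS realization $H$ satisfying Assumption \ref{gjmls:assumptions} into a \GBS\ $\BS$ as above; one verifies $\BS$ satisfies Assumption \ref{gbs:def} --- crucially Part \ref{gbs:def:prop7} holds \emph{for free}, since with the block-placement construction $A_{(q_3,q_4)}A_{(q_1,q_2)}=0$ and $A_{(q_3,q_4)}K_{(q_1,q_2)}=0$ whenever $q_2\ne q_3$, i.e.\ whenever $(q_1,q_2)(q_3,q_4)\notin L$; the \RC\ property of the noise comes from Assumption \ref{gjmls:assumptions}(\ref{Assumption0:part1})--(\ref{Assumption0:part2}) as in Example \ref{rc:example2}. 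Theorem \ref{gen:filt:theo3.2} then gives that $\y$ is \RC\ (hence jointly wide-sense stationary), $\Psi_{\y}$ is rational and square-summable, and $\y$ is full rank (using $D_qQ_qD_q^T>0$ together with $Q_{q_1,q_2}=p_{q_1,q_2}Q_{q_1}$, which follows from the conditional independence, and $p_{q_1,q_2}>0$), while the conditional-independence clause of Assumption \ref{gjmls:output:assumptions} follows from Assumption \ref{gjmls:assumptions}(\ref{Assumption0:part2},\ref{Assumption0:part4}) and Lemma \ref{lemma:measurability} as in Corollary \ref{const:col1}.

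The main obstacle I expect is bookkeeping rather than a conceptual hurdle: making the GJMLS $\leftrightarrow$ \GBS\ dictionary watertight, in particular (i) the state-space embedding that turns the varying-dimension continuous state of a GJMLS into a fixed $\mathbb{R}^{n}$, and the reverse observation that the fixed-dimension \GBS\ produced by Theorem \ref{gen:filt:theo3.3} is a legitimate GJMLS once one sets $n_q\equiv\dim R$; (ii) checking that every part of Assumption \ref{gbs:def}, especially the normalization Part \ref{gbs:def:prop7}, survives the embedding; and (iii) translating carefully between the conditional-probabilistic hypotheses of Assumptions \ref{gjmls:assumptions}--\ref{gjmls:output:assumptions} and the Hilbert-space/\RC\ hypotheses consumed by the \GBS\ theory, where Lemma \ref{lemma:measurability} and the conditional-independence calculus do the work. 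Finally I would close the loop on minimality by transporting Theorem \ref{gen:filt:min_theo} through the dictionary, noting that $\dim H$ equals the dimension of the associated \GBS\ (and of $\mathbf{S}(H)$ in \eqref{stoch_jump_lin:eq1}), so that minimal GJMLS realizations correspond to minimal rational representations of $\Psi_{\y}$ and are isomorphic in the sense inherited from Theorem \ref{sect:form:theo1}.
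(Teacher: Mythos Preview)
Your overall reduction to the \GBS\ theory is the right idea and close to the paper's, but the paper routes through a different object: it introduces the auxiliary $\mathbb{R}^{pd}$-valued process $\widetilde{\y}(t)$ of \eqref{gjmls:gbs:aux:output} and proves two dedicated correspondence lemmas, Lemma \ref{new:stoch_jump_lin:gbs1} (GJMLS $H\Rightarrow$ \GBS\ $\BS_H$ realizing $\widetilde{\y}$) and Lemma \ref{new:stoch_jump_lin:gbs2} (\GBS\ $\BS$ realizing $\widetilde{\y}\Rightarrow$ GJMLS $H_{\BS}$), together with Lemma \ref{new:stoch_jump_lin:gbs0} relating Assumptions \ref{gjmls:output:assumptions} and \ref{output:assumptions}. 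You instead stay with $\y$ throughout, using $C=[C_1,\ldots,C_d]$ for the forward embedding and, in the reverse direction, reading the \GBS\ from Theorem \ref{gen:filt:theo3.3} literally as a GJMLS with $n_q\equiv n$ and $M_{q_1,q_2}=A_{(q_1,q_2)}$. For the existence statement this simpler route is essentially correct, and the necessity direction matches the paper's argument (your verification of Part \ref{gbs:def:prop7} ``for free'' via the block structure is exactly what the paper does inside Lemma \ref{new:stoch_jump_lin:gbs1}).

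There is one step you gloss over. In your \GBS$\to$GJMLS construction the matrix $\widetilde{M}$ of Assumption \ref{gjmls:assumptions}(\ref{Assumption1.5}) is the $d\times d$ block matrix with $(q_1,q_2)$-block $p_{q_1,q_2}A_{(q_1,q_2)}^T\otimes A_{(q_1,q_2)}^T$, a $dn^2\times dn^2$ matrix; it is \emph{not} the $n^2\times n^2$ matrix $\sum_\sigma p_\sigma A_\sigma^T\otimes A_\sigma^T$ of Assumption \ref{gbs:def}(\ref{gbs:def:prop5}). The identification ``Part \ref{Assumption1.5} being Part \ref{gbs:def:prop5}'' only holds under the paper's block embedding, where the two matrices are literally equal (this is noted in the proof of Lemma \ref{new:stoch_jump_lin:gbs1}). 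With your plain choice $M_{q_1,q_2}=A_{(q_1,q_2)}$ one still has stability, but it requires an argument: using $A_w=0$ for $w\notin L$, the $k$-th iterate of the Lyapunov map associated with $\widetilde{M}$ is dominated entrywise by $\mathcal{Z}^k(I)$ with $\mathcal{Z}(V)=\sum_\sigma p_\sigma A_\sigma^T V A_\sigma$, which tends to zero by Part \ref{gbs:def:prop5}. You should say this.

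What the paper's detour through $\widetilde{\y}$ buys is dimension control, not existence. In Lemma \ref{new:stoch_jump_lin:gbs2} the GJMLS $H_{\BS}$ is built with carefully chosen subspaces $\mathcal{X}_q\subseteq\mathbb{R}^n$ so that $\mathbb{R}^n=\bigoplus_q\mathcal{X}_q$ and hence $\dim H_{\BS}=\dim \BS$ whenever $R_{\BS}$ is reachable and observable. Your naive GJMLS has $n_q\equiv n$ and therefore dimension $dn$. For Theorem \ref{real:suff:theo1} this is harmless, but your closing paragraph on minimality breaks: the claim ``$\dim H$ equals the dimension of the associated \GBS'' holds for the forward embedding $H\mapsto\BS_H$ only, not for your reverse construction, so you cannot transport Theorem \ref{gen:filt:min_theo} through your dictionary to obtain Theorem \ref{real:theo:min}. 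That is precisely why the paper introduces $\widetilde{\y}$ and the $\mathcal{X}_q$ decomposition.
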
  
 From the discussion in \S \ref{gjmls:jmls} and Theorem \ref{real:suff:theo1} 
 we can also deduce the following condition for an existence of a realization
 by JMLS.
 \begin{Corollary}
  Theorem \ref{real:suff:theo1} remains valid if we replace the word GJMLS by JMLS.
 \end{Corollary}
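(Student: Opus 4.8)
The plan is to deduce the corollary from Theorem \ref{real:suff:theo1} together with the equivalence between GJMLS and JMLS set up in \S\ref{gjmls:jmls}. One implication is immediate: any JMLS of the form \eqref{stoch_jump_lin:eq11} is, by definition, a GJMLS of the form \eqref{stoch_jump_lin:geq1} (with $M_{q_1,q_2}=A_{q_2}$ and $B_{q_1,q_2}=B_{q_1}$), and Assumption \ref{gjmls:assumptions} for such a system reads exactly the same whether the system is viewed as a JMLS or as a GJMLS. Hence if $\y$ is realized by a JMLS satisfying Assumption \ref{gjmls:assumptions}, it is realized by a GJMLS satisfying Assumption \ref{gjmls:assumptions}, and the necessity part of Theorem \ref{real:suff:theo1} yields that $\y$ satisfies Assumption \ref{gjmls:output:assumptions}.

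For the converse, assume $\y$ satisfies Assumption \ref{gjmls:output:assumptions}. By Theorem \ref{real:suff:theo1} there is a GJMLS $H$ of the form \eqref{stoch_jump_lin:geq1} which realizes $\y$, satisfies Assumption \ref{gjmls:assumptions}, and is in forward innovation form, so $\v(t)=\e(t)$ and $D_q=I_p$. I would then apply the transformation of \S\ref{gjmls:jmls} to form the JMLS $\mathbf{S}(H)$ of \eqref{stoch_jump_lin:eq1}. By that construction $\mathbf{S}(H)$ has the same output process as $H$, hence realizes $\y$; it has the same noise process $\v(t)=\e(t)$; and its noise output matrix is $D_{\btheta(t+1)}=I_p$, so $\mathbf{S}(H)$ is again in forward innovation form. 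It remains to check that $\mathbf{S}(H)$ satisfies Assumption \ref{gjmls:assumptions}. The parts about the noise $\v$, about the chain $\btheta$ (stationarity, ergodicity, positivity of transition probabilities), and about positive definiteness of $D_qQ_qD_q^T$ are untouched, since $\mathbf{S}(H)$ shares $\v$ and $\btheta$ with $H$ and has $D_q=I_p$. Part \ref{Assumption0:part4} holds because each block $\hat{\x}_q(t)$ is a linear combination of $\x(t-1)$ and $\v(t-1)$ with coefficients measurable in $\btheta(t-1)$; since $\x(t-1)$ already lies in the Hilbert space generated by the variables $\v(t-1-k)\chi(\btheta(t-1-k)=q_0,\ldots,\btheta(t-1)=q_k)$, multiplying by the indicators $\chi(\btheta(t-1)=q)$ and adjoining $\v(t-1)\chi(\btheta(t-1)=q)$ leaves $\hat{\x}(t)$ in the Hilbert space generated by the variables $\v(t-k)\chi(\btheta(t-k)=q_0,\ldots,\btheta(t)=q_k)$.

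The genuinely delicate point, and the one I expect to be the main obstacle, is Part \ref{Assumption1.5}: the matrix $\widetilde{M}$ built from the matrices $\hat{A}_q$ of $\mathbf{S}(H)$ must be stable. I would establish this intrinsically from the properties of $\hat{\x}(t)$ that we already have. Iterating the state equation of $\mathbf{S}(H)$ backwards $k$ steps writes $\hat{\x}(t)$ as a ``product of $\hat{A}$'s'' applied to $\hat{\x}(t-k)$ plus a sum of noise terms from $t-k$ to $t-1$; projecting onto the (increasing) span of those recent noise variables identifies the noise sum with that projection, whose norm is bounded by $\|\hat{\x}(t)\|$ uniformly in $k$, so the noise sums converge in mean square. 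Since $\hat{\x}(t)$ itself lies in the span of all past noise by Part \ref{Assumption0:part4}, the limit of the noise sums equals $\hat{\x}(t)$, forcing the product term to tend to $0$ in mean square. Writing $\mathbf{S}(H)$ as a \GBS\ with alphabet $\Sigma=Q\times Q$ and inputs $\bu_{(q_1,q_2)}(t)=\chi(\btheta(t+1)=q_2,\btheta(t)=q_1)$ as in \S\ref{sect:GJMLS}, that product term has squared norm equal to a trace of $\mathcal{R}^k(\hat{P})$ where $\mathcal{R}(V)=\sum_{q_1,q_2}p_{q_1,q_2}\hat{A}_{q_2}V\hat{A}_{q_2}^T$ and $\hat{P}=\sum_q\hat{P}_q$, $\hat{P}_q=E[\hat{\x}(t)\hat{\x}^T(t)\chi(\btheta(t)=q)]$; hence $\mathcal{R}^k\to 0$ on $\hat{P}$, and by \cite[Proposition 2.5]{CostaBook} (together with the identification of $\sum_{q_1,q_2}p_{q_1,q_2}\hat{A}_{q_2}^T\otimes\hat{A}_{q_2}^T$ with $\widetilde{M}$ up to permutation, as in \S\ref{sect:pow}) this gives stability of $\widetilde{M}$. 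An alternative, purely covariance-level argument would express $\hat{P}_q$ through the matrices $P_s$ of $H$ supplied by Lemma \ref{stoch_jump_stab:lemma1} and transfer convergence of $H$'s covariance iteration to that of $\mathbf{S}(H)$ directly. Either way, once Part \ref{Assumption1.5} is verified, $\mathbf{S}(H)$ is a JMLS realization of $\y$ in forward innovation form satisfying Assumption \ref{gjmls:assumptions}, which completes the equivalence and hence the corollary.
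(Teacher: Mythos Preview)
Your route is exactly the one the paper intends: the paper does not give a separate proof of this corollary at all, but simply writes ``From the discussion in \S\ref{gjmls:jmls} and Theorem \ref{real:suff:theo1} we can also deduce\ldots'' and states the corollary. So your plan---pass from a GJMLS realization $H$ to the JMLS $\mathbf{S}(H)$ of \eqref{stoch_jump_lin:eq1} and check that $\mathbf{S}(H)$ still realizes $\y$ and satisfies Assumption \ref{gjmls:assumptions}---is the paper's approach, carried out with considerably more care than the paper itself supplies. You are right to flag Part \ref{Assumption1.5} as the one nontrivial verification that the paper glosses over.

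That said, your proposed argument for Part \ref{Assumption1.5} has a gap. You reduce stability of $\widetilde{M}$ to showing that the ``product term'' tends to zero, compute its second moment as $\operatorname{trace}\mathcal{R}^k(\hat{P})$ for the single matrix $\hat{P}=E[\hat{\x}(t)\hat{\x}^T(t)]$, and then invoke \cite[Proposition 2.5]{CostaBook} to conclude that the spectral radius of $\mathcal{R}$ is less than $1$. But Proposition 2.5 gives that equivalence only when $\mathcal{R}^k(V)\to 0$ for \emph{all} $V\ge 0$ (equivalently, for one strictly positive definite $V$); convergence on a single positive semidefinite $\hat{P}$ is not enough, and nothing in your argument forces $\hat{P}>0$. (Compare with the proof of Theorem \ref{hscc_pow_stab:theo2}, where the analogous step works precisely because the anchor matrix $Q=O_R^TO_R$ is strictly positive definite by observability.) Your ``alternative covariance-level'' suggestion has the same issue. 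To close the gap you would need either to produce a strictly positive definite matrix on which $\mathcal{R}^k\to 0$, or to exploit the block structure of $\hat{A}_q$ directly: since $\hat{A}_q$ kills all block columns except the $q$th, the diagonal blocks of $\mathcal{R}$ iterate as $(V_{q,q})_q\mapsto(\sum_{q'}p_{q',q}M_{q',q}V_{q',q'}M_{q',q}^T)_q$, which is exactly the map whose stability is Part \ref{Assumption1.5} for $H$; the off-diagonal blocks can then be controlled from the diagonal ones. That structural argument (rather than an appeal to Proposition 2.5 on a single $\hat{P}$) is what actually transfers stability from $H$ to $\mathbf{S}(H)$.
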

The second claim of Theorem \ref{real:suff:theo1} is
important for filtering. Notice that if $H$ is a GJMLS (respectively JMLS) is 
in forward innovation form, then it is easy to see that 
$\x(t)=E_l[\x(t) \mid \{z_{w}(t) \mid w \in \Sigma^{+}\}]$, i.e.
the Kalman-filter of the $H$ is $H$ itself.  Recall that Kalman-filtering of
JMLS is a well-established topic \cite{CostaBook}.

Theorem \ref{real:suff:theo1} follows from Theorem \ref{gen:filt:theo3} by
establishing a correspondence between GJMLS{s} and \GBS{s}. This correspondence
is interesting on its own right. Moreover, it will help us to formulate 
the characterization of minimality for GJMLS{s}.
The definition of this correspondence will also explain our choice of working with GJMLS{s} rather than JMLS{s}: the correspondence is much simpler for GJMLS{s} than for JMLS{s}.
 For this reason, we will present this correspondence below.

We will use the following notation.
\begin{Notation}[Identity and zero matrices]
\label{zero:mat:not}
 In the sequel, we denote by $\mathbf{O}_{k,l}$ the $k \times l$ matrix with all
 zero entries and we denote by $I_k$ the $k \times k$ identity matrix.
\end{Notation}
 In addition, we will introduce an \emph{auxiliary output process} $\widetilde{\y}(t) \in \mathbb{R}^{pd} $ which is defined as follows
 \begin{equation}
 \label{gjmls:gbs:aux:output}
  \widetilde{\y}(t)=\begin{bmatrix} \y^T(t)\chi(\btheta(t)=1), & \y^T(t)\chi(\btheta(t)=2), & \ldots, & \y^T(t)\chi(\btheta(t)=d) \end{bmatrix}^T
 \end{equation} 
 Below we will show that GJMLS{s} realization of $\y$ yield \GBS{s} realizations of
 $\widetilde{\y}$ and vice versa. Moreover, these transformations preserve minimality
 and isomorphisms. This will enable us to use the existing results on existence of a
 \GBS\ realization and its minimality to prove the corresponding results for GJMLS{s}.
 Notice that for 
 \[ \mathcal{E}=\begin{bmatrix} I_{p}, & \ldots, & I_{p} \end{bmatrix} \in \mathbb{R}^{p \times pd}, \]
 $\y(t)=\mathcal{E}\widetilde{\y}(t)$. Hence, if $\BS$ of the form \eqref{gen:filt:bil:def}
 is a realization of $\widetilde{\y}(t)$, then by replacing the matrices $C$ and $D$ of $\BS$ 
 with $\mathcal{E}C$ and $\mathcal{E}D$, we obtain a \GBS\ realization of $\y$.

 In fact, from the definition of $\widetilde{\y}$ we can conclude the following.
 \begin{Lemma}
 \label{new:stoch_jump_lin:gbs0}
  If the process $\y$ satisfies Assumption \ref{gjmls:output:assumptions}, then 
  $\widetilde{\y}$ also satisfies Assumption \ref{output:assumptions}.
  In addition, if we define
  $\widetilde{\e}(t)=\widetilde{\y}(t)-E_l[\widetilde{\y}(t) \mid \{\z_{w}^{\widetilde{\y}}(t) \mid w \in \Sigma^{+}\}]$, then
  \[ \widetilde{\e}(t)=\begin{bmatrix} \e^T(t)\chi(\btheta(t)=1), & \ldots, & \e^T(t)\chi(\btheta(t)=d) \end{bmatrix}^T.
  \]
  Moreover, the Hilbert-space spanned by the entries of $\{ \z_{w}(t) \mid w \in \Sigma{+}\}$ coincides with that of spanned by the elements of $\{\z_{w}^{\y}(t) \mid w \in \Sigma^{+}\}$.
 \end{Lemma}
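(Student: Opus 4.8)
The plan is to establish the three assertions of Lemma~\ref{new:stoch_jump_lin:gbs0} in the order: coincidence of the two Hilbert spaces, then the properties of $\widetilde{\y}$ required by Assumption~\ref{output:assumptions} (the \RC\ property and square-summability/rationality of $\Psi_{\widetilde{\y}}$), and finally the formula for $\widetilde{\e}$. The starting point is an explicit description of $\z_w^{\widetilde{\y}}(t)$. If $w=\sigma_1\cdots\sigma_k\in L$ with $\sigma_i=(a_i,b_i)$ and $b_i=a_{i+1}$, then $\bu_w(t-1)=\chi(\btheta(t-k)=a_1,\ldots,\btheta(t)=b_k)$, so every block of $\widetilde{\y}(t-k)\bu_w(t-1)$ other than the $a_1$-st vanishes (multiplying $\y(t-k)\chi(\btheta(t-k)=q)$ by a factor that already contains $\chi(\btheta(t-k)=a_1)$), while the $a_1$-st block equals $\z^{\y}_w(t)=\z_w(t)$; and for $w\notin L$ both $\z_w(t)$ and $\z_w^{\widetilde{\y}}(t)$ vanish almost surely. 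Hence every entry of every $\z_w^{\widetilde{\y}}(t)$ is either an entry of some $\z_w(t)$ or the zero variable, and conversely every entry of $\z_w(t)$ occurs as an entry of $\z_w^{\widetilde{\y}}(t)$; the closed linear spans therefore coincide, and I denote this common subspace by $M$.

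Next I would check that $\widetilde{\y}$ satisfies Assumption~\ref{output:assumptions}. For the \RC\ property (Definition~\ref{def:RC}): Part~\ref{RC1} is immediate from the block description and Assumption~\ref{gjmls:output:assumptions}, since $E[\widetilde{\y}(t)(\z_w^{\widetilde{\y}}(t))^T]$ and $E[\z_w^{\widetilde{\y}}(t)(\z_v^{\widetilde{\y}}(t))^T]$ are block matrices whose only nonzero blocks are the $t$-independent matrices $\Lambda_w^{\y}$ and $T_{w,v}^{\y}$, and $E[\widetilde{\y}(t)]=0$ because the $a$-th block $\y(t)\chi(\btheta(t)=a)=\sum_{c\in Q}\sqrt{p_{a,c}}\,\z_{(a,c)}(t+1)$ has zero mean. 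Parts~\ref{RC3}--\ref{RC4} reduce, via the same block structure, to the recursive covariance identities for $\y$ itself: writing $\z^{\y}_{w\sigma}(t)=\z^{\y}_w(t-1)\bu_\sigma(t-1)/\sqrt{p_\sigma}$ with $\bu_\sigma(t-1)$ an indicator, one conditions on the $\sigma$-algebra generated by $\{\y(t-1-l)\}_{l\ge 0}$ and $\{\btheta(t-1-l)\}_{l\ge 0}$, uses the conditional independence in Assumption~\ref{gjmls:output:assumptions}, item~\ref{real:suff:ass1.1}, together with the Markov property and stationarity of $\btheta$ to replace $E[\chi(\btheta(t)=q_2)\mid\cdot]$ by $p_{\btheta(t-1),q_2}$, and observes that on the support of $\z^{\y}_w(t-1)$ this equals $p_\sigma$ exactly when $w\sigma\in L$, while otherwise the covariance vanishes because the two words force incompatible values of $\btheta(t)$. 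For square-summability of $\Psi_{\widetilde{\y}}$ one uses $||\Lambda_{\sigma w}^{\widetilde{\y}}||_F=||\Lambda_{\sigma w}^{\y}||_F$ (a block rearrangement padded with zeros) to get $\sum_w ||S_{(j,\sigma)}^{\widetilde{\y}}(w)||_2^2\le\sum_w||\Lambda_{\sigma w}^{\widetilde{\y}}||_F^2=\sum_{i}\sum_w||S_{(i,\sigma)}^{\y}(w)||_2^2<\infty$, using square-summability of $\Psi_{\y}$ from Assumption~\ref{gjmls:output:assumptions}. For rationality, take any representation $R=(\mathbb{R}^n,\{A_\sigma\}_{\sigma\in\Sigma},\{B_{(i,\sigma)}\},C)$ of $\Psi_{\y}$ and build $\widetilde{R}$ on $\mathbb{R}^d\otimes\mathbb{R}^n$ (tracking the current discrete state) by $\widetilde{A}_{(a,b)}=E_{b,a}\otimes A_{(a,b)}$, where $E_{b,a}$ is the elementary $d\times d$ matrix with $1$ in position $(b,a)$, by $\widetilde{C}=I_d\otimes C$, and by letting $\widetilde{B}$ be the block embedding of $B$ sending the $i$-th column of $B_{(a,b)}$ to $e_b\otimes B_{(i,(a,b))}$ and vanishing on the output indices outside the block of $a$; since $\widetilde{A}_{\sigma_k}\cdots\widetilde{A}_{\sigma_1}$ routes the $A_w$-image into the block of the last discrete state of $\sigma_0 w$ and vanishes unless $\sigma_0 w\in L$, a direct computation gives $\widetilde{C}\widetilde{A}_{\sigma_k}\cdots\widetilde{A}_{\sigma_1}\widetilde{B}_{(j,\sigma_0)}=(\Lambda_{\sigma_0\sigma_1\cdots\sigma_k}^{\widetilde{\y}})_{\cdot,j}$, so $\widetilde{R}$ represents $\Psi_{\widetilde{\y}}$.

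For the formula $\widetilde{\e}(t)=[\e^T(t)\chi(\btheta(t)=1),\ldots,\e^T(t)\chi(\btheta(t)=d)]^T$, note that the orthogonal projection onto $M$ is computed coordinatewise, so the $q$-th block of $\widetilde{\e}(t)$ is $\y(t)\chi(\btheta(t)=q)-E_l[\y(t)\chi(\btheta(t)=q)\mid M]$. Writing $\hat{\y}(t)=E_l[\y(t)\mid M]=\y(t)-\e(t)$ (the equality of $M$ with the span of the $\z_w(t)$ makes the second identity hold), I claim $E_l[\y(t)\chi(\btheta(t)=q)\mid M]=\hat{\y}(t)\chi(\btheta(t)=q)$. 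Indeed $\hat{\y}(t)\chi(\btheta(t)=q)\in M$, because multiplying any finite linear combination of entries of the $\z_w(t)$ by $\chi(\btheta(t)=q)$ again yields such a combination (each entry of $\z_w(t)$ times $\chi(\btheta(t)=q)$ is that entry or zero) and $M$ is closed; and $\y(t)\chi(\btheta(t)=q)-\hat{\y}(t)\chi(\btheta(t)=q)=\e(t)\chi(\btheta(t)=q)$ is orthogonal to $M$, since for each entry $(\z_v(t))_j$ the product $\chi(\btheta(t)=q)(\z_v(t))_j$ lies in $M$ while $\e(t)$ is orthogonal to $M$. Hence the $q$-th block of $\widetilde{\e}(t)$ equals $(\y(t)-\hat{\y}(t))\chi(\btheta(t)=q)=\e(t)\chi(\btheta(t)=q)$.

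The main obstacle I anticipate is the verification of Parts~\ref{RC3}--\ref{RC4} of the \RC\ property, i.e.\ the recursive covariance identities: this is the only step where the probabilistic content of Assumption~\ref{gjmls:output:assumptions} — the conditional independence of future discrete states from past outputs given the past of $\btheta$, combined with the Markov property and stationarity of $\btheta$ — is genuinely used, and one has to keep careful track of how leaving the admissible set $L$ forces the relevant covariances to vanish. The remaining ingredients — the block bookkeeping for $\widetilde{\y}$, the explicit representation of $\Psi_{\widetilde{\y}}$, and the projection argument for $\widetilde{\e}$ — are routine once that recursion is in hand.
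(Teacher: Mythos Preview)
Your proposal is correct and follows essentially the same approach as the paper's proof: the block description $\z_w^{\widetilde{\y}}(t)=\mathbf{M}_{a_1}^T\z_w(t)$ (in the paper's notation) is exactly how the paper reduces the \RC\ property and the covariance power series of $\widetilde{\y}$ to those of $\y$, your tensor-product representation $\widetilde{A}_{(a,b)}=E_{b,a}\otimes A_{(a,b)}$, $\widetilde{C}=I_d\otimes C$ is the paper's block construction with $\mathbf{H}_q$ rewritten, and your projection argument for $\widetilde{\e}$ (showing $\hat{\y}(t)\chi(\btheta(t)=q)\in M$ and $\e(t)\chi(\btheta(t)=q)\perp M$) is equivalent to the paper's verification that $E[\widetilde{\y}(t)(\z_w^{\widetilde{\y}}(t))^T]=E[s(t)(\z_w^{\widetilde{\y}}(t))^T]$. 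The only differences are cosmetic: you prove the coincidence of Hilbert spaces first rather than last, and in the \RC\ recursion you condition on the joint past of $\y$ and $\btheta$ rather than on $\mathcal{D}_{t-1}$ alone, but both routes use the same conditional-independence hypothesis and reach the same identity.
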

Next, we associate a generalized bilinear system $\BS_{H}$ with a GBJMLS $H$.
\begin{Definition}[\GBS\ associated with a GJMLS]
  Assume that $H$ is a GJLS of the form (\ref{stoch_jump_lin:geq1}) and $H$ satisfies 
  Assumptions
  \ref{Assumption0}. 
  We will define the \GBS\ 
  $\BS_{H}$, referred to as
  the \emph{\GBS\ associated with $H$} as follows.
  \begin{equation}
   \label{real:gjmls_repr}
     \BS_H \left \{\begin{split}
      \widetilde{\x}(t+1)&=\sum_{\sigma \in \Sigma} (A_{\sigma}\widetilde{\x}(t)+
       \widetilde{K}_{\sigma}\widetilde{\v(t)})\bu_{\sigma}(t) \\
      \widetilde{\y}(t)&=C\widetilde{\x}(t)+D\widetilde{\v}(t),
     \end{split}\right.
     \end{equation}
  In order to define the parameters of $\BS$, we define $n=n_1+\cdots+n_d$ and
  for each $q \in Q$ define the matrices
   $\mathrm{I}_q \in \mathbb{R}^{n \times n_q}$, 
   $\mathbf{S}_{q} \in \mathbb{R}^{m \times dm}$ 
   \[ 
    \begin{split}
     & \mathbf{S}_q=\begin{bmatrix} \mathbf{O}_{m, (q-1)m}, I_{m \times m}, & \mathbf{O}_{m, (d-q)m} \end{bmatrix} \\
     & \mathbf{I}_q=\begin{bmatrix} \mathbf{O}_{n_q, n_1}, & \ldots & \mathbf{O}_{n_q, n_{q-1}} &, I_{n_q}, & \mathbf{O}_{n_{q},n_{q+1}}, & \ldots, \mathbf{O}_{n_q, n_d} \end{bmatrix}^T.
   \end{split}
\]
  Using the matrices above, we define  the parameters of $\BS_H$ as follows.

  \textbf{ State $\widetilde{\x}(t)$. } 
   $\widetilde{\x}(t)=\begin{bmatrix} \x^T(t)\chi(\btheta(t)=1), & \ldots, & \x^{T}(t)\btheta(t)=d) \end{bmatrix}^T \in \mathbb{R}^{n}$, $n=n_1+\cdots+n_d$.

 \textbf{ Noise $\widetilde{\v}(t)$.} 
   $\widetilde{\v}(t)=\begin{bmatrix} \v^T(t)\chi(\btheta(t)=1), & \ldots, & \v^{T}(t)\btheta(t)=d) \end{bmatrix}^T \in \mathbb{R}^{dm}$.
     
  \textbf{ Matrices $A_{(q_1,q_2)}$. } 
    Define
    for each $q_1,q_{2}$ let $A_{q_2,q_1}$ be the 
    $n \times n$ matrix 
    \begin{equation*}
    A_{(q_1,q_2)}=\mathbf{I}_{q_2}M_{q_1,q_2}\mathbf{I}_{q_1}^T
    \end{equation*}

\textbf{The matrix $\widetilde{K}_{(q_1,q_2)}$. } 
 The $n \times md$ matrix $\widetilde{K}_{(q_1,q_2)}$ is defined as
 \begin{equation*}
    K_{(q_1,q_2)}=\mathbf{I}_{q_2}B_{q_1,q_2}\mathbf{S}_{q_1}.
    \end{equation*}

 \textbf{Matrix $C$.}  
    The $p \times n$ matrix $C$ is defined by
    \begin{equation*}
     C=\begin{bmatrix} \mathbf{I}_{1}C_1^T, & \mathbf{I}_{2}C_2^T, &
             \ldots, & \mathbf{I}_dC_d^T
     \end{bmatrix}^T.
    \end{equation*}
  That is, $C$ is a diagonal matrix, such that for all $q \in Q$ its
  diagonal block indexed by row indices
  $i=(q-1)p,\ldots,qp$ and column indices $j=[n_1+\cdots+n_{q-1}+1,\ldots,n_1+\cdots+n_q]$
  equals $C_q$.

\textbf{Matrix $D$} 
    The $p \times md$ matrix $D$ is defined by
    \begin{equation*}
     D=\begin{bmatrix} \mathbf{S}_1^TD_1^T, & \mathbf{S}_2^TD_2^T, & \ldots, & \mathbf{S}_d^TD_d^T  
     \end{bmatrix}^T.
    \end{equation*}

\end{Definition}
\begin{Lemma}
\label{new:stoch_jump_lin:gbs1}
 The output process of $\BS_{H}$ equals $\widetilde{\y}$. If $H$ satisfies
 Assumptions \ref{Assumption0}, then $\BS_H$ satisfies Assumption \ref{gbs:def}.
  Moreover, if we define $\hat{D}=\mathcal{E}D=\begin{bmatrix} D_1, & \ldots, & D_d \end{bmatrix}$, then for any $\sigma=(q_1,q_2) \in \Sigma$,
  $\hat{D}E[\hat{\v}^T(t)\hat{\v}(t)\chi(\btheta(t)=q_1,\btheta(t+1)=q_2]\hat{D}^T$ is
  strictly positive definite.
\end{Lemma}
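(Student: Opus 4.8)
The plan is to verify the three claims in turn, relying on the block structure of the matrices $\mathbf{I}_q$ and $\mathbf{S}_q$ and the disjointness of the indicator functions $\chi(\btheta(t)=q)$, $q\in Q$. First I would establish the key algebraic identities: $\mathbf{I}_{q_1}^T\mathbf{I}_{q_2}=\delta_{q_1,q_2}I_{n_{q_1}}$ and $\mathbf{S}_{q_1}\mathbf{S}_{q_2}^T=\delta_{q_1,q_2}I_m$, so that $A_{(q_1,q_2)}\widetilde{\x}(t)=\mathbf{I}_{q_2}M_{q_1,q_2}\mathbf{I}_{q_1}^T\widetilde{\x}(t)$. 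Since $\mathbf{I}_{q_1}^T\widetilde{\x}(t)=\x(t)\chi(\btheta(t)=q_1)$ and $\bu_{(q_1,q_2)}(t)=\chi(\btheta(t+1)=q_2,\btheta(t)=q_1)$, multiplying by $\bu_{(q_1,q_2)}(t)$ picks out exactly the terms where $\btheta(t)=q_1$ and $\btheta(t+1)=q_2$. Summing over $\sigma=(q_1,q_2)\in\Sigma$ and using $\sum_{q_1}\chi(\btheta(t)=q_1)=1$, the state equation of $\BS_H$ collapses to $\widetilde{\x}(t+1)=\sum_{q_2}\mathbf{I}_{q_2}\big(M_{\btheta(t),q_2}\x(t)+B_{\btheta(t),q_2}\v(t)\big)\chi(\btheta(t+1)=q_2)$, which by the definition of $\widetilde{\x}$ and the state equation \eqref{stoch_jump_lin:geq1} of $H$ is exactly $\widetilde{\x}(t+1)$; a similar but shorter computation with $C$ and $D$ gives $C\widetilde{\x}(t)+D\widetilde{\v}(t)=\widetilde{\y}(t)$. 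This proves the first claim.

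Next I would check that $\BS_H$ satisfies Assumption \ref{gbs:def}, going through its five parts. For Part 1 (the noise $\widetilde{\v}(t)$ is \RC), I would appeal to Lemma \ref{new:pf:lemma-1} together with Assumption \ref{Assumption0}: the wide-sense stationarity and the recursive covariance structure of $\widetilde{\v}$ follow from the conditional zero-mean / conditional uncorrelatedness properties of $\v$ relative to the $\sigma$-algebras $\mathcal{D}_{t,t+k}$, and the block structure means $T^{\widetilde{\v}}_{w,v}$ is block-diagonal with the vanishing patterns dictated by $L$. Part 2 (orthogonality of $\z^{\widetilde{\v}}_w$ and $\z^{\widetilde{\v}}_v$ for $w\ne v$) follows because $\z^{\widetilde{\v}}_w(t)$ is built from $\v(t-|w|)$ multiplied by a product of indicators along a discrete-state path, and distinct words $w,v$ force either incompatible discrete-state paths (giving orthogonality by disjoint support) or, when the paths are compatible but the time-lengths differ, orthogonality from the conditional uncorrelatedness of noise at different times. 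Part 3 (the state belongs to the Hilbert space generated by $\{\z^{\widetilde{\v}}_w(t)\}$) is exactly Assumption \ref{Assumption0}, Part \ref{Assumption0:part4}, rewritten with the correspondence $\chi(\btheta(t-k)=q_0,\ldots,\btheta(t)=q_k)\leftrightarrow \bu_w(t-1)$ for the appropriate $w$. Part 4 (stability of $\sum_\sigma p_\sigma A_\sigma^T\otimes A_\sigma^T$) follows from Assumption \ref{Assumption1.5}: I would show that in a suitable permuted basis the matrix $\sum_{(q_1,q_2)} p_{q_1,q_2}A_{(q_1,q_2)}^T\otimes A_{(q_1,q_2)}^T$ is exactly the block matrix $\widetilde{M}$ (up to the standard identification of $\Re^{n\times n}$ with $\Re^{n^2}$ from \cite[Section 2.1]{CostaBook}), hence has the same spectrum. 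Part 5 follows from $A_{\sigma_2}A_{\sigma_1}=\mathbf{I}_{q_2'}M_{q_1',q_2'}\mathbf{I}_{q_1'}^T\mathbf{I}_{q_2}M_{q_1,q_2}\mathbf{I}_{q_1}^T$, which vanishes unless $q_1'=q_2$, i.e.\ unless $\sigma_1\sigma_2\in L$; similarly $A_{\sigma_2}\widetilde{K}_{\sigma_1}=0$ for $\sigma_1\sigma_2\notin L$ by the same index-matching, so $A_{\sigma_2}\widetilde{K}_{\sigma_1}T^{\widetilde{\v}}_{\sigma_1,\sigma_1}=0$ as well.

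Finally, for the strict positive-definiteness of $\hat{D}E[\hat{\v}^T(t)\hat{\v}(t)\chi(\btheta(t)=q_1,\btheta(t+1)=q_2)]\hat{D}^T$: writing out $\hat{D}\hat{\v}(t)$ with $\hat{D}=[D_1,\ldots,D_d]$ and $\hat{\v}(t)=(\v^T(t)\chi(\btheta(t)=1),\ldots)^T$, the indicator $\chi(\btheta(t)=q_1,\btheta(t+1)=q_2)$ annihilates all blocks except the $q_1$-th, so the expression reduces to $D_{q_1}E[\v(t)\v^T(t)\chi(\btheta(t)=q_1,\btheta(t+1)=q_2)]D_{q_1}^T$. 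I would then relate this to $D_{q_1}Q_{q_1}D_{q_1}^T$, which is strictly positive definite by Assumption \ref{Assumption1.6}, using the transition probability $p_{q_1,q_2}>0$ and the conditional-independence structure to argue that summing over $q_2$ recovers $D_{q_1}Q_{q_1}D_{q_1}^T$ and that each summand is positive semidefinite, so each must in fact be positive definite since $p_{q_1,q_2}>0$ ensures the relevant conditional covariance is non-degenerate. The main obstacle I anticipate is Part 1 of Assumption \ref{gbs:def}, i.e.\ carefully deriving that $\widetilde{\v}$ has the full \RC\ property (including the delicate vanishing conditions tied to $L$ in Part \ref{RC4} of Definition \ref{def:RC}) from the conditional-moment hypotheses on $\v$ in Assumption \ref{Assumption0}; the remaining verifications are essentially bookkeeping with the block matrices.
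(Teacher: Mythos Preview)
Your overall strategy matches the paper's: verify the state/output equations via the block identities for $\mathbf{I}_q,\mathbf{S}_q$, then check each part of Assumption~\ref{gbs:def} directly from the corresponding clauses of Assumption~\ref{Assumption0}, and finish with the positive-definiteness computation. Two points deserve attention.

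First, the appeal to Lemma~\ref{new:pf:lemma-1} for Part~\ref{gbs:def:prop1} is misplaced: that lemma presupposes an ambient \RC\ process $\br$ and a second process living in $H_t^{\br}$, which is not the situation here. What you actually describe afterwards---computing $E[\z^{\v}_w(t)(\z^{\v}_v(t))^T]$ directly from the conditional zero-mean and conditional-uncorrelatedness hypotheses on $\v$ relative to $\mathcal{D}_{t-l,t}$---is exactly what the paper does, and is the right route. So drop the citation and keep the computation.

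Second, your argument for the strict positive definiteness has a genuine gap. You correctly reduce $\hat D E[\widetilde{\v}(t)\widetilde{\v}^T(t)\chi(\btheta(t)=q_1,\btheta(t+1)=q_2)]\hat D^T$ to $D_{q_1}E[\v(t)\v^T(t)\chi(\btheta(t)=q_1,\btheta(t+1)=q_2)]D_{q_1}^T$, but then you argue: sum over $q_2$ to get $D_{q_1}Q_{q_1}D_{q_1}^T>0$, each summand is positive semidefinite, hence each summand is positive definite. That inference is false in general (e.g.\ two complementary rank-one projections). The paper instead uses Part~\ref{Assumption0:part2} of Assumption~\ref{Assumption0} to factor the indicator: conditioning on $\mathcal{D}_t$ and using that $\{\v(t)\}$ and $\btheta(t+1)$ are conditionally independent given $\mathcal{D}_t$ yields
\[
E[\v(t)\v^T(t)\chi(\btheta(t)=q_1,\btheta(t+1)=q_2)]
= p_{q_1,q_2}\,E[\v(t)\v^T(t)\chi(\btheta(t)=q_1)]
= p_{q_1,q_2}\,Q_{q_1},
\]
so the expression equals $p_{q_1,q_2}\,D_{q_1}Q_{q_1}D_{q_1}^T$, which is strictly positive definite by $p_{q_1,q_2}>0$ and Assumption~\ref{Assumption1.6}. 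Replace your summation argument with this direct factorization.
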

\begin{Remark}
\label{JMLS2GBS}
 If the GJMLS $H$ is a jump-Markov linear system of the type studied in \cite{CostaBook}, i.e.
 \begin{equation}
\label{stoch_jump_lin:eq11.2}
\mathbf{H}:
\begin{cases}
   \x(t+1) & = F_{\btheta(t)}\x(t)+
    G_{\btheta(t)}\v(t)  \\
   \y(t) &= ~ H_{\btheta(t)}\x(t) + L_{\btheta(t)}\v(t)   \\
\end{cases}.
\end{equation}
where $F_q \in \mathbb{R}^{n \times n}$, 
$G_q \in \mathbb{R}^{n \times m}$, $H_q \in \mathbb{R}^{p \times n}$, $G_q \in \mathbb{R}^{p \times m}$, $q \in \Theta$, 
then we can directly construct a \GBS\ 
\begin{equation}
   \label{real:gjmls_repr1}
     \BS \left \{\begin{split}
      \widetilde{\x}(t+1)&=\sum_{\sigma \in \Sigma} (\widetilde{A}_{\sigma}\widetilde{\x}(t)+
       \widetilde{K}_{\sigma}\widetilde{\v}(t))\bu_{\sigma}(t) \\
      \y(t)&=\widetilde{C}\widetilde{\x}(t)+\widetilde{D}\widetilde{\v}(t),
     \end{split}\right.
 \end{equation}
whose output is $\y$. In this case, $\widetilde{A}_{(q_1,q_2)}$ is a $nd \times nd$ matrix, all elements of
which are zero, except the $n \times n$ block at location $(q_1,q_2)$ which equal $F_{q_2}$. Similarly,
$\widetilde{K}_{(q_1,q_2)}$ is an $nd \times md$ matrix, all elements of which are zero, except the $n \times m$ block at location $(q_1,q_2)$ which equals $G_{q_2}$.  That is, 
\[
    \widetilde{A}_{(q_1,q_2)}= \begin{bmatrix}
                \delta_{(1,1),(q_1,q_2)}F_{1} &
    		  \cdots & \delta_{(1,d),(q_1,q_2)}F_{d} \\
     		  \vdots & \vdots &  \vdots \\
    		 \delta_{(d,1),(q_1,q_2)}F_{1} & \cdots
    		 & \delta_{(d,d),(q_1,q_2)}F_{d}
    		 \end{bmatrix}, \quad 
    \widetilde{K}_{(q_1,q_2)} = \begin{bmatrix}
                \delta_{(1,1),(q_1,q_2)}G_{1} &
    		  \cdots & \delta_{(1,d),(q_1,q_2)}G_{d} \\
     		  \vdots & \vdots & \vdots \\
    		 \delta_{(d,1),(q_1,q_2)}G_{1} & \cdots
    		 & \delta_{(d,d),(q_1,q_2)}G_{d}
    		 \end{bmatrix}
 \]
 where $\delta_{(i,j),(k,l)}=1$ if $k=i$ and $j=l$ and $\delta_{(i,j),(k,l)}=0$ otherwise.
The matrices $\widetilde{C}$ and $\widetilde{D}$ are
$\widetilde{C}=\begin{bmatrix} C_1, & \ldots, & C_d \end{bmatrix}^T$, $\widetilde{D}=\begin{bmatrix} L_1, & \ldots, & L_d \end{bmatrix}^T$.
The processes $\widetilde{\x}$ and $\widetilde{\v}$ are defined as 
 $\widetilde{\x}(t)=\begin{bmatrix} \x^T(t)\chi(\btheta(t)=1), & \ldots, & \x^{T}(t)\btheta(t)=d) \end{bmatrix}^T$, $\widetilde{\v}(t)=\begin{bmatrix} \v^T(t)\chi(\btheta(t)=1), & \ldots, & \v^{T}(t)\btheta(t)=d) \end{bmatrix}^T$.
  If $H$ satisfies Assumptions \ref{Assumption0}, then $\BS$ defined above satisfies Assumption \ref{gbs:def}.
\end{Remark}

We can reverse the construction above, by associating with every 
\GBS\ $\BS$ a GJMLS $H$.
\begin{Definition}[GJMLS associated with \GBS]
 Let $\BS$ be a \GBS\ of the form 
 \begin{align*}
  \x(t+1) &=\sum_{\sigma \in \Sigma} (A_{\sigma}\x(t)+K_{\sigma}\widetilde{\e}(t))\bu_{\sigma}(t) \\
   \widetilde{\y}(t)& =C\x(t)+\widetilde{\e}(t)
 \end{align*}
 where $\widetilde{\e}(t)$ is the innovation process of 
 $\widetilde{\y}(t)$ defined in Lemma \ref{new:stoch_jump_lin:gbs0}.
Define the \emph{GJMLS $H_{\BS}$ associated with $\BS$} as follows.
\begin{equation}
  H_{\BS}: \left \{
  \begin{split}
   \hat{\x}(t+1)&=M_{\btheta(t+1),\btheta(t)}\hat{\x}(t)+
                 \widetilde{K}_{\btheta(t+1),\btheta(t)}\e(t)  \\
   \y(t)&=C_{\btheta(t)}\hat{\x}(t) + \e(t), 		 
  \end{split}		 
  \right.
  \end{equation}
  where 
 In order to define the parameters of $H_{\BS}$, we use the following notation.
 
    For each $q \in Q$, define the matrix $\mathbf{M}_q \in \mathbb{R}^{p \times pd}$ as
    \[ \mathbf{M}_q=\begin{bmatrix} \mathbf{O}_{p, p(q-1)}, & I_{p}, & \mathbf{O}_{p, p(d-q-1)} \end{bmatrix}.  \]
  For each $q \in Q$ define 
  $\mathcal{X}_{q} \subseteq  \mathbb{R}^{n}$ 
  as the subspace spanned
  by all the elements
  belonging to $\IM A_{(q_{1},q)}A_{w}K_{(q_2,q_3)}\mathbf{M}_{q_2}^T$ and
  $\IM K_{(q_{1},q)}\mathbf{M}_{q_1}^T$ for all
  $q_{1},q_2,q_3 \in Q$, $w \in \Sigma^{*}$, $\sigma \in \Sigma$,
  $i=1,\ldots, p$. Let $n_q=\dim \mathcal{X}_q$.
  Let $\Pi_q \in \mathbb{R}^{n \times n_q}$ be such that the columns of $\Pi_q$ are
  orthogonal and they span $\mathcal{X}_q$, i.e. $\Pi_q^T\Pi_q=I_{n_q}$ and
  $\IM \Pi_q=\mathcal{X}_q$.
 Then $\Pi_q$ is the matrix representation of the inclusion
 $\mathcal{X}_q \subseteq \mathbb{R}^{n}$ and $\Pi_q^T$ is the matrix representation of the
 projection of
 elements of $\mathbb{R}^{n}$ to $\mathcal{X}_q$.
\begin{enumerate} 
\item \textit{Continuous state-space for $q \in Q$:} $\mathbb{R}^{n_q}$, $n_q=\dim \mathcal{X}_q$.
\item  
\textit{State process.} 
The continuous state process $\hat{\x}(t)$ of the GJMLS is  obtained from the continuous state $\x(t)$ of the generalized bilinear system \ref{gen:filt:eq2} as follows.
Then let $\hat{\x}(t)=\Pi^T_{\btheta(\theta)}(\x(t))$, i.e.
$\hat{\x}(t)$ is obtained from $\x(t)$
by viewing it as an element of $\mathcal{X}_{\btheta(t)}$ and identifying it with the corresponding vector in $\mathbb{R}^{n_{q}}$ for $q=\btheta(t)$.
\item  
\textit{System matrices.} 
For each $q_1,q_2 \in Q$ the matrix $M_{q_{1},q_{2}} \in \mathbb{R}^{n_{q_{2}} \times n_{q_1}}$ is defines as 
 \[ M_{q_1,q_2}=\Pi_{q_2}^TA_{(q_1,q_2)}\Pi_{q_1} \]
 i.e. $M_{q_1,q_2}$ is the matrix representation of the ma[
%
\( \mathcal{X}_{q_{1}} \ni x \mapsto 
     A_{(q_{1},q_{2})}x \in  \mathcal{X}_{q_{2}}  \).
%
For each $q \in Q$ the matrix $C_{q} \in \mathbb{R}^{p \times n_{q}}$ as
\[ C_q=\mathbf{M}_{q}C\Pi_q. \]
%
\item 
\textit{Noise gain $\widetilde{K}_{q_1,q_2}$} 
 Let $\widetilde{K}_{q_1,q_2}=\Pi_{q_2}^TK_{(q_1,q_2)}\mathbf{M}_{q_1}^T$.

\end{enumerate} 
\end{Definition}
\begin{Lemma}
\label{new:stoch_jump_lin:gbs2}
 Assume that $\BS$ is in forward innovation form, it satisfies 
 Assumptions \ref{gbs:def}, and it is a realization of $\widetilde{\y}$.
 Assume moreover that $\y$ satisfies Part \ref{real:suff:ass1.1} of Assumption \ref{gjmls:output:assumptions}.
 Then $H_{\BS}$ is also a 
 realization of $\y$, it is in forward innovation form, and it
  satisfies Assumptions \ref{Assumption0}.
 Moreover, if the representation $R_{\BS}$ associated with $\BS$ is reachable
 and observable, then $\mathbb{R}^{n}=\bigoplus_{q\in Q} \mathcal{X}_q$ and hence
 $\dim \BS=\dim H_{\BS}$.
\end{Lemma}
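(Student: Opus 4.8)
The overall strategy is to reconstruct the continuous state $\x(t)$ of $\BS$ from the candidate state $\hat{\x}(t)=\Pi^{T}_{\btheta(t)}\x(t)$ of $H_{\BS}$, verify the defining equations of $H_{\BS}$ by substitution, translate Assumption \ref{gbs:def} into Assumption \ref{Assumption0}, and finally use reachability and observability of $R_{\BS}$ to count dimensions. The first ingredient is a set of algebraic identities. Since $\Sigma=Q\times Q$ and $L$ consists of the ``chained'' words, $\sigma_{1}\sigma_{2}\in L$ iff the second component of $\sigma_{1}$ equals the first component of $\sigma_{2}$; combining this with Part \ref{gbs:def:prop7} of Assumption \ref{gbs:def} (so $A_{\sigma_{2}}A_{\sigma_{1}}=0$ and $A_{\sigma_{2}}K_{\sigma_{1}}T^{\widetilde{\e}}_{\sigma_{1},\sigma_{1}}=0$ when $\sigma_{1}\sigma_{2}\notin L$), with the block structure $\widetilde{\e}(t)\bu_{(q_{1},q_{2})}(t)=\mathbf{M}_{q_{1}}^{T}\e(t)\bu_{(q_{1},q_{2})}(t)$ coming from Lemma \ref{new:stoch_jump_lin:gbs0}, and with the full-rank hypothesis on $\y$ (which makes $E[\e(t)\e^{T}(t)\bu_{\sigma}^{2}(t)]$ invertible for every $\sigma\in\Sigma$, so that $A_{\sigma_{2}}K_{(q_{1},q_{2})}T^{\widetilde{\e}}_{(q_{1},q_{2}),(q_{1},q_{2})}=0$ sharpens to $A_{\sigma_{2}}K_{(q_{1},q_{2})}\mathbf{M}_{q_{1}}^{T}=0$ whenever $\sigma_{2}\neq(q_{2},\cdot)$), one proves directly from the generating sets that $A_{(q_{1},q_{2})}\mathcal{X}_{q}=\{0\}$ for $q\neq q_{1}$, $A_{(q_{1},q_{2})}\mathcal{X}_{q_{1}}\subseteq\mathcal{X}_{q_{2}}$, and $\IM K_{(q_{1},q_{2})}\mathbf{M}_{q_{1}}^{T}\subseteq\mathcal{X}_{q_{2}}$. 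Since $\Pi_{q}^{T}\Pi_{q}=I_{n_{q}}$ and $\Pi_{q}\Pi_{q}^{T}$ is the orthogonal projector onto $\mathcal{X}_{q}$, these give the compatibility relations $A_{(q_{1},q_{2})}\Pi_{q_{1}}=\Pi_{q_{2}}M_{q_{1},q_{2}}$, $A_{(q_{1},q_{2})}\Pi_{q}=0$ for $q\neq q_{1}$, and $K_{(q_{1},q_{2})}\mathbf{M}_{q_{1}}^{T}=\Pi_{q_{2}}\widetilde{K}_{q_{1},q_{2}}$.

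Next I would show $\x(t)\in\mathcal{X}_{\btheta(t)}$ almost surely. Expanding $\x(t)$ by Lemma \ref{gbs:def:new_lemma1} into a mean-square convergent sum over words and using that $A_{w}=0$ and $\bu_{w}(t)=0$ a.s.\ for $w\notin L$, only chained words $w=(s_{0},s_{1})\cdots(s_{k-1},s_{k})$ contribute, and the corresponding summand equals $A_{(s_{1},s_{2})\cdots(s_{k-1},s_{k})}K_{(s_{0},s_{1})}\mathbf{M}_{s_{0}}^{T}\,\e(t-k)\bu_{w}(t-1)$, whose vector coefficient lies in $\mathcal{X}_{s_{k}}$ by the identities above; since $\bu_{w}(t-1)\neq0$ forces $\btheta(t)=s_{k}$ and each $\mathcal{X}_{q}$ is finite-dimensional, the mean-square limit $\x(t)$ lies in $\mathcal{X}_{\btheta(t)}$, hence $\Pi_{\btheta(t)}\hat{\x}(t)=\x(t)$. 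Substituting $\x(t)=\Pi_{\btheta(t)}\hat{\x}(t)$ and $\widetilde{\e}(t)=\mathbf{M}_{\btheta(t)}^{T}\e(t)$ into the equations of $\BS$, restricting to $\{\btheta(t)=q_{1},\btheta(t+1)=q_{2}\}$ so that only the letter $(q_{1},q_{2})$ appears, and applying $\Pi_{q_{2}}^{T}$ resp.\ $\mathbf{M}_{q_{2}}$ on the left (using $\Pi_{q_{2}}^{T}\Pi_{q_{2}}=I$, $\mathbf{M}_{q_{2}}\mathbf{M}_{q_{2}}^{T}=I$, $\mathbf{M}_{q_{2}}C\Pi_{q_{2}}=C_{q_{2}}$, and $\mathbf{M}_{q_{2}}\widetilde{\y}(t)=\y(t)\chi(\btheta(t)=q_{2})$), one recovers exactly the equations of $H_{\BS}$ with noise $\e(t)$ and $D_{q}=I_{p}$; since $\e$ is the innovation process of $\y$ (Lemma \ref{new:stoch_jump_lin:gbs0}), $H_{\BS}$ is a realization of $\y$ in forward innovation form.

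Verifying Assumption \ref{Assumption0} for $H_{\BS}$ is then done condition by condition: stationarity and ergodicity of $\btheta$ are given; the noise conditions (Parts \ref{Assumption0:part1}--\ref{Assumption0:part2}) follow from the $\RC$ property and orthogonality of $\e$ (Lemma \ref{gbs:def:new_lemma1} and Parts \ref{gbs:def:prop1}--\ref{gbs:def:prop2} of Assumption \ref{gbs:def}), from Part \ref{real:suff:ass1.1} of Assumption \ref{gjmls:output:assumptions}, and from the measurability of $\e(t)$ w.r.t.\ $\sigma(\y(t-l),\btheta(t-l-1):l\ge0)$ (Lemma \ref{lemma:measurability}); Part \ref{Assumption0:part4} is exactly the content of the expansion above; Part \ref{Assumption1.6} holds because $D_{q}=I_{p}$ and $Q_{q}=E[\e(t)\e^{T}(t)\chi(\btheta(t)=q)]=\sum_{q'}E[\e(t)\e^{T}(t)\bu_{(q,q')}^{2}(t)]$ is a sum of positive-definite matrices by full-rankness; and stability of $\widetilde{M}$ (Part \ref{Assumption1.5}) follows from Part \ref{gbs:def:prop5} of Assumption \ref{gbs:def} because the map $\Phi(\{P_{q}\})=\sum_{q}\Pi_{q}P_{q}\Pi_{q}^{T}$ intertwines the block Lyapunov map of $\{M_{q_{1},q_{2}}\}$ with $V\mapsto\sum_{\sigma}p_{\sigma}A_{\sigma}VA_{\sigma}^{T}$ (using $A_{(q_{1},q_{2})}\Pi_{q_{1}}=\Pi_{q_{2}}M_{q_{1},q_{2}}$ and $A_{(q_{1},q_{2})}\Pi_{q}=0$ for $q\neq q_{1}$) and sends a nonzero positive-semidefinite tuple to a nonzero positive-semidefinite matrix, so stability of the latter map forces stability of the former.

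Finally, assume $R_{\BS}$ is reachable and observable. For spanning, $\sum_{q}\mathcal{X}_{q}$ is invariant under all $A_{\sigma}$ and contains $\IM B_{\sigma}$ for every $\sigma=(q_{1},q_{2})$ (because $\IM P_{\sigma}\subseteq\mathcal{X}_{q_{1}}$ by the almost-sure inclusion above, $\IM A_{\sigma}P_{\sigma}C^{T}\subseteq A_{\sigma}\mathcal{X}_{q_{1}}\subseteq\mathcal{X}_{q_{2}}$, and $\IM K_{\sigma}Q_{\sigma}\subseteq\IM K_{\sigma}\mathbf{M}_{q_{1}}^{T}\subseteq\mathcal{X}_{q_{2}}$), so $W_{R_{\BS}}\subseteq\sum_{q}\mathcal{X}_{q}$ and reachability gives $\sum_{q}\mathcal{X}_{q}=\mathbb{R}^{n}$. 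For directness, suppose $\sum_{q}\Pi_{q}v_{q}=0$; for any word $v$, $A_{v}\Pi_{q}=0$ unless $v$ chains out of $q$, so for a word $v$ chaining out of a fixed $q_{0}$ only the $q_{0}$-term survives and $CA_{v}\Pi_{q_{0}}v_{q_{0}}=0$, while for every other word $A_{v}\Pi_{q_{0}}=0$; thus $CA_{v}\Pi_{q_{0}}v_{q_{0}}=0$ for all $v\in\Sigma^{*}$, and observability of $R_{\BS}$ forces $\Pi_{q_{0}}v_{q_{0}}=0$, i.e.\ $v_{q_{0}}=0$. Hence $\mathbb{R}^{n}=\bigoplus_{q}\mathcal{X}_{q}$ and $\dim\BS=n=\sum_{q}n_{q}=\dim H_{\BS}$. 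I expect the main obstacle to be the correct formulation and proof of the invariance identities of the first paragraph — especially the full-rank upgrade of Part \ref{gbs:def:prop7} — together with the bookkeeping that keeps every object confined to the right $\mathcal{X}_{q}$; once those are in place the remaining steps are essentially forced.
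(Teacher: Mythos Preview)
Your plan matches the paper's proof almost step for step: the same expansion of $\x(t)$ via Lemma \ref{gbs:def:new_lemma1} to land it in $\mathcal{X}_{\btheta(t)}$, the same substitution to recover the equations of $H_{\BS}$, the same condition-by-condition verification of Assumption \ref{Assumption0} (your intertwining formulation of the stability step is a tidy repackaging of the paper's eigenvector computation), and the same reachability/observability split for $\mathbb{R}^{n}=\bigoplus_{q}\mathcal{X}_{q}$.

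There is one real gap in your directness argument. You assert that ``for any word $v$, $A_{v}\Pi_{q}=0$ unless $v$ chains out of $q$'' and then conclude $CA_{v}\Pi_{q_{0}}v_{q_{0}}=0$ for \emph{all} $v\in\Sigma^{*}$. For $v=\epsilon$ the premise is false ($A_{\epsilon}=I$), and from $\sum_{q}C\Pi_{q}v_{q}=0$ alone you cannot isolate $C\Pi_{q_{0}}v_{q_{0}}=0$; yet the observability kernel includes $\ker C$, so the $\epsilon$-case cannot be skipped. The paper closes this by first using \emph{reachability} to prove a block identity for the output matrix: from $\mathbf{M}_{r}CA_{w}B_{\sigma}=(\Lambda^{\widetilde{\y}}_{\sigma w})_{r\text{-block}}=0$ whenever the last letter of $\sigma w$ does not terminate in $r$, reachability gives $\mathbf{M}_{r}CA_{(q_{1},q_{2})}=0$ and $\mathbf{M}_{r}CB_{(q_{1},q_{2})}=0$ for $r\neq q_{2}$; combined with the expression of $K_{(q_{1},q_{2})}\mathbf{M}_{q_{1}}^{T}$ in terms of $B_{(q_{1},q_{2})}$ and $\IM A_{(q_{1},q_{2})}$ this yields $\mathbf{M}_{r}C|_{\mathcal{X}_{q}}=0$ for $r\neq q$. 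With that in hand, $C\Pi_{q_{0}}v_{q_{0}}=\sum_{r}\mathbf{M}_{r}^{T}\mathbf{M}_{r}C\Pi_{q_{0}}v_{q_{0}}=\mathbf{M}_{q_{0}}^{T}\mathbf{M}_{q_{0}}C\bigl(\sum_{q}\Pi_{q}v_{q}\bigr)=0$, and your observability argument goes through. So your directness proof implicitly needs reachability twice --- once for spanning, once (via the block structure of $C$) to handle $v=\epsilon$ --- exactly as in the paper.
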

\begin{Remark}
  In fact, we can convert any \GBS\ $\BS$ of the form 
 \begin{align*}
  \widetilde{x}(t+1) &=\sum_{\sigma \in \Sigma} (A_{\sigma}\widetilde{\x}(t)+K_{\sigma}\widetilde{\v}(t))\bu_{\sigma}(t) \\
   \y(t)& =C\widetilde{\x}(t)+D\widetilde{\v}(t)
 \end{align*}
 to a jump-Markov linear system of the type defined in \cite{CostaBook}:
 \begin{equation}
\label{stoch_jump_lin:eq11.3}
\mathbf{H}:
\begin{cases}
   \x(t+1) & = F_{\btheta(t)}\x(t)+
    G_{\btheta(t)}\widetilde{\v}(t)  \\
   \y(t) &= ~ H_{\btheta(t)}\x(t) + L_{\btheta(t)}\widetilde{\v}(t)   \\
\end{cases}.
\end{equation}
where $\x(t)=\begin{bmatrix} \z_1^T(t), & \ldots, \z_d^T(t) \end{bmatrix}^T$, 
$\z_q(t)=A_{(q,\btheta(t-1))}\widetilde{\x}(t-1)+K_{(q,\btheta(t-1))}\widetilde{\v}(t-1)$, 
$q \in Q$, and 
\[ 
  \begin{split}
  & L_q=D \\
  & H_q=\begin{bmatrix} \delta_{q,1}C,& \delta_{q,2}C, & \cdots & \delta_{q,d}C \end{bmatrix} \\
  & F_{q}=\begin{bmatrix} 
   \delta_{1,q}A_{(1,1)}, & \delta_{2,q}A_{(1,2)}, & \ldots & \delta_{d,q}A_{(1,d)} \\
   \delta_{1,q}A_{(2,1)}, & \delta_{2,q}A_{(2,2)}, & \ldots & \delta_{d,q}A_{(2,d)} \\
   \vdots &     \vdots & \ldots &  \vdots \\
   \delta_{1,q}A_{(d,1)}, & \delta_{2,q}A_{(d,2)}, & \ldots & \delta_{d,q}A_{(d,d)} \\
  \end{bmatrix} \mbox{\ \ }
   G_{q} = \begin{bmatrix} K_{(1,q)} \\ K_{(2,q)} \\ \vdots \\ K_{(d,q)} \end{bmatrix} \\
 \end{split}
\]
 where $\delta_{i,j}=1$ if $i=j$ and $\delta_{i,j}=0$ if $i \ne j$ for
all $i,j \in Q$.
If $\BS$ satisfies 
 Assumptions \ref{gbs:def}, and it is a realization of $\y$, 
 then $H$ is also a  ealization of $\y$ and it
  satisfies Assumptions \ref{Assumption0}.
\end{Remark}

Recall the notion of \emph{minimality} of a linear system realization.
In particular, recall that a realization by a linear system is minimal if and only if it is reachable and observable. In this subsection, we will formulate similar concepts for GJMLS with fully observed discrete. We first define the notions of reachability and observability for a GJMLS. We then show that a realization by a GJMLS is minimal if and only if it is reachable and observable. 

In order to formulate the conditions more precisely, we will need to introduce some notation. In particular, we need to define reachability and observability matrices for GJMLS.
To that end, let $H$ be a given GJMLS of the form \eqref{stoch_jump_lin:geq1}
that satisfies Assumptions \ref{Assumption0}.
Let $N$ be the dimension of $H$, i.e. $N=\dim H$, and for all $(q_1,q_2)\in Q\times Q = \Sigma$
let 
\begin{equation}
\label{min:gjmsl:eq1}
 \begin{split}
 G_{q_1,q_2}= & E[\x(t)\y^{T}(t-1)\chi(\btheta(t)=q_2,\btheta(t-1)=q_1)]=\\
 = & p_{q_1,q_2}(M_{q_1,q_2}P_{q_1}C_{q_1}^T+B_{q_1,q_2}Q_{q_1}D_{q_1}^T) \in \Re^{q_2 \times p}.
 \end{split}
\end{equation}
Recall the definition of $L \subset Q \times Q=\Sigma$ from (\ref{real:suff:admiss1}).
\begin{Notation}[Matrix products]
 We define the following notation for the products of
  matrices $M_{q_1,q_2} \in \mathbb{R}^{n_{q_1} \times n_{q_2}}$.
For any admissible word $w=(q_1,q_2)\cdots (q_{k-1},q_k) \in L$,
where $k > 2$ and $q_1,\ldots, q_k \in Q$,
let
\begin{equation}
M_{w}=M_{q_{k-1},q_k}M_{q_{k-2},q_{k-1}}\cdots
   M_{q_1,q_2} \in \Re^{n_{q_k}\times n_{q_1}}
\end{equation}
If $w=\epsilon$, then $M_{\epsilon}$ is an identity matrix,
dimension of which depends on the context it is used in.
If $w \notin L$, then $M_{w}$ denotes the zero matrix.
\end{Notation}
\begin{Notation}
For each $q\in Q$, $L^{q}(N)$ be 
 the set of all words in $w \in L$ such that $|w| \le N$ and $w=v(q_1,q)$ for some $q_1 \in Q$ and $v \in L$.
\end{Notation}


\begin{Definition}[Reachability of a GJMLS]
For each discrete state $q \in Q$, define the matrix
\begin{equation}
\mathcal{R}_{H,q}=[ M_{v}G_{q_1,q_2} \mid q_1 \in Q , q_2 \in Q, (q_1,q_2)v \in L^q(N) ]  \in \Re^{n_q \times |L^q(N)|p}.
\end{equation}
We will say that the GJMLS $H$ is \emph{reachable}, if for each discrete
state $q \in Q$, $\Rank (\mathcal{R}_{H,q}) = n_{q}$. 
\end{Definition}
Notice that the matrix $R_{H,q}$ is analogous to the controllability matrix for linear systems. 
\begin{Notation}
For each $q\in Q$, let $L_{q}(N)$ be the set of all words in $L$ of length at most $N$ that begin in some pair whose first component is $q$, \ie $L_{q}(N)$ is the set of all words in $w \in L$ such that $|w| \le N$ and $w=(q,q_2)v$ for some $q_2 \in Q$ and $v \in L$.
\end{Notation}
\begin{Definition}[Observability of a GJMLS]
For each discrete state $q \in Q$, define the matrix
\begin{align}
\mathcal{O}_{H,q}=[ (C_{q_k} M_v )^T  \mid q_{k-1} \in Q, q_k \in Q, v(q_{k-1}, q_k) \in L_q(N)]^T  \in \Re^{| L_q(N) |p \times n_q}.
\end{align}

%
We will say that a GJMLS $H$ is \emph{observable}, if for each discrete state $q \in Q$, 
$\Rank (O_{H,q}) = n_q$.
\end{Definition}
Notice that the matrix $O_{H,q}$ plays a role similar to the observability matrix for linear systems. 

Recall from (\ref{real:gjmls_repr}) the definition of the \GBS\ $\BS_{H}$ associated with a GJLS $H$. 
Recall from Definition \ref{gbs2repr} the definition of the representation $R_{\BS_H}$ associated with the \GBS\ $\BS_H$.  We will denote $R_{\BS_H}$ by $R_H$ and we will call it the representation associated with the GJMLS $H$.
 Recall the definition of reachability of a representation along with the definition of the space $O_{R_H}$ defined in (\ref{sect:pow:form1.1}). Observability and reachability of a GJMLS $H$ can be characterized in terms of the observability and reachability of the corresponding representation $R_{H}$ as follows. 

\begin{Lemma}
\label{real:lemma:reachobs}
The GJMLS
$H$ is reachable if and only if $R_{H}$ is reachable, and
$H$ is observable if and only if  $R_H$ is observable.
\end{Lemma}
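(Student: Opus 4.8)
The plan is to unwind the definitions of $\mathcal{R}_{H,q}$, $\mathcal{O}_{H,q}$, $W_{R_H}$ and $O_{R_H}$ and show that, block by block, the reachability (resp.\ observability) matrix of the GJMLS is obtained from the reachability (resp.\ observability) matrix of the representation $R_H$ by picking out the rows/columns corresponding to discrete state $q$, multiplying by the full-column-rank projector $\mathbf{I}_q$ (or $\Pi_q$, depending on which construction we work with), and dropping the blocks that vanish identically because the underlying word is not admissible. Since $\mathbf{I}_q$ has orthonormal columns, left- or right-multiplication by it does not change rank, so the rank of each "$q$-block" of the GJMLS matrix equals the rank of the corresponding $q$-block of $W_{R_H}$ or $O_{R_H}$; summing/intersecting over $q$ then gives the equivalence.

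First I would recall from Definition \ref{gbs2repr} and the construction of $\BS_H$ that $R_H=R_{\BS_H}=(\mathbb{R}^n,\{\sqrt{p_\sigma}A_\sigma\}_{\sigma\in\Sigma},B,C)$ with $A_{(q_1,q_2)}=\mathbf{I}_{q_2}M_{q_1,q_2}\mathbf{I}_{q_1}^T$ and $C=\mathrm{diag}(C_1,\ldots,C_d)$ in the block sense, and that $B_\sigma=\frac{1}{\sqrt{p_\sigma}}(A_\sigma P_\sigma C^T+\widetilde K_\sigma Q_\sigma D^T)$; using the block form of $P_\sigma$, $Q_\sigma$ coming from Lemma \ref{stoch_jump_stab:lemma1} this gives $B_{(q_1,q_2)}=\mathbf{I}_{q_2}\cdot\frac{1}{\sqrt{p_{q_1,q_2}}}\cdot\frac{1}{\sqrt{p_{q_1,q_2}}}\,p_{q_1,q_2}(M_{q_1,q_2}P_{q_1}C_{q_1}^T+B_{q_1,q_2}Q_{q_1}D_{q_1}^T)\cdot(\text{selector})$, i.e.\ up to the orthonormal injection $\mathbf{I}_{q_2}$ and a harmless scalar, the columns of $B_{(q_1,q_2)}$ are exactly the columns of $G_{q_1,q_2}$ of \eqref{min:gjmsl:eq1}. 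The key computational observation is the "word composition" identity: for $w=(q_1,q_2)(q_2,q_3)\cdots(q_{k-1},q_k)\in L$ one has $A_w=A_{(q_{k-1},q_k)}\cdots A_{(q_1,q_2)}=\mathbf{I}_{q_k}M_w\mathbf{I}_{q_1}^T$ (a telescoping of the $\mathbf{I}_{q_i}^T\mathbf{I}_{q_i}=I$ cancellations), and $A_w=0$ whenever $w\notin L$, by Part \ref{gbs:def:prop7} of Assumption \ref{gbs:def} (which $\BS_H$ satisfies by Lemma \ref{new:stoch_jump_lin:gbs1}). This is the heart of the argument and I would prove it by induction on $|w|$.

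Given that identity, the reachability direction is immediate: the column block of $W_{R_H}$ indexed by a word $v\sigma$ with $\sigma=(q_1,q_2)$ and $v$ ranging over words with $v\sigma\in L$ and ending the discrete trajectory at $q$ equals $A_v B_\sigma=\mathbf{I}_q M_v G_{q_1,q_2}$ (a scalar times), while all other blocks — those with $v\sigma\notin L$ — are zero. Hence the nonzero columns of $W_{R_H}$ are precisely $\mathbf{I}_q\cdot(\text{columns of }\mathcal{R}_{H,q})$ as $q$ ranges over $Q$, and because the $\mathbf{I}_q$ have mutually orthogonal ranges spanning all of $\mathbb{R}^n$ exactly when every $\mathcal{R}_{H,q}$ has full row rank $n_q$, we get $\Rank W_{R_H}=n=\dim R_H$ iff each $\mathcal{R}_{H,q}$ has rank $n_q$, i.e.\ $R_H$ reachable $\iff$ $H$ reachable. (Here one also needs that the enumeration length $M(n-1)$ in the definition \eqref{sect:pow:form1} of $W_{R_H}$ is large enough to see everything, which follows from $N=\dim H=n$ being an adequate bound, exactly as in the linear Cayley--Hamilton argument.) The observability direction is the transpose of the same computation: the row block of $O_{R_H}$ indexed by a word $v$ is $C A_v$, which for $v$ routing through $q$ equals $C\mathbf{I}_q M_v'\cdots = \mathbf{M}_q$-type block times $C_{q_k}M_v$, and vanishes off $L$; so $\ker O_{R_H}=\{0\}$ iff each $\mathcal{O}_{H,q}$ has full column rank $n_q$.

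The main obstacle I anticipate is purely bookkeeping rather than conceptual: keeping the block indexing of $W_{R_H}$, $O_{R_H}$ consistent with the indexing of $\mathcal{R}_{H,q}$, $\mathcal{O}_{H,q}$ by \emph{admissible} words only, and making fully precise the claim that multiplying a matrix by the orthonormal injections $\mathbf{I}_q$ and deleting identically-zero blocks preserves rank in the required aggregated sense (full row rank $n_q$ for each $q$ $\iff$ full rank $n$ overall, using $\mathbb{R}^n=\bigoplus_q\mathcal{X}_q$ and the orthogonality of the $\mathbf{I}_q$). I would handle this by first proving the single composition identity $A_w=\mathbf{I}_{q_k}M_w\mathbf{I}_{q_1}^T$ cleanly, then stating and proving a short linear-algebra sublemma about block rank under orthonormal injections, and only then assembling the two rank equivalences.
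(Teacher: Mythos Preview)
Your proposal is correct and follows essentially the same route as the paper's proof: both compute $B_{(q_1,q_2)}$ in terms of $G_{q_1,q_2}$ via Lemma~\ref{stoch_jump_stab:lemma1.1}, use the telescoping identity $A_w=\mathbf{I}_{q_k}M_w\mathbf{I}_{q_1}^T$ for $w\in L$ (and $A_w=0$ otherwise), and then argue that the columns of $W_{R_H}$ landing in $\IM\mathbf{I}_q$ are precisely $\mathbf{I}_q\mathcal{R}_{H,q}$, so full rank of $W_{R_H}$ is equivalent to $\Rank\mathcal{R}_{H,q}=n_q$ for each $q$. One small caution: the observability direction is not quite a literal transpose of the reachability argument; the paper has to separately verify that $x=(x_1^T,\ldots,x_d^T)^T\in\ker O_{R_H}$ forces each $\mathbf{I}_q x_q\in\ker O_{R_H}\cap\IM\mathbf{I}_q$, using $A_{(q_1,q_2)}\mathbf{I}_q=0$ for $q\neq q_1$ --- this is exactly the ``block rank under orthonormal injections'' sublemma you already flagged as the main bookkeeping point, so your plan covers it.
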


The lemma above implies that observability and reachability of a GJMLS can 
be checked by a numerical algorithm.
\begin{Definition}[Morphism of GJMLS{s}]
 Let $H$ be a GJMLS of the form \eqref{stoch_jump_lin:geq1} and let  
$\hat{H}$ is another GJMLS realization of $\y$ given by
 \begin{equation}
 \label{gjlms:iso}
 \begin{split}
 & \hat{\x}(t+1)=\hat{M}_{\btheta(t),\btheta(t+1)}\hat{\x}(t)+
         \hat{B}_{\btheta(t),\btheta(t+1)}\hat{\v}(t) \\
 & \hat{\y}(t)=\hat{C}_{\btheta(t)}\hat{\x}(t) + \hat{D}_{\btheta(t)}\hat{\v}(t), 	 
 \end{split}
 \end{equation}
 where the dimension of the continuous state-space of $\hat{H}$
 corresponding to the discrete state $q$ is
 $\hat{n}_{q}$.
 A morphism from $H$ to $\hat{H}$ is a collection of matrices 
 $T=\{T_q \in \mathbb{R}^{\hat{n}_q \times n_q}\}_{q \in Q}$ such that 
 for all $q_1,q_2 \in Q$.
\begin{equation}
\label{isomorph}
T_{q_2}M_{q_1,q_2}=\hat{M}_{q_1,q_2}T_{q_1}, \quad
C_{q_1}=\hat{C}_{q_1}T_{q_1}, \quad 
T_{q_2}G_{q_1,q_2}=\hat{G}_{q_1,q_2},
\end{equation}
where $G_{q_1,q_2}$ is defined in \eqref{min:gjmsl:eq1}, and 
\begin{align} 
\hat{G}_{q_1,q_2} &=\sqrt{p_{q_1,q_2}}(\hat{M}_{q_1,q_2}\hat{P}_{q_1}C_{q_1}^T+\hat{B}_{q_1,q_2}\hat{Q}_{q_1}\hat{D}_{q_1}^T,
\end{align}
where $\hat{P}_{q_1}=E[\hat{\x}(t)\hat{\x}^T(t)\chi(\btheta(t)=q_1)]$ and
$\hat{Q}_{q_1}=E[\hat{\v}(t)\hat{\v}^T(t)\chi(\btheta(t)=q_1)]$.

$T$ will be called an isomorphism, if for all $q \in Q$, $n_q=\hat{n}_q$ and
$T_q$ is invertible.
\end{Definition}
Note that $T=(T_q)$ is an GJMLS isomorphism, if and only if the map
$\mathbf{S}_T:R_H \rightarrow R_{\hat{H}}$ is a representation isomorphism, where
$\mathbf{S}_T=\sum_{q \in Q} \mathbf{I}_{q}T_q\mathbf{I}_q^T$.  

We are now ready to state the theorem on minimality of a GJMLS realization.
\begin{Theorem}[Minimality of a realization by a GJMLS]
\label{real:theo:min}
Let the GJMLS $H$ be a realization of $\y$ of the form 
\eqref{stoch_jump_lin:geq1} and assume that $H$ satisfies Assumption \ref{Assumption0}.
 Then, the GJMLS $H$ is a minimal realization of $\y$ if and only if it is reachable and observable. 
If $\hat{H}$ is another minimal GJMLS realization of $\y$ such that $\hat{H}$
satisfies Assumption \ref{Assumption0}, then
$\hat{H}$ and $H$ are isomorphic.
\end{Theorem}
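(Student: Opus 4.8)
The plan is to reduce Theorem \ref{real:theo:min} to the already-solved realization theory for \GBS{s} and for rational representations by means of the two constructions $H\mapsto\BS_H$ and $\BS\mapsto H_{\BS}$ relating GJMLS{s} to \GBS{s}. Since $H$ is a GJMLS realization of $\y$ satisfying Assumption \ref{Assumption0}, Theorem \ref{real:suff:theo1} shows that $\y$ satisfies Assumption \ref{gjmls:output:assumptions}, hence by Lemma \ref{new:stoch_jump_lin:gbs0} the auxiliary output $\widetilde{\y}$ satisfies Assumption \ref{output:assumptions}; by Lemma \ref{new:stoch_jump_lin:gbs1}, $\BS_H$ is a \GBS\ realization of $\widetilde{\y}$ satisfying Assumption \ref{gbs:def}, so by Theorem \ref{gen:filt:theo3.2} the associated representation $R_H=R_{\BS_H}$ is a stable representation of $\Psi_{\widetilde{\y}}$, with $\dim R_H=\dim\BS_H=\dim H$ by construction. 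Conversely, starting from any minimal representation $R$ of $\Psi_{\widetilde{\y}}$, Theorem \ref{gen:filt:theo3.3} yields a \GBS\ $\BS$ in forward innovation form realizing $\widetilde{\y}$ with $\dim\BS=\dim R=\Rank H_{\Psi_{\widetilde{\y}}}$; as every \GBS\ realization $\BS'$ of $\widetilde{\y}$ has $\dim\BS'=\dim R_{\BS'}\ge\Rank H_{\Psi_{\widetilde{\y}}}$, this $\BS$ is a minimal \GBS\ realization, so by Theorem \ref{gen:filt:min_theo} $R_{\BS}$ is minimal, i.e.\ reachable and observable; therefore Lemma \ref{new:stoch_jump_lin:gbs2} (whose remaining hypothesis, Part \ref{real:suff:ass1.1} of Assumption \ref{gjmls:output:assumptions}, holds) applies, and $H_{\BS}$ is a GJMLS realization of $\y$ satisfying Assumption \ref{Assumption0}, in forward innovation form, with $\dim H_{\BS}=\dim\BS=\Rank H_{\Psi_{\widetilde{\y}}}$.

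First I would establish that $H$ is a minimal realization of $\y$ if and only if it is reachable and observable. By Lemma \ref{real:lemma:reachobs}, $H$ is reachable and observable exactly when $R_H$ is, which by Theorem \ref{sect:pow:theo1}(ii) holds exactly when $\dim R_H=\Rank H_{\Psi_{\widetilde{\y}}}$, that is, when $\dim H=\Rank H_{\Psi_{\widetilde{\y}}}$. On the other hand, every GJMLS realization $H'$ of $\y$ satisfying Assumption \ref{Assumption0} gives, via Lemma \ref{new:stoch_jump_lin:gbs1} and Theorem \ref{gen:filt:theo3.2}, the bound $\dim H'=\dim R_{\BS_{H'}}\ge\Rank H_{\Psi_{\widetilde{\y}}}$, and the realization $H_{\BS}$ constructed above attains it. Hence the minimal possible dimension of such a realization is exactly $\Rank H_{\Psi_{\widetilde{\y}}}$, and $H$ is minimal precisely when $\dim H$ equals that number, which by the previous sentence is exactly when $H$ is reachable and observable.

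Next I would prove the isomorphism claim. If $H$ and $\hat H$ are two minimal GJMLS realizations of $\y$ satisfying Assumption \ref{Assumption0}, then by the first part both are reachable and observable, so $R_H$ and $R_{\hat H}$ are minimal representations of $\Psi_{\widetilde{\y}}$ and, by Theorem \ref{sect:pow:theo1}(iii), there is a representation isomorphism $\mathcal S\colon R_H\to R_{\hat H}$ (in fact the unique one, since an automorphism of a reachable and observable representation is the identity). The core step is to show $\mathcal S$ respects the block decomposition $\mathbb{R}^{n}=\bigoplus_{q\in Q}\mathbf{I}_q\mathbb{R}^{n_q}$ carried by $R_H=R_{\BS_H}$, which coincides with $\bigoplus_{q\in Q}\mathcal{X}_q$ of Lemma \ref{new:stoch_jump_lin:gbs2} by reachability and observability. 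Concretely, in $R_H$ the matrices $A_{(q_1,q_2)}=\mathbf{I}_{q_2}M_{q_1,q_2}\mathbf{I}_{q_1}^{T}$ and the input matrices $K_{(q_1,q_2)}=\mathbf{I}_{q_2}B_{q_1,q_2}\mathbf{S}_{q_1}$, hence also the columns $B_\sigma$ defined through \eqref{gsb2repr:eq1}, are block-structured, so every column of every product $A_wB_\sigma$ lies in a single summand $\mathbf{I}_q\mathbb{R}^{n_q}$; reachability forces the columns landing in summand $q$ to span $\mathbf{I}_q\mathbb{R}^{n_q}$. Since $\mathcal S(A_wB_\sigma)=\hat A_w\hat B_\sigma$ lies in the corresponding summand of $\mathbb{R}^{\hat n}$, we get $\mathcal S(\mathbf{I}_q\mathbb{R}^{n_q})\subseteq\hat{\mathbf{I}}_q\mathbb{R}^{\hat n_q}$ for every $q$; injectivity of $\mathcal S$ together with $\sum_q n_q=n=\hat n=\sum_q\hat n_q$ then gives $n_q=\hat n_q$ and equality. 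Setting $T_q=\hat{\mathbf{I}}_q^{T}\mathcal S\,\mathbf{I}_q$ yields $\mathcal S=\sum_{q\in Q}\hat{\mathbf{I}}_qT_q\mathbf{I}_q^{T}=\mathbf{S}_T$, so by the remark following the definition of a GJMLS morphism, $T=\{T_q\}_{q\in Q}$ is a GJMLS isomorphism from $H$ to $\hat H$; the only relations in \eqref{isomorph} not immediate from $\mathcal S$ being a representation isomorphism are $T_{q_2}G_{q_1,q_2}=\hat G_{q_1,q_2}$, which follow by unpacking $T\widetilde B_j=B_j$ with the help of \eqref{min:gjmsl:eq1} and Definition \ref{gbs2repr}.

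I expect the main obstacle to be this block-structure argument: a priori the isomorphism $\mathcal S$ between $R_H$ and $R_{\hat H}$ need not be block-diagonal, and one must genuinely use reachability together with the structured form of $A_\sigma$, $K_\sigma$, and $C$ inherited from $\BS_H$ to pin $\mathcal S$ down summand by summand, and then verify that the representation-isomorphism conditions on $\mathbf{S}_T$ decode into exactly the GJMLS-morphism conditions \eqref{isomorph}. Everything else — the dimension identities $\dim\BS_H=\dim H$ and $\dim R_{\BS}=\dim\BS$, the Hankel-rank bound $\dim R\ge\Rank H_{\Psi_{\widetilde{\y}}}$, and the transfer of minimality through Lemmas \ref{new:stoch_jump_lin:gbs0}--\ref{new:stoch_jump_lin:gbs2} and Theorems \ref{gen:filt:theo3.2}, \ref{gen:filt:theo3.3}, \ref{gen:filt:min_theo} — is bookkeeping on top of results already established.
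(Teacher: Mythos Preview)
Your proposal is correct and follows essentially the same route as the paper: reduce the GJMLS problem to the \GBS\ and rational-representation theory via the constructions $H\mapsto\BS_H$ and $\BS\mapsto H_{\BS}$, invoke Lemma \ref{real:lemma:reachobs} to translate reachability/observability, and use Theorem \ref{gen:filt:min_theo} together with Lemmas \ref{new:stoch_jump_lin:gbs1}--\ref{new:stoch_jump_lin:gbs2} for the dimension comparisons. Your minimality argument via $\Rank H_{\Psi_{\widetilde{\y}}}$ is just the paper's two contradiction arguments unwound into a direct statement; for the isomorphism, the paper simply asserts that the representation isomorphism $\mathbf{S}:R_H\to R_{\hat H}$ ``is easy to see'' to be a GJMLS isomorphism, whereas you correctly identify and supply the missing step --- showing $\mathbf{S}$ is block-diagonal by using reachability and the fact that $\IM A_wB_\sigma\subseteq\mathbf{I}_q\mathbb{R}^{n_q}$ for the terminal state $q$ --- which is exactly what is needed to extract the $T_q$ and verify \eqref{isomorph}.
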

\begin{Remark}
 Notice that in (\ref{isomorph}) we do not require 
 any relationship between $B_{q_1,q_2}$ and $\hat{K}_{q_1,q_2}$.
 This is consistent with the situation for linear stochastic
 systems.
\end{Remark}  
\begin{Remark}[Realization Algorithms]
 It is clear that reachability and observability, and hence 
 minimality, of a GJLS can be checked
 numerically. It is also easy to see that the
 Algorithm \ref{alg2}
 can be adapted to obtain a weak realization 
 $H$ of $\y$.
\end{Remark}

\subsection{Proofs of the results on realization theory of GJMLS{s}}
\label{real:proof}
  Below we present the proofs of the statements presented in 
  \S \ref{real:sol}. In addition, we present the proof of
  Lemma \ref{stoch_jump_stab:lemma1}.

 \begin{proof}[Proof of Lemma \ref{new:stoch_jump_lin:gbs0}]
   We show that $\widetilde{\y}$ satisfies the parts of Assumptions \ref{output:assumptions}
   one by one and then we show that the statement of the lemma for the innovation
   process of $\widetilde{\y}$ is true.

  \textbf{$\widetilde{\y}$ is an \RC\ process}
  Define the matrix $\mathbf{M}_q \in \mathbb{R}^{p \times dp}$ as
  \[ \mathbf{M}_{q}= \begin{bmatrix} \mathbf{O}_{p,(q-1)p}, & I_p, &  \mathbf{O}_{p,(d-q-1)p} \end{bmatrix}.
  \]
  It then follows that 
  $\z_{w}^{\widetilde{\y}}(t)=\mathbf{M}_q^T\z_w^T(t)$ if $w=(q,q_1)v$ for some $q,q_1 \in Q$, $v \in \Sigma^{+}$.
  Moreover, notice that $\widetilde{\y}(t)(\z_{w}^{\widetilde{\y}}(t))^T=\mathbf{M}^T_{q_2}\y(t)\z^T_{w}(t)\mathbf{M}_q$, 
  where $q \in Q$ is the first component of the first letter of $w$ and $q_2 \in Q$ is the second 
  component of the last letter of $w$. It is then easy to check that if
  $\y$ is a \RC\ process, then so is $\widetilde{\y}$.
  In order to see that $\y$ is an \RC\ process, notice that the first
  requirement of Assumption \ref{gjmls:output:assumptions} implies  that $\y$
  satisfies
  Part \ref{RC1} of Definition \ref{def:RC}. That
  $\y$ satisfies Part \ref{RC3} of Definition \ref{def:RC} can be shown as follows.
  If $w \notin L$, then $\bu_{w}(t)=0$ by definition of $L$.
  Let $w,v \in \Sigma^{*}$ be such that $w\sigma,v\sigma^{'} \in L$ and $|w| > 0$. 
  It is clear that
  $\z_{w\sigma}(t)\z_{v\sigma^{'}}(t)$ contains a term $\bu_{\sigma}(t)\bu_{\sigma^{'}}(t)$ and
  the latter term is zero, if $\sigma \ne \sigma^{'}$. Assume that 
  $\sigma=\sigma^{'}=(q_1,q_2)$. Then, using the definition of $\z_{w}(t),\z_{v}(t)$, 
  $E[\z_{w\sigma}(t)\z^T_{v\sigma}(t)]=\frac{1}{p_{q_1,q_2}}E[\z_{w}(t-1)\z^T_{v}(t-1)\chi(\btheta(t-1)=q_1,\btheta(t)=q_2)]$. Here, for $v=\epsilon$, $\z_{v}(t-1)=\y(t-1)$. 
   Using the assumption on conditional independence, 
   \( E[\z_{w}(t-1)\z^T_{v}(t-1)\chi(\btheta(t-1)=q_1,\btheta(t)=q_2) \mid \mathcal{D}_{t-1}]=
      E[\chi(\btheta(t-1)=q_1,\btheta(t)=q_2) \mid \mathcal{D}_{t-1}]
      E[\z_{w}(t-1)\z^T_{v}(t-1) \mid \mathcal{D}_{t-1}] =
     p_{q_1,q_2}\chi(\btheta(t-1)=q_1)E[\z_{w}(t-1)\z^T_{v}(t-1) \mid \mathcal{D}_{t-1}]
   \).
   Note that $E[\z_{w}(t-1)\z^T_{v}(t-1)]=E[E[\z_{w}(t-1)\z^T_{v}(t-1) \mid \mathcal{D}_{t-1}]]$. Moreover, $w\sigma \in L$, $|w| > 0$ implies that $q_1$ is the last component of 
   the last letter of $w$. Hence, $\z_{w}(t-1)\chi(\btheta(t-1)=q_1)=\z_{w}(t-1)$.
  From the properties of conditional expectation it follows then that
  \( \chi(\btheta(t-1)=q_1)E[\z_{w}(t-1)\z^T_{v}(t-1) \mid \mathcal{D}_{t-1}]=E[\z_{w}(t-1)\chi(\btheta(t-1)=q_1)\z^T_{v}(t-1) \mid \mathcal{D}_{t-1}]=E[\z_{w}(t-1)\z_{v}(t-1)\mid \mathcal{D}_{t-1}]$. Combining all these remarks, it follows that 
  \( E[\z_{w}(t-1)\z^T_{v}(t-1)\chi(\btheta(t-1)=q_1,\btheta(t)=q_2)]=p_{q_1,q_2}E[\z_{w}(t-1)\z_{v}^T(t-1)] \) and hence
  \( E[\z_{w\sigma}(t)\z^T_{v\sigma}(t)]=E[\z_{w}(t-1)\z^T_{v}(t-1)] \).
  That is, $\y$ satisfies Part \ref{RC3} of Definition \ref{def:RC}. By
  Remark \ref{rem:RC}, $\y$ then satisfies Part \ref{RC4} of Definition \ref{def:RC} too.

 \textbf{$\Psi_{\widetilde{\y}}$ is rational and square summable} \\
  From the discussion above it follows that 
  if $\Psi_{\widetilde{\y}}=\{ T_{(\sigma,i)} \mid \sigma \in \Sigma, i=1,\ldots,dp\}$,
  then for all $q \in Q$, $l=1,\ldots,p$,
   $T_{\sigma,p(q-1)+l}(v)$ can be written as follows.
  If $q$ is the first components of $\sigma$, 
  then $T_{\sigma,p(q-1)+l}(v)=\mathbf{M}^T_{q_2} S_{\sigma,l}(v)\mathbf{M}_{q}$
  for all $v \in \Sigma^{*}$  where $\sigma v = s(q_1,q_2)$ for some $s \in \Sigma^{*}$,
  $q_1 \in Q$. If $q$ is not the first components of $\sigma$, then
  $T_{\sigma,p(q-1)+l}(v)=0$.
  It is not difficult to construct a rational representation of 
  $\Psi_{\widetilde{\y}}$ based on such a representation of
  $\Psi_{\y}$.  Indeed, assume that
  $R=(\mathbb{R}^{n}, \{A_{\sigma}\}_{\sigma \in \Sigma},B,C)$ is a 
  representation of $\Psi_{\y}$. Define
  $\hat{\mathcal{X}}=\mathbb{R}^{dn}$ and define 
  $\mathbf{H}_{q} \in \mathbb{R}^{n \times nd}$ by
  \[ \mathbf{H}_{q}= \begin{bmatrix} \mathbf{O}_{n,(q-1)n}, & I_n, & \mathbf{O}_{n,(d-q-1)n} \end{bmatrix}.
  \]
 Let $\hat{A}_{(q_1,q_2)}=\mathbf{H}_{q_2}^TA_{(q_1,q_2)}\mathbf{H}_{q_1}$,
 $\hat{B}_{(q_1,q_2),p(q_1-1)+i}=\mathbf{H}_{q_2}B_{(q_1,q_2),i}$, $i=1,\ldots,p$ and
 let $\hat{B}_{(q_1,q_2),l}=0$ for all $l \ne p(q_1-1)+i$ for some $i=1,\ldots,p$.
 Finally, define $\hat{C}=\begin{bmatrix} \mathbf{H}_1^TC^T, & \ldots, & \mathbf{H}^T_dC^T \end{bmatrix}^T$, i.e. $\hat{C}$ is a block diagonal matrix, whose $(q,q)$th $p \times n$ block equals $C$. It is then easy to see that $\hat{R}=(\mathbb{R}^{nd}, \{\hat{A}_{\sigma}\}_{\sigma \in \Sigma},\hat{B},\hat{C})$ is a representation of $\Psi_{\widetilde{\y}}$.
 Square summability of $\Psi_{\widetilde{\y}}$ follows easily from that of
 $\Psi_{\y}$, by taking into account the relationship 
 $T_{\sigma,p(q-1)+l}(v)=\mathbf{M}^T_{q_2} S_{\sigma,l}(v)\mathbf{M}_{q}$,
 $v \in \Sigma^{*}$, $l=1,\ldots,p$, $q \in Q$, $\sigma \in \Sigma$, $q$ is the first
 letter of $\sigma$.

 \textbf{Proof of the formula for $\widetilde{\e}(t)$} \\
  Finally, from the discussion above it follows that the 
Hilbert-space spanned by the entries of $\{ \z_{w}(t) \mid w \in \Sigma^{+}\}$ coincides with that of spanned by the elements of $\{\z_{w}^{\widetilde{\y}}(t) \mid w \in \Sigma^{+}\}$.
  If $\z(t)=E_l[\y(t) \mid \{\z_{w}(t) \mid w \in \Sigma^{+}\}]$, then define
  $s(t)=\begin{bmatrix} \z^T(t)\chi(\btheta(t)=1), & \ldots, & \z^T(t)\chi(\btheta(t)=d) \end{bmatrix}^T$. 
   We claim that $s(t)=E_l[\widetilde{\y}(t) \mid \{\z_{w}^{\widetilde{\y}}(t) \mid w \in \Sigma^{+}\}]$. Indeed, $s(t)$ belongs to the Hilbert-space spanned by the entries of 
  $\{\z_{w}^{\widetilde{\y}}(t) \mid w \in \Sigma^{+}\}$. Moreover, if $q$ is the first
  component of the first letter of $w$ and $q_1$ is the second component of the
  last letter of $w$, then
   $E[\widetilde{\y}(t)(\z^{\widetilde{\y}}_{w}(t))^T]=\mathbf{M}^T_{q_1}E[\y(t)\z_{w}^T(t)]\mathbf{M}_q=\mathbf{M}^T_{q_1}E[\z(t)\z^T_{w}(t)]\mathbf{M}_{q}=E[s(t)(\z^{\widetilde{\y}}_{w}(t))^T]$. 
  From this, the claim of the lemma regarding $\widetilde{\e}(t)$ follows easily.
 \end{proof}

\begin{proof}[Proof of Lemma \ref{new:stoch_jump_lin:gbs1}]
 First, we show that $\BS_H$ is well-defined and the output of 
 $\BS_H$ equals $\widetilde{\y}$.
 For this, we have to show that
 $\hat{\x}(t)$ indeed satisfy \eqref{real:gjmls_repr}. 
 From this and the definition of $\widetilde{\y}(t)$ it follows easily that the outputs of
 $H$ and $\BS_H$ are equal.
 We show that the various parts of Definition \ref{gbs:def} hold one by one. First of all,
 Assumption \ref{Assumption1} on ergodicity of $\btheta$ means that the
 framework of Section \ref{gen:filt} can be used as it was explained before.
 
 \textbf{$\widetilde{\v}(t)$ satisfies Part \ref{gbs:def:prop1} of Assumption \ref{gbs:def}} \\
  First, we will show that $\v$ is an \RC\ process.
  From Part \ref{Assumption0:part1}, Assumption \ref{Assumption0} it follows that
  $\z_{w}^{\v}(t)$ is zero-mean. Moreover, for any $w,v \in \Sigma^{+}$,
  $|w|=k < |v|=l$,
  \( E[\z^{\v}_{w}(t)(\z^{\v}_{v}(t))^T]=E[E[\z^{\v}_{w}(t)(\z^{\v}_{v}(t))^T \mid D_{t-k,t}]]$.
   If $v=ss^{'}$ for some $s,s^{'} \in \Sigma^{+}$, $|s^{'}|=|w|$ and $w \ne s^{'}$  
   then clearly $\bu_{v}(t)\bu_{w}(t)=0$ and hence 
     $E[\z^{\v}_{w}(t)(\z^{\v}_{v}(t))^T]=0$.  Otherwise,  if $w=s^{'}$,  then 
     notice that $\bu_{v}(t)$ is a product of variables $\chi(\btheta(t-r)=q)$ for
     some $q \in Q$ and $r=0,\ldots,l-1$ multiplied by a constant. Hence, 
     by Part \ref{Assumption0:part1} of Assumption \ref{Assumption0}
     $E[\z^{\v}_{w}(t)\z^{\v}_{v}(t) \mid D_{t-l,t}]=\frac{1}{p_{w}} E[\bu_{v}(t) \mid D_{t-k}]E[\v(t-k)\v(t-l))^T \mid D_{t-l,t}] = 0$.
 Hence, $E[\z^{\v}_{w}(t)(\z^{\v}_{v}(t))^T]=0$ for any $w \ne v$, $|w| \ne |v|$.
  If $w \ne v$ but $|w|=|v|$, the $\bu_{w}(t)\bu_{v}(t)=0$ and
  hence $E[\z^{\v}_{w}(t)(\z^{\v}_{v}(t))^T]=0$. Finally, if $w=v$ and $|w|=|v|=k$, then
  using Assumption \ref{Assumption0}, Part \ref{Assumption0:part2} yields
  $E[\z^{\v}_{w}(t)(\z^{\v}_{w}(t))^T \mid D_{t-k,t}]=\frac{1}{p_{w}} E[\bu_{w}(t) \mid D_{t-k,t}]E[\v(t-k)\v^{T}(t-k) \mid D_{t-k}] = \chi(\btheta(t-k)=q)E[\v(t-k)\v^{T}(t-k) \mid D_{t-k,t}]=E[\v(t-k)\v^{T}(t-k)\chi(\btheta(t-k)=q) \mid D_{t-k,t}]$
 where is assumed to be of the form $w=(q,q_1)s$ for some $q,q_1 \in Q$,
 $s \in \Sigma^{*}$.
 Hence, $E[\z^{\v}_{w}(t)\z^{\v}_{w}(t))^T]=E[E[\z^{\v}_{w}(t)(\z^{\v}_{w}(t))^T \mid D_{t-k,t}]]=E[\v(t-k)\v^T(t-k)\chi(\btheta(t-k))]$ and the latter does not
  depend  $t$ by Assumption \ref{Assumption0}, Part \ref{Assumption0:part2}.
  Hence, we have shown that $E[\z^{\v}_{w}(t)\z^{\v}_{w}(t))^T]$ does not depend on
  $t$. Finally, notice that 
  $E[\v(t)(\z_{w}^{\v}(t))^T | D_{t-k,t}]=\frac{1}{\sqrt{p_{w}}} \bu_{w}(t) E[\v(t)\v^T(t-k) \mid D_{t-k,t}]=0$ does not depend on $t$ and hence
  $E[\v(t)(\z_{w}^{\v}(t))^T]=0$ also does not depend on $t$.
  Hence, $\v(t)$ satisfies Part \ref{RC1} of Definition \ref{def:RC}. 
  Finally,
  from the discussion above it follows that $T_{w,v}=0$ for $w \ne v$,
  and $T_{w,w}=0$ for $w \notin L$ and $T_{w,w}=E[\v(t)\v^T(t)\chi(\btheta(t)=q)]$
  where $q \in Q$ is such that $w=(q,q_1)s$ for some $q_1 \in Q$, $s \in \Sigma^{*}$.
  This implies that Part \ref{RC3} of Definition \ref{def:RC} is satisfied.
  By Remark \ref{rem:RC} this already implies Part \ref{RC4} of Definition \ref{def:RC}.
  Hence, $\v$ is indeed an \RC\ process.  
  Next, we show that $\widetilde{\v}$ is an \RC\ process too.
  It follows that all the entries of $\z_{w}^{\widetilde{\v}}(t)$ are zero except the one  which corresponds to the $q$th block of $p$ rows, where $q$ is the first components of
  the first letter of $w$. The latter entry equals
  $\z_{w}^{\v}(t)$,
  It is then easy to see that Part \ref{RC1}
  of Definition \ref{def:RC} hold. Consider any two
  $w,v \in \Sigma^{+}$, and let the first component of the first letter of $w$ and $v$
  be respectively $q_1,q_2 \in Q$.
  Then non-zero $p \times p$ block of 
   $E[\widetilde{\v}(t)(\z^{\widetilde{\v}}_{w}(t))^T]$ is the one indexed by
  $(q_1,q_1)$.
  Similarly, the only non-zero $p \times p$ block of
  $E[z_{v}^{\widetilde{\v}}(t)(\z^{\widetilde{\v}}_{w}(t))^T]$ is the
  is the one indexed by $q_1 \times q_2$. Here, we viewed both matrices as
  $d \times d$ matrices of $p \times p$ block.
  The respective non-zero entries are
  $E[\z^{\v}(t)(\z_{w}^{\v}(t)^T]$ and 
  $E[\z_{v}^{\v}(t)(\z_{w}^{\v}(t))^T]$.
  Since $\v$ is \RC, it follows that $\widetilde{\v}$ satisfies 
  Part \ref{RC3} and \ref{RC4} of Definition \ref{def:RC}.

 \textbf{$\widetilde{\v}(t)$ satisfies Part \ref{gbs:def:prop2} of Assumption \ref{gbs:def}} \\
  The orthogonality of $\z_{w}^{\widetilde{\v}}(t)$ and
  $\z_{v}^{\widetilde{\v}}(t)$ for $w \ne v$ follows from the proof that
  $\widetilde{\v}(t)$ satisfies Part \ref{gbs:def:prop1} of Assumption \ref{gbs:def}.

\textbf{$\x(t)$ and $\v(t)$ satisfy Part  \ref{gbs:def:prop4} of Assumption \ref{gbs:def}}
   The first statement of Part \ref{gbs:def:prop4} of Assumption \ref{gbs:def}
   is a direct consequence of Part \ref{Assumption0:part4} of
   Assumption \ref{Assumption0} and the fact that the sum of entries of
   $\widetilde{\v}(t)$ equals $\v(t)$.

\textbf{Part \ref{gbs:def:prop7} of Assumption \ref{gbs:def} holds}
  From the construction of $A_{(q_1,q_2)}$ it follows that
  the only non-zero column of $A_{(q_1,q_2)}$ is the one indexed by
  $j=(\sum_{q=1}^{q_1-1}n_q)+1, \ldots, \sum_{q=1}^{q_1} n_q$, and the only
  non-zero rows are the ones indexed by $i=(\sum_{q=1}^{q_2-1}n_q)+1, \ldots, \sum_{q=1}^{q_2} n_q$.  
Hence, $A_{(q_3,q_4)}A_{(q_1,q_2)}$ is necessarily zero if $q_2 \ne q_3$.
  The latter condition is equivalent to $(q_1,q_2)(q_3,q_4) \notin L$.
  Similarly, the only non-zero rows of $\hat{K}_{(q_1,q_2)}$ are the ones indexed by
  $i=(\sum_{q=1}^{q_2-1}n_q)+1, \ldots, \sum_{q=1}^{q_2} n_q$, so again
  $A_{(q_3,q_4)}\hat{K}_{(q_1,q_2)}=0$ for $q_3 \ne q_2$.
 
\textbf{Part \ref{gbs:def:prop5} of Assumption \ref{gbs:def} holds}
  It is easy to see that $\sum_{(q_1,q_2) \in \Sigma} p_{q_1,q_2} A^T_{(q_1,q_2)} \otimes A^T_{(q_1,q_2)}=\widetilde{M}$ and hence Part \ref{gbs:def:prop5} of Assumption \ref{gbs:def} follows directly from Part \ref{Assumption1.5} of Assumption \ref{Assumption0}.

\textbf{Proof that $\hat{D}E[\hat{\v}^T(t)\hat{\v}(t)\chi(\btheta(t)=q_1,\btheta(t+1)=q_2)]\hat{D}^T > 0$. }
  Notice that $\hat{D}E[\hat{\v}^T(t)\hat{\v}(t)\chi(\btheta(t)=q_1,\btheta(t+1)=q        _2)]\hat{D}^T=D_{q_1}E[\v(t)\v^T(t)\chi(\btheta(t)=q_1,\btheta(t+1)=q_2)]D_{q_1}^T$.
  From Part \ref{Assumption0:part1} it follows that 
  $E[\v(t)\v^T(t)\chi(\btheta(t)=q_1,\btheta(t+1)=q_2) \mid D_t]=E[\v(t)\v^T(t)\chi(\btheta(t)=q_1)] E[\chi(\btheta(t+1)=q_2) \mid D_t] \mid D_t] = p_{q_1,q_2} E[\v(t)\v^T(t)\chi(\btheta(t)=q_1)\mid D_t]$ and hence
  $E[\v(t)\v^T(t)\chi(\btheta(t)=q_1,\btheta(t+1)=q_2)]=p_{q_1,q_2}E[\v(t)\v^T(t)\chi(\btheta(t)=q)]=p_{q_1,q_2}Q_{q_1}$.
  Hence, $DQ_{(q_1,q_2)}D^T=p_{q_1,q_2}D_{q_1}Q_{q_1}D_{q_1}^T$. Since
  $p_{q_1,q_2} > 0$, by Part \ref{Assumption1.6} of Assumption \ref{Assumption0}, the above matrix is strictly positive definite.
\end{proof}
\begin{proof}[Proof of Lemma \ref{new:stoch_jump_lin:gbs2}]
 The first, we argue that $H_{\BS}$ is well-defined and its output equals $\y$.
 The only non-trivial thing is to prove that $\hat{\x}(t)$ is well defined and
 that the output of $H_{\BS}$ is $\y$.
 First, notice that Lemma \ref{new:stoch_jump_lin:gbs0} implies that
 \[ K_{(\btheta(t),\btheta(t+1)}\widetilde{\e}(t)=K_{(\btheta(t),\btheta(t+1))}\mathbf{M}_{\btheta(t)}\e(t) \]
 It then follows that 
 \begin{equation} 
 \label{new:stoch_jump_lin:gbs2:pf1}
   \hat{\x}(t+1)=\Pi_{\btheta(t+1)}^T\x(t+1)=\Pi^T_{\btheta(t+1)}A_{(\btheta(t),\btheta(t+1))}\x(t)+\Pi_{\btheta(t+1)}^TK_{(\btheta(t),\btheta(t+1)}\widetilde{\e}(t)
 \end{equation}
 Note that
 from Lemma \ref{gbs:def:new_lemma1} it follows that 
 $\x$ is an \RC\ process and that  
 $\x(t)\chi(\btheta(t)=q)$ belongs to $\mathcal{X}_{q}=\IM \Pi_q$ almost surely. Hence,
 $\Pi_{\btheta(t)}\Pi^T_{\btheta(t)}\x(t)=\x(t)$ and thus $A_{\btheta(t+1),\btheta(t)}\x(t)=A_{\btheta(t+1),\btheta(t)}\Pi_{\btheta(t)}\hat{\x}(t)$. Substituting this into
 \eqref{new:stoch_jump_lin:gbs2:pf1} yields that
 \[ \hat{\x}(t+1)=M_{\btheta(t+1),\btheta(t)}\hat{x}(t)+\widetilde{K}_{\btheta(t+1),\btheta(t)}\e(t).
 \]
 Hence, the first equation of $H_{\BS}$ holds.
 Notice that  $\mathbf{M}_{\btheta(t)}\widetilde{\y}(t)=\y(t)$ and
 $\mathbf{M}_{\btheta(t)}\widetilde{\e}(t)=\e(t)$. Moreover, by the discussion above it follows
 that $C\x(t)=C\Pi_{\btheta(t)}\hat{\x}(t)$.
 By multiplying $\widetilde{\y}(t)=C\hat{x}(t)+\widetilde{\e}(t)$ with $\mathbf{M}_{\btheta(t)}$ we obtain
 \[ \y(t)=C_{\btheta(t)}\hat{\x}(t)+\e(t). \]
 That is, $\y$ is indeed the output of $H_{\BS}$.

 Next, we show that $H_{\BS}$ satisfies each of the assumptions of
 Assumption \ref{Assumption0}.

 \textbf{Part \ref{Assumption0:part1} of Assumption \ref{Assumption0}}
  Since $\y(t)$ is the output of $\BS$, by Theorem \ref{gen:filt:theo3} it is
  \RC. Moreover, because $\BS$ satisfies Assumption \ref{gbs:def},
  the innovation process is \RC\ too. Hence,
  $E[\z^{\e}_{w}(t+|w|)]=0$ for any $w \in\Sigma^{+}$, which implies that
  $E[\e(t) \mid \mathcal{D}_{t+k}]=0$ for any $k \ge 0$.
  Notice that for any $w \in \Sigma^{+}$, $|w|=l$ the variables 
  $\bu_{w}(t+1)$ generate the $\sigma$-algebra $D_{t-l,t}$.
  Notice that for any $w \in \Sigma^{+}$, $|w|=l-1$, $\sigma \in \Sigma$
  $E[\e(t)\e^T(t-l)\bu_{w\sigma}(t+1)]=\sqrt{p_{w\sigma}} E[\z^{\e}_{\sigma}(t+1)(\z^{\e}_{w\sigma}(t+1))^T]=0$. Hence,
  $E[\e(t)\e^T(t-l) \mid \mathcal{D}_{t,t-l}]=0$.
  Finally $E[\e(t)\e^T(t)\chi(\btheta(t)=q)]=\sum_{q_2 \in Q} E[\z_{(q,q_2)}^{\e}(t+1)(\z_{(q,q_2)}^{\e}(t+1))^T]$  and the latter does not depen on $t$ due to the
  fact that $\e(t)$ is \RC.

 \textbf{Part \ref{Assumption0:part2} of Assumption \ref{Assumption0}}
   Let $\mathcal{F}_1$ be the $\sigma$-algebra generated by the variables $\{y(t-l)\}_{l \ge 0}$ and denote by
   $\mathcal{F}_1 \lor \mathcal{D}_t$ the smallest $\sigma$-algebra which contains 
   $\mathcal{F}_1$ and $\mathcal{D}_t$. 
   Let $\mathcal{F}_2$ be the $\sigma$-algebra generated by $\{\btheta(t+l)\}_{l > 0}$ and
   notice that by assumption $\mathcal{F}_2$ and $\mathcal{F}_1$ are conditionaly 
   independent w.r.t. $\mathcal{D}_t$.
  From the elementary properties of conditional independence 
  and the fact that $\mathcal{F}_1$ and $\mathcal{F}_2$ are conditionally independent w.r.t. $\mathcal{D}_t$ it follows
  that $\mathcal{F}_1 \lor \mathcal{D}_t$ and $\mathcal{F}_2$ are also conditionally
  independent w.r.t. $\mathcal{D}_t$.  
   
   Hence, it is enough to show that for $l \ge 0$,
   $\e(t-l)$ is $\mathcal{F}_1 \lor \mathcal{D}_t$ measurable. 
   From this and the discussion above it then follows that the $\sigma$-algebra
   generated by 
   $\{e(t-l)\}_{l=0}^{\infty}$ and $\mathcal{F}_2$ are conditionaly independent 
   w.r.t $\mathcal{D}_t$. 
   Notice that $\e(t-l)$ belongs to the Hilbert-space
   generated by $\{\y(t-l),\z_{w}(t-l)\mid w \in \Sigma^{+}\}$, 
   and hence by Lemma \ref{lemma:measurability}, 
   $\e(t-l)$ is measurable w.r.t the $\sigma$-algebra generated by
   $\{y(t-l),\z_{w}(t-l) \mid w \in \Sigma^{+}\}$.
   The latter $\sigma$-algebra is contained in 
   $\mathcal{F}_1 \lor \mathcal{D}_t$ and 
   hence $\y$ is $\mathcal{F}_1 \lor \mathcal{D}_t$ measurable, as required.

 \textbf{Part \ref{Assumption0:part4} of Assumption \ref{Assumption0}}
  This is a direct consequence of Part \ref{gbs:def:prop4} of Assumption \ref{gbs:def}. 

 \textbf{Part \ref{Assumption1} of Assumption \ref{Assumption0}}
   This a direct consequence of Assumption \ref{real:suff:ass1.1}.

 \textbf{Part \ref{Assumption1.5} of Assumption \ref{Assumption0}}
    It then follows that $M_{q_1,q_2} \otimes M_{q_1,q_2}=(\Pi^T_{q_2} \otimes \Pi^T_{q_2}) (A_{(q_1,q_2)} \otimes A_{(q_1,q_2)})(\Pi_{q_1} \otimes \Pi_{q_1})$.
   Let $P=(P_1,\ldots,P_d)$ a $d$ tuple of matrices $P_q \in \mathbb{R}^{n_q \times n_q}$   such that if $P$ is interpreted as a $\sum_{q \in Q} n_q^2$ vector $\phi(P)$, then
   $\widetilde{M}^T\phi(P)=\lambda \phi(P)$ 
    for some $\lambda \in \mathbb{C}$. It then follows that
   $\lambda P_q=\sum_{r \in Q} p_{r,q}M_{r,q}P_rM_{r,q}^T$.
   Notice that with $\hat{P}_q=\Pi_{q}P_q\Pi_{q}^T=(\Pi_q \otimes \Pi_q)\phi(P)$,
   $\lambda P_q=\sum_{r \in Q} p_{r,q}\Pi_q^TA_{(r,q)}\hat{P}_rA^T_{(r,q)}\Pi_q$.   
   By applying from the left $\Pi_q$ and from the right $\Pi_q^T$ to both sides of the
   equation, we get $\lambda \hat{P}_q= \sum_{r \in Q} p_{r,q}\Pi_q\Pi^T_qA_{(r,q)}\hat{P}_rA^T_{(r,q)}\Pi_q\Pi^T_q$.
   Notice that $\mathcal{X}_q=\IM \Pi_q$ and that $A_{(r,q)}\mathcal{X}_r \subseteq \mathcal{X}_q$. Hence, $A_{(r,q)}\Pi_{r}=\Pi_qS$ for some $S \in \mathbb{R}^{n_r \times n}$. By exploiting $\Pi_q^T\Pi_q=I_{n_q}$, 
  it follows that $\Pi_q\Pi^T_qA_{(r,q)}\Pi_{r}=A^T_{(r,q)}\Pi_r$.
  Thus, by taking into account that $\hat{P}_r=\Pi_rP_r\Pi^T_r$, $r \in Q$,
   \[ \lambda \hat{P}_q= \sum_{r \in Q} p_{r,q}A_{(r,q)}\hat{P}_rA_{(r,q)}^T. \]
    Note that $A_{(r,q)}\Pi_{r_1}=0$ for $r_1 \ne r$, since 
   $A_{(r,q)}|_{\mathcal{X}_{r_1}}=0$, since $\mathcal{X}_{r_1}$ belongs to the linear span of
   elements of $\IM A_{(r_1,q_1)}$ and $\IM K_{(r_1,q_1)}$, $q_1 \in Q$, and 
    Part \ref{gbs:def:prop7} of
   Assumption \ref{gbs:def}. Hence, if $\hat{P}=\sum_{q \in Q} \hat{P}_q$,
   then $A_{(r,q)}\hat{P}_rA^T_{(r,q)}=A_{(r,q)}\hat{P}A^T_{(r,q)}$.
   Denote by $\mathcal{Z}$ the linear map $\mathbb{R}^{n^2 \times n^2} \mapsto \sum_{(r,q) \in Q \times Q} p_{r,q} A_{(r,q)}VA_{(r,q)}^T$.
   From the discussion above it follows that $\hat{P}$ is an eigenvector of $\Z$ 
  corresponding to the eigenvalue $\lambda$.
  From \cite[Chapter 2]{CostaBook} it follows  $\sum_{(r,q) \in Q \times Q} p_{r,q} A_{(r,q)} \otimes A_{(r,q)}$ is just a matrix representation of $\Z$.
  Then Part \ref{gbs:def:prop5} of Assumption \ref{gbs:def}
  implies that the eigenvalues $(\sum_{(r,q) \in Q \times Q} p_{r,q} A_{(r,q)} \otimes A_{(r,q)})^T=\sum_{(r,q) \in Q \times Q} p_{r,q} A_{(r,q)}^T \otimes A_{(r,q)}^T$
  all inside the unit disk. Since takings transposes does not change the eigenvalues, 
  it then follows that all
the eigenvalues of $\sum_{(r,q) \in Q \times Q} p_{r,q} A_{(r,q)} \otimes A_{(r,q)}$, and hence
 of $\Z$, are inside the unit disk as well. 
 Since $\lambda$ was
  an arbitrary eigenvalue of $\widetilde{M}^T$, and $\widetilde{M}$ and $\widetilde{M}^T$ have
  the same eigenvalues,  it follows that
  Part \ref{Assumption1.5} of Assumption \ref{Assumption0} holds.

 \textbf{Part \ref{Assumption1.6} of Assumption \ref{Assumption0}}
  A direct consequence of Part \ref{gbs:def:prop7} of Definition \ref{gbs:def}.

 \textbf{Proof that $\mathbb{R}^{n}=\bigoplus_{q\in Q} \mathcal{X}_q$}

  Consider the matrix $B_{\sigma}$ of $\BS$ defined in \eqref{gsb2repr:eq1}.
  It then follows that
  $K_{(q_1,q_2)}Q_{(q_1,q_2)}=B_{(q_1,q_2)}-A_{(q_1,q_2)}P_{(q_1,q_2)}C^T$.
  where $Q_{q_1,q_2}=E[\widetilde{\e}(t)\widetilde{\e}^T(t)\chi(\btheta(t)=q_1,\btheta(t+1)=q_2)]$. From Lemma \ref{new:stoch_jump_lin:gbs0} it follows that
  $Q_{q_1,q_2}=\mathbf{M}_{q_2}^TE[\e(t)\e^T(t)\chi(\btheta(t)=q_1,\btheta(t+1)=q_2)]\mathbf{M}_{q_1}$. Since $\e(t)$ and $\btheta(t+1)$ are conditionally independent given $\mathcal{D}_t$,
  it follows that $E[\e(t)\e^T(t)\chi(\btheta(t)=q_1,\btheta(t+1)=q_2)]=E[\e(t)\e^T(t)\chi(\btheta(t)=q_1)]p_{q_1,q_2} > 0$. Notice, moreover, that $\mathbf{M}_{q_1}\mathbf{M}^T_{q_1}=I_{q_1}$.
  Hence, by multiplying $K_{(q_1,q_2)}Q_{(q_1,q_2)}=B_{(q_1,q_2)}-A_{(q_1,q_2)}P_{(q_1,q_2)}C^T$ by $\mathbf{M}_{q_1}^T(E[\e(t)\e^T(t)\chi(\btheta(t)=q_1)])^{-1}p^{-1}_{q_1,q_2}$
 from the right, we obtain that
  $K_{(q_1,q_2)}\mathbf{M}_{q_1}^T$ belongs to the linear span of elements of the form
  $\IM B_{(q_1,q_2)}$ and $A_{(q_1,q_2)}z$, $z \in \mathbb{R}^{n}$.

  Also notice that $\mathbf{M}_qCA_{w}B_{\sigma}=E[\widetilde{\y}(t)(\z_{\sigma w}^{\widetilde{\y}}(t))^T]=0$, if the last component of the last letter of 
$\sigma w$ is not $q \in Q$. 
  Since $\BS$ is reachable, any $z \in \mathbb{R}^{n}$ is a linear combination of
  vectors from $\IM A_{w}B_{\sigma}$ for some $w \in \Sigma^{*}$, $\sigma \in \Sigma$.
  Hence, $\mathbf{M}_qCA_{(q_1,q_2)}=0$ and $\mathbf{M}_{q}CB_{(q_1,q_2)}=0$
  for all $q_1,q_2,q \in Q$ such that $q_2 \ne q$.
  Combining this with the definition of $\mathcal{X}_q$, $q \in Q$ and the fact
  derived above that $K_{(q_1,q_2)}$ is spanned by elements $\IM A_{(q_1,q_2)}$, $\IM B_{(q_1,q_2)}$, it follows that $\mathbf{M}_qCx=0$ for all $x \in \mathcal{X}_{q_1}$, $q_1 \ne q$.

  We are now ready to prove that $\mathbb{R}^{n}=\bigoplus_{q \in Q} \mathcal{X}_q$.
  From the discussion above, it follows that
  $\mathcal{X}_{q_1} \cap \mathcal{X}_{q_2} = \{0\}$.  Indeed, if
  $x \in \mathcal{X}_{q_1} \cap \mathcal{X}_{q_2}$, then
  for $q \ne q_1$, $\mathbf{M}_qCx=0$, and since $q_1 \ne q_2$ and $x \in \mathcal{X}_{q_2}$,
  $\mathbf{M}_{q_1}Cx=0$. Hence, $Cx=0$. Moreover,  notice that
  $\mathcal{X}_{q} \subseteq \ker A_{(q_3,q_4)}$ for $q \ne q_3$, since 
  $A_{(q_3,q_4)}K_{q_5,q}=0$ and $A_{(q_3,q_4)}A_{(q_5,q)}=0$ for all $q_5 \in Q$.
  By applying this result to $q=q_1$ and $q=q_2$, it follows that
  $A_{(q_3,q_4)}x=0$ for any $q_1,q_4 \in Q$ and hence $A_{w}x=0$ for any $w \in  \Sigma^{+}$.
  That is, $CA_{w}x=0$ for all $w \in \Sigma^{*}$, i.e. $x \in O_{R_{\BS}}$. Since
  $\BS$ is observable, it then follows that $x=0$.

  It is left to show that $\mathbb{R}^{n}=\sum_{q \in Q} \mathcal{X}_q$.  
  To this end,  consider the definition of $R_{\BS}$. As it was already mentioned,
  $\x(t)\chi(\btheta(t)=q)$ belongs to $\mathcal{X}_q$ for $q \in Q$ almost everywhere.
  Hence, the columns of $P_{(q_1,q_2)}=E[\x(t)\x^T(t)\chi(\btheta(t)=q_1,\btheta(t+1)=q_2)]$ 
  belong to $\mathcal{X}_{q_1}$: take any $M \in \mathbb{R}^{n-n_{q_1} \times n}$ such that $\mathcal{X}_{q_1}=\ker M$; then $M\x(t)\chi(\btheta(t)=q)=0$ almost everywhere, and hence
  $MP_{(q_1,q_2)}=0$.
  It then follows that $\IM A_{(q_1,q_2)}P_{(q_1,q_2)}C^T \subseteq \mathcal{X}_{q_2}$.
  From the previous discussion it follows that
  $K_{(q_1,q_2)}Q_{(q_1,q_2)}=p_{q_1,q_2}K_{(q_1,q_2)}\mathbf{M}_{q_1}^TE[\e(t)\e^T(t)\chi(\btheta(t)=q_1)]\mathbf{M}_{q_1}$ and hence $\IM K_{(q_1,q_2)}Q_{(q_1,q_2)} \subseteq \mathcal{X}_{q_2}$. Combining all this with the definition of $R_{\BS}$ it follows that
  $\IM B_{(q_1,q_2)} \subseteq \mathcal{X}_{q_2}$. Since $A_{(q,q_3)}(\mathcal{X}_{q}) \subseteq \mathcal{X}_{q_3}$, we obtain that $\IM A_{w}B_{(q_1,q_2)}$ always belongs to 
  $\mathcal{X}_q$, where $q$ is the last component of the last letter of $(q_1,q_2)w$.
  From reachability of $R_{\BS}$ we then obtain that 
  $\mathbb{R}^{n}=\sum_{q \in Q} \mathcal{X}_q$, as claimed.
\end{proof}
 
Now we can also easily prove Lemma \ref{stoch_jump_stab:lemma1}. In fact,
we will prove first a technical result, relating state covariances of
$H$ and $\BS_H$. From this Lemma \ref{stoch_jump_stab:lemma1}
follows easily.  
\begin{Lemma}
\label{stoch_jump_stab:lemma1.1}
 Assume that $H$ satisfies Assumption \ref{Assumption0}.
 Let $\hat{P}_{q_1,q_2}=E[\hat{\x}(t)\hat{\x}^T(t)\chi(\btheta(t)=q_1,\btheta(t+1)=q_2)]$.
 Then for $P_{q}=E[\x(t)\x^T(t)\chi(\btheta(t)=q)]$, 
 \[ \hat{P}_{q_1,q_2}=p_{q_1,q_2}\mathbf{I}_{q_1}P_{q_1}\mathbf{I}_{q_1}^T. \]
 Similarly, if $\hat{Q}_{q_1,q_2}=E[\hat{\v}(t)\hat{\v}(t)\chi(\btheta(t)=1,\btheta(t+1)=q_2)]$,
 and $Q_{q_1}=E[\v(t)\v^T(t)\chi(\btheta(t)=q_1)]$, then 
 \[ \hat{Q}_{q_1,q_2}=p_{q_1,q_2}\mathbf{S}^T_{q_1}Q_{q_1}\mathbf{S}_{q_1}. \]
\end{Lemma}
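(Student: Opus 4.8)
The plan is to reduce both identities to statements about the original GJMLS processes $\x(t)$ and $\v(t)$, exploiting the block structure built into the definition of $\BS_H$. First I would note that the state and noise processes of $\BS_H$ (written $\hat{\x}(t)$ and $\hat{\v}(t)$ in the statement, and $\widetilde{\x}(t),\widetilde{\v}(t)$ in the definition of $\BS_H$) satisfy $\hat{\x}(t)=\mathbf{I}_{\btheta(t)}\x(t)$ and $\hat{\v}(t)=\mathbf{S}_{\btheta(t)}^{T}\v(t)$, since every block of $\widetilde{\x}(t)$ resp.\ $\widetilde{\v}(t)$ other than the $\btheta(t)$-th one carries a vanishing indicator. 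Consequently $\hat{\x}(t)\hat{\x}^{T}(t)\chi(\btheta(t)=q_{1})=\mathbf{I}_{q_{1}}\x(t)\x^{T}(t)\mathbf{I}_{q_{1}}^{T}\chi(\btheta(t)=q_{1})$ almost surely, and likewise $\hat{\v}(t)\hat{\v}^{T}(t)\chi(\btheta(t)=q_{1})=\mathbf{S}_{q_{1}}^{T}\v(t)\v^{T}(t)\mathbf{S}_{q_{1}}\chi(\btheta(t)=q_{1})$. Multiplying by $\chi(\btheta(t+1)=q_{2})$ and taking expectations, the lemma reduces to showing $E[\x(t)\x^{T}(t)\chi(\btheta(t)=q_{1},\btheta(t+1)=q_{2})]=p_{q_{1},q_{2}}P_{q_{1}}$ and $E[\v(t)\v^{T}(t)\chi(\btheta(t)=q_{1},\btheta(t+1)=q_{2})]=p_{q_{1},q_{2}}Q_{q_{1}}$.

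For the first of these I would condition on $\mathcal{D}_{t}$. By Part \ref{Assumption0:part4} of Assumption \ref{Assumption0} and Lemma \ref{lemma:measurability}, $\x(t)$ is measurable with respect to the $\sigma$-algebra $\mathcal{G}_{t}$ generated by $\{\v(t-l)\}_{l\ge 0}$ together with $\mathcal{D}_{t}$. By Part \ref{Assumption0:part2} of Assumption \ref{Assumption0}, combined with the elementary properties of conditional independence already invoked in the proof of Lemma \ref{new:stoch_jump_lin:gbs2}, $\mathcal{G}_{t}$ and the $\sigma$-algebra generated by $\{\btheta(t+l)\}_{l>0}$ are conditionally independent given $\mathcal{D}_{t}$. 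Since $\x(t)\x^{T}(t)\chi(\btheta(t)=q_{1})$ is $\mathcal{G}_{t}$-measurable and $\chi(\btheta(t+1)=q_{2})$ is measurable with respect to $\{\btheta(t+l)\}_{l>0}$, this gives
\[
E[\x(t)\x^{T}(t)\chi(\btheta(t)=q_{1},\btheta(t+1)=q_{2})\mid\mathcal{D}_{t}]=E[\x(t)\x^{T}(t)\chi(\btheta(t)=q_{1})\mid\mathcal{D}_{t}]\,E[\chi(\btheta(t+1)=q_{2})\mid\mathcal{D}_{t}].
\]
The second factor equals $p_{q_{1},q_{2}}$ on the event $\{\btheta(t)=q_{1}\}$ by the Markov property of $\btheta$, and this event carries the first factor; hence the product equals $p_{q_{1},q_{2}}E[\x(t)\x^{T}(t)\chi(\btheta(t)=q_{1})\mid\mathcal{D}_{t}]$, and taking expectations yields the claimed identity. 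The identity for $\v(t)$ follows by the same conditioning argument using Parts \ref{Assumption0:part1}--\ref{Assumption0:part2} of Assumption \ref{Assumption0}; in fact it was already established in the proof of Lemma \ref{new:stoch_jump_lin:gbs1}.

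Substituting these two identities back into the reduction of the first paragraph gives $\hat{P}_{q_{1},q_{2}}=\mathbf{I}_{q_{1}}(p_{q_{1},q_{2}}P_{q_{1}})\mathbf{I}_{q_{1}}^{T}=p_{q_{1},q_{2}}\mathbf{I}_{q_{1}}P_{q_{1}}\mathbf{I}_{q_{1}}^{T}$ and $\hat{Q}_{q_{1},q_{2}}=\mathbf{S}_{q_{1}}^{T}(p_{q_{1},q_{2}}Q_{q_{1}})\mathbf{S}_{q_{1}}=p_{q_{1},q_{2}}\mathbf{S}_{q_{1}}^{T}Q_{q_{1}}\mathbf{S}_{q_{1}}$, which is the assertion. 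I expect the only delicate point to be the indicator-function bookkeeping under conditioning on $\mathcal{D}_{t}$ — in particular making sure the transition probability $p_{q_{1},q_{2}}$ is extracted on the correct event $\{\btheta(t)=q_{1}\}$ — but this mirrors computations already carried out elsewhere in Section \ref{real:proof} and presents no real difficulty.
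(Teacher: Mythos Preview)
Your proof is correct and follows essentially the same route as the paper: reduce via the block structure $\hat{\x}(t)=\mathbf{I}_{\btheta(t)}\x(t)$ to the identity $E[\x(t)\x^{T}(t)\chi(\btheta(t)=q_{1},\btheta(t+1)=q_{2})]=p_{q_{1},q_{2}}P_{q_{1}}$, then establish the latter by conditioning on $\mathcal{D}_{t}$ and invoking Parts~\ref{Assumption0:part4} and~\ref{Assumption0:part2} of Assumption~\ref{Assumption0}, with the noise case deferred to the proof of Lemma~\ref{new:stoch_jump_lin:gbs1}. Your formulation of the measurability step (taking $\mathcal{G}_{t}$ to include $\mathcal{D}_{t}$) is in fact slightly more careful than the paper's wording, but the argument is otherwise identical.
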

\begin{proof}[Proof of Lemma \ref{stoch_jump_stab:lemma1.1}]
 The second statement of the lemma was already shown in the proof of
 Lemma \ref{new:stoch_jump_lin:gbs1}, while showing that $\BS_H$ 
 satisfies Part \ref{gbs:def:prop7} of Assumption \ref{gbs:def} holds.

 We proceed with the proof of the first statement. 
 From the construction of $\hat{\x}(t)$ it follows that $\hat{P}_{(q_1,q_2)}=\mathbf{I}_{q_1}E[\x(t)\x(t)\chi(\btheta(t)=q_1,\btheta(t+1)=q_2)]\mathbf{I}_{q_1}^T$.
 Hence, it is enough to show that $E[\x(t)\x^T(t)\chi(\btheta(t)=q_1,\btheta(t+1)=q_2)]=p_{q_1,q_2}P_{q_1}$. 
  To this end, notice that \( E[\x(t)\x^T(t)\chi(\btheta(t)=q_1,\btheta(t+1)=q_2)]=E[E\x(t)\x^T(t)\chi(\btheta(t)=q_1,\btheta(t+1)=q_2 \mid \mathcal{D}_t]]$. 
 Also notice that from Part \ref{Assumption0:part4} of Assumption \ref{Assumption0} it follows that
 $\x(t)$ is measurable w.r.t. the $\sigma$-algebra generated by 
 $\{\v(t-l)\}_{l \ge 0}$.  Indeed, Part \ref{Assumption0:part4} of Assumption \ref{Assumption0}
 and Lemma \ref{lemma:measurability}
 implies that $\x(t)$ 
  measurable w.r.t. to the $\sigma$-algebra generated by $\{\v(t-l)\}_{l \ge 0}$.

 From Part \ref{Assumption0:part2} of Assumption \ref{Assumption0} it then follows that $\x(t)$ and $\btheta(t),\btheta(t+1)$ are conditionally independent given
  $\mathcal{D}_t$. Hence \( E[\x(t)\x^T(t)\chi(\btheta(t)=q_1,\btheta(t+1)=q_2) \mid \mathcal{D}_t]= p_{q_1,q_2}E[\x(t)\x^T(t)\chi(\btheta(t)=q_1)\mid \mathcal{D}_t]$.
  Combining this with the discussion above yields 
  that $E[\x(t)\x^T(t)\chi(\btheta(t)=q_1,\btheta(t+1)=q_2)]=p_{q_1,q_2}E[\x(t)\x^T(t)\chi(\btheta(t)=q_1)]$.
\end{proof}
\begin{proof}[Proof of Lemma \ref{stoch_jump_stab:lemma1}]
   Consider the \GBS\ $\BS_H$ associated with $H$.
   From the construction of the matrices of $\BS_{H}$ it follows that the solutions to
   \eqref{stoch_jump_lin:eq2} and those of \eqref{gen:filt:theo3.2:eq1} interpreted for $\BS=\BS_{H}$ can be related as follows.
  Suppose that $\{P_q\}_{q \in Q}$ is a solution to \eqref{stoch_jump_lin:eq2}.
  From Lemma \ref{stoch_jump_stab:lemma1.1} it follows that
  $\hat{Q}_{(q_1,q_2)}=p_{q_1,q_2}\mathbf{S}_{q_1}^TQ_{q_1}\mathbf{I}_{S_{q_1}}$.
  Define $\hat{P}_{(q_1,q_2)}=p_{q_1,q_2}\mathbf{I}_{q_1}P_{q_1}\mathbf{I}_{q_1}^T$.
  Notice that $\mathbf{I}_{q}^T\mathbf{I}_{q}=I_{n_{q}}$ and
  $\mathbf{S}_{q}\mathbf{S}_{q}^T=I_{m}$.
  If we multiply \eqref{stoch_jump_lin:eq2} by $\mathbf{I}_{q}$ from
  the right and by $\mathbf{I}_{q}^T$ from the right, then using the discussion 
  above and the definition of $A_{(q_1,q_2)}$, $K_{q_1,q_2}$ we readily obtain that
  $\{\hat{P}_{(q_1,q_2)}\}_{(q_1,q_2) \in Q \times Q}$ satisfies 
  \eqref{gen:filt:theo3.2:eq1}.
  In addition, notice that the correspondence between 
  $p_{q_1,q_2}P_{q_1}$ and $\hat{P}_{(q_1,q_2)}$ is injective, since
  $\mathbf{I}_{q_1}$ is full column rank for all $q_1 \in Q$.
  Since by Lemma \ref{gbs:def:new_lemma1.3} \eqref{gen:filt:theo3.2:eq1}
  has precisely one solution, this implies that \eqref{stoch_jump_lin:eq2}
  has at most one solution.

  Next, we show that \eqref{stoch_jump_lin:eq2} has a solution. To this end,
  notice that the unique solution of \eqref{gen:filt:theo3.2:eq1} is of the form
  $\hat{P}_{(q_1,q_2)}=E[\hat{\x}(t)\hat{\x}^T(t)\chi(\btheta(t)=q_1,\btheta(t+1)=q_2)]$. 
  Notice that the  only non-zero block of $\hat{P}_{(q_1,q_2)}$ is the one which corresponds
  to $p_{q_1,p_2}E[\x(t)\x^T(t)\chi(\btheta(t)=q_1)]$.
  Define now $P_{q}=E[\x(t)\x^T(t)\chi(\btheta(t)=q)]$, $q \in Q$. From the discussion above and Lemma \ref{stoch_jump_stab:lemma1.1} and the definition of
  the matrices $A_{(q_1,q_2)}$ and $K_{(q_1,q_2)}$
  it is easy to see that $\{P_{q}\}_{q \in Q}$ satisfies 
  \eqref{stoch_jump_lin:eq2}.
\end{proof}

Now we are ready to present the proof of Theorem \ref{real:suff:theo1}.
\begin{proof}[Proof of Theorem \ref{real:suff:theo1}]
 \textbf{Necessity} \\
 If $\y$ has a realization by a GJMLS which satisfies Assumption \ref{Assumption0}, then by Lemma \ref{new:stoch_jump_lin:gbs1},
   $\y$ can be realized by a \GBS\ which satisfies
  Assumption \ref{gbs:def}. By Theorem \ref{gen:filt:theo3.2},
  the latter implies that
  $\y$ satisfies  Assumption \ref{output:assumptions}.
  Moreover, the second statement of Lemma \ref{new:stoch_jump_lin:gbs1} together with
  Theorem \ref{gen:filt:theo3.2} implies that $\y$ is full rank. Hence,
  $\y$ satisfies the first part of Assumption \ref{gjmls:output:assumptions}.

  Finally, the validity of Part
  \ref{real:suff:ass1.1} of Assumption \ref{gjmls:output:assumptions}
can be obtained as follows.
  Let $\mathcal{F}_t$ be the $\sigma$-algebra generated by $\{\v(t-l)\}_{l \le 0}$.
  Let $\mathcal{D}_t^{+}$ be the $\sigma$-algebra generated by
  $\{\btheta(t+l)\}_{l \ge 0}$.
  From Part \ref{Assumption0:part4} of Assumption \ref{Assumption0}
  and $\y(t)=C_{\btheta(t)}\x(t)+D_{\btheta(t)}\v(t)$ it follows that 
  $\y(t)$ is measurable with respect to the joint $\sigma$-algebra $\mathcal{F}_t \lor \mathcal{D}_t$. 
Hence, the $\sigma$-algebra $\mathcal{H}_t$ generated by
  $\{\y(t-l)\}_{l \ge 0}$ is a sub-algebra of
  $\mathcal{F}_t \lor \mathcal{D}_t$.
  Since by Part \ref{Assumption0:part2} of Assumption \ref{Assumption0}
  $\mathcal{F}_t$ and $\mathcal{D}_t^{+}$ are conditionaly independent given
  $\mathcal{D}_t$, from the well-known properties of conditional independence it
  follows that $\mathcal{F}_t \lor \mathcal{D}_t$ and $\mathcal{D}_t^{+}$ are
  conditionally independent too. Hence, $\mathcal{H}_t$ and $\mathcal{D}_t^{+}$   are conditionally independent given $\mathcal{D}_t$,

 \textbf{Sufficiency}
  Assume that $\y$ satisfies Assumption \ref{gjmls:output:assumptions}.
  From Theorem \ref{gen:filt:theo3} it follows that 
  $\y$ admits a \GBS\ $\Sigma$ realization in forward innovation form which 
  satisfies Assumption \ref{gbs:def}. From Lemma \ref{new:stoch_jump_lin:gbs2}
  it then follows that the GJMLS $H_{\Sigma}$ associated with $\Sigma$
  is a realization of $\y$ and it satisfies Assumption \ref{Assumption0}.
 
\end{proof}
\begin{proof}[Proof of Lemma \ref{real:lemma:reachobs}]
  Consider the \GBS\ $BS_{H}$ associated with $H$ from \eqref{real:gjmls_repr}.
  Then it is easy to see that 
  $R_H=(\mathbb{R}^{n}, \{\sqrt{p_{\sigma}}A_{\sigma}\}_{\sigma \in \Sigma}, B,C)$,
  where $B=\{ B_{(\sigma,j)} \mid \sigma \in \Sigma, j=1,\ldots,p\}$ and
  with $B_{\sigma}=\begin{bmatrix} B_{\sigma,1} & \ldots B_{\sigma,p} \end{bmatrix}$,
  \[ B_{\sigma}= \sqrt{p_{\sigma}}(A_{\sigma}\hat{P}_{\sigma}C^{T}+K_{\sigma}Q_{\sigma}D_{\sigma}^T) \]
  where $\hat{P}_{\sigma}=E[\hat{\x}(t)\hat{\x}^T(t)\bu_{\sigma}^2(t)]$. 
  From Lemma \ref{stoch_jump_stab:lemma1.1} 
  it then follows that 
  \[ B_{(q_1,q_2)}=\sqrt{p_{q_1,q_2}}\mathbf{I}_{q_2}^TG_{q_1,q_2}\mathbf{M}_{q_1}, \]
  where
  \( \mathbf{M}_q=\begin{bmatrix} \mathbf{O}_{p, p(q-1)}, & I_{p}, & \mathbf{O}_{p, p(d-q-1)} \end{bmatrix} \in \mathbb{R}^{p \times pd} \).

  Note that $R_{H}$ is reachable if and only the elements of
  $\IM \sqrt{p}_{w}A_{w}B_{(q_1,q_2)}$,  $w \in (Q \times Q)^{*}$, 
  $|w| \le n-1$, $(q_1,q_2) \in Q \times Q$, 
  span the whole space. Notice that 
  $A_{w}B_{(q_1,q_2)}=0$ if $(q_1,q_2)w \notin L$ and that $B_{(q_1,q_2)} \in \IM \mathbf{I}_q$, and 
  $A_{w}B_{(q_1,q_2)}$ belongs to $\IM \mathbf{I}_{q}$, 
  if $w$ ends in a letter $(q_3,q)$. 
  Hence, reachability of $R_H$ is equivalent to requiring that
  the span of columns of $A_{(q,q_3)w}B_{(q_1,q_2)},B_{(q,q_4)}$ for all
  $q_1,q_2,q_3,q_4$, $|w| \le n-2$, $w \in L$ equals $\IM \mathbf{I}_{q}$ 
  for all $q \in Q$.
  Notice that $\mathbf{M}_q$ is full row rank, hence $\IM A_{(q,q_3)w}B_{(q_1,q_2)}=\IM \mathbf{I}_{q}M_{(q,q_3)w}G_{(q_1,q_2)}\mathbf{M}_{q_1}=\IM \mathbf{I}_{q}M_{(q,q_3)w}G_{(q_1,q_2)}$ and 
  $\IM B_{(q_4,q)}=\IM \mathbf{I}_qG_{q_4,q}\mathbf{M}_{q_4}=\IM \mathbf{I}_qG_{q_4,q}$ for all
  $q_1,q_2,q_3,q_4$, $|w| \le n-2$, $w \in L$. 
  It then follows that the span of those vectors
  equals $\IM \mathbf{I}_{q}\mathcal{R}_{H,q}$.
  Since $\Rank \mathbf{I}_{q}=n_q$, reachability of $R_H$ is indeed equivalent to 
  $\Rank \mathcal{R}_{H,q}=n_q$ for all $q \in Q$.

  From the definition of $R_H$ and $\BS_H$ it follows that
  $\mathbf{M}_{r}CA_{w}A_{(q_1,q)}\mathbf{I}_{q}=0$ if $(q_1,q)w$ does not end in 
  a letter $(q_2,r)$, $q_2 \in Q$, 
  and $\mathbf{M}_{r}CA_{w}A_{(q_1,q)}\mathbf{I}_{q}=C_rM_{w(q_1,q)}$ otherwise, 
  for any $r,q_1,q \in Q$, $w \in \Sigma^{*}$.
  Hence, $\ker CA_{w}\mathbf{I}_q=\ker C_{r}M_{w}$ for all $w \in L$ such that $w$ ends
  in $(q_2,r)$. 
  Notice that $C\mathbf{I}_{q}=C_q$. 
  Finally, we remark that $w \notin L$, then $CA_{w}=0$ and if
  $w$ does not start with a letter of the form $(q,q_1)$, then
  $CA_{w}\mathbf{I}_q=0$.
  From the discussion above it then follows that
  $O_{R_H} \cap \IM \mathbf{I}_q = \mathbf{I}_{q}(\mathcal{O}_{H,q})$. 

  Assume now that $R_H$ is observable, i.e. $O_{R_H}=\{0\}$. Since $\mathbf{I}_q$ is
  full column rank, we then get that $\mathcal{O}_{H,q}=\{0\}$, $q \in Q$.
  Conversely, assume that $\mathcal{O}_{H,q}=\{0\}$ for all $q \in Q$.
  It then follows that $O_{R_H} \cap \IM \mathbf{I}_q=\{0\}$.
  Let $x=(x^T_1,\ldots,x^T_d)^T \in \mathbb{R}^{n}$, $x_q \in \mathbb{R}^{n_q}$, $q \in Q$,
  and assume that $x \in O_{R_H}$. Notice that 
  $Cx=\begin{bmatrix} (C_1x_1)^T, & \ldots, & (C_dx_d)^T \end{bmatrix}^T$ and
  $Cx_q=\mathbf{M}_{q}C_qx_q=C\mathbf{I}_{q}x$, $q \in Q$. 
  Hence,  $Cx=0$ is equivalent to $C_qx_q=0$. 
  Moreover, for any $q_1,q_2 \in Q$, $A_{(q_1,q_2)}x=A_{(q_1,q_2)}\mathbf{I}_{q_1}x_{q_1}$ and
  $A_{(q_1,q_2)}\mathbf{I}_{q}x_q=0$ for $q \ne q_1$.
  Hence, $x \in O_{R_H}$ implies that $CA_{w}\mathbf{I}_{q}x_q=0$ for any $q \in Q$, $w \in \Sigma^{*}$,
  $|w| \le n-1$.
  Hence,
  $\mathbf{I}_qx_q \in O_{R_H} \cap \IM \mathbf{I}_q$. Since we have shown above that
  $O_{R_H} \cap \IM \mathbf{I}_q=\{0\}$, it follows that $\mathbf{I}_{q}x_q=0$, $q \in Q$. 
  Since $\mathbf{I}_q$ is full column rank, it follows that $x_q=0$ for all $q \in Q$.
  Hence, $x=0$. 
\end{proof}
\begin{proof}[Proof of Theorem \ref{real:theo:min}]

 \textbf{Minimality $\implies$ reachability and observability.}
 Assume that $H$ is a minimal realization of $\y$ and assume that it is not reachable or observable. Consider the \GBS\ $\BS_H$ associated
 with $H$.  From Lemma \ref{new:stoch_jump_lin:gbs1} it follows that $\BS_H$ is a realization of $\widetilde{\y}$.
 From Lemma \ref{real:lemma:reachobs} it follows $R_{\BS_H}$ cannot be reachable and observable. 
 Then by Theorem \ref{gen:filt:min_theo} $\BS_H$
 cannot be minimal. Take a minimal realization $\BS$ of $\widetilde{\y}$ in 
 forward innovation form. Then $\dim \BS < \dim \BS_H = \dim H$.
 Construct the GJMLS $H_{\BS}$ associated with $\BS$. By Lemma \ref{new:stoch_jump_lin:gbs2},
 $H_{\BS}$ is a realization of $\y$ and $\dim H_{\BS} =\dim \BS < \dim H$.
 This contradicts to minimality of $H$ and hence a contradiction.
   
 \textbf{Reachability and observability $\implies$ minimality}
   Assume that $H$ is reachable and observable but it is not a minimal
   realization of $\y$. Consider the associated \GBS\ $\BS_H$.
   From Lemma \ref{real:lemma:reachobs}
   it follows that $R_H=R_{\BS_H}$ is reachable and observable.  
  From Theorem \ref{gen:filt:min_theo} and Lemma \ref{new:stoch_jump_lin:gbs1} it then follows that $\BS_H$ is a minimal
  realization of $\widetilde{\y}$. Assume that $H$ is not minimal. Then 
  there exists a GJMLS $\hat{H}$ such that $\dim \hat{H} < \dim H$, $\hat{H}$ is a realization of $\y$ and it
  satisfies Assumption \ref{Assumption0}. From Lemma \ref{new:stoch_jump_lin:gbs2} it then follows that
  $\BS_{\hat{H}}$ is a realization of $\widetilde{\y}$. Since $\dim \hat{H}=\dim \BS_{\hat{H}}$ and $\dim H=\dim \BS_H$, it follows that $\dim \BS_{\hat{H}} < \dim \BS_H$, which 
  contradicts the minimality of $\BS_H$.

\textbf{Minimal realizations are isomorphic}
  If $H$ and $\hat{H}$ are two minimal realizations of $\y$ such that they both satisfy
  Assumption \ref{Assumption0}, then by Lemma \ref{new:stoch_jump_lin:gbs1} the 
  \GBS{s} $\BS_H$ and $\BS_{\hat{H}}$ are minimal realizations of $\widetilde{\y}$
  which satisfy Assumption \ref{gbs:def}. From Theorem \ref{gen:filt:min_theo} 
  it then follows that 
  the representations $R_{H}=R_{\BS_H}$ and 
  $R_{\hat{H}}=R_{\BS_{\hat{H}}i}$ are isomorphic
  and they are both reachable and observable.
  Consider this isomorphism $\mathbf{S}:R_H \rightarrow R_{\hat{H}}$.
  It is easy to see that $\mathbf{S}$ is then an isomorphism between $H$ and $\hat{H}$.
\end{proof}

 
 
\section{ Discussion and Conclusion }
 We have presented a realization theory for stochastic 
 jump-linear systems. The theory relies on the solution 
 of a generalized bilinear filtering/realization problem. 
 This solution
 represents an extension of the
 known results on 
 linear and bilinear stochastic realization/filtering.

 We would like to extend the presented results to more general
 classes of hybrid systems. In particular, we would like
 to develope realization theory for jump-linear systems
 with partially observed discrete states. Necessary conditions
 for existence of a realization by a system of this class
 were already presented in \cite{MPRV:HSCC}. 
 Another line of research we would like to pursue is to use
 the presented theory for developing subspace identification
 algorithms
 for stochastic jump-linear systems. Note that the
 classical stochastic bilinear realization theory gave
 rise to a number of subspace identification algorithms, see
 \cite{Favoreel:PhD99,Verhaegen:automatica,Verhaegen:INJC,MacChenBilSub}.
 It is very likely that the presented results will lead to
 very similar subspace identification algorithms.

 \textbf{Acknowledgement. }
 This work was supported by grants NSF EHS-05-09101, 
 NSF CAREER IIS-04-47739,  and ONR N00014-05-1083.




\end{document}